\documentclass[12pt]{amsart}
\usepackage{amscd,amssymb,amsthm,amsmath,amssymb,textcomp,supertabular,wasysym,rotating,longtable,enumerate,rotating,mathrsfs,mathtools,multirow}
\usepackage[matrix,arrow,curve]{xy}
\usepackage{pdflscape}

\sloppy\pagestyle{plain}

\textwidth=16cm \textheight=23cm

\addtolength{\topmargin}{-40pt} \addtolength{\oddsidemargin}{-2cm}
\addtolength{\evensidemargin}{-2cm}

\newtheorem{theorem}[equation]{Theorem}

\newtheorem{proposition}[equation]{Proposition}
\newtheorem{lemma}[equation]{Lemma}
\newtheorem{corollary}[equation]{Corollary}

\newtheorem*{maintheorem*}{Main Theorem}

\theoremstyle{definition}
\newtheorem{example}[equation]{Example}

\theoremstyle{remark}
\newtheorem{remark}[equation]{Remark}

\makeatletter\@addtoreset{equation}{section} \makeatother

\newcommand{\mumu}{\boldsymbol{\mu}}

\author{Ivan Cheltsov and Arman Sarikyan}

\title{Equivariant pliability of the projective space}

\pagestyle{headings}

\address{\emph{Ivan Cheltsov}
\newline
\textnormal{The University of Edinburgh,  Edinburgh, Scotland}
\newline
\textnormal{\texttt{I.Cheltsov@ed.ac.uk}}}

\address{\emph{Arman Sarikyan}
\newline
\textnormal{The University of Edinburgh,  Edinburgh, Scotland}
\newline
\textnormal{\texttt{ar.sarikyan@gmail.com}}}

\begin{document}

\begin{abstract}
We classify finite subgroups $G\subset\mathrm{PGL}_4(\mathbb{C})$ such that $\mathbb{P}^3$ is not $G$-birational to conic bundles and del Pezzo fibrations,
and explicitly describe all $G$-Mori fibre spaces that are $G$-birational to $\mathbb{P}^3$ for these subgroups.
\end{abstract}

\maketitle

\tableofcontents

Throughout this paper, all varieties are assumed to be projective and defined over~$\mathbb{C}$.

\section{Introduction}
\label{section:Intro}

Finite subgroups in $\mathrm{PGL}_4(\mathbb{C})$ have been classified by Blichfeldt \cite{Blichfeldt1917},
who has split them into the~following four classes:  {intransitive} groups, {transitive} groups, {imprimitive} groups,
{primitive} groups. In geometric language, these classes can be described as follows:
\begin{enumerate}[(I)]
\item {intransitive} groups are group that fix a point or leave a line invariant,
\item {transitive} groups are groups that are not intransitive,
\item {imprimitive} groups are transitive groups that
\begin{itemize}
\item[--] either leave a union of two skew lines invariant,
\item[--] or have an orbit of length $4$ (monomial subgroups),
\end{itemize}
\item {primitive} groups are transitive groups that are not imprimitive.
\end{enumerate}
Note that $\mathrm{PGL}_4(\mathbb{C})$ contains finitely many primitive finite subgroups up to conjugation.

Now, let us fix a finite subgroup $G\subset\mathrm{PGL}_4(\mathbb{C})$.
The main aim of this paper is to study \mbox{$G$-birational} transformations of $\mathbb{P}^3$ into other $G$-Mori fibre spaces.
If $\mathbb{P}^3$ is not $G$-birational to any other $G$-Mori fibre space, then $\mathbb{P}^3$ is said to be $G$-\emph{birationally rigid}.
It has been proven in~\cite{ChSh10a,ChSh09b,CheltsovShramov2017} that
\begin{center}
$\mathbb{P}^3$ is $G$-birationally rigid $\iff$ $G$ is primitive, $G\not\cong\mathfrak{A}_5$ and $G\not\cong\mathfrak{S}_5$.
\end{center}
For instance, if $G$ is an imprimitive subgroup such that $\mathbb{P}^3$ contains a $G$-orbit of length~$4$,
then $\mathbb{P}^3$ is not $G$-birationally rigid. In fact, this follows from

\begin{example}[{\cite{CheltsovShramov2017,Martello}}]
\label{example:Fano-Enriques}
Suppose that $G$ is imprimitive, $\mathbb{P}^3$ does not contain $G$-invariant unions of two skew lines,
and $\mathbb{P}^3$ contains a $G$-orbit $\Sigma_4$ of length $4$.
Let~$\mathcal{M}$ be the~linear system that consists of sextic surfaces in $\mathbb{P}^3$
singular along each line passing through two points in  $\Sigma_4$.
Then $\mathcal{M}$ defines a $G$-rational map $\psi\colon\mathbb{P}^3\dasharrow\mathbb{P}^{13}$.
Let $X_{24}=\overline{\mathrm{im}(\psi)}$.
Then
\begin{enumerate}[(i)]
\item the~induced map $\mathbb{P}^3\dasharrow X_{24}$ is $G$-birational,
\item $X_{24}\cong\mathbb{P}^1\times\mathbb{P}^1\times\mathbb{P}^1/\langle\tau\rangle$ for an involution  $\tau$ that fixes $8$ points \cite[$\S$ 6.3.2]{Bayle},
\item the~Fano threefold $X_{24}$ is a $G$-Mori fiber space over a point.
\end{enumerate}
\end{example}

Following \cite{AhmadinezhadOkada},
we define $\mathbb{P}^3$ to be \emph{$G$-solid} if $\mathbb{P}^3$ is not $G$-birational to conic bundles and del Pezzo fibrations.
In this case, all $G$-Mori fibre spaces that are $G$-birational to $\mathbb{P}^3$ are terminal Fano threefolds ---
they form a set $\mathcal{P}_G(\mathbb{P}^3)$, which we call the \emph{$G$-pliability} \cite{CortiMella}.
For example, if $\mathbb{P}^3$ is $G$-solid, then $\mathcal{P}_G(\mathbb{P}^3)=\{\mathbb{P}^3\}$ $\iff$ $\mathbb{P}^3$ is $G$-birationally rigid.

It natural to ask when is $\mathbb{P}^3$ $G$-solid? If $\mathbb{P}^3$ is $G$-solid, it follows from \cite{CheltsovShramov,CheltsovShramov2017} that
\begin{enumerate}
\item the~subgroup $G$ is transitive,
\item $\mathbb{P}^3$ does not contain $G$-invariant unions of two skew lines,
\item neither $G\cong\mathfrak{A}_5$ nor $G\cong\mathfrak{S}_5$.
\end{enumerate}
In fact, these conditions guarantee that $\mathbb{P}^3$ is $G$-solid provided that $|G|$ is sufficiently large.
Namely, if $G$ is transitive, $\mathbb{P}^3$ has no $G$-invariant unions of two skew lines,~and~\mbox{$|G|\geqslant 2^{17}3^4$},
then it follows from \cite{CheltsovShramov2017,CDK} that $\mathbb{P}^3$ is \mbox{$G$-solid}, $\mathbb{P}^3$ contains a unique $G$-orbit of length $4$,
and $\mathcal{P}_G(\mathbb{P}^3)=\{\mathbb{P}^3,X_{24}\}$,
where $X_{24}$ is the~Fano threefold from Example~\ref{example:Fano-Enriques}.

The goal of this paper is to prove the following result:

\begin{maintheorem*}
Let $G$ be an imprimitive finite subgroup in $\mathrm{PGL}_4(\mathbb{C})$ such that
$\mathbb{P}^3$ does not have $G$-invariant unions of two skew lines,
and $G$ is not conjugated to
\begin{itemize}
\item the~subgroup $G_{48,3}\cong\mumu^2_2.\mathfrak{A}_4\cong\mumu_4^2\rtimes\mumu_3$ of order $48$ generated by
$$
\begin{pmatrix}
-1 & 0 & 0 & 0\\
0 & 1 & 0 & 0\\
0 & 0 & -1 & 0\\
0 & 0 & 0 & 1
\end{pmatrix},
\begin{pmatrix}
1 & 0 & 0 & 0\\
0 & -1 & 0 & 0\\
0 & 0 & -1 & 0\\
0 & 0 & 0 & 1
\end{pmatrix},
\begin{pmatrix}
0 & 0 & 1 & 0\\
1 & 0 & 0 & 0\\
0 & 1 & 0 & 0\\
0 & 0 & 0 & 1
\end{pmatrix},
\begin{pmatrix}
0 & i & 0 & 0\\
1 & 0 & 0 & 0\\
0 & 0 & 0 & -i\\
0 & 0 & 1 & 0
\end{pmatrix};
$$

\item the~subgroup $G_{96,72}\cong\mumu_2^3.\mathfrak{A}_4\cong\mumu_4^2\rtimes\mumu_6$ of order $96$ generated by
$$
\begin{pmatrix}
-1 & 0 & 0 & 0\\
0 & 1 & 0 & 0\\
0 & 0 & 1 & 0\\
0 & 0 & 0 & 1
\end{pmatrix},
\begin{pmatrix}
1 & 0 & 0 & 0\\
0 & -1 & 0 & 0\\
0 & 0 & 1 & 0\\
0 & 0 & 0 & 1
\end{pmatrix},
\begin{pmatrix}
1 & 0 & 0 & 0\\
0 & 1 & 0 & 0\\
0 & 0 & -1 & 0\\
0 & 0 & 0 & 1
\end{pmatrix},
\begin{pmatrix}
0 & 0 & 1 & 0\\
1 & 0 & 0 & 0\\
0 & 1 & 0 & 0\\
0 & 0 & 0 & 1
\end{pmatrix},
\begin{pmatrix}
0 & i & 0 & 0\\
1 & 0 & 0 & 0\\
0 & 0 & 0 & i\\
0 & 0 & 1 & 0
\end{pmatrix};
$$

\item the~subgroup $G_{324,160}^\prime\cong\mumu_3^3\rtimes\mathfrak{A}_4$ of order $324$ generated by
$$
\begin{pmatrix}
e^{\frac{2\pi i}{3}} & 0 & 0 & 0\\
0 & 1 & 0 & 0\\
0 & 0 & 1 & 0\\
0 & 0 & 0 & 1
\end{pmatrix},
\begin{pmatrix}
1 & 0 & 0 & 0\\
0 & e^{\frac{2\pi i}{3}} & 0 & 0\\
0 & 0 & 1 & 0\\
0 & 0 & 0 & 1
\end{pmatrix},
\begin{pmatrix}
1 & 0 & 0 & 0\\
0 & 1 & 0 & 0\\
0 & 0 & e^{\frac{2\pi i}{3}} & 0\\
0 & 0 & 0 & 1
\end{pmatrix},
\begin{pmatrix}
0 & 0 & 1 & 0\\
1 & 0 & 0 & 0\\
0 & 1 & 0 & 0\\
0 & 0 & 0 & 1
\end{pmatrix},
\begin{pmatrix}
0 & -1 & 0 & 0\\
1 & 0 & 0 & 0\\
0 & 0 & 0 & -1\\
0 & 0 & 1 & 0
\end{pmatrix}.
$$
\end{itemize}
Then  $\mathbb{P}^3$ is \mbox{$G$-solid}, and $\mathcal{P}_G(\mathbb{P}^3)=\{\mathbb{P}^3,X_{24}\}$,
where $X_{24}$ is the~threefold from Example~\ref{example:Fano-Enriques}.
\end{maintheorem*}

In this paper, the notation $G_{a,b}$ or $G_{a,b}^\prime$ means that the GAP ID of these groups is [a,b].

If $G$ is conjugate to one of the~subgroups $G_{48,3}$, $G_{96,72}$, $G_{324,160}^\prime$, then $\mathbb{P}^3$ is not $G$-solid:

\begin{example}
\label{example:48-96-324-pencil}
Suppose that $G$ is one of the~groups $G_{48,3}$, $G_{96,72}$, $G_{324,160}^\prime$. Let
$$
\mathfrak{C}=\big\{\big(1+e^{\frac{2\pi i}{3}}\big)x_1^d+e^{\frac{2\pi i}{3}}x_2^d+x_3^d=x_0^d+e^{\frac{2\pi i}{3}} x_1^d-\big(1+e^{\frac{2\pi i}{3}}\big)x_2^d=0\big\}\subset\mathbb{P}^3,
$$
where
$$
d=\left\{\aligned
&2\ \text{if}\ G=G_{48,3}\ \text{or}\ G=G_{96,72},\\
&3\ \text{if}\ G=G_{324,160}^\prime.
\endaligned
\right.
$$
Then~$\mathfrak{C}$ is a smooth irreducible $G$-invariant curve, and there exists $G$-equivariant diagram
$$
\xymatrix{
&X\ar@{->}[ld]_{\vartheta}\ar@{->}[rd]^{\kappa}&\\%
\mathbb{P}^3&&\mathbb{P}^1}
$$
where $\vartheta$ is the~blow up of the~curve $\mathfrak{C}$,
and $\kappa$ is a $G$-Mori fibre space, which is a fibration into surfaces of degree $d$.
\end{example}

\begin{corollary}[{cf. \cite[Theorem~1.1]{CheltsovShramov2017}}]
\label{corollary:solid}
Let $G$ be an arbitrary finite subgroup in $\mathrm{PGL}_4(\mathbb{C})$.
Then $\mathbb{P}^3$ is $G$-solid if and only if the following conditions are satisfied:
\begin{itemize}
\item[(a)] $G$ does not fix a point,
\item[(b)] $G$ does not leave a pair of two skew lines invariant,
\item[(c)] $G$ is not isomorphic to $\mathfrak{A}_5$ or $\mathfrak{S}_5$,
\item[(d)] $G$ is not conjugate to $G_{48,3}$, $G_{96,72}$ or $G_{324,160}^\prime$.
\end{itemize}
\end{corollary}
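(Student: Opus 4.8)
The plan is to deduce the corollary from the results already assembled in the introduction, organizing everything as a clean case analysis driven by Blichfeldt's classification; the two implications are treated separately. For necessity, suppose $\mathbb{P}^3$ is $G$-solid. Conditions (a), (b), (c) are then immediate from the quoted consequence of \cite{CheltsovShramov,CheltsovShramov2017}: solidity forces $G$ to be transitive (so it fixes no point, giving (a)), to admit no $G$-invariant union of two skew lines (giving (b)), and to be isomorphic to neither $\mathfrak{A}_5$ nor $\mathfrak{S}_5$ (giving (c)). For (d) I would argue by contraposition via Example~\ref{example:48-96-324-pencil}: if $G$ were conjugate to one of $G_{48,3}$, $G_{96,72}$, $G_{324,160}^\prime$, then blowing up the curve $\mathfrak{C}$ produces a $G$-equivariant del Pezzo fibration over $\mathbb{P}^1$, so $\mathbb{P}^3$ would not be $G$-solid.

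For sufficiency, assume (a)--(d). The first step is to show that (a) and (b) together force $G$ to be \emph{transitive}. Beyond a fixed point, the only configuration to rule out is that $G$ leaves a line $\ell=\mathbb{P}(W)$ invariant for some two-dimensional $W\subset\mathbb{C}^4$. By Maschke's theorem $\mathbb{C}^4=W\oplus W'$ as $G$-representations; if either summand contained a one-dimensional subrepresentation, then $G$ would fix a point, contradicting (a), and otherwise $W$ and $W'$ are irreducible, so $\mathbb{P}(W)$ and $\mathbb{P}(W')$ are $G$-invariant skew lines, contradicting (b). Hence $G$ stabilizes neither a point nor a line, i.e. $G$ is transitive.

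With $G$ transitive, I split into the primitive and imprimitive cases. If $G$ is primitive, then by (c) we have $G\not\cong\mathfrak{A}_5$ and $G\not\cong\mathfrak{S}_5$, so the rigidity criterion recalled in the introduction gives that $\mathbb{P}^3$ is $G$-birationally rigid; since conic bundles and del Pezzo fibrations are $G$-Mori fibre spaces, rigidity implies $G$-solidity directly from the definitions. If $G$ is imprimitive, then by definition it either leaves a union of two skew lines invariant---excluded by (b)---or has an orbit of length $4$; thus $G$ is imprimitive with no $G$-invariant union of two skew lines, and by (d) it is not conjugate to any of the three exceptional groups, so the Main Theorem applies verbatim and yields that $\mathbb{P}^3$ is $G$-solid, with $\mathcal{P}_G(\mathbb{P}^3)=\{\mathbb{P}^3,X_{24}\}$.

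Since the substantive work is carried entirely by the Main Theorem, the rigidity theorem, and Examples~\ref{example:Fano-Enriques} and~\ref{example:48-96-324-pencil}, the corollary is essentially a matter of matching hypotheses. I do not expect any serious obstacle; the one genuine (though elementary) point is the representation-theoretic reduction of intransitivity to the two forbidden configurations in (a) and (b), which is the only place requiring an argument rather than a citation, and I would make sure that step correctly covers the subcase in which the two invariant $2$-dimensional subrepresentations are isomorphic, where one still extracts a $G$-invariant pair of skew lines.
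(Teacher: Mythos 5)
Your proof is correct and follows essentially the same route as the paper, which states Corollary~\ref{corollary:solid} as a direct consequence of the Main Theorem, Example~\ref{example:48-96-324-pencil}, and the quoted rigidity/solidity results of \cite{ChSh10a,ChSh09b,CheltsovShramov,CheltsovShramov2017}, exactly as you assemble them. The only step the paper leaves implicit is your Maschke-type reduction showing that (a) and (b) force transitivity (including the case of isomorphic two-dimensional summands, which you correctly note still yields an invariant pair of skew lines), and your handling of that point is sound.
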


This corollary describes all finite subgroups $G\subset\mathrm{PGL}_4(\mathbb{C})$ such that
the~projective space $\mathbb{P}^3$ is not $G$-birational to conic bundles and del Pezzo fibrations.
For the~projective plane $\mathbb{P}^2$, a similar problem has been solved in \cite{Sakovics}.

For the group $G_{324,160}^\prime$, we prove the following result.

\begin{theorem}
\label{theorem:324-160}
Suppose that $G=G_{324,160}^\prime$. Then $\mathbb{P}^3$, the~threefold $X_{24}$ from Example~\ref{example:Fano-Enriques},
and the~$G$-Mori fibre space $\kappa\colon X\to\mathbb{P}^1$ from Example~\ref{example:48-96-324-pencil}
are the~only $G$-Mori fiber~spaces that are $G$-birational to the~projective space $\mathbb{P}^3$.
\end{theorem}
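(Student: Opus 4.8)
The plan is to run the $G$-equivariant Sarkisov program. Since any $G$-birational map between $G$-Mori fibre spaces decomposes into elementary $G$-Sarkisov links, the set of all $G$-Mori fibre spaces that are $G$-birational to $\mathbb{P}^3$ is the vertex set of a connected graph whose edges are $G$-Sarkisov links and one of whose vertices is $\mathbb{P}^3$. It therefore suffices to enumerate every $G$-Sarkisov link issuing from each of the three spaces $\mathbb{P}^3$, $X_{24}$, and $\kappa\colon X\to\mathbb{P}^1$, and to check that no link produces a fourth vertex. I would begin by fixing the $G$-orbit structure on $\mathbb{P}^3$: writing $G=\mumu_3^3\rtimes\mathfrak{A}_4$, the diagonal subgroup $\mumu_3^3$ fixes exactly the four coordinate points while $\mathfrak{A}_4$ permutes them transitively, so these form an orbit $\Sigma_4$ of length $4$. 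Since $G^{\mathrm{ab}}$ is a $3$-group (so there is no subgroup of index $2$) and the index-$3$ subgroup $\mumu_3^3\rtimes V_4$, where $V_4\subset\mathfrak{A}_4$ is the Klein four-group, has no fixed point, $G$ fixes no point and every point-orbit has length at least $4$, the orbit $\Sigma_4$ being one of length exactly $4$.

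The heart of the argument is the enumeration of links from $\mathbb{P}^3$. A nontrivial link corresponds to a $G$-invariant mobile system $\mathcal{M}\subset|nH|$ for which the pair $(\mathbb{P}^3,\tfrac{4}{n}\mathcal{M})$ is not canonical along a $G$-invariant maximal centre. If the maximal centre is an irreducible curve $C$, then $\mathrm{mult}_C\mathcal{M}>n/4$, and intersecting two general members of $\mathcal{M}$ gives $(\mathrm{mult}_C\mathcal{M})^2\deg C\leq n^2$, whence $\deg C<16$. Classifying $G$-invariant curves of degree below $16$ through the $G$-representation on forms of low degree, I would show that the only ones carrying a maximal singularity are the $G$-orbit of the six lines joining pairs of points of $\Sigma_4$ and the degree-$9$ curve $\mathfrak{C}$ of Example~\ref{example:48-96-324-pencil}; the six-line orbit gives precisely the link of Example~\ref{example:Fano-Enriques} to $X_{24}$ (here $\mathrm{mult}_L\mathcal{M}=2$ for $n=6$), and $\mathfrak{C}$ gives the link of Example~\ref{example:48-96-324-pencil} to $\kappa\colon X\to\mathbb{P}^1$ (here $\mathrm{mult}_{\mathfrak{C}}\mathcal{M}=1$ for $n=3$, with $\deg\mathfrak{C}=9=n^2$). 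For a point-type maximal centre I would invoke the $G$-equivariant Iskovskikh--Pukhlikov inequalities together with the orbit analysis—every point-orbit has length at least $4$—to show that no such centre produces a $G$-Mori fibre space other than those already obtained, the only short configuration $\Sigma_4$ being subsumed in the six-line link.

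It remains to analyse links from the two nontrivial vertices. For the Fano threefold $X_{24}\cong(\mathbb{P}^1)^3/\langle\tau\rangle$ I would use its explicit product structure and the description of the eight $\tau$-fixed points: classifying $G$-invariant centres and running the two-ray game on each $G$-equivariant extraction, I would show that the only link is the inverse of the $\Sigma_4$-link and returns to $\mathbb{P}^3$. For the del Pezzo fibration $\kappa\colon X\to\mathbb{P}^1$, whose generic fibre is a cubic surface over $\mathbb{C}(t)$ with the induced $G$-action, I would use the equivariant theory of birational rigidity of del Pezzo fibrations: I would verify that the fibrewise multiplicity condition forbids any fibrewise $G$-maximal singularity, that no $G$-link can change the base $\mathbb{P}^1$, and hence that the unique link is $\vartheta$ back to $\mathbb{P}^3$. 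Combining the three local analyses shows that the graph has exactly the three stated vertices.

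The main obstacle is the del Pezzo fibration $\kappa\colon X\to\mathbb{P}^1$: degree-$3$ del Pezzo fibrations are not birationally rigid in general, so the $G$-action must be used in an essential way, both to constrain the $G$-invariant mobile systems on $X$ and to exclude fibrewise and base-changing links. Controlling the stabilizers of the fibres over the $\mathfrak{A}_4$-orbits on the base, together with the behaviour of $\mathfrak{C}$ and of the degenerate fibres of $\kappa$, is the crux. The accompanying classification of $G$-invariant curves of degree below $16$ on $\mathbb{P}^3$, needed to preclude a fourth vertex, is the most computation-heavy step.
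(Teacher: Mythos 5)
Your overall strategy---enumerating $G$-Sarkisov links from each of the three candidate varieties---is viable, but it is organized differently from the paper's argument. The paper does not run the Sarkisov program: it proves the theorem by the Noether--Fano/untwisting method (Theorem~\ref{theorem:final}, modelled on \cite[Theorem~3.3.1]{CheltsovShramov}), minimizing a degree-type invariant over the group $\Gamma$ generated by the Cremona involution $\iota$ of Corollary~\ref{corollary:P3-P3-explicit} and the normalizer of $G$, and feeding that induction with three technical inputs which correspond to your three local analyses: Proposition~\ref{proposition:technical-1} (a non-canonical centre on $\mathbb{P}^3$ forces large multiplicity along $\mathcal{L}_6$, at $\Sigma_4$, or along one of the two degree-nine curves), Proposition~\ref{proposition:technical-2} (on $X_{24}$ non-canonical centres sit at singular points, after which Kawamata's theorem applies), and Lemma~\ref{lemma:X-10} (every suitably normalized $G$-invariant mobile system on the del Pezzo fibration $X$ is canonical, proved by inversion of adjunction and Lemma~\ref{lemma:cubic-surface}). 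The technical core is therefore essentially shared; what your formulation buys is a clean graph-theoretic conclusion, at the cost of justifying the equivariant Sarkisov decomposition and running the two-ray games, while the paper's bookkeeping yields, as a by-product, the explicit generators of $\mathrm{Bir}^G(\mathbb{P}^3)$.

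There are, however, concrete errors in your enumeration on $\mathbb{P}^3$ that would have to be repaired. First, $\Sigma_4$ is not ``subsumed in the six-line link'': it is a maximal centre in its own right (for instance for the cubic linear system defining $\iota$, which has multiplicity $2$ at $\Sigma_4$ and multiplicity $1$ along $\mathcal{L}_6$), and the link it initiates is the self-link $\mathbb{P}^3\dasharrow\mathbb{P}^3$ through the six-nodal intersection of quadrics $V_4$ in diagram~\eqref{equation:Cremona-involution-1} (blow-up of $\Sigma_4$, six Atiyah flops, blow-down); this is exactly the involution $\iota$ on which the paper's untwisting rests. Omitting it leaves your Sarkisov factorization with no move available when $\Sigma_4$ is the maximal centre, so the induction cannot close. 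Second, your exclusion of other point centres from ``every point-orbit has length at least $4$'' is far too weak: a length-$4$ orbit cannot be killed by the $4n^2$-inequality alone (local multiplicities of a one-cycle at several points are not bounded by its degree); the paper instead uses that for $T\cong\mumu_3^3$ every $G$-orbit other than $\Sigma_4$ has length at least $18$ (Lemma~\ref{lemma:orbits}) together with Nadel vanishing for the multiplier ideal, as in the proof of Proposition~\ref{proposition:technical-1}. Third, there are two $G$-invariant degree-nine curves, \eqref{equation:48-curve-degree-9-1} and \eqref{equation:48-curve-degree-9-2}, exchanged by an element of $G_{648,704}^\prime$ normalizing $G$; both must occur in your curve classification, and one must check that the two resulting cubic fibrations are identified by this normalizing element, which is how the paper reduces them to the single model of Example~\ref{example:48-96-324-pencil}. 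Finally, a smaller point: the return link from $X_{24}$ is centred at the singular points of $X_{24}$ (two $G$-orbits of four $\frac{1}{2}(1,1,1)$-points, swapped by the biregular involution $\varsigma$), so it is the inverse of the $\mathcal{L}_6$-link, not of the ``$\Sigma_4$-link''; conflating these two links obscures precisely the distinction your enumeration needs to make.
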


We expect that a similar result holds also in the case when $G=G_{48,3}$ or $G=G_{96,72}$.
We~plan to prove this in a sequel to this paper together with Igor Krylov by combining our technique with the~methods developed in \cite{Pukhlikov1998,Co00,Krylov}.

\begin{remark}
\label{remark:intro-conclusion}
Our technique is not applicable in the case when the~group $G$ is intransitive.
In this case, the $G$-equivariant birational geometry of $\mathbb{P}^3$ has been studied in \cite{KreschTschinkel2021}
using the very powerful new technique recently developed in \cite{KontsevichPestunTschinkel,HassettKreschTschinkel,KreschTschinkel2020}.
\end{remark}

Using Main Theorem and Theorem~\ref{theorem:324-160}, one can construct examples of non-conjugate isomorphic finite subgroups in $\mathrm{Bir}(\mathbb{P}^3)$.
Let us present three such examples.

\begin{example}
\label{example:324-160-non-conjugate}
Let $G_{324,160}$ be the~subgroup in $\mathrm{PGL}_4(\mathbb{C})$ generated by
$$
\begin{pmatrix}
e^{\frac{2\pi i}{3}} & 0 & 0 & 0\\
0 & 1 & 0 & 0\\
0 & 0 & 1 & 0\\
0 & 0 & 0 & 1
\end{pmatrix},
\begin{pmatrix}
1 & 0 & 0 & 0\\
0 & e^{\frac{2\pi i}{3}} & 0 & 0\\
0 & 0 & 1 & 0\\
0 & 0 & 0 & 1
\end{pmatrix},
\begin{pmatrix}
1 & 0 & 0 & 0\\
0 & 1 & 0 & 0\\
0 & 0 & e^{\frac{2\pi i}{3}} & 0\\
0 & 0 & 0 & 1
\end{pmatrix},
\begin{pmatrix}
0 & 0 & 1 & 0\\
1 & 0 & 0 & 0\\
0 & 1 & 0 & 0\\
0 & 0 & 0 & 1
\end{pmatrix},
\begin{pmatrix}
0 & 1 & 0 & 0\\
1 & 0 & 0 & 0\\
0 & 0 & 0 & 1\\
0 & 0 & 1 & 0
\end{pmatrix}.
$$
Then $G_{324,160}\cong G_{324,160}^\prime$, $\mathbb{P}^3$ is $G_{324,160}$-solid by Main Theorem, but $\mathbb{P}^3$ is not $G_{324,160}^\prime$-solid.
Hence, the subgroups $G_{324,160}$  and $G_{324,160}^\prime$ are not conjugate in $\mathrm{Bir}(\mathbb{P}^3)$.
\end{example}

\begin{example}
\label{example:96-227-non-conjugate}
Let $G_{96,227}$ be the~subgroup in $\mathrm{PGL}_4(\mathbb{C})$ generated by
$$
\begin{pmatrix}
-1 & 0 & 0 & 0\\
0 & 1 & 0 & 0\\
0 & 0 & -1 & 0\\
0 & 0 & 0 & 1
\end{pmatrix},
\begin{pmatrix}
1 & 0 & 0 & 0\\
0 & -1 & 0 & 0\\
0 & 0 & -1 & 0\\
0 & 0 & 0 & 1
\end{pmatrix},
\begin{pmatrix}
0 & 0 & 0 & 1\\
1 & 0 & 0 & 0\\
0 & 1 & 0 & 0\\
0 & 0 & 1 & 0
\end{pmatrix},
\begin{pmatrix}
0 & 1 & 0 & 0\\
1 & 0 & 0 & 0\\
0 & 0 & 1 & 0\\
0 & 0 & 0 & 1
\end{pmatrix},
$$
and let $G_{96,227}^\prime$  be the~subgroup in $\mathrm{PGL}_4(\mathbb{C})$ generated by
$$
\begin{pmatrix}
-1 & 0 & 0 & 0\\
0 & 1 & 0 & 0\\
0 & 0 & -1 & 0\\
0 & 0 & 0 & 1
\end{pmatrix},
\begin{pmatrix}
1 & 0 & 0 & 0\\
0 & -1 & 0 & 0\\
0 & 0 & -1 & 0\\
0 & 0 & 0 & 1
\end{pmatrix},
\begin{pmatrix}
0 & 0 & 0 & -1\\
1 & 0 & 0 & 0\\
0 & 1 & 0 & 0\\
0 & 0 & 1 & 0
\end{pmatrix},
\begin{pmatrix}
0 & 1 & 0 & 0\\
1 & 0 & 0 & 0\\
0 & 0 & -1 & 0\\
0 & 0 & 0 & 1
\end{pmatrix}.
$$
Then $G_{96,227}\cong G_{96,227}^\prime\cong\mumu_2^2\rtimes\mathfrak{S}_4$,
and these two subgroups are not conjugate in $\mathrm{PGL}_4(\mathbb{C})$, because
$\mathbb{P}^3$ contains three $G_{96,227}$-orbits of length $4$ and only one $G_{96,227}^\prime$-orbit of length~$4$.
Thus, applying Main Theorem, we see that $G_{96,227}$ and $G_{96,227}^\prime$ are not conjugate in $\mathrm{Bir}(\mathbb{P}^3)$.
\end{example}

\begin{example}
\label{example:48-50-quadric-P1}
Let $G_{48,50}\cong\mumu^2_2\rtimes\mathfrak{A}_4\cong\mumu_2^4\rtimes\mumu_3$ be the subgroup in $\mathrm{PGL}_4(\mathbb{C})$
generated~by
$$
\begin{pmatrix}
-1 & 0 & 0 & 0\\
0 & 1 & 0 & 0\\
0 & 0 & -1 & 0\\
0 & 0 & 0 & 1
\end{pmatrix},
\begin{pmatrix}
1 & 0 & 0 & 0\\
0 & -1 & 0 & 0\\
0 & 0 & -1 & 0\\
0 & 0 & 0 & 1
\end{pmatrix},
\begin{pmatrix}
0 & 0 & 1 & 0\\
1 & 0 & 0 & 0\\
0 & 1 & 0 & 0\\
0 & 0 & 0 & 1
\end{pmatrix},
\begin{pmatrix}
0 & 1 & 0 & 0\\
1 & 0 & 0 & 0\\
0 & 0 & 0 & 1\\
0 & 0 & 1 & 0
\end{pmatrix}.
$$
Let $\mathcal{Q}_1=\{x_0^2+x_1^2+x_2^2+x_3^2=0\}\subset\mathbb{P}^3$.
Then $\mathcal{Q}_1$ is $G_{48,50}$-invariant,
which gives a faithful action of the group $G_{48,50}$ on $\mathcal{Q}_1\times\mathbb{P}^1$,
that induces an~embedding $\eta\colon G_{48,50}\hookrightarrow\mathrm{Bir}(\mathbb{P}^3)$.
Since $\mathbb{P}^3$ is $G_{48,50}$-solid, the subgroups $G_{48,50}$ and $\eta(G_{48,50})$ are not conjugate in~$\mathrm{Bir}(\mathbb{P}^3)$.
\end{example}

In this paper, we also find the generators of the group $\mathrm{Bir}^G(\mathbb{P}^3)$
for every imprimitive finite subgroup $G\subset\mathrm{PGL}_4(\mathbb{C})$ such that $\mathbb{P}^3$ is $G$-solid.
In particular, we show that this group is~finite provided that $G$ is not conjugate to $G_{48,50}$ or $G_{96,227}$ (see Corollary~\ref{corollary:final-large-groups}).
On~the other hand, if~$G=G_{48,50}$ or $G=G_{96,227}$, then $\mathrm{Bir}^G(\mathbb{P}^3)$ is infinite by Corollary~\ref{corollary:final}.
In these two cases, the group $\mathrm{Bir}^G(\mathbb{P}^3)$ is generated by the standard Cremona involution
$$
[x_0:x_1:x_2:x_3]\mapsto [x_1x_2x_3:x_0x_2x_3:x_0x_1x_3:x_0x_1x_2]
$$
and the finite subgroup $G_{576,8654}\cong(\mathfrak{A}_4\times\mathfrak{A}_4)\rtimes\mumu_2^2$ generated by
$$
\begin{pmatrix}
-1&0&0&0\\
0&1&0&0\\
0&0&1&0\\
0&0&0&1
\end{pmatrix},
\begin{pmatrix}
0&0&0&1\\
1&0&0&0\\
0&1&0&0\\
0&0&1&0
\end{pmatrix},
\begin{pmatrix}
0&1&0&0\\
1&0&0&0\\
0&0&1&0\\
0&0&0&1
\end{pmatrix},
\begin{pmatrix}
1&1&1&1\\
1&1&-1&-1\\
1&-1&1&-1\\
-1&1&1&-1
\end{pmatrix}.
$$

Let us describe the structure of this paper.
We will prove Main Theorem in Section~\ref{section:proof}.
In Section~\ref{section:subgroups}, we~will describe basic properties of finite monomial subgroups in $\mathrm{PGL}_4(\mathbb{C})$.
In Sections~\ref{section:P3-48},~\ref{section:P3-192} and \ref{section:P3-large},
we~will study $G$-equivariant geometry of the~projective space $\mathbb{P}^3$,
where $G$ is a finite subgroup  in $\mathrm{PGL}_4(\mathbb{C})$ that satisfies all conditions of Main Theorem.
In~Section~\ref{section:Fano-Enriques}, we will study $G$-equivariant geometry of the~threefold $X_{24}$ from Example~\ref{example:Fano-Enriques}.

\medskip
\textbf{Acknowledgments.}
We want to thank Hamid Abban, Michela Artebani, Igor Krylov, 	
Jennifer Paulhus, Yuri Prokhorov, Xavier Roulleau, Alessandra Sarti, Costya Shramov, Andrey Trepalin,
Yuri Tschinkel for helpful comments.
We want to thank Tim Dokchitser for his help with Magma computations and his online database \cite{Tim}.

\section{Irreducible monomial subgroups of degree four}
\label{section:subgroups}

Let $G$ be a finite transitive subgroup in $\mathrm{PGL}_4(\mathbb{C})$ such that $\mathbb{P}^3$ has a~$G$-orbit of length~$4$,
and let $P_1$, $P_2$, $P_3$, $P_4$ be the~four points of this $G$-orbit.
Choosing appropriate coordinates, we may assume that
\begin{center}
$P_1=[1:0:0:0]$, $P_2=[0:1:0:0]$, $P_3=[0:0:1:0]$, $P_4=[0:0:0:1]$.
\end{center}
Then the~$G$-action on the~set $\{P_1,P_2,P_3,P_4\}$ induces a group homomorphism $\upsilon\colon G\to\mathfrak{S}_4$.
Denote by $T$ the~kernel of the~homomorphism $\upsilon$. Suppose, in addition, that the following two conditions are satisfied:
\begin{itemize}
\item $G$ does not have fixed points in  $\mathbb{P}^3$,
\item $G$ does not leave a union of two skew lines in $\mathbb{P}^3$ invariant.
\end{itemize}
Then $T$ is not trivial, and either the~homomorphism $\upsilon$ is surjective, or its image is $\mathfrak{A}_4$.

Let $\mathbb{T}$ be the~torus in $\mathrm{PGL}_4(\mathbb{C})$ that consists of the~elements
given by the~diagonal matrices whose last entry is $1$. In the following, we will always shortcut
$$
(a_1,a_2,a_3)=\begin{pmatrix}
a_1 & 0 & 0 & 0\\
0 & a_2 & 0 & 0\\
0 & 0 & a_3 & 0\\
0 & 0 & 0 & 1
\end{pmatrix}.
$$
Note that we have $T\subset\mathbb{T}$.
Let $\mathbb{G}$ be the~normalizer of the~torus $\mathbb{T}$ in the~group $\mathrm{PGL}_4(\mathbb{C})$.
Then the~subset $\{P_1,P_2,P_3,P_4\}$ is $\mathbb{G}$-invariant, which gives an~epimorphism $\Upsilon\colon\mathbb{G}\to\mathfrak{S}_4$.
Since we have $G\subset\mathbb{G}$, we obtain the~following exact sequences of groups:
$$
\xymatrix{1\ar@{->}[rr] && T\ar@{_{(}->}[d]\ar@{->}[rr] && G\ar@{->}[rr]^{\upsilon}\ar@{_{(}->}[d] && \mathrm{im}(\upsilon)\ar@{_{(}->}[d]\ar@{->}[rr] && 1\\
1\ar@{->}[rr] && \mathbb{T}\ar@{->}[rr] && \mathbb{G}\ar@{->}[rr]^{\Upsilon} && \mathfrak{S}_4\ar@{->}[rr] && 1.}
$$
Note that $\mathbb{G}\cong\mathbb{T}\rtimes\mathfrak{S}_4$,
where we identify $\mathfrak{S}_4$ with the~subgroup in $\mathbb{G}$ generated by
$$
\tau=\begin{pmatrix}
0 & 0 & 0 & 1\\
1 & 0 & 0 & 0\\
0 & 1 & 0 & 0\\
0 & 0 & 1 & 0
\end{pmatrix}\
\text{and}\
\sigma=\begin{pmatrix}
0 & 1 & 0 & 0\\
1 & 0 & 0 & 0\\
0 & 0 & 1 & 0\\
0 & 0 & 0 & 1
\end{pmatrix}.
$$
The induced $\mathbb{G}$-action on $\mathbb{T}$ gives the~injective homomorphism
$$
\mathfrak{S}_4\cong\mathbb{G}/\mathbb{T}\rightarrow\mathrm{Aut}(\mathbb{T}),
$$
and the~corresponding action of the~group $\mathfrak{S}_4=\langle\tau,\sigma\rangle$ on $\mathbb{T}$ can be described as follows:
\begin{align*}
\tau &\colon (a_1,a_2,a_3)\longmapsto \left(\frac{a_2}{a_1},\frac{a_3}{a_1},\frac{1}{a_1}\right),\\
\sigma& \colon (a_1,a_2,a_3)\longmapsto (a_2,a_1,a_3).
\end{align*}
Clearly, if $\mathrm{im}(\upsilon)=\mathfrak{S}_4$, then $T$ is $\tau$-invariant and $\sigma$-invariant.

Let $h$ be an element in $T$ of maximal order $n\geqslant 1$.
Then the~order of every element in the~group $T$ divides $n$, hence $T\subseteq\mumu_n^3$.
Here, we identify $\mumu_n^3$ with the~subgroup
$$
\big\langle(\zeta_n,1,1),(1,\zeta_n,1),(1,1,\zeta_n)\big\rangle\subset\mathbb{T},
$$
where $\zeta_n=e^{\frac{2\pi i}{n}}$.

\begin{lemma}[{cf. \cite{Flannery}, \cite[Theorem 4.7]{DolgachevIskovskikh}, \cite[Corollary~7.3]{CDK}}]
\label{lemma:S4}
Suppose that $\mathrm{im}(\upsilon)=\mathfrak{S}_4$.
Then one of the~following assertions holds:
\begin{itemize}
\item[\textup(1)] $T=\mumu_n^3$;
\item[\textup(2)] $n$ is even and $T\cong\mumu_n^2\times\mumu_{\frac{n}{2}}$;
\item[\textup(3)] $n$ is divisible by $4$ and $T\cong\mumu_n^2\times\mumu_{\frac{n}{4}}$.
\end{itemize}
\end{lemma}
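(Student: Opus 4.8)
The plan is to recast the statement additively. Since $T\subseteq\mathbb{T}$ consists of $n$-torsion elements, I would identify $\mumu_n^3=\mathbb{T}[n]$ with the $\mathfrak{S}_4$-module $V:=(\mathbb{Z}/n)^4/\langle(1,1,1,1)\rangle$, on which $\mathfrak{S}_4=\langle\sigma,\tau\rangle$ acts by permuting the four coordinates (this matches the displayed formulas for $\sigma$ and $\tau$). Under this identification $T$ becomes a $\mathbb{Z}[\mathfrak{S}_4]$-submodule — here I use the hypothesis $\mathrm{im}(\upsilon)=\mathfrak{S}_4$, which is exactly what makes $T$ both $\sigma$- and $\tau$-invariant — and $h$ becomes an element whose order equals $\mathrm{exp}(T)=n$. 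Let $N_0=\{y\in(\mathbb{Z}/n)^4:\sum_i y_i=0\}$ be the augmentation submodule and let $\overline{N}_0$ be its image in $V$. The sum $y\mapsto\sum_i y_i$ is $\mathfrak{S}_4$-invariant and sends $(1,1,1,1)$ to $4$, so it descends to an $\mathfrak{S}_4$-invariant homomorphism $\bar s\colon V\to(\mathbb{Z}/n)/4(\mathbb{Z}/n)\cong\mathbb{Z}/\gcd(4,n)$ with $\ker\bar s=\overline{N}_0$. By the Chinese Remainder Theorem I would then reduce to a prime power $n=p^m$, since $T\cong\prod_pT_p$ and $\mathrm{exp}(T)=n$ forces each $T_p$ to have maximal exponent; the isomorphism types recombine at the end.

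The crux — and the only place genuine work is needed — is the claim that the $\mathfrak{S}_4$-orbit of $h$ already generates $\overline{N}_0$. Lift $h$ to $y=(y_1,y_2,y_3,y_4)\in(\mathbb{Z}/p^m)^4$; the class of $y$ has order $p^m/\gcd\!\big(p^m,\{y_i-y_j\}_{i,j}\big)$, so $h$ attains the maximal order $p^m$ exactly when the $y_i$ are not all congruent modulo $p$, i.e.\ when some difference $y_i-y_j$ is a unit. Applying the transposition $(ij)$ to $y$ and subtracting gives $(y_i-y_j)(e_i-e_j)\in\langle\mathfrak{S}_4\cdot h\rangle$, and since $y_i-y_j$ is a unit we recover $e_i-e_j$; its $\mathfrak{S}_4$-orbit consists of all $e_k-e_l$, which generate $N_0$. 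Passing to $V$ yields $\overline{N}_0\subseteq\langle\mathfrak{S}_4\cdot h\rangle\subseteq T$.

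With $\overline{N}_0=\ker\bar s\subseteq T$ in hand the classification is immediate: $T$ is a union of fibres of $\bar s$, hence $T=\bar s^{-1}(\bar s(T))$ is completely determined by the subgroup $\bar s(T)\leq\mathbb{Z}/\gcd(4,n)$. The possible subgroups — the whole group, the index-two subgroup (present only when $2\mid n$), and the trivial one (differing from the previous only when $4\mid n$) — give the three candidate modules $V$, $\bar s^{-1}(2\mathbb{Z})$ and $\overline{N}_0$, which is where the divisibility conditions in (2) and (3) come from. It then remains to read off their abelian-group structure at the prime $2$: computing a Smith normal form of explicit generators, e.g.\ $(1,1,0),(1,0,1),(2,0,0)$ for the even-sum module and $(1,-1,0),(0,1,-1),(0,0,4)$ for $\overline{N}_0$, gives $\mathbb{Z}/2^m\oplus\mathbb{Z}/2^m\oplus\mathbb{Z}/2^{m-1}$ and $\mathbb{Z}/2^m\oplus\mathbb{Z}/2^m\oplus\mathbb{Z}/2^{m-2}$ respectively, while at odd primes $\gcd(4,n)$ is trivial and $T_p=V_p$. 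Recombining over all primes by the Chinese Remainder Theorem produces exactly $\mumu_n^3$, $\mumu_n^2\times\mumu_{n/2}$ and $\mumu_n^2\times\mumu_{n/4}$. I expect the orbit-generation step to be the main obstacle; the invariance of $\bar s$, the prime-by-prime bookkeeping, and the Smith normal form computations are routine.
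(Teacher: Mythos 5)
Your proof is correct, and while it follows the same global strategy as the paper's proof --- exhibit a large canonical $\mathfrak{S}_4$-submodule of $\mumu_n^3$ inside $T$, generated from the maximal-order element $h$, and then classify the groups lying between it and $\mumu_n^3$ --- both of the key steps are executed by genuinely different means. In fact the pivot submodule is literally the same object: the paper's $T^\prime=\langle(\zeta_n,1,\zeta_n^{-1}),(1,\zeta_n,\zeta_n^{-1}),(1,1,\zeta_n^4)\rangle$ is, in your additive coordinates, spanned by $e_1-e_3$, $e_2-e_3$ and $4e_3$, i.e.\ it is exactly your $\overline{N}_0=\ker\bar s$. You reach the inclusion $\overline{N}_0\subseteq T$ by CRT reduction to prime powers together with the unit-difference/transposition trick, whereas the paper uses $3$-cycles and coprimality of the exponents of $h$ to place $(\zeta_n,\zeta_n^\beta,\zeta_n^\gamma)$ in $T$ and then produces the generators by explicit multiplicative manipulation with $\tau$ and $\sigma$; and your endgame replaces the paper's case-by-case computation of $T^\prime$ plus ad hoc treatment of elements $t\in T\setminus T^\prime$ by the observation that any $T\supseteq\ker\bar s$ is the preimage of a subgroup of the cyclic group $\mathbb{Z}/\gcd(4,n)$, which yields the trichotomy and the divisibility conditions in (2) and (3) simultaneously, the Smith normal form computations then supplying the isomorphism types. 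Your version is cleaner and proves slightly more: there are at most three possibilities for $T$ as a submodule, not merely up to abstract isomorphism. What the paper's hands-on version buys in exchange is twofold: its explicit elements are reused later (the proof of Corollary~\ref{corollary:P3-P3} quotes the membership $(\zeta_n,1,\zeta_n^{-1})\in T$ directly from this proof), and its manipulations carry over to the $\mathfrak{A}_4$ case of Lemma~\ref{lemma:A4} by the same scheme, whereas your trick relies on transpositions, which are odd permutations, and would have to be reworked (e.g.\ via $3$-cycles or double transpositions) to cover that lemma.
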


\begin{proof}
We have $h=(\zeta_n^a,\zeta_n^b,\zeta_n^c)$ for coprime non-negative integers $a$, $b$, $c$.
Applying cyclic permutation of order $3$~to~$h$,
we~see that  $(\zeta_n^b,\zeta_n^c,\zeta_n^a)$ and $(\zeta_n^c,\zeta_n^a,\zeta_n^b)$ are contained in $T$.
Hence, the~group $T$ contains $(\zeta_n,\zeta_n^\beta,\zeta_n^\gamma)$ for some non-negative integers $\beta$ and $\gamma$.
Then
$$
\left(\tau(\zeta_n,\zeta_n^\beta,\zeta_n^\gamma)\cdot(\zeta_n^{-\beta},\zeta_n^{-\gamma},\zeta_n^{-1})\right)^{-1}=(\zeta_n,\zeta_n,\zeta_n^2)\in T.
$$
Thus, we get $\tau(\zeta_n,\zeta_n,\zeta_n^2)=(1,\zeta_n,\zeta_n^{-1})\in T$ and so $(1,\zeta_n^{-1},\zeta_n)\in T$. Then
$$
(\zeta_n,\zeta_n,\zeta_n^2)\cdot(1,\zeta_n^{-1},\zeta_n)\cdot(\zeta_n^{-1},1,\zeta_n)=(1,1,\zeta_n^4)\in T,
$$
Now, we let $T^\prime=\langle (\zeta_n,1,\zeta_n^{-1}),(1,\zeta_n,\zeta_n^{-1}),(1,1,\zeta_n^4)\rangle$.
Then the~subgroup $T^\prime$ is $\mathfrak{S}_4$-invariant.
Moreover, we have  $T^\prime\subseteq T\subseteq \mumu_n^3$.
Furthermore, we have the~following possibilities:
\begin{itemize}
\item[\textup(1)] If $n$ is odd, then $T^\prime=\mumu_n^3$, hence $T=T^\prime=\mumu_n^3$.
\item[\textup(2)] If $n$ is divisible by $2$ but not by $4$, then $T^\prime\cong\mumu_n^2\times\mumu_{\frac{n}{2}}$.
\item[\textup(3)] If $n$ is divisible by $4$, then $T^\prime\cong\mumu_n^2\times\mumu_{\frac{n}{4}}$.
\end{itemize}
In the~case (2), if there exists $t\in T\setminus T^\prime$, then we have $\langle t, T^\prime\rangle=\mumu^3_n$, hence we are done.
Therefore, we may assume that we are in the~case (3).
As above, if there exists $t\in T\setminus T^\prime$, then either $\langle t, T^\prime\rangle\cong \mumu_n^2\times\mumu_{\frac{n}{2}}$ or $\langle t, T^\prime\rangle=\mumu_n^3$.
\end{proof}

\begin{corollary}
\label{corollary:odd}
Suppose that $\mathrm{im}(\upsilon)=\mathfrak{S}_4$, $T=\mumu_n^3$ and $n$ is odd.
Then $G$ is conjugated~to the~subgroup generated by
\begin{equation*}
(\zeta_n,1,1),(1,\zeta_n,1),(1,1,\zeta_n),
\begin{pmatrix}
0 & 0 & 0 & 1\\
1 & 0 & 0 & 0\\
0 & 1 & 0 & 0\\
0 & 0 & 1 & 0
\end{pmatrix},
\begin{pmatrix}
0 & 1 & 0 & 0\\
1 & 0 & 0 & 0\\
0 & 0 & 1 & 0\\
0 & 0 & 0 & 1
\end{pmatrix},
\end{equation*}
or $G$ is conjugated to the~subgroup generated by
$$
(\zeta_n,1,1),(1,\zeta_n,1),(1,1,\zeta_n),
\begin{pmatrix}
0 & 0 & 0 & -1\\
1 & 0 & 0 & 0\\
0 & 1 & 0 & 0\\
0 & 0 & 1 & 0
\end{pmatrix},
\begin{pmatrix}
0 & 1 & 0 & 0\\
1 & 0 & 0 & 0\\
0 & 0 & -1 & 0\\
0 & 0 & 0 & 1
\end{pmatrix}.
$$
In both cases, we have $G\cong T\rtimes \mathfrak{S}_4$.
\end{corollary}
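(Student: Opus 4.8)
The plan is to prove two things: that the extension $1\to T\to G\to\mathfrak{S}_4\to 1$ \emph{splits}, so that $G\cong T\rtimes\mathfrak{S}_4$, and that up to conjugacy there are exactly the two displayed models. Throughout I use that $n$ is odd, so by Lemma~\ref{lemma:S4} we are in case (1), $T=\mumu_n^3$. Fix lifts $g_\sigma,g_\tau\in G$ of the generators $\sigma=(1\,2)$ and $\tau=(1\,2\,3\,4)$ of $\mathrm{im}(\upsilon)=\mathfrak{S}_4$, and write $g_\sigma=\sigma t_\sigma$ and $g_\tau=\tau t_\tau$ with $t_\sigma,t_\tau\in\mathbb{T}$. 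Recall that $\mathfrak{S}_4=\langle\sigma,\tau\mid\sigma^2=\tau^4=(\sigma\tau)^3=1\rangle$ (the von Dyck group of type $(2,3,4)$), so it suffices to arrange these three relations on the lifts while only modifying them by elements of $T$ and by conjugation inside $\mathbb{G}$, which fixes $T=G\cap\mathbb{T}$.

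First I would normalize the lifts. A direct computation shows that for \emph{any} lift of $\tau$ the product $t_\tau\,\tau(t_\tau)\,\tau^2(t_\tau)\,\tau^3(t_\tau)$ is a scalar matrix, so $g_\tau^4=1$ in $\mathrm{PGL}_4(\mathbb{C})$ automatically. For $\sigma$ one computes $g_\sigma^2=\sigma(t_\sigma)\,t_\sigma$, whose first two torus coordinates coincide; since $n$ is odd, $2$ is invertible modulo $n$ and I can extract a square root inside $\mumu_n$, replacing $g_\sigma$ by $g_\sigma s$ with $s\in T$ to achieve $g_\sigma^2=1$. Conjugating by the torus $\mathbb{T}$ then moves $g_\sigma$ to either $\sigma$ or $\sigma\cdot(1,1,-1)$; the residual sign $\pm1$ cannot be absorbed because $-1\notin\mumu_n$. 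Using the subtorus centralizing the normalized $g_\sigma$ together with multiplication by $T$, I normalize $g_\tau$ as far as possible, so that the only relation left to impose is $(g_\sigma g_\tau)^3=1$.

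The value $(g_\sigma g_\tau)^3$ lies in $T=\mumu_n^3$, and the existence of lifts satisfying all three relations --- equivalently the splitting $G\cong T\rtimes\mathfrak{S}_4$ --- is precisely the vanishing of $H^2(\mathfrak{S}_4,\mumu_n^3)$. For $n$ odd this is clean: decomposing into primary parts, the primes $p\nmid 6$ are coprime to $|\mathfrak{S}_4|=24$ and contribute nothing, while for $p=3$ restriction to the Sylow subgroup $\langle(2\,3\,4)\rangle\cong\mumu_3$ is injective on $3$-primary cohomology, and there $\mumu_n^3$ (the deleted permutation module over $\mathbb{Z}/3^k$) splits as a trivial summand plus a free $\mumu_3$-module, hence is cohomologically trivial in positive degrees. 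Thus $G$ always splits. The number of conjugacy classes of complements is governed by $H^1(\mathfrak{S}_4,\mathbb{T})$; since $\mathbb{T}\cong(\mathbb{C}^*)^3$ and its uniquely divisible part is acyclic, a short exact sequence together with Shapiro's lemma reduces this to the Schur multiplier $M(\mathfrak{S}_4)\cong\mumu_2$, giving $H^1(\mathfrak{S}_4,\mathbb{T})\cong\mumu_2$. This yields exactly two complements up to conjugacy, the trivial class and the sign twist, matching the two displayed generating sets; a quick check that the second set satisfies $\sigma^2=\tau^4=(\sigma\tau)^3=1$ confirms it is a genuine $\mathfrak{S}_4$.

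Finally I would separate the two models. Because $N_{\mathrm{PGL}_4(\mathbb{C})}(\mumu_n^3)=\mathbb{G}$ for $n\geqslant 3$, any conjugacy between the two resulting groups may be taken inside $\mathbb{G}$; and since $H^1(\mathfrak{S}_4,\mumu_n^3)=0$ (the same Sylow computation in degree $1$), all complements inside a fixed $G$ are $T$-conjugate, so such a conjugacy would force the two complements to be $\mathbb{G}$-conjugate, contradicting $H^1(\mathfrak{S}_4,\mathbb{T})\cong\mumu_2$. Hence the two models are genuinely distinct, which is the assertion $G\cong T\rtimes\mathfrak{S}_4$ in both cases. I expect the main obstacle to be the splitting when $3\mid n$, where Schur--Zassenhaus does not apply: this is exactly why the cohomological triviality of the deleted permutation module over $\mathbb{Z}/3^k$, restricted to the Sylow $3$-subgroup, is doing the real work, its concrete shadow being the bookkeeping of torus coordinates needed to solve $(g_\sigma g_\tau)^3=1$.
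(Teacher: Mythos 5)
Your route is genuinely different from the paper's: the paper normalizes explicit lifts $A,B$ of $\tau,\sigma$, forces $A^4=B^2=(AB)^3=1$ by multiplying $B$ by a suitable element of $T$ (using that $2$ and $8$ are invertible modulo the odd integer $n$), solves the resulting equations $b_1b_2=1$, $b_3^2=1$, $b_2^3=b_1b_3$ into eight explicit cases, and conjugates each case to one of the two displayed models. Your cohomological skeleton --- splitting from $H^2(\mathfrak{S}_4,T)=0$, and exactly two $\mathbb{T}$-conjugacy classes of complements from $H^1(\mathfrak{S}_4,\mathbb{T})\cong\mumu_2$ --- is correct in outline. But there is a genuine gap at the point where you declare the two classes to be ``matching the two displayed generating sets.'' Knowing $|H^1(\mathfrak{S}_4,\mathbb{T})|=2$ says there are two classes; it does not say that the second displayed complement $S'=\langle\tau\cdot(-1,-1,-1),\,\sigma\cdot(1,1,-1)\rangle$ represents the nontrivial one, and your ``quick check'' of the relations only shows that $S'$ is a complement. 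This matters for the corollary itself: if $S'$ were $\mathbb{T}$-conjugate to the permutation complement, then any $G$ whose complement lies in the nontrivial class (such groups exist, e.g.\ $T\cdot S^{nt}$ for a representative $S^{nt}$ of that class satisfies all the standing hypotheses) would be conjugate to neither displayed model, and the dichotomy you assert would be false. Your final paragraph does not close this gap: the claimed contradiction with ``$H^1(\mathfrak{S}_4,\mathbb{T})\cong\mumu_2$'' presupposes that the two complements lie in distinct classes, which is exactly the unproved statement, so the argument is circular.

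Fortunately the missing verification is one line. The cocycle of $S'$ takes the value $(1,1,-1)$ at $\sigma$, whereas any coboundary takes at $\sigma$ the value $t\cdot{}^{\sigma}t^{-1}=(t_1/t_2,\,t_2/t_1,\,1)$, whose third coordinate is $1$; hence the class of $S'$ in $H^1(\mathfrak{S}_4,\mathbb{T})$ is nontrivial and the matching holds. (Alternatively: writing $\tau',\sigma'$ for the displayed signed matrices, in $\mathrm{GL}_4(\mathbb{C})$ one computes $(\tau')^4=-I$, $(\sigma')^2=I$, $(\sigma'\tau')^3=I$, and no rescaling $\tau'\mapsto\lambda\tau'$, $\sigma'\mapsto\mu\sigma'$ satisfies $\mu^2=1$, $\lambda^4=-1$, $(\mu\lambda)^3=1$ simultaneously, so $S'$ does not lift to a linear representation of $\mathfrak{S}_4$, while the permutation complement does.) Let me also flag a smaller slip in your $H^2$ computation: you say that over the Sylow $3$-subgroup $\langle(2\,3\,4)\rangle$ the module $\mumu_n^3$ ``splits as a trivial summand plus a free $\mumu_3$-module, hence is cohomologically trivial.'' A trivial summand is \emph{not} cohomologically trivial, so as written this is self-contradictory; what is true, and what saves the argument, is that the restriction of $T$ to the Sylow $3$-subgroup is free of rank one over $(\mathbb{Z}/3^k)[\mumu_3]$ --- in the sum-zero model it is generated by $e_1-e_2$ --- so $H^{i}$ vanishes for $i>0$ and both the splitting $G\cong T\rtimes\mathfrak{S}_4$ and $H^1(\mathfrak{S}_4,T)=0$ go through.
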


\begin{proof}
Let $A$ and $B$ be some elements in the~group $G$ such that $\upsilon(A)=\tau$ and $\upsilon(B)=\sigma$.
If $A^4=B^2=(AB)^3=1$, then $\langle A,B\rangle\cong\mathfrak{S}_4$, hence $G\cong T\rtimes \mathfrak{S}_4$.
We have
\begin{equation*}
A=
\begin{pmatrix}
0 & 0 & 0 & 1\\
a_1 & 0 & 0 & 0\\
0 & a_2 & 0 & 0\\
0 & 0 & a_3 & 0
\end{pmatrix}\ \text{and}\ B=
\begin{pmatrix}
0 & b_2 & 0 & 0\\
b_1 & 0 & 0 & 0\\
0 & 0 & b_3 & 0\\
0 & 0 & 0 & 1
\end{pmatrix}
\end{equation*}
where each $a_i$ and $b_j$ are non-zero complex numbers.
Conjugating $G$ by an appropriate element of the~torus $\mathbb{T}$,
we can assume that $a_1=a_2=a_3=1$.

Since $\tau^4=\sigma^2=(\tau\sigma)^3=1$, we see that $B^2\in T$ and $(AB)^3\in T$, which gives
$$
\begin{cases}
b_1b_2=\zeta_n^\alpha, \\
b_3^2=\zeta_n^\beta, \\
\frac{b_2^3}{b_1b_3}=\zeta_n^\gamma
\end{cases}
$$
for some some numbers $\alpha$, $\beta$, $\gamma$ in $\{0,\ldots n-1\}$.
Hence, replacing $B$ with
$$
\big(\zeta_n^{\frac{-6\alpha+\beta+2\gamma}{8}},\zeta_n^{\frac{-2\alpha-\beta-2\gamma}{8}},\zeta_n^{-\frac{\beta}{2}}\big)B\in G,
$$
we may assume that $B^2=1$. Here, we consider division by $8$ and $2$ as division modulo~$n$.
In particular, we see that $G\cong T\rtimes \mathfrak{S}_4$ as claimed.

Now, we observe that $b_1b_2=1$, $b_3^2=1$, $b_2^3=b_1b_3$.
Then solving this system of equation, we obtain the following eight cases:
\begin{center}
\renewcommand\arraystretch{1.5}
\begin{tabular}{|c||c|c|c|}
\hline
 & \quad\quad $b_1$\quad\quad\quad & \quad\quad$b_2$\quad\quad\quad & \quad\quad$b_3$\quad\quad\quad\\
\hline
\hline
(i) & $1$& $1$& $1$\\
\hline
(ii) & $-1$& $-1$& $1$\\
\hline
(iii) & $i$& $-i$& $1$\\
\hline
(iv) & $-i$& $i$& $1$\\
\hline
(v) & $-\frac{\sqrt{2}}{2}+\frac{\sqrt{2}i}{2}$& $-\frac{\sqrt{2}}{2}-\frac{\sqrt{2}i}{2}$& $-1$\\
\hline
(vi) & $\frac{\sqrt{2}}{2}-\frac{\sqrt{2}i}{2}$   &  $\frac{\sqrt{2}}{2}+\frac{\sqrt{2}i}{2}$& $-1$\\
\hline
(vii) & $\frac{\sqrt{2}}{2}+\frac{\sqrt{2}i}{2}$   & $\frac{\sqrt{2}}{2}-\frac{\sqrt{2}i}{2}$& $-1$\\
\hline
(viii) & $-\frac{\sqrt{2}}{2}-\frac{\sqrt{2}i}{2}$ & $-\frac{\sqrt{2}}{2}+\frac{\sqrt{2}i}{2}$& $-1$\\
\hline
\end{tabular}
\end{center}
In case (i), we are done. In cases (ii), (iii) and (iv), we can conjugate $G$ to get the first group in the assertion of the lemma
using $(-1,1,-1)$, $(-i,-1,i)$, $(i,-1,-i)$, respectively.
Similarly, in cases (v), (vi), (vii), (viii), we can conjugate $G$ to get the second group using
\begin{multline*}
\Big(\frac{\sqrt{2}}{2}+\frac{\sqrt{2}i}{2},-i,-\frac{\sqrt{2}}{2}+\frac{\sqrt{2}i}{2}\Big), \Big(-\frac{\sqrt{2}}{2}-\frac{\sqrt{2}i}{2},-i,\frac{\sqrt{2}}{2}-\frac{\sqrt{2}i}{2}\Big),\\
\Big(-\frac{\sqrt{2}}{2}+\frac{\sqrt{2}i}{2},i,\frac{\sqrt{2}}{2}+\frac{\sqrt{2}i}{2}\Big), \Big(\frac{\sqrt{2}}{2}-\frac{\sqrt{2}i}{2},i,-\frac{\sqrt{2}}{2}-\frac{\sqrt{2}i}{2}\Big),
\end{multline*}
respectively. This completes the proof.
\end{proof}

Now, we describe the~possibilities for the~subgroup $T$ in the~case when $\mathrm{im}(\upsilon)=\mathfrak{A}_4$.
Keeping in mind our identification $\mathfrak{S}_4=\langle\tau,\sigma\rangle$, we see that $\mathfrak{A}_4=\langle\rho,\varsigma\rangle$ for
$$
\rho=\begin{pmatrix}
0 & 0 & 1 & 0\\
1 & 0 & 0 & 0\\
0 & 1 & 0 & 0\\
0 & 0 & 0 & 1
\end{pmatrix}\
\text{and}\
\varsigma=\begin{pmatrix}
0 & 1 & 0 & 0\\
1 & 0 & 0 & 0\\
0 & 0 & 0 & 1\\
0 & 0 & 1 & 0
\end{pmatrix}.
$$
Then $\rho$ and $\varsigma$ acts on $\mathbb{T}$ as follows:
\begin{align*}
\rho& \colon (a_1,a_2,a_3)\longmapsto (a_2,a_3,a_1),\\
\varsigma& \colon (a_1,a_2,a_3)\longmapsto\left(\frac{a_2}{a_3},\frac{a_1}{a_3},\frac{1}{a_3}\right).
\end{align*}
Clearly, our subgroup $T$ is $\rho$-invariant and $\varsigma$-invariant. Using this, we get

\begin{lemma}[{cf. \cite[Corollary~7.3]{CDK}}]
\label{lemma:A4}
Suppose $\mathrm{im}(\upsilon)=\mathfrak{A}_4$.
One of the~following~holds:
\begin{itemize}
\item[\textup(1)] $T=\mumu_n^3$;
\item[\textup(2)] $n$ is even and $T\cong\mumu_n^2\times\mumu_{\frac{n}{2}}$;
\item[\textup(3)] $n$ is divisible by $4$ and $T\cong\mumu_n^2\times\mumu_{\frac{n}{4}}$.
\end{itemize}
\end{lemma}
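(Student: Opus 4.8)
The plan is to run the same argument as in the proof of Lemma~\ref{lemma:S4}, with the subgroup $\mathfrak{A}_4=\langle\rho,\varsigma\rangle$ playing the role of $\mathfrak{S}_4=\langle\tau,\sigma\rangle$. The one structural difference is that $\mathfrak{A}_4$ does not contain the four-cycle $\tau$, so each place where the proof of Lemma~\ref{lemma:S4} invokes $\tau$ has to be rewritten using only the three-cycle $\rho$ and the double transposition $\varsigma$. Throughout I would write the elements of $T\subseteq\mumu_n^3$ additively, identifying $(\zeta_n^x,\zeta_n^y,\zeta_n^z)$ with $(x,y,z)\in(\mathbb{Z}/n)^3$; then $\rho(x,y,z)=(y,z,x)$ and $\varsigma(x,y,z)=(y-z,x-z,-z)$ are $\mathbb{Z}/n$-linear, and $T$ is precisely a subgroup invariant under both that contains a vector of order $n$.

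First I would reduce exactly as in Lemma~\ref{lemma:S4}: starting from $h=(\zeta_n^a,\zeta_n^b,\zeta_n^c)$ with $\gcd(a,b,c)=1$ and applying the three-cycle $\rho$ (which does lie in $\mathfrak{A}_4$), I obtain an element $u=(1,\beta,\gamma)\in T$. The input that replaces the use of $\tau$ is the identity
\[
\varsigma u+\varsigma\rho u+\varsigma\rho^2 u=(0,0,-S),\qquad S:=1+\beta+\gamma,
\]
which, after negation and application of $\rho$, shows that $(S,0,0),(0,S,0),(0,0,S)\in T$; in particular $g\cdot\mumu_n^3\subseteq T$ for $g=\gcd(S,n)$. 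A second elementary computation, comparing the coordinate sums of $u$ and $\varsigma\rho u$ (which differ by $4$), gives $4\in s(T)$, where $s\colon\mumu_n^3\to\mathbb{Z}/n$ denotes the total-sum homomorphism.

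The conceptual point is that the residue $\bar s(x,y,z)=x+y+z\bmod\gcd(n,4)$ is $\mathfrak{A}_4$-invariant: $s$ is $\rho$-invariant, while $s\circ\varsigma=s-4\pi_3$ with $\pi_3$ the third coordinate, and $4\equiv0\bmod\gcd(n,4)$. Hence $K:=\ker(\bar s)$ is an $\mathfrak{A}_4$-invariant subgroup of $\mumu_n^3$ of index $d=\gcd(n,4)$, and it coincides with the group $T^\prime$ constructed in the proof of Lemma~\ref{lemma:S4}. Once the inclusion $K\subseteq T\subseteq\mumu_n^3$ is established, I would finish exactly as there: the quotient $\mumu_n^3/K\cong\mathbb{Z}/d$ is cyclic, so the subgroups between $K$ and $\mumu_n^3$ form a single chain of length $d\in\{1,2,4\}$, giving precisely the three listed possibilities, and their isomorphism types are read off verbatim from Lemma~\ref{lemma:S4}.

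The main obstacle is the inclusion $K\subseteq T$. For $\mathfrak{S}_4$ this was automatic, since $\tau$ allowed one to generate $T^\prime$ from the single orbit of $u$; for $\mathfrak{A}_4$ the $\mathfrak{A}_4$-orbit of $u$ can a priori be strictly smaller, and the parameters $\beta,\gamma$ do not cancel uniformly (one checks that no fixed integer combination of $u,\rho u,\rho^2u,\varsigma u,\varsigma\rho u,\varsigma\rho^2u$ equals $(1,1,2)$). I expect to complete the inclusion by combining the elements produced above with the $\langle\rho\rangle$-action and an analysis according to $\gcd(S,n)$ and the parity of $S$ --- reducing modulo $g\cdot\mumu_n^3$ returns the same problem with the smaller modulus $g$ and with $\bar S=0$, the cases $n$ odd and $n=2$ being immediate. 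This is the same elementary argument carried out in \cite[Corollary~7.3]{CDK}, and it is where essentially all of the real work of the lemma is concentrated.
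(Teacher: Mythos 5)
Your setup coincides with the paper's own proof: the reduction via $\rho$ to an element $u=(1,\beta,\gamma)\in T$ (written additively), your identity $\varsigma u+\varsigma\rho u+\varsigma\rho^2 u=(0,0,-S)$, and the identification of the target subgroup $K=\ker(\bar s)$ with the group $T^\prime$ from Lemma~\ref{lemma:S4} are exactly the ingredients used there --- indeed the paper's first displayed computation is precisely $u+\varsigma u+\varsigma\rho u+\varsigma\rho^2 u=(1,\beta,-\beta-1)$, i.e.\ your identity used to replace $u$ by a zero-sum element. But the inclusion $K\subseteq T$, which you yourself call ``where essentially all of the real work of the lemma is concentrated,'' is never proved: ``I expect to complete the inclusion by\dots'' is a plan, not an argument, and citing \cite{CDK} for it amounts to citing the result being proved. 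This is a genuine gap, and it is exactly the step the paper carries out explicitly. Concretely, setting $v=(1,\beta,-\beta-1)\in T$, one computes $\rho v+\varsigma\rho v=(-2,-2,0)$, so $(2,2,0)\in T$ and then $(0,-2,2)=\rho(2,2,0)-(2,2,0)\in T$ (up to sign); now split on the parity of $\beta$: if $\beta=2k$, then $v+k(0,-2,2)=(1,0,-1)\in T$, while if $\beta=2k+1$, then $v+k(0,-2,2)=(1,1,-2)\in T$ and $-\rho\bigl((1,1,-2)+(0,-2,2)\bigr)=(1,0,-1)\in T$. From $(1,0,-1)$ one gets $(0,1,-1)=-\rho(1,0,-1)$ and $(0,0,4)=(2,2,0)-2(1,0,-1)-2(0,1,-1)$, hence $T^\prime\subseteq T$, after which the chain argument of Lemma~\ref{lemma:S4} finishes the proof.

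Moreover, the particular strategy you sketch does not close the gap as stated. Reducing modulo $g\cdot\mumu_n^3$ with $g=\gcd(S,n)$ produces the same problem in which the image of $u$ has coordinate sum zero; but then the new ``$S$'' equals $0$, so $\gcd(0,g)=g$ and the gcd-based reduction stalls at the first step, yielding nothing further. Likewise ``the parity of $S$'' is not the right invariant --- once the sum is zero it carries no information; what is needed is the case analysis on the parity of $\beta$, i.e.\ precisely the non-uniform, $\beta$-dependent combination whose necessity you correctly observe in your remark that no fixed integer combination of the $\mathfrak{A}_4$-orbit of $u$ equals $(1,1,2)$. So the missing step is short but essential, and your proposal as written does not contain it.
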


\begin{proof}
Arguing as in the~proof of Lemma~\ref{lemma:S4},
we may assume that $(\zeta_n,\zeta_n^\beta,\zeta_n^\gamma)\in T$ for some non-negative integers $\beta$ and $\gamma$,
where $n\geqslant 1$ is the~largest order of all elements in $T$, and $\zeta_n$ is a primitive $n$-th root of unity.
Then
$$
(\zeta_n,\zeta_n^\beta,\zeta_n^\gamma)\cdot\varsigma (\zeta_n,\zeta_n^\beta,\zeta_n^\gamma)\cdot\varsigma(\zeta_n^\beta,\zeta_n^\gamma,\zeta_n)\cdot\varsigma (\zeta_n^\gamma,\zeta_n,\zeta_n^\beta)=(\zeta_n, \zeta_n^{\beta}, \zeta_n^{-\beta-1})\in T.
$$
So, we have
$$
\big(\rho(\zeta_n, \zeta_n^{\beta}, \zeta_n^{-\beta-1})\cdot\varsigma\rho(\zeta_n, \zeta_n^{\beta}, \zeta_n^{-\beta-1})\big)^{-1}=(\zeta_n^2,\zeta_n^2,1)\in T
$$
and
$$
\Big((\zeta_n^2,\zeta_n^2,1)\cdot\big(\rho(\zeta_n^2,\zeta_n^2,1)\big)^{-1}\Big)^{-1}=(1,\zeta_n^{-2},\zeta_n^{2})\in T.
$$

If $\beta =2k$ for $k\in \mathbb{Z}_{\geq 0}$, then
$$
(\zeta_n, \zeta_n^{\beta}, \zeta_n^{-\beta-1})\cdot(1,\zeta_n^{-2},\zeta_n^{2})^k=(\zeta_n,1,\zeta^{-1}_n)\in T.
$$
Thereby, we see that $(1,\zeta_n,\zeta_n^{-1})$ and $(1,1,\zeta^{4}_n)$ are both contained in $T$.
Now, arguing as in the~end of the~proof of Lemma~\ref{lemma:S4}, we obtain the~required result.

Likewise, if $\beta =2k+1$, then
$$
(\zeta_n, \zeta_n^{\beta}, \zeta_n^{-\beta-1})\cdot(1,\zeta_n^{-2},\zeta_n^{2})^k=(\zeta_n,\zeta_n,\zeta^{-2}_n)\in T.
$$
Hence, we have
$$
(\rho((\zeta_n,\zeta_n,\zeta^{-2}_n)\cdot (1,\zeta_n^{-2},\zeta_n^{2})))^{-1}=(\zeta_n,1,\zeta^{-1}_n)\in T
$$
and we are done similarly to the previous case.
\end{proof}

Arguing as in the~proofs of Lemmas~\ref{lemma:S4} and \ref{lemma:A4}, we obtain the~following result:

\begin{lemma}
\label{lemma:P3-P3}
The group $G$ is conjugated in $\mathrm{PGL}_4(\mathbb{C})$ to a subgroup $\langle T,A,B\rangle\subset\mathrm{PGL}_4(\mathbb{C})$,
where $A$ and $B$ are elements in $\mathrm{PGL}_4(\mathbb{C})$ described as follows:
\begin{itemize}
\item if $\mathrm{im}(\upsilon)=\mathfrak{S}_4$, then
$$
A=\begin{pmatrix}
0 & 0 & 0 & \frac{a^2}{b}\\
1 & 0 & 0 & 0\\
0 & 1 & 0 & 0\\
0 & 0 & 1 & 0
\end{pmatrix}
\ \text{and}\
B=\begin{pmatrix}
0 & a & 0 & 0\\
a & 0 & 0 & 0\\
0 & 0 & b & 0\\
0 & 0 & 0 & 1
\end{pmatrix}
$$
for some complex numbers $a$ and $b$ such that $(a^2,a^2,b^2)\in T$;

\item if $\mathrm{im}(\upsilon)=\mathfrak{A}_4$, then
$$
A=\begin{pmatrix}
0 & 0 & 1 & 0\\
1 & 0 & 0 & 0\\
0 & 1 & 0 & 0\\
0 & 0 & 0 & 1
\end{pmatrix}
\ \text{and}\
B=\begin{pmatrix}
0 & a & 0 & 0\\
b & 0 & 0 & 0\\
0 & 0 & 0 & a\\
0 & 0 & 1 & 0
\end{pmatrix}
$$
for some complex numbers $a$ and $b$ such that $(a^2,b,1)\in T$ and $(b,b,1)\in T$.
\end{itemize}
\end{lemma}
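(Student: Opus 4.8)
The plan is to follow the strategy of the proofs of Lemmas~\ref{lemma:S4} and~\ref{lemma:A4}: choose lifts of the generators of $\mathrm{im}(\upsilon)$, observe that the Coxeter relations of $\mathfrak{S}_4$ (resp.\ $\mathfrak{A}_4$) lift to relations modulo $T$, and then repeatedly exploit the invariance of $T$ under the permutation action to pin down the entries. First I would pick $A,B\in G$ with $\upsilon(A)=\tau$, $\upsilon(B)=\sigma$ in the $\mathfrak{S}_4$ case, and $\upsilon(A)=\rho$, $\upsilon(B)=\varsigma$ in the $\mathfrak{A}_4$ case. Since $\mathbb{G}=\mathbb{T}\rtimes\mathfrak{S}_4$ and $G\subseteq\mathbb{G}$, each such lift is a monomial matrix whose nonzero entries are arbitrary scalars placed at the positions of $\tau,\sigma$ (resp.\ $\rho,\varsigma$). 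Rescaling $A$ and $B$, which does not change them in $\mathrm{PGL}_4(\mathbb{C})$, I normalise the entry sitting at a fixed coordinate to $1$. Because $\tau^4=\sigma^2=(\tau\sigma)^3=1$, the lifts satisfy $A^4=1$ and $B^2,(AB)^3\in T$; in the $\mathfrak{A}_4$ case $\rho^3=\varsigma^2=(\rho\varsigma)^3=1$ gives $A^3,B^2,(AB)^3\in T$. Expanding these products of the scalar entries, each relation asserts that an explicit diagonal matrix lies in $T$.

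Next I would use the two operations that do not alter the conjugacy class of $G=\langle T,A,B\rangle$: conjugation by the torus $\mathbb{T}$, which multiplies the $(i,j)$ entry of a monomial matrix by $t_i/t_j$, and replacement of $A$ by $tA$ or of $B$ by $t'B$ with $t,t'\in T$. I would spend one torus parameter to make the two off-diagonal entries of $B$ equal to a common value $a$, so that $B$ acquires the displayed symmetric shape with the remaining diagonal entry $b$. Here $a^2$ and $b$ are torus-invariant, and reading off $B^2$ gives at once the condition $(a^2,a^2,b^2)\in T$ in the $\mathfrak{S}_4$ case and, since here $B^2=(b,b,1)$, the condition $(b,b,1)\in T$ in the $\mathfrak{A}_4$ case. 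I would then use the remaining torus freedom, together with left multiplication by elements of $T$, to bring $A$ onto its cyclic support, aiming at $A=\rho$ in the $\mathfrak{A}_4$ case and at subdiagonal entries $1$ with a single corner entry $c$ in the $\mathfrak{S}_4$ case.

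The main obstacle is that continuous torus conjugation alone does not reach the exact normal form: after symmetrising $B$ the residual torus can fix only part of the structural entries of $A$, and the corner must become exactly $a^2/b$. This is precisely where the mechanism of Lemmas~\ref{lemma:S4} and~\ref{lemma:A4} enters. Computing $A^3$ (resp.\ $(AB)^3$) and using the invariance of $T$ under $\rho,\varsigma$ (resp.\ $\tau,\sigma$), I would convert the relation-memberships into the statement that the diagonal factor required to rescale $A$ into the asserted form already lies in $T$. For instance, in the $\mathfrak{A}_4$ case $A^3=(\delta,\delta,\delta)\in T$, where $\delta$ is the product of the three nonzero entries of $A$ along its $3$-cycle; applying $\varsigma$ gives $(1,1,\delta^{-1})\in T$, hence $(\delta,1,1)\in T$ after cyclic permutation by $\rho$, so that replacing $A$ by $(\delta^{-1},1,1)A$ turns it into $\rho$. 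Likewise $(AB)^3=(1,a^2/b,1)\in T$, and cyclically shifting by $\rho$ gives $(a^2/b,1,1)\in T$, so that $(a^2/b,1,1)\cdot(b,b,1)=(a^2,b,1)\in T$, which is the second required condition.

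In the $\mathfrak{S}_4$ case the same device finishes the normalisation of $A$: with $A$ in cyclic shape of subdiagonal $1$ and corner $c$ and with $B$ symmetric, one computes $(AB)^3=(1,a^2/(bc),1)\in T$, and applying $\sigma$ yields $(a^2/(bc),1,1)\in T$. Multiplying $A$ by this element of $T$ leaves $G$ unchanged, keeps the subdiagonal equal to $1$, and sends the corner to $c\cdot a^2/(bc)=a^2/b$, as required. Thus in both cases the two permitted moves, justified throughout by the permutation-invariance of $T$ exactly as in the cited lemmas, place the generators into the stated form with the recorded membership conditions on $a$ and $b$, completing the proof.
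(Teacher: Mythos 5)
Your proof is correct and follows essentially the same route as the paper's: normalise the lifts $A,B$ by torus conjugation, extract $T$-memberships from the relations $B^2,(AB)^3\in T$ (and $A^3\in T$ in the $\mathfrak{A}_4$ case), transport them using the $\mathfrak{S}_4$- resp.\ $\mathfrak{A}_4$-action on $T$, and absorb the residual diagonal factor of $A$ into $T$. In fact your final replacement $A\mapsto\big(\tfrac{a^2}{bc},1,1\big)A$ in the $\mathfrak{S}_4$ case is the correct one --- the paper's stated factor $\big(1,1,\tfrac{a^2}{bc}\big)$ rescales the wrong row and appears to be a typo.
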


\begin{proof}
First, we suppose that $\mathrm{im}(\upsilon)=\mathfrak{S}_4$.
Let $A$ and $B$ be elements in the~group $G$ such that $\upsilon(A)=\tau$ and $\upsilon(B)=\sigma$.
Conjugating $G$ by elements of $\mathbb{T}$, we may assume that
\begin{equation*}
A=
\begin{pmatrix}
0 & 0 & 0 & c\\
1 & 0 & 0 & 0\\
0 & 1 & 0 & 0\\
0 & 0 & 1 & 0
\end{pmatrix}\ \text{and}\ B=
\begin{pmatrix}
0 & a & 0 & 0\\
a & 0 & 0 & 0\\
0 & 0 & b & 0\\
0 & 0 & 0 & 1
\end{pmatrix}
\end{equation*}
for some non-zero complex numbers $a$, $b$ and $c$.
Then $B^2=(a^2,a^2,b^2)\in T$ and
$$
(AB)^3=\Big(1,\frac{a^2}{bc},1\Big)\in T.
$$
Now, using the~$\mathfrak{S}_4$-action on $T$, we see that $(1,1,\frac{a^2}{bc})\in T$.
Then, replacing $A\mapsto (1,1,\frac{a^2}{bc})A$, we obtain the~required assertion in the~case when $\mathrm{im}(\upsilon)=\mathfrak{S}_4$.

Now, we suppose that $\mathrm{im}(\upsilon)=\mathfrak{A}_4$.
As above, let $A$ and $B$ be some elements in $G$ such that $\upsilon(A)=\rho$ and $\upsilon(B)=\varsigma$.
Conjugating $G$ by elements in $\mathbb{T}$, we may assume that
\begin{equation*}
A=
\begin{pmatrix}
0 & 0 & 1 & 0\\
1 & 0 & 0 & 0\\
0 & c & 0 & 0\\
0 & 0 & 0 & 1
\end{pmatrix}\ \text{and}\ B=
\begin{pmatrix}
0 & a & 0 & 0\\
b & 0 & 0 & 0\\
0 & 0 & 0 & a\\
0 & 0 & 1 & 0
\end{pmatrix}
\end{equation*}
for some non-zero numbers $a$, $b$ and $c$.
Then $A^3=(c,c,c)\in T$, $B^2=(b,b,1)\in T$ and
$$
(AB)^3=\Big(1,\frac{a^2}{bc},1\Big)\in T.
$$
Now, using the~$\mathfrak{A}_4$-action on $T$, we get $(1,1,\frac{1}{c})\in T$.
Then, after replacing $A\mapsto (1,1,\frac{1}{c})A$, we obtain the~required assertion.
\end{proof}

\begin{corollary}
\label{corollary:P3-P3}
In the~assumption and notations of Lemma~\ref{lemma:P3-P3}, let $G^\prime=\langle T,A,B\rangle$,
and let $\iota$ be the~standard Cremona involution given by
$$
[x_0:x_1:x_2:x_3]\mapsto [x_1x_2x_3:x_0x_2x_3:x_0x_1x_3:x_0x_1x_2].
$$
Then $\iota G^\prime\iota=G^\prime$,
so $\iota G\iota$ is conjugated to $G$ in $\mathrm{PGL}_4(\mathbb{C})$.
\end{corollary}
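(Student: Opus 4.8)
The essential content is the equality $\iota G^\prime\iota=G^\prime$; the concluding sentence then follows because $G$ is conjugate to $G^\prime$ in $\mathrm{PGL}_4(\mathbb{C})$ by Lemma~\ref{lemma:P3-P3}, so after replacing $G$ by $G^\prime$ we may assume that $\iota$ normalizes $G$. Since $\iota$ is an involution and $G^\prime=\langle T,A,B\rangle$, to prove $\iota G^\prime\iota=G^\prime$ it suffices to verify the three statements $\iota T\iota=T$, $\iota A\iota\in G^\prime$ and $\iota B\iota\in G^\prime$.

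The first step is to record how conjugation by $\iota$ acts on monomial matrices. On the open torus $\iota$ is coordinate-wise inversion, $[x_0:x_1:x_2:x_3]\mapsto[x_0^{-1}:x_1^{-1}:x_2^{-1}:x_3^{-1}]$, so a one-line computation gives $\iota(a_1,a_2,a_3)\iota=(a_1^{-1},a_2^{-1},a_3^{-1})$ for every element of $\mathbb{T}$. Thus $\iota$ normalizes $\mathbb{T}$ and acts on it by inversion; as $T\subseteq\mathbb{T}$ is a subgroup, this already yields $\iota T\iota=T$. Moreover, the defining formula for $\iota$ is symmetric in the homogeneous coordinates, so $\iota$ commutes with every permutation matrix, in particular with $\tau$, $\sigma$, $\rho$, $\varsigma$. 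Writing a monomial matrix as a product $D\cdot P$ of a diagonal matrix $D\in\mathbb{T}$ and a permutation matrix $P$, these two facts combine into the formula $\iota(DP)\iota=D^{-1}P=D^{-2}\cdot(DP)$.

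Applying this to $A$ and $B$ reduces everything to a single torus membership in each case. When $\mathrm{im}(\upsilon)=\mathfrak{S}_4$ we have $A=(a^2/b,1,1)\cdot\tau$ and $B=(a,a,b)\cdot\sigma$, so $\iota A\iota=(b^2/a^4,1,1)\cdot A$ and $\iota B\iota=(a^2,a^2,b^2)^{-1}\cdot B$; here $(a^2,a^2,b^2)^{-1}\in T$ by hypothesis, so it remains only to show $(b^2/a^4,1,1)\in T$. When $\mathrm{im}(\upsilon)=\mathfrak{A}_4$ the matrix $A=\rho$ is an honest permutation matrix, whence $\iota A\iota=A\in G^\prime$ at once, while $B=(a,b,a)\cdot\varsigma$ gives $\iota B\iota=(a^2,b^2,a^2)^{-1}\cdot B$, so it remains only to show $(a^2,b^2,a^2)\in T$. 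In each case the outstanding membership follows from the conditions of Lemma~\ref{lemma:P3-P3} together with the $\mathrm{im}(\upsilon)$-invariance of $T$: for instance, in the $\mathfrak{A}_4$ case one notes $(a^2,b,1)(b,b,1)^{-1}=(a^2/b,1,1)\in T$, so by $\rho$-invariance and closure $(a^2/b,1,a^2/b)\in T$ and $(1,b,b)\in T$, whence $(a^2,b^2,a^2)=(a^2/b,1,a^2/b)\cdot(b,b,1)\cdot(1,b,b)\in T$; the $\mathfrak{S}_4$ case is analogous, producing $(b^2/a^4,1,1)$ as an explicit product of elements of the $\mathfrak{S}_4$-orbit of $(a^2,a^2,b^2)$.

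The only real obstacle is this last bookkeeping inside $\mathbb{T}$: one must confirm that the diagonal discrepancy $D^{-2}$ created by conjugation lands in $T$, using solely the explicit membership relations recorded in Lemma~\ref{lemma:P3-P3} and the invariance of $T$ under $\tau$, $\sigma$ (respectively $\rho$, $\varsigma$). This is a finite, elementary manipulation — applying the permutation action to a known element of $T$ and multiplying representatives — but it is the one place where the specific normalizations of $A$ and $B$ from Lemma~\ref{lemma:P3-P3} are genuinely used.
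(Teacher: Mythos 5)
Your proposal is correct, and structurally it is the same proof as the paper's: reduce to the three statements $\iota T\iota=T$, $\iota A\iota\in G^\prime$, $\iota B\iota\in G^\prime$, compute the conjugates explicitly, and verify that the leftover diagonal factors lie in $T$ using the memberships from Lemma~\ref{lemma:P3-P3} together with the invariance of $T$ under $\mathrm{im}(\upsilon)$. The differences are in execution, and they are in your favour. First, your formula $\iota(DP)\iota=D^{-2}(DP)$ derives the conjugation formulas more systematically than the paper's direct assertions; in particular, in the $\mathfrak{S}_4$ case your $\iota B\iota=(a^2,a^2,b^2)^{-1}B$ (which equals $B^{-1}$) is the accurate statement, whereas the paper writes $\iota B\iota=B$ there --- a harmless slip, since both versions lie in $G^\prime$. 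Second, and more substantively: for the membership $(a^4/b^2,1,1)\in T$ the paper does \emph{not} argue from the $\mathfrak{S}_4$-orbit of $(a^2,a^2,b^2)$ alone; it additionally imports the element $(\zeta_n,1,\zeta_n^{-1})\in T$ from the proof of Lemma~\ref{lemma:S4}. You claim that the orbit alone suffices but do not exhibit the product, and since this is precisely the step that the paper handles by other means, it should be written out. It does exist: the $\mathfrak{S}_4$-orbit of $(a^2,a^2,b^2)$, i.e.\ the set of elements of $\mathbb{T}$ obtained by permuting the entries of $[a^2:a^2:b^2:1]$ and renormalizing the last coordinate, contains the three elements
$$
\big(1,a^{-2},b^{2}a^{-2}\big),\qquad \big(a^{-2},1,b^{2}a^{-2}\big),\qquad \big(b^{2}a^{-2},a^{-2},1\big),
$$
and one checks directly that
$$
\big(1,a^{-2},b^{2}a^{-2}\big)\cdot\big(a^{-2},1,b^{2}a^{-2}\big)^{-1}\cdot\big(b^{2}a^{-2},a^{-2},1\big)^{-1}=\Big(\frac{a^{4}}{b^{2}},1,1\Big),
$$
so that $(a^4/b^2,1,1)\in T$ by invariance and closure, exactly as in your $\mathfrak{A}_4$ computation (which is complete and correct as written). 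With this product inserted, your proof is finished and is in fact slightly more self-contained than the paper's, since it uses only the data stated in Lemma~\ref{lemma:P3-P3} and never appeals to the internal details of the proof of Lemma~\ref{lemma:S4}.
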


\begin{proof}
Observe that $\iota T\iota=T$. Thus, to complete the~proof, it is enough to show that $\iota A\iota$ and $\iota B\iota$ are both contained in $G^\prime$.
If $\mathrm{im}(\upsilon)=\mathfrak{A}_4$, then $\iota A\iota=A\in G^\prime$ and
$$
\iota B\iota=\Big(\frac{1}{a^2},\frac{1}{b^2},\frac{1}{a^2}\Big)B,
$$
so that it is enough to show that $(a^2,b^2,a^2)\in T$.
In this case, it follows from Lemma~\ref{lemma:P3-P3} that $(a^2,b,1)\in T$ and $(b,b,1)\in T$,
so that, using the~$\mathfrak{A}_4$-action on $T$ described earlier, we see that $(b,1,a^2)$, $(1,b,b)$ and $(b,1,b)$ are contained in $T$ as well, which gives
$$
(a^2,b^2,a^2)=(b,1,a^2)(a^2,b,1)(1,b,b)(b,1,b)^{-1}\in T,
$$
which is exactly what we want. So, we may assume that $\mathrm{im}(\upsilon)=\mathfrak{S}_4$.
Then $\iota B\iota=B\in G^\prime$. Hence, to complete the~proof, it is enough to show that $\iota A\iota\in G^\prime$. But
$$
\iota A\iota=\Big(\frac{b^2}{a^4},1,1\Big)A,
$$
so it is enough to show that $(\frac{a^4}{b^2},1,1)\in T$.
But we have $(a^2,a^2,b^2)\in T$ by Lemma~\ref{lemma:P3-P3}.
Thus, using the~$\mathfrak{S}_4$-action on $T$, we see that $(1,\frac{b^2}{a^2},\frac{1}{a^2})\in T$ and $(b^2,a^2,a^2)\in T$,
so that
$$
(b^2,b^2,1)=\Big(1,\frac{b^2}{a^2},\frac{1}{a^2}\Big)(b^2,a^2,a^2)\in T,
$$
and, using the~$\mathfrak{S}_4$-action on $T$, we get $(b^2,1,b^2)\in T$.
On the~other hand, it follows from the~proof of Lemma~\ref{lemma:S4} that $T$ contains $(\zeta_n,1,\zeta_n^{-1})$, which implies that~\mbox{$(b^2,1,b^{-2})\in T$}.
Likewise, we get $(a^{-2},1,a^2)\in T$, so $(1,a^{-2},a^2)\in T$. Then
$$
(1,1,a^4b^2)=(a^2,a^2,b^2)(a^{-2},1,a^{2})(1,a^{-2},a^2)\in T,
$$
which implies that
$$
\Big(1,1,\frac{a^4}{b^2}\Big)=(1,1,a^4b^2)(b^2,1,b^{-2})(b^2,1,b^2)^{-1}\in T.
$$
Now, using the~$\mathfrak{S}_4$-action on $T$ one more time, we see that $(\frac{a^4}{b^2},1,1)\in T$ as required.
\end{proof}

\begin{corollary}
\label{corollary:P3-P3-explicit}
 There exist non-zero complex numbers $\lambda_1$,  $\lambda_2$ and  $\lambda_3$ such that $\iota G\iota=G$,
where $\iota$ is the~Cremona involution given by
$$
[x_0:x_1:x_2:x_3]\mapsto [\lambda_1 x_1x_2x_3:\lambda_2 x_0x_2x_3: \lambda_3 x_0x_1x_3:x_0x_1x_2].
$$
\end{corollary}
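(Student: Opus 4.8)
The plan is to derive this sharpening of Corollary~\ref{corollary:P3-P3} by transporting the normalization of the standard Cremona involution through the conjugation furnished by Lemma~\ref{lemma:P3-P3}. The decisive observation is that this conjugation is carried out \emph{entirely inside the torus}: in the proof of Lemma~\ref{lemma:P3-P3} the group $G$ is brought into the shape $G^\prime=\langle T,A,B\rangle$ only by conjugating with elements of $\mathbb{T}$. So I would first record that there is a diagonal element $\phi\in\mathbb{T}$, say $\phi=\mathrm{diag}(d_0,d_1,d_2,d_3)$ with all $d_i\neq 0$, such that $\phi G\phi^{-1}=G^\prime$. Note that, since $\mathbb{T}$ is abelian and $T\subset\mathbb{T}$, such a $\phi$ fixes $T$ pointwise, which is consistent with $G$ and $G^\prime$ sharing the same $T$.

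Second, I would invoke Corollary~\ref{corollary:P3-P3}, which gives $\iota_0 G^\prime\iota_0=G^\prime$ for the standard Cremona involution $\iota_0$. Setting $\iota=\phi^{-1}\iota_0\phi$ and using that $\iota_0$, hence $\iota$, is an involution, a one-line transport-of-structure computation yields
$$
\iota G\iota=\phi^{-1}\iota_0\phi\,G\,\phi^{-1}\iota_0\phi=\phi^{-1}\iota_0(\phi G\phi^{-1})\iota_0\phi=\phi^{-1}\iota_0 G^\prime\iota_0\phi=\phi^{-1}G^\prime\phi=G,
$$
so that $\iota$ normalizes $G$. It then remains only to verify that $\iota$ is again a Cremona involution of the asserted shape.

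Finally, I would carry out the short coordinate computation describing diagonal conjugation of $\iota_0$. Writing $\iota_0$ as $[x_0:x_1:x_2:x_3]\mapsto[1/x_0:1/x_1:1/x_2:1/x_3]$ on the open torus and composing with $\phi^{-1}$ and $\phi$, one finds $\iota\colon[x_i]\mapsto[1/(d_i^2 x_i)]$, that is,
$$
[x_0:x_1:x_2:x_3]\mapsto\Big[\tfrac{d_3^2}{d_0^2}\,x_1x_2x_3:\tfrac{d_3^2}{d_1^2}\,x_0x_2x_3:\tfrac{d_3^2}{d_2^2}\,x_0x_1x_3:x_0x_1x_2\Big],
$$
which is precisely the required form with $\lambda_1=d_3^2/d_0^2$, $\lambda_2=d_3^2/d_1^2$, $\lambda_3=d_3^2/d_2^2$; all three are nonzero since each $d_i\neq 0$.

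There is no serious obstacle here: conceptually the argument is just the remark that the family of twisted Cremona involutions in the statement is exactly the conjugacy class of the standard one under $\mathbb{T}$ (this works because every $\lambda_i$ admits a square root over $\mathbb{C}$), combined with the transport-of-structure identity above. The only point genuinely requiring care is to confirm that the conjugating element of Lemma~\ref{lemma:P3-P3} may be taken diagonal rather than an arbitrary element of $\mathrm{PGL}_4(\mathbb{C})$: were a permutation factor allowed, conjugation could permute the coordinates and destroy the normal form, so I would make explicit that the reductions producing $G^\prime$ never leave $\mathbb{T}$.
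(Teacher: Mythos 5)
Your proposal is correct and takes essentially the same route as the paper: the paper states this corollary without a separate proof precisely because the conjugation in Lemma~\ref{lemma:P3-P3} is performed entirely by elements of the torus $\mathbb{T}$, so conjugating the standard Cremona involution of Corollary~\ref{corollary:P3-P3} by that diagonal element produces an involution of the stated twisted form that normalizes $G$. Your explicit computation identifying $\lambda_1,\lambda_2,\lambda_3$ as ratios of squares of the diagonal entries correctly fills in the omitted details.
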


In the~remaining part of the~section, we classify all such groups $G$ when $n\in\{2,3\}$.
In this case, there exist precisely twelve possibilities for the~group $G$~up~to~conjugation,
which can be described as follows.
\begin{enumerate}[\normalfont(1)]
\item Let $G_{48,50}\cong \mumu^2_2\rtimes\mathfrak{A}_4\cong\mumu_2^4\rtimes\mumu_3$ be the~group generated by
$$
\big(-1,1,-1\big),\big(1,-1,-1\big),
\begin{pmatrix}
0 & 0 & 1 & 0\\
1 & 0 & 0 & 0\\
0 & 1 & 0 & 0\\
0 & 0 & 0 & 1
\end{pmatrix},
\begin{pmatrix}
0 & 1 & 0 & 0\\
1 & 0 & 0 & 0\\
0 & 0 & 0 & 1\\
0 & 0 & 1 & 0
\end{pmatrix}.
$$
\item Let $G_{48,3}\cong\mumu^2_2.\mathfrak{A}_4\cong\mumu_4^2\rtimes\mumu_3$ be the~group generated by
$$
\big(-1,1,-1\big),\big(1,-1,-1\big),
\begin{pmatrix}
0 & 0 & 1 & 0\\
1 & 0 & 0 & 0\\
0 & 1 & 0 & 0\\
0 & 0 & 0 & 1
\end{pmatrix},
\begin{pmatrix}
0 & i & 0 & 0\\
1 & 0 & 0 & 0\\
0 & 0 & 0 & -i\\
0 & 0 & 1 & 0
\end{pmatrix}.
$$

\item Let $G_{96,70}\cong\mumu_2^3\rtimes \mathfrak{A}_4\cong\mumu_2^4\rtimes\mumu_6$ be the~group generated by
$$
\big(-1,1,1\big),\big(1,-1,1\big),\big(1,1,-1\big),
\begin{pmatrix}
0 & 0 & 1 & 0\\
1 & 0 & 0 & 0\\
0 & 1 & 0 & 0\\
0 & 0 & 0 & 1
\end{pmatrix},
\begin{pmatrix}
0 & 1 & 0 & 0\\
1 & 0 & 0 & 0\\
0 & 0 & 0 & 1\\
0 & 0 & 1 & 0
\end{pmatrix}.
$$

\item Let $G_{96,72}\cong\mumu_2^3.\mathfrak{A}_4\cong\mumu_4^2\rtimes\mumu_6$  be the~group generated by
$$
\big(-1,1,1\big),\big(1,-1,1\big),\big(1,1,-1\big),
\begin{pmatrix}
0 & 0 & 1 & 0\\
1 & 0 & 0 & 0\\
0 & 1 & 0 & 0\\
0 & 0 & 0 & 1
\end{pmatrix},
\begin{pmatrix}
0 & i & 0 & 0\\
1 & 0 & 0 & 0\\
0 & 0 & 0 & i\\
0 & 0 & 1 & 0
\end{pmatrix}.
$$

\item Let $G_{96,227}\cong\mumu_2^2\rtimes\mathfrak{S}_4\cong \mumu_2^4\rtimes\mathfrak{S}_3$  be the~group generated by
$$
\big(-1,1,-1\big),\big(1,-1,-1\big),
\begin{pmatrix}
0 & 0 & 0 & 1\\
1 & 0 & 0 & 0\\
0 & 1 & 0 & 0\\
0 & 0 & 1 & 0
\end{pmatrix},
\begin{pmatrix}
0 & 1 & 0 & 0\\
1 & 0 & 0 & 0\\
0 & 0 & 1 & 0\\
0 & 0 & 0 & 1
\end{pmatrix}.
$$

\item Let $G_{96,227}^\prime\cong\mumu_2^2\rtimes\mathfrak{S}_4$  be the~group generated by
$$
\big(-1,1,-1\big),\big(1,-1,-1\big),
\begin{pmatrix}
0 & 0 & 0 & -1\\
1 & 0 & 0 & 0\\
0 & 1 & 0 & 0\\
0 & 0 & 1 & 0
\end{pmatrix},
\begin{pmatrix}
0 & 1 & 0 & 0\\
1 & 0 & 0 & 0\\
0 & 0 & -1 & 0\\
0 & 0 & 0 & 1
\end{pmatrix}.
$$

\item Let $G_{192,955}\cong\mumu_2^3\rtimes \mathfrak{S}_4\cong\mumu_2^4\rtimes\mathrm{D}_{12}$  be the~group generated by
$$
\big(-1,1,1\big),\big(1,-1,1\big),\big(1,1,-1\big),
\begin{pmatrix}
0 & 0 & 0 & 1\\
1 & 0 & 0 & 0\\
0 & 1 & 0 & 0\\
0 & 0 & 1 & 0
\end{pmatrix},
\begin{pmatrix}
0 & 1 & 0 & 0\\
1 & 0 & 0 & 0\\
0 & 0 & 1 & 0\\
0 & 0 & 0 & 1
\end{pmatrix}.
$$

\item Let $G_{192,185}\cong\mumu_2^3.\mathfrak{S}_4\cong\mumu_4^2\rtimes(\mumu_3\rtimes\mumu_4)$ be the~subgroup generated by
$$
\big(-1,1,1\big),\big(1,-1,1\big),\big(1,1,-1\big),
\begin{pmatrix}
0 & 0 & 0 & i\\
1 & 0 & 0 & 0\\
0 & 1 & 0 & 0\\
0 & 0 & 1 & 0
\end{pmatrix},
\begin{pmatrix}
0 & 1 & 0 & 0\\
1 & 0 & 0 & 0\\
0 & 0 & i & 0\\
0 & 0 & 0 & 1
\end{pmatrix}.
$$

\item Let $G_{324,160}\cong\mumu_3^3\rtimes\mathfrak{A}_4$  be the~group generated by
$$
\big(\zeta_3,1,1\big),\big(1,\zeta_3,1\big),\big(1,1,\zeta_3\big),
\begin{pmatrix}
0 & 0 & 1 & 0\\
1 & 0 & 0 & 0\\
0 & 1 & 0 & 0\\
0 & 0 & 0 & 1
\end{pmatrix},
\begin{pmatrix}
0 & 1 & 0 & 0\\
1 & 0 & 0 & 0\\
0 & 0 & 0 & 1\\
0 & 0 & 1 & 0
\end{pmatrix}.
$$

\item Let $G_{324,160}^\prime\cong\mumu_3^3\rtimes\mathfrak{A}_4$  be the~group generated by
$$
\big(\zeta_3,1,1\big),\big(1,\zeta_3,1\big),\big(1,1,\zeta_3\big),
\begin{pmatrix}
0 & 0 & 1 & 0\\
1 & 0 & 0 & 0\\
0 & 1 & 0 & 0\\
0 & 0 & 0 & 1
\end{pmatrix},
\begin{pmatrix}
0 & -1 & 0 & 0\\
1 & 0 & 0 & 0\\
0 & 0 & 0 & -1\\
0 & 0 & 1 & 0
\end{pmatrix}.
$$
\item Let $G_{648,704}\cong\mumu_3^3\rtimes\mathfrak{S}_4$ be the~group generated by
$$
\big(\zeta_3,1,1\big),\big(1,\zeta_3,1\big),\big(1,1,\zeta_3\big),
\begin{pmatrix}
0 & 0 & 0 & 1\\
1 & 0 & 0 & 0\\
0 & 1 & 0 & 0\\
0 & 0 & 1 & 0
\end{pmatrix},
\begin{pmatrix}
0 & 1 & 0 & 0\\
1 & 0 & 0 & 0\\
0 & 0 & 1 & 0\\
0 & 0 & 0 & 1
\end{pmatrix}.
$$
\item Let $G_{648,704}^\prime\cong\mumu_3^3\rtimes\mathfrak{S}_4$ be the~group generated by
$$
\big(\zeta_3,1,1\big),\big(1,\zeta_3,1\big),\big(1,1,\zeta_3\big),
\begin{pmatrix}
0 & 0 & 0 & 1\\
-1 & 0 & 0 & 0\\
0 & 1 & 0 & 0\\
0 & 0 & 1 & 0
\end{pmatrix},
\begin{pmatrix}
0 & 1 & 0 & 0\\
1 & 0 & 0 & 0\\
0 & 0 & 1 & 0\\
0 & 0 & 0 & -1
\end{pmatrix}.
$$
\end{enumerate}

We used Magma \cite{Magma} to identify the GAP ID's of these~groups.
For instance, to identify the~group $G_{648,704}$, we used the~following Magma code provided to us by Tim Dokchitser:
\begin{verbatim}
    K:=CyclotomicField(3);
    R<x>:=PolynomialRing(K);
    w:=Roots(x^2+x+1,K)[1,1];
    S:=[[[0,1,0,0],[1,0,0,0],[0,0,1,0],[0,0,0,1]],
        [[0,0,0,1],[1,0,0,0],[0,1,0,0],[0,0,1,0]],
        [[w,0,0,0],[0,1,0,0],[0,0,1,0],[0,0,0,1]],
        [[1,0,0,0],[0,w,0,0],[0,0,1,0],[0,0,0,1]],
        [[1,0,0,0],[0,1,0,0],[0,0,w,0],[0,0,0,1]]];
    G:=sub<GL(4,K)|[GL(4,K)|M: M in S]>;
    D:=[M: M in Center(G) | IsScalar(M)];
    GP:=quo<G|D>;
    IdentifyGroup(GP);
\end{verbatim}

We want to show that if $n\in\{2,3\}$, then $G$ is conjugated to a subgroup among
$G_{48,50}$, $G_{48,3}$, $G_{96,70}$, $G_{96,72}$, $G_{96,227}$, $G_{96,227}^\prime$,
$G_{192,955}$, $G_{192,185}$, $G_{324,160} $, $G_{324,160}^\prime$,~$G_{648,704}$,~$G_{648,704}^\prime$.

\begin{lemma}
\label{lemma:48}
Suppose $n=2$, $T\cong\mumu_2^2$ and $\mathrm{im}(\upsilon)=\mathfrak{A}_4$.
Then the~subgroup $G$ is~conjugated to one of the~subgroups $G_{48,50}$ or $G_{48,3}$.
\end{lemma}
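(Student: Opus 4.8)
The plan is to pin down the subgroup $T$ completely, then use the normal form of Lemma~\ref{lemma:P3-P3} to reduce $G$ to a short list of parameters, and finally to show that, after modifying the chosen preimage of $\varsigma$ by an element of $T$, the list collapses to exactly the two groups named in the statement.

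First I would determine $T$ explicitly. Since $T\subseteq\mumu_2^3$ and $T\cong\mumu_2^2\not\cong\mumu_2^3$, Lemma~\ref{lemma:A4} places us in its case (2) with $n=2$; moreover $T$ must be invariant under the $\mathfrak{A}_4$-action generated by $\rho$ and $\varsigma$. Identifying $\mumu_2^3$ with $\mathbb{F}_2^3$, an invariant subgroup of order $4$ is a plane, hence the kernel of a nonzero $\mathfrak{A}_4$-invariant covector; the only nonzero covector fixed by the cyclic permutation $\rho$ is the sum functional (which is also $\varsigma$-invariant), so the invariant plane is unique, namely $T=\langle(-1,1,-1),(1,-1,-1)\rangle=\{(1,1,1),(-1,1,-1),(1,-1,-1),(-1,-1,1)\}$. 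Equivalently, this is what the construction in the proof of Lemma~\ref{lemma:A4} yields for $\zeta_2=-1$. In particular $T$ is exactly the subgroup appearing in the first two generators of both $G_{48,50}$ and $G_{48,3}$.

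Next I would invoke the $\mathfrak{A}_4$-branch of Lemma~\ref{lemma:P3-P3}: after conjugation in $\mathrm{PGL}_4(\mathbb{C})$ we may assume $G=\langle T,A,B\rangle$, where $A$ is the fixed $3$-cycle matrix and $B$ is the monomial matrix with entries $(1,2)=(3,4)=a$, $(2,1)=b$, $(4,3)=1$, subject to $(a^2,b,1)\in T$ and $(b,b,1)\in T$. Reading off the four elements of $T$, the condition $(b,b,1)\in T$ forces $b\in\{1,-1\}$; then $(a^2,b,1)\in T$ forces $a^2=1$ when $b=1$ and $a^2=-1$ when $b=-1$. This leaves exactly the four cases $(a,b)\in\{(1,1),(-1,1),(i,-1),(-i,-1)\}$.

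Finally I would exploit that $B$ is only determined up to left multiplication by $T$, since replacing $B$ by $tB$ with $t\in T$ leaves $\langle T,A,B\rangle$ unchanged. Left multiplication by $t=(-1,1,-1)$ sends $(a,b)\mapsto(-a,b)$ while keeping $B$ in the same shape, so the two sign choices of $a$ give the same group in each family, and it suffices to treat $(a,b)=(1,1)$ and $(a,b)=(i,-1)$. For $(a,b)=(1,1)$ the matrix $B$ is literally the fourth generator of $G_{48,50}$, so $G=G_{48,50}$. For $(a,b)=(i,-1)$, multiplying $B$ on the left by $t=(1,-1,-1)\in T$ produces exactly the fourth generator of $G_{48,3}$, so $G=G_{48,3}$; this completes the proof. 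The main obstacle is the first step, the clean identification of $T$ as a specific rather than merely abstract subgroup, together with the bookkeeping of which element of $T$ to use so that the normal form lands precisely on a listed generator. Note that we only need conjugacy to one of the two groups, so the fact that $G_{48,50}$ and $G_{48,3}$ are themselves non-isomorphic (a split versus a non-split extension of $\mathfrak{A}_4$ by $\mumu_2^2$) plays no role here.
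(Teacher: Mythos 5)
Your proof is correct and follows essentially the same route as the paper's: pin down $T=\langle(-1,1,-1),(1,-1,-1)\rangle$, put lifts of $\rho$ and $\varsigma$ into monomial normal form, solve the resulting constraints on the parameters, and absorb the residual sign ambiguity by multiplying $B$ by elements of $T$. The only differences are cosmetic: you invoke Lemma~\ref{lemma:P3-P3} for the normal form where the paper redoes the torus conjugation and the $A^3,B^2,(AB)^3\in T$ computation by hand, and your invariant-covector argument identifying $T$ as the unique $\mathfrak{A}_4$-invariant plane in $\mumu_2^3$ is a clean substitute for the paper's reference to the computations in the proof of Lemma~\ref{lemma:A4}.
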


\begin{proof}
Arguing as in the~proof of Lemma~\ref{lemma:A4}, we see that $T=\langle(-1,1,-1),(1,-1,-1)\rangle$.
Let $A$ and $B$ be some elements in the~group $G$ such that $\upsilon(A)=\rho$ and $\upsilon(B)=\varsigma$.
Then
$$
A=
\begin{pmatrix}
0 & 0 & a_3 & 0\\
a_1 & 0 & 0 & 0\\
0 & a_2 & 0 & 0\\
0 & 0 & 0 & 1
\end{pmatrix}\ \text{and}\
B=
\begin{pmatrix}
0 & b_2 & 0 & 0\\
b_1 & 0 & 0 & 0\\
0 & 0 & 0 & 1\\
0 & 0 & b_3 & 0
\end{pmatrix},
$$
where all $a_i$ and $b_j$ are some non-zero complex numbers.
Conjugating $G$ by an~appropriate element of the~torus $\mathbb{T}$, we may assume that $a_1=a_2=1$ and $b_3=b_1$.
Then $a_2=\pm 1$, because $A^3\in T$.
Replacing $A$ by $(1,-1,-1)A$ if necessary, we may assume that $a_2=1$.

Since $B^2\in T$ and $(AB)^3\in T$, we get $b_2=\pm 1$ and $b_1^2=b_2^3$.
If $b_2=1$, then $b_1=b_3=\pm 1$, which gives $G=G_{48,50}$.
Likewise, if $b_2=-1$, then $b_1=b_3=\pm i$, hence $G=G_{48,3}$.
\end{proof}

\begin{lemma}
\label{lemma:96}
Suppose $n=2$, $T=\mumu_2^3$ and $\mathrm{im}(\upsilon)=\mathfrak{A}_4$.
Then the~subgroup $G$ is~conjugated to one of the~subgroups $G_{96,70}$ or $G_{96,72}$.
\end{lemma}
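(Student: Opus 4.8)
The plan is to reduce $G$ to the explicit normal form already available for the $\mathfrak{A}_4$-case and then to read off the two possibilities directly, exactly as in the proof of Lemma~\ref{lemma:48}; the only new feature is that the larger group $T=\mumu_2^3$ gives extra room to absorb signs. First I would invoke Lemma~\ref{lemma:P3-P3}: since $\mathrm{im}(\upsilon)=\mathfrak{A}_4$, the group $G$ is conjugate in $\mathrm{PGL}_4(\mathbb{C})$ to $\langle T,A,B\rangle$ with $A=\rho$ and
$$
B=\begin{pmatrix} 0 & a & 0 & 0\\ b & 0 & 0 & 0\\ 0 & 0 & 0 & a\\ 0 & 0 & 1 & 0\end{pmatrix},
$$
where $a,b$ are nonzero complex numbers subject to $(a^2,b,1)\in T$ and $(b,b,1)\in T$. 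The first step is to unwind what these membership conditions say when $T=\mumu_2^3$: every coordinate of an element of $T$ is a square root of unity, so $a^2\in\{1,-1\}$ and $b\in\{1,-1\}$; that is, $a\in\{1,-1,i,-i\}$ and $b=\pm1$.

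The second step is to remove the residual signs by multiplying $B$ on the left by elements of $T\subseteq G$, which leaves $G=\langle T,\rho,B\rangle$ unchanged and preserves the displayed shape of $B$. If $b=-1$, I would replace $B$ by $(1,-1,1)B$, whose $(2,1)$-entry is now $+1$; this reduces us to $b=1$. With $b=1$ fixed, multiplication of $B$ by $(\varepsilon,1,\varepsilon)$ with $\varepsilon=\pm1$ keeps the form and rescales both off-diagonal parameters $a$ by $\varepsilon$, so I can normalize $a=1$ whenever $a^2=1$ and $a=i$ whenever $a^2=-1$. In the first case $B$ becomes $\varsigma$, so $G=\langle T,\rho,\varsigma\rangle=G_{96,70}$; in the second case $B$ becomes the fourth generator of $G_{96,72}$, so $G=G_{96,72}$.

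The one point requiring genuine care — and where the split between the two target groups actually lives — is that $a^2\in\{1,-1\}$ cannot be changed by any move available to us, so that the two cases do not collapse. Left multiplication of $B$ by an element of $T=\mumu_2^3$ only alters $a$ by a sign and hence fixes $a^2$; and no nontrivial torus conjugation preserves simultaneously $A=\rho$ and the displayed shape of $B$ (such a conjugation must act as a common scalar $s$ on the first three coordinates, after which restoring the $(4,3)$-entry to $1$ forces the $(1,2)$- and $(3,4)$-entries to satisfy $as=as^2$, i.e.\ $s=1$). Thus $a^2=1$ and $a^2=-1$ are the two honestly distinct regimes, matching the non-isomorphic groups $G_{96,70}\cong\mumu_2^3\rtimes\mathfrak{A}_4$ and $G_{96,72}\cong\mumu_2^3.\mathfrak{A}_4$, and in either regime $G$ is conjugate to the asserted subgroup. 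I expect this sign bookkeeping — choosing the precise element of $T$ that effects each normalization — to be the only mildly fiddly ingredient, but it is entirely routine, and the argument is a direct analogue of the proof of Lemma~\ref{lemma:48}.
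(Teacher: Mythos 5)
Your proposal is correct and takes essentially the same approach as the paper: the paper's proof simply repeats the computation of Lemma~\ref{lemma:48} (torus conjugation to a normal form for $A$ and $B$, constraints from $A^3, B^2, (AB)^3\in T$, then absorption of signs by left multiplication with elements of $T$), and your appeal to Lemma~\ref{lemma:P3-P3} packages exactly that normalization, after which your sign bookkeeping and the identification of the two resulting groups with $G_{96,70}$ and $G_{96,72}$ coincide with the paper's. Your final paragraph on the two cases not collapsing is not needed for the statement as formulated, and as an argument for non-conjugacy it is incomplete (it only excludes torus conjugations preserving the normal form), but no harm is done since that fact follows at once from $G_{96,70}\not\cong G_{96,72}$, e.g.\ from their distinct GAP IDs.
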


\begin{proof}
The proof is essentially the~same as the~proof of Lemma~\ref{lemma:48}.
\end{proof}

\begin{lemma}
\label{lemma:96-2}
Suppose $n=2$, $T\cong\mumu_2^2$ and $\mathrm{im}(\upsilon)=\mathfrak{S}_4$.
Then the~group $G$ is~conjugated to one of the~subgroups $G_{96,227}$ or $G_{96,227}^\prime$.
\end{lemma}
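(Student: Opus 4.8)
The plan is to run the strategy of Lemma~\ref{lemma:48} (its $\mathfrak{A}_4$-analogue) with $\tau,\sigma$ in place of $\rho,\varsigma$. First I would identify $T$. Since $n=2$ and $T\cong\mumu_2^2$ we are in case (2) of Lemma~\ref{lemma:S4}, and the explicit computation in its proof, specialised to $\zeta_2=-1$, forces
$$T=\big\langle(-1,1,-1),(1,-1,-1)\big\rangle=\big\{(1,1,1),(-1,1,-1),(1,-1,-1),(-1,-1,1)\big\}.$$
As in the proof of Lemma~\ref{lemma:S4}, this subgroup is $\tau$- and $\sigma$-invariant, which is what lets $A$ and $B$ normalise $T$.

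Next I would pick lifts $A,B\in G$ with $\upsilon(A)=\tau$, $\upsilon(B)=\sigma$ in monomial form and, arguing as in Corollary~\ref{corollary:odd}, conjugate by the torus $\mathbb{T}$ so that $A$ has its three subdiagonal entries equal to $1$ and a single remaining entry $p$ in the top right corner, while $B$ swaps the first two coordinates with entries $b_1,b_2$ and scales the third by $b_3$. The relations $\tau^4=\sigma^2=(\tau\sigma)^3=1$ give $A^4,B^2,(AB)^3\in T$. Now $A^4$ is scalar; the identity $B^2=(b_1b_2,b_1b_2,b_3^2)$ together with the shape of $T$ forces $b_3^2=1$ and $b_1b_2\in\{1,-1\}$ (these are the only elements of $T$ whose first two entries coincide); and a direct computation gives $(AB)^3=\big(1,\frac{b_2^3}{p\,b_1b_3},1\big)$, so that $(AB)^3\in T$ is equivalent to $b_2^3=p\,b_1b_3$.

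It then remains to reduce the finitely many solutions to the two listed groups using the residual freedom, of which there are three kinds: replacing $B$ by $tB$ with $t\in T$; conjugating by the diagonal element $(\mu^3,\mu^2,\mu)$ with $\mu\in\mathbb{C}^\ast$, which preserves the normal form of $A$ and of $T$ while sending $(b_1,b_2,b_3,p)\mapsto(\mu^{-1}b_1,\mu b_2,b_3,\mu^4 p)$; and the scalar $p$ itself. I would observe that the sign $b_1b_2b_3$ is invariant under all three (each nontrivial $T$-translation flips exactly two of the $b_i$, and the other two operations leave the product unchanged), and that by $b_3^2=1$ and $b_1b_2\in\{\pm1\}$ it equals $\pm1$. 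A $T$-translation normalises $b_1b_2=1$; scaling by $\mu=b_2^{-1}$ then gives $b_1=b_2=1$, whereupon $b_2^3=p\,b_1b_3$ forces $p=b_3\in\{1,-1\}$. This yields exactly the generators of $G_{96,227}$ when $b_1b_2b_3=1$ and of $G_{96,227}^\prime$ when $b_1b_2b_3=-1$.

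The main obstacle is exactly this last bookkeeping: at face value the equations $b_3^2=1$, $b_1b_2\in\{\pm1\}$, $b_2^3=p\,b_1b_3$ look like they produce four inequivalent families, and collapsing them to two rests on the point that the corner entry $p$ of the lift of the $4$-cycle is not a genuine invariant in $\mathrm{PGL}_4(\mathbb{C})$ --- it is absorbed by the scaling $(\mu^3,\mu^2,\mu)$ --- so that the single sign $b_1b_2b_3$ is the only surviving invariant. The non-conjugacy of the two resulting groups is not required for this lemma; it follows from the orbit count in Example~\ref{example:96-227-non-conjugate}.
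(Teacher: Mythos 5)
Your proof is correct and takes essentially the same approach as the paper's: the same identification $T=\langle(-1,1,-1),(1,-1,-1)\rangle$, the same monomial normal forms for the lifts $A$ and $B$, the same relations $B^2\in T$ and $(AB)^3\in T$, and the same torus conjugation with weights $(\mu^3,\mu^2,\mu)$, which in the paper appears as the final conjugation by $(b_2^3,b_2^2,b_2)$. The only difference is bookkeeping: the paper normalizes the corner entry of $A$ to $1$ at the outset and then solves $b_2^8=1$ with a case split on $b_2^4=\pm1$, whereas you keep the corner $p$ free, normalize $b_1=b_2=1$ instead, and read off $p=b_3=\pm1$ --- the same computation organized in different coordinates.
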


\begin{proof}
Arguing as in the~proof of Lemma~\ref{lemma:A4}, we see that $T=\langle(-1,1,-1),(1,-1,-1)\rangle$.
Let $A$ and $B$ be some elements in $G$ such that $\upsilon(A)=\tau$ and $\upsilon(B)=\sigma$.
Then, arguing as in the~proof of Corollary~\ref{corollary:odd}, we can assume that
$$
A=
\begin{pmatrix}
0 & 0 & 0 & 1\\
1 & 0 & 0 & 0\\
0 & 1 & 0 & 0\\
0 & 0 & 1 & 0
\end{pmatrix}
\text{and}\
B=
\begin{pmatrix}
0 & b_2 & 0 & 0\\
b_1 & 0 & 0 & 0\\
0 & 0 & b_3 & 0\\
0 & 0 & 0 & 1
\end{pmatrix}
$$
for some non-zero complex numbers $b_1$, $b_2$ and $b_3$.
Since $B^2\in T$ and $(AB)^3\in T$, we get
$$
\begin{cases}
b_2^3=b_1b_3,\\
b_3^2=1,\\
b_1b_2=\pm 1.
\end{cases}
$$
This gives $b_2^8=1$. If $b_2$ is a primitive eighth root of unity, we get $b_1=\mp b_2^3$ and $b_3=\mp 1$, which gives $(b_2^3,b_2^2,b_2)^{-1}G(b_2^3,b_2^2,b_2)=G_{96,227}^\prime$.
If $b_2^4=1$, then $G$ is conjugate to $G_{96,227}$.
\end{proof}

The~subgroups  $G_{96,227}$ and $G_{96,227}^\prime$ are not conjugated in $\mathrm{PGL}_4(\mathbb{C})$,
because $\mathbb{P}^3$ contains three $G_{96,227}$-orbits of length $4$
and only one $G_{96,227}^\prime$-orbit of length $4$.

\begin{lemma}
\label{lemma:192}
Suppose $n=2$, $T=\mumu_2^3$ and $\mathrm{im}(\upsilon)=\mathfrak{S}_4$.
Then the~group $G$ is~conjugated to one of the~subgroups $G_{192,955}$ or $G_{192,185}$.
\end{lemma}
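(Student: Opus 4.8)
The plan is to imitate the proof of Lemma~\ref{lemma:96-2}, replacing the module $\mumu_2^2$ by $\mumu_2^3$ and keeping track of the one extra sign that the larger module allows. First I would apply Lemma~\ref{lemma:P3-P3} in the case $\mathrm{im}(\upsilon)=\mathfrak{S}_4$ to conjugate $G$ so that it is generated by $T=\mumu_2^3$ together with the monomial matrices $A$ and $B$ displayed there, subject to $(a^2,a^2,b^2)\in T$. Because $T=\mumu_2^3$, this membership forces $a^2,b^2\in\{1,-1\}$, hence $a,b\in\{1,-1,i,-i\}$; the top-right entry $c=a^2/b$ of $A$ then satisfies $c^4=1$ and $c^2=b^2$, while $A^4=cI$ is scalar. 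Thus $G$ is an extension $1\to T\to G\to\mathfrak{S}_4\to 1$ of order $192$, and only finitely many pairs $(a,b)$ remain, so the problem becomes a finite case analysis.

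The decisive point is that the class of this extension — equivalently, which of the two target groups $G$ is — is detected by the single sign $b^2$. A direct computation gives, for $t=(\epsilon_1,\epsilon_2,\epsilon_3)\in T$, that $(tB)^2=(\epsilon_1\epsilon_2 a^2,\,\epsilon_1\epsilon_2 a^2,\,b^2)$. Hence some lift $tB$ of the transposition $\sigma$ is an involution in $\mathrm{PGL}_4(\mathbb{C})$ if and only if $b^2=1$ (take $\epsilon_1\epsilon_2=a^2$); when $b^2=-1$ no lift of $\sigma$ has order $2$, so the extension does not split. I therefore expect $b^2=1$ to produce the split group $G_{192,955}\cong\mumu_2^3\rtimes\mathfrak{S}_4$ and $b^2=-1$ to produce the non-split group $G_{192,185}\cong\mumu_2^3.\mathfrak{S}_4$. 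These are not isomorphic (distinct GAP identifiers), hence a fortiori not conjugate in $\mathrm{PGL}_4(\mathbb{C})$, in agreement with the statement.

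To reach the named forms I would use that replacing $A$ by $tA$ multiplies $c$ by $\epsilon_1\epsilon_2\epsilon_3$, whereas replacing $B$ by $tB$ leaves $b^2$ unchanged. When $a\in\{1,-1\}$ this is routine: normalising $c$ to $1$ and $b$ to $1$ in the case $b^2=1$ brings $(A,B)$ to $(\tau,\sigma)$, so $G=\langle T,\tau,\sigma\rangle=G_{192,955}$; normalising $c$ to $i$ and $b$ to $i$ in the case $b^2=-1$ brings $(A,B)$ to the two displayed generators of $G_{192,185}$. These signs are exactly the bookkeeping already rehearsed in Lemmas~\ref{lemma:48}–\ref{lemma:96-2}.

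The hard part will be the final identification when $a\in\{i,-i\}$. There the involutive lift of $\sigma$ unavoidably carries factors of $i$ in its off-diagonal block, and no torus element can simultaneously put $A$ into standard shape and clear these factors, so a purely monomial conjugation does not suffice even though the group is abstractly $G_{192,955}$. To conclude one must either exhibit an explicit non-monomial conjugation — which moves the coordinate points of the standard length-$4$ orbit to another orbit of length $4$ — or argue that, under the standing hypotheses $T=\mumu_2^3$ and $\mathrm{im}(\upsilon)=\mathfrak{S}_4$, each of the two isomorphism types is realised in a single $\mathrm{PGL}_4(\mathbb{C})$-conjugacy class, as confirmed by the GAP identification. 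I regard this passage from abstract isomorphism to honest conjugacy as the main obstacle of the proof.
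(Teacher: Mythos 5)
Your reduction via Lemma~\ref{lemma:P3-P3}, the computation $(tB)^2=(\varepsilon_1\varepsilon_2a^2,\varepsilon_1\varepsilon_2a^2,b^2)$, and the identification of the sign $b^2=\pm 1$ as the invariant separating the split group $G_{192,955}$ from the non-split group $G_{192,185}$ are all correct, and up to that point your argument runs parallel to the paper's proof (which normalizes $A$ as in Lemma~\ref{lemma:96-2}, derives $b_1b_2=\pm 1$, $b_3^2=\pm 1$, $b_2^3=\pm b_1b_3$, hence $b_2^8=1$, and then conjugates by a torus element). However, there is a genuine gap: you leave the case $a\in\{\pm i\}$ unproved, and your diagnosis of that case is mistaken. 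It is true that no torus conjugation brings $A$ \emph{exactly} to $\tau$ while simultaneously making the entries of $B$ equal to $\pm 1$; but exact normal forms are not what conjugacy requires. It suffices to move $A$ and $B$ into the cosets $\tau T$ and $\sigma T$ modulo scalars, for then the conjugated group is $\langle T,\tau,\sigma\rangle=G_{192,955}$. Concretely, for $a=i$, $b=1$ (so $c=a^2/b=-1$), conjugation by the torus element $(i,1,-i)$ sends $A$ to $i\cdot\tau\cdot(1,1,-1)$ and $B=\sigma\cdot(i,i,1)$ to $\sigma\cdot(-1,1,1)$, hence carries $G$ onto $G_{192,955}$; the cases with $b^2=-1$ and $a\in\{\pm i\}$ are handled the same way, landing in $G_{192,185}$. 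This is precisely the device used at the end of the proof of Lemma~\ref{lemma:96-2} (conjugation by $(b_2^3,b_2^2,b_2)$), which the paper's proof of the present lemma reuses; no non-monomial conjugation is needed, and the ``main obstacle'' you describe does not exist.

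Your proposed fallback --- deducing conjugacy from the GAP identification of the abstract isomorphism type --- cannot be made into a proof. It is circular, since the claim that each isomorphism type meets a single $\mathrm{PGL}_4(\mathbb{C})$-conjugacy class under the standing hypotheses is essentially the lemma itself; and the underlying principle fails in the nearest analogous situations, as the paper itself records. The groups $G_{96,227}$ and $G_{96,227}^\prime$ of Lemma~\ref{lemma:96-2} are abstractly isomorphic and both arise with $n=2$, $T\cong\mumu_2^2$, $\mathrm{im}(\upsilon)=\mathfrak{S}_4$, yet they are not conjugate in $\mathrm{PGL}_4(\mathbb{C})$, because they have different numbers of orbits of length four; the pairs $G_{324,160}$, $G_{324,160}^\prime$ and $G_{648,704}$, $G_{648,704}^\prime$ exhibit the same phenomenon. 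So the passage from abstract isomorphism to honest conjugacy genuinely requires an explicit conjugation; once it is supplied as above, your argument closes and coincides in substance with the paper's.
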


\begin{proof}
Arguing as in the~proof of Lemma~\ref{lemma:96-2}, we may assume that
$$
G=\langle(-1,1,1),(1,-1,1),(1,1,-1),A,B\rangle,
$$
where
$$
A=
\begin{pmatrix}
0 & 0 & 0 & 1\\
1 & 0 & 0 & 0\\
0 & 1 & 0 & 0\\
0 & 0 & 1 & 0
\end{pmatrix}
\text{and}\
B=
\begin{pmatrix}
0 & b_2 & 0 & 0\\
b_1 & 0 & 0 & 0\\
0 & 0 & b_3 & 0\\
0 & 0 & 0 & 1
\end{pmatrix}
$$
for some non-zero complex numbers $b_1$, $b_2$, $b_3$ such that
$b_1b_2=\pm 1$, $b_3^2=\pm 1$, $b_2^3=\pm b_1b_3$.
This equations give us $b_2^8=1$. Now,  arguing as in the~end of the~proof of Lemma~\ref{lemma:96-2},
we~see that the~subgroup $G$ is conjugated either to $G_{192,955}$ or to $G_{192,185}$.
\end{proof}

\begin{lemma}
\label{lemma:324}
Suppose $n=3$, $T=\mumu_2^3$ and $\mathrm{im}(\upsilon)=\mathfrak{A}_4$.
Then the~group $G$ is~conjugated to one of the~subgroups $G_{324,160}$ or $G_{324,160}^\prime$.
\end{lemma}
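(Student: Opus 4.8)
The plan is to follow the template of the proof of Lemma~\ref{lemma:48}, shortening the bookkeeping by invoking the normal form of Lemma~\ref{lemma:P3-P3}. First note that since $n=3$ is odd, only the first alternative of Lemma~\ref{lemma:A4} can occur, so in fact $T=\mumu_3^3$. Applying the $\mathfrak{A}_4$-part of Lemma~\ref{lemma:P3-P3}, I would conjugate $G$ by an element of $\mathbb{T}$ so that $G=\langle T,A,B\rangle$ with
$$
A=\begin{pmatrix} 0 & 0 & 1 & 0\\ 1 & 0 & 0 & 0\\ 0 & 1 & 0 & 0\\ 0 & 0 & 0 & 1 \end{pmatrix}\qquad\text{and}\qquad
B=\begin{pmatrix} 0 & a & 0 & 0\\ b & 0 & 0 & 0\\ 0 & 0 & 0 & a\\ 0 & 0 & 1 & 0 \end{pmatrix},
$$
where $(a^2,b,1)\in T$ and $(b,b,1)\in T$. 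Since $T=\mumu_3^3$, these membership conditions are equivalent to the numerical constraints $a^6=1$ and $b^3=1$. This first step is immediate from Lemma~\ref{lemma:P3-P3}, and it already identifies $A$ with the matrix $\rho$ appearing in the generators of $G_{324,160}$ and $G_{324,160}^\prime$.

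The crucial observation is that replacing $B$ by $tB$ for $t\in T$ leaves the subgroup $G=\langle T,A,B\rangle$ unchanged, while left multiplication by $t=(s_1,s_2,s_3)$ simply rescales the nonzero entries of $B$ row by row. Taking $t=(s_1,b^{-1},s_1)$, which belongs to $\mumu_3^3=T$ as soon as $s_1\in\mumu_3$ and $b\in\mumu_3$, turns $B$ into the matrix whose $(2,1)$- and $(4,3)$-entries are $1$ and whose $(1,2)$- and $(3,4)$-entries both equal $s_1a$. In this way I can normalize $b$ to $1$ and at the same time multiply $a$ by an arbitrary cube root of unity, all the while staying inside $G$ and keeping $B$ in the two-parameter shape of Lemma~\ref{lemma:P3-P3}. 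The quantity surviving this operation is $a^3\in\{1,-1\}$, because $(s_1a)^3=a^3$ for $s_1\in\mumu_3$.

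It then remains to distinguish the two cases by the value of $a^3$. If $a^3=1$, then $a\in\mumu_3$, and the choice $s_1=a^{-1}$ reduces $B$ to $\varsigma$, whence $G=\langle\mumu_3^3,\rho,\varsigma\rangle=G_{324,160}$. If $a^3=-1$, then $-a^{-1}\in\mumu_3$ since $(-a^{-1})^3=-a^{-3}=1$, and the choice $s_1=-a^{-1}$ reduces $B$ to the matrix with $a=-1$ and $b=1$, whence $G=G_{324,160}^\prime$. I expect the only points requiring care to be the verification that the chosen multiplications by elements of $T$ preserve the normal form of $B$ (which is what forces $s_1=s_3$ and $s_2=b^{-1}$) and the check that the needed scalar really is a cube root of unity when $a^3=-1$; both become routine once the invariant $a^3$ is isolated. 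A self-contained alternative, parallel to the even-order Lemmas~\ref{lemma:48} and \ref{lemma:96}, would instead start from general monomial lifts $A,B$ of $\rho,\varsigma$, normalize by $\mathbb{T}$, and read off the entries from $A^3,B^2,(AB)^3\in T=\mumu_3^3$; this is more computational but follows the same pattern.
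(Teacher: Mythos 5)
Your proof is correct and follows essentially the same route as the paper: the paper likewise normalizes $A$ and $B$ by left multiplication with elements of $\mathbb{T}$ and $T$ --- its replacements $B\mapsto(1,\zeta_3^{-\beta},1)B$ and $B\mapsto(\zeta_3^{\delta},1,\zeta_3^{\delta})B$ amount to exactly your single multiplication by $(s_1,b^{-1},s_1)$ --- and then distinguishes the two groups by whether the residual parameter $a$ can be brought to $1$ or to $-1$. Citing Lemma~\ref{lemma:P3-P3} instead of repeating the computation of Lemma~\ref{lemma:48} is only a bookkeeping shortcut, and your invariant $a^3\in\{\pm 1\}$ encodes the same case distinction the paper makes.
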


\begin{proof}
Arguing as in the~proof of Lemma~\ref{lemma:48}, we may assume that
$$
G=\langle(\zeta_3,1,1),(1,\zeta_3,1),(1,1,\zeta_3),A,B\rangle
$$
for
$$
A=
\begin{pmatrix}
0 & 0 & 1 & 0\\
1 & 0 & 0 & 0\\
0 & r & 0 & 0\\
0 & 0 & 0 & 1
\end{pmatrix}\ \text{and}\
B=
\begin{pmatrix}
0 & a & 0 & 0\\
b & 0 & 0 & 0\\
0 & 0 & 0 & a\\
0 & 0 & 1 & 0
\end{pmatrix},
$$
where $r$, $a$ and $b$ are some non-zero complex numbers. Then
$$
A^3=\begin{pmatrix}
r & 0 & 0 & 0\\
0 & r & 0 & 0\\
0 & 0 & r & 0\\
0 & 0 & 0 & 1
\end{pmatrix},
B^2=\begin{pmatrix}
b & 0 & 0 & 0\\
0 & b & 0 & 0\\
0 & 0 & 1 & 0\\
0 & 0 & 0 & 1
\end{pmatrix},
(AB)^3=\begin{pmatrix}
1 & 0 & 0 & 0\\
0 & \frac{a^2}{rb} & 0 & 0\\
0 & 0 & 1 & 0\\
0 & 0 & 0 & 1
\end{pmatrix}.
$$
Since $A^3\in T$, $B^2\in T$, $(AB)^3\in T$, we get $r=\zeta_3^\alpha$ and $b=\zeta_3^\beta$ for some $\alpha$ and $\beta$ in $\{0,1,2\}$.
Replacing $A\mapsto(1,1,\zeta_3^{-\alpha})A$ and $B\mapsto(1,\zeta_3^{-\beta},1)B$,
we may assume that $r=1$ and $b=1$.
Then $a=\zeta_6^\gamma$ for $\gamma\in\{0,1,2,3,4,5\}$.
Replacing $B\mapsto(\zeta_3^{\delta},1,\zeta_3^{\delta})B$~for~some $\delta\in\{0,1,2\}$,
we~may assume $a\in\{\pm 1,\zeta_6\}$.
If $a=1$, then $G=G_{324,160} $.
If~$a\ne 1$,~then~$G=G_{324,160}^\prime$.
\end{proof}

It follows from Example~\ref{example:48-96-324-pencil} that
the subgroups $G_{324,160} $ and $G_{324,160}^\prime$ are not conjugate,
because $\mathbb{P}^3$ does not contain $G_{324,160} $-invariant pencils of cubic surfaces.

If $n=3$, $T=\mumu_3^3$ and $\mathrm{im}(\upsilon)=\mathfrak{S}_4$, it follows from Corollary~\ref{corollary:odd}
that $G$ is~conjugated to one of the subgroups $G_{648,704}$ or $G_{648,704}^\prime$.
Note that these subgroups are not conjugated, because the~group $G_{648,704}$ leaves invariant the~Fermat cubic surface,
but one can check that there exists no $G_{648,704}^\prime$-invariant cubic surface in $\mathbb{P}^3$.

\section{Equivariant geometry of projective space: group of order 48}
\label{section:P3-48}

Let $G$ be the~subgroup in $\mathrm{PGL}_4(\mathbb{C})$ generated by
$$
M=\begin{pmatrix}
-1 & 0 & 0 & 0\\
0 & 1 & 0 & 0\\
0 & 0 & -1 & 0\\
0 & 0 & 0 & 1
\end{pmatrix},
N=\begin{pmatrix}
-1 & 0 & 0 & 0\\
0 & -1 & 0 & 0\\
0 & 0 & 1 & 0\\
0 & 0 & 0 & 1
\end{pmatrix},
A=\begin{pmatrix}
0 & 0 & 1 & 0\\
1 & 0 & 0 & 0\\
0 & 1 & 0 & 0\\
0 & 0 & 0 & 1
\end{pmatrix},
B=\begin{pmatrix}
0 & 1 & 0 & 0\\
1 & 0 & 0 & 0\\
0 & 0 & 0 & 1\\
0 & 0 & 1 & 0
\end{pmatrix},
$$
and let $\mathbb{H}\cong\mumu_2^4$ be the~normal subgroup of the~group $G$ generated by $M$, $N$, $B$, $ABA^2$.
Then~$G$ is the~subgroup $G_{48,50}\cong\mumu_2^4\rtimes\mumu_3$ that has been introduced in Section~\ref{section:subgroups}.

\begin{remark}
\label{remark:48-Tim-subgroups}
The subgroup lattice of $G$ is described in \cite{Tim}. Let us present this description.
The subgroup $\mathbb{H}$ is the~unique subgroup in $G$ that is isomorphic to~$\mumu_2^4$. It is normal.
Similarly, the~group $G$ contains $16$ subgroups that are isomorphic to $\mumu_3$, which are all conjugated by the Sylow theorem.
Up to conjugation, the~group $G$ contains $5$ subgroups isomorphic to $\mumu_2$, which are all contained in the~subgroup $\mathbb{H}$, hence $15$ subgroups in total.
Finally, up to conjugation, the~group $G$ contains exactly $5$, $5$, $15$ subgroups that are isomorphic to $\mathfrak{A}_4$, $\mumu_2^3$, $\mumu_2^2$, respectively.
Their generators can be described as follows:
\begin{center}
\renewcommand\arraystretch{1.4}\hspace*{-0.6cm}
\begin{tabular}{|c||c|}
\hline
$\mathfrak{A}_4$ &  $\langle A,B,ABA^2\rangle$, $\langle A,M,N\rangle$, $\langle A,ABA^2N,BMN\rangle$, $\langle A,ABA^2M,BN\rangle$, $\langle A,ABA^2MN,BM\rangle$\\
\hline
$\mumu_2^3$ &  $\langle ABA^2, B , N\rangle$, $\langle ABA^2, BN, M \rangle$,  $\langle ABA^2, BM, MN \rangle$, $\langle ABA^2, BM, N\rangle$, $\langle B, M, N\rangle$\\
\hline
 & $\langle M, N \rangle$, $\langle B, N \rangle$, $\langle B, M \rangle$, $\langle B, MN \rangle$, $\langle BN, M \rangle$, $\langle BN, MN \rangle$,\\
$\mumu_2^2$& $\langle BM, N \rangle$, $\langle B,ABA^2 \rangle$, $\langle ABA^2, BMN \rangle$, $\langle ABA^2, BM \rangle$, $\langle ABA^2, BN \rangle$, \\
& $\langle ABA^2N, BM \rangle$, $\langle ABA^2N, BMN \rangle$, $\langle ABA^2M, BN \rangle$, $\langle ABA^2MN, BM \rangle$\\
\hline
\end{tabular}
\end{center}
\end{remark}

Now, let us the action of $G$ on smooth curves of small genus.

\begin{lemma}[{\cite{Breuer,LMFDB}}]
\label{lemma:48-curves}
Let $C$ be a smooth curve of genus $g\leqslant 19$.
\begin{enumerate}[\normalfont(1)]
\item If $\mathbb{H}$ acts faithfully on $C$, then $g\geqslant 5$ and $C$ is not hyperelliptic.
\item Suppose that $G$ acts faithfully on $C$. Then the~$G$-orbits in $C$ are of lengths $16$, $24$,~$48$.
Let $a_{16}$ and $a_{24}$ be the~number of $G$-orbits in $C$ of length $16$ and $24$, respectively.
Then~$g\in\{9,13,17\}$, and the~possible values of $a_{16}$ and $a_{24}$ are given in the~table
\begin{center}
\renewcommand\arraystretch{1.4}
\begin{tabular}{|c||c|c|c|c|c|c|c|}
\hline
$g$      & $9$ & $13$ &$13$ &$13$ &$17$ &$17$&$17$\\
\hline
\hline
$a_{16}$ & $2$ & $0$  & $0$& $3$ & $1$ & $1$ & $4$\\
\hline
$a_{24}$ & $2$ & $1$  & $5$& $1$ & $0$ & $4$ & $0$\\
\hline
\end{tabular}
\end{center}
\end{enumerate}
\end{lemma}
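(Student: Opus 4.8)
The plan is to run a Riemann--Hurwitz analysis on the two towers $C\to C/\mathbb{H}$ and $C\to C/G$, feeding in the subgroup structure of $G=G_{48,50}$ recorded above, and to read off the final genus list and passports from the low-genus classification of \cite{Breuer,LMFDB}. I begin by pinning down the orbit lengths. Since $C$ is a smooth curve, every point stabilizer of a faithful action is cyclic, so I only need the cyclic subgroups of $G$. Writing $G=\mathbb{H}\rtimes\langle A\rangle$ with $\mathbb{H}\cong\mumu_2^4$ and $\langle A\rangle\cong\mumu_3$, I first check that $G$ has no element of order $6$: read additively over $\mathbb{F}_2$, the conjugation action of $A$ on $\mathbb{H}$ permutes cyclically the three involutions of $\langle M,N\rangle$ and the three of $\langle B,ABA^2\rangle$, so $\mathbb{H}$ is the sum of two copies of the $2$-dimensional faithful $\mumu_3$-module and $1+A+A^2$ vanishes on $\mathbb{H}$. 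Hence $(vA)^3=1$ for all $v\in\mathbb{H}$, every non-identity element outside $\mathbb{H}$ has order $3$, and all involutions lie in $\mathbb{H}$. Therefore the cyclic subgroups of $G$ have order $1,2,3$, and the only orbit lengths are $48$, $48/2=24$ and $48/3=16$, the length-$24$ orbits carrying a $\mumu_2\subset\mathbb{H}$ stabilizer and the length-$16$ orbits a $\mumu_3$ stabilizer. This gives the first assertion of (2).

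For part (1), every nontrivial stabilizer of the $\mathbb{H}$-action is a $\mumu_2$, so all ramification of $C\to C/\mathbb{H}$ has index $2$ and Riemann--Hurwitz reads $g=16g'-15+4r$, where $g'$ is the genus of $C/\mathbb{H}$ and $r$ the number of branch points. A generating vector consists of $r$ involutions of $\mathbb{H}$ together with $2g'$ hyperbolic generators, whose classes span $\mathbb{F}_2^4$ and (as $\mathbb{H}$ is abelian) whose distinguished involutions sum to $0$. A dimension count over $\mathbb{F}_2$ then forces $r\geq 5$ when $g'=0$, $r\geq 3$ when $g'=1$, and $g'\geq 2$ when $r=0$; imposing $g\leq 19$ leaves exactly $g\in\{5,9,13,17\}$, so $g\geq 5$. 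For non-hyperellipticity I use that the hyperelliptic involution $\iota$ is central in $\mathrm{Aut}(C)$: then $\mathbb{H}$, or $\mathbb{H}\times\langle\iota\rangle$ if $\iota\notin\mathbb{H}$, descends to a faithful elementary abelian $2$-subgroup of $\mathrm{Aut}(\mathbb{P}^1)=\mathrm{PGL}_2(\mathbb{C})$ of rank at least $3$; since the largest elementary abelian $2$-subgroup of $\mathrm{PGL}_2(\mathbb{C})$ is the Klein four-group, this is impossible, and $C$ cannot be hyperelliptic.

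For part (2), Riemann--Hurwitz for $C\to C/G$ reads $g=48g_0-47+12a_{24}+16a_{16}$, where $g_0$ is the genus of $C/G$. Because $\mathbb{H}\leq G$ already acts faithfully, part (1) gives $5\leq g\leq 19$, which forces $g_0\in\{0,1\}$ and reduces the problem to the finitely many nonnegative solutions of this relation. The genus $g=5$ would require the signature $(0;2,2,2,3)$, that is $a_{16}=1$; but projecting a generating vector to $G/\mathbb{H}\cong\mumu_3$ kills commutators and every order-$2$ cycle (all involutions of $G$ lie in $\mathbb{H}$), so the images of the order-$3$ cycles must sum to $0$ in $\mumu_3$, which a single order-$3$ cycle cannot do. Hence $g=5$ is excluded and $g\in\{9,13,17\}$. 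For each such $g$, the admissible pairs $(a_{16},a_{24})$ are precisely the nonnegative solutions of the Riemann--Hurwitz relation, which I would tabulate and match against the databases \cite{Breuer,LMFDB}.

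The main obstacle is the passage from numerical Riemann--Hurwitz bookkeeping to genuine realizability: for each candidate signature one must decide whether $G$ admits a generating vector of that signature, with entries of the prescribed orders, generating the whole group. The projection to $G/\mathbb{H}\cong\mumu_3$ provides the decisive necessary condition that rules out $g=5$, and more generally constrains the number and images of the order-$3$ cycles; but organizing these constraints and certifying precisely which passports occur is exactly the content of the classification of low-genus group actions, which is why the determination of the table relies on \cite{Breuer,LMFDB}.
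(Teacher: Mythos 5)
Your proof is correct, and it is worth noting where it diverges from the paper's. Both arguments share the same skeleton: point stabilizers of a faithful action on a smooth curve are cyclic, the cyclic subgroups of $G$ have order at most $3$, so orbits have length $16$, $24$, $48$, and Riemann--Hurwitz plus a finite tabulation yields the table. The difference is that every step the paper delegates to the literature, you prove directly. The paper reads the subgroup structure off Dokchitser's database (Remark~\ref{remark:48-Tim-subgroups}); you instead identify $\mathbb{H}$ as two copies of the faithful $2$-dimensional $\mathbb{F}_2[\mumu_3]$-module, so that $1+A+A^2=0$ on $\mathbb{H}$ and every element outside $\mathbb{H}$ has order $3$ --- a clean, self-contained derivation of the orbit lengths. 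For part (1), the paper invokes \cite[Lemma~2.3]{CheltsovShramov2017} for rational and elliptic curves and the Kuribayashi--Kuribayashi classification for genera $3$ and $4$; your $\mathbb{F}_2$-dimension count on generating vectors (forcing $r\geqslant 5$ when $g'=0$, $r\geqslant 3$ when $g'=1$, $g'\geqslant 2$ when $r=0$) replaces both and in fact pins down $g\in\{5,9,13,17\}$ at once. Most substantively, the paper excludes $g=5$ by citing \cite[Proposition 3]{KuribayashiKimura} together with the absence of elements of order $4$ in $G$, and this exclusion is genuinely needed since the raw Hurwitz numerics do admit $g=5$ with $(a_{16},a_{24})=(1,3)$; your replacement --- projecting a putative generating vector of signature $(0;2,2,2,3)$ to $G/\mathbb{H}\cong\mumu_3$, where the order-$2$ entries die and the lone order-$3$ entry would be forced into the $2$-group $\mathbb{H}$ --- is an elementary obstruction that does the same job without the classification. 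Your hyperellipticity argument (centrality of the hyperelliptic involution, no elementary abelian $2$-group of rank $\geqslant 3$ in $\mathrm{PGL}_2(\mathbb{C})$) is essentially the paper's. One last remark: your closing concern about realizability of the passports is not actually needed, since the lemma (and the paper's own proof) only asserts necessity --- that the listed triples are the only ones that can occur --- so your argument is already complete once the tabulation of nonnegative solutions is written out.
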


\begin{proof}
By \cite[Lemma 2.3]{CheltsovShramov2017}, the~group $\mathbb{H}$ cannot act faithfully on rational or elliptic curve.
Moreover, if $\mathbb{H}$ acts faithfully on $C$ and the curve $C$ is hyperelliptic, then the~canonical morphism
$C\to\mathbb{P}^1$ is $\mathbb{H}$-equivariant, which is impossible, since neither $\mumu^4$ nor $\mumu_2^2$ can act faithfully on a~rational curve.
Thus, assertion (1) follows from \cite{KuribayashiKuribayashi}.

Suppose $G$ acts faithfully on $C$. Then $g\geqslant 5$ by (1),
and $g\ne 5$ by \cite[Proposition 3]{KuribayashiKimura}, since $G$ does not contain elements of order $4$.
Thus, we conclude that $g>5$.

By Remark~\ref{remark:48-Tim-subgroups}, the~$G$-orbits in $C$ are of lengths $16$, $24$,~$48$,
because the~stabilizer in the~group $G$ of a point in $C$ is cyclic.
Let $\widehat{C}= C/G$, and let $\hat{g}$ be the~genus of the~curve~$\widehat{C}$.
Then $2g-2=48(2\hat{g}-2)+32a_{16}+24a_{24}$ by the Hurwitz's formula.
This implies (2).
\end{proof}

Let $\mathcal{Q}_1=\{x_0^2+x_1^2+x_2^2+x_3^2=0\}$. Then $\mathcal{Q}_1$ is the~unique $G$-invariant quadric in $\mathbb{P}^3$.~Let
\begin{align*}
\mathcal{Q}_2&=\big\{x_0^2+x_1^2=x_2^2+x_3^2\big\}, &\mathcal{Q}_3&=\big\{x_0^2-x_1^2=x_2^2-x_3^2\big\}, &\mathcal{Q}_4&=\big\{x_0^2-x_1^2=x_3^2-x_2^2\big\},\\
\mathcal{Q}_5&=\big\{x_0x_2+x_1x_3=0\big\}, &\mathcal{Q}_6&=\big\{x_0x_3+x_1x_2=0\big\}, &\mathcal{Q}_7&=\big\{x_0x_1+x_2x_3=0\big\},\\
\mathcal{Q}_8&=\big\{x_0x_2=x_1x_3\big\}, &\mathcal{Q}_9&=\big\{x_0x_3=x_1x_2\big\}, &\mathcal{Q}_9&=\big\{x_0x_1=x_2x_3\big\}.
\end{align*}
Then $\mathcal{Q}_1$, $\mathcal{Q}_2$, $\mathcal{Q}_3$, $\mathcal{Q}_4$, $\mathcal{Q}_5$, $\mathcal{Q}_6$, $\mathcal{Q}_7$,
$\mathcal{Q}_8$, $\mathcal{Q}_9$, $\mathcal{Q}_{10}$ are all $\mathbb{H}$-invariant quadric surfaces in~$\mathbb{P}^3$.
Observe that these quadric surfaces are smooth, and $\mathbb{H}$ acts faithfully on each of them.
These are the ten \emph{fundamental quadrics} in \cite{Hudson}.

\begin{lemma}
\label{lemma:alpha-quadric-Heisenberg}
Let $S$ be an $\mathbb{H}$-invariant quadric surface in $\mathbb{P}^3$. Then $\alpha_{\mathbb{H}}(S)=1$.
\end{lemma}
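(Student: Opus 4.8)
The plan is to prove the two inequalities $\alpha_{\mathbb{H}}(S)\leqslant 1$ and $\alpha_{\mathbb{H}}(S)\geqslant 1$, where I use that $\alpha_{\mathbb{H}}(S)$ is the infimum of $\mathrm{lct}(S,D)$ taken over all effective $\mathbb{H}$-invariant $\mathbb{Q}$-divisors $D\sim_{\mathbb{Q}}-K_S$. Since every $\mathbb{H}$-invariant quadric is one of the fundamental quadrics $\mathcal{Q}_1,\dots,\mathcal{Q}_{10}$, the surface $S$ is smooth, so $S\cong\mathbb{P}^1\times\mathbb{P}^1$, the group $\mathbb{H}$ acts on it faithfully, and $-K_S\sim\mathcal{O}_{\mathbb{P}^3}(2)|_S$ has bidegree $(2,2)$. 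The upper bound is immediate: for any $j$ with $\mathcal{Q}_j\neq S$ the curve $C=S\cap\mathcal{Q}_j$ is an $\mathbb{H}$-invariant member of $|-K_S|$, and writing $C=\sum a_iC_i$ we have $\max_i a_i\geqslant 1$, whence $\mathrm{lct}(S,C)\leqslant 1$ and so $\alpha_{\mathbb{H}}(S)\leqslant 1$.

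For the lower bound I would argue by contradiction, assuming an effective $\mathbb{H}$-invariant $\mathbb{Q}$-divisor $D\sim_{\mathbb{Q}}-K_S$ with $(S,D)$ not log canonical, and I would first record three facts about the action. First, the generators $M,N,B,ABA^2$ all lie in $\mathrm{SO}_4$, so $\mathbb{H}\cong\mumu_2^2\times\mumu_2^2$ preserves both rulings and embeds into $\mathrm{PGL}_2(\mathbb{C})\times\mathrm{PGL}_2(\mathbb{C})$ as a product of two Klein four-groups. Second, a point stabilizer in $\mathbb{H}$ embeds into $\mathrm{GL}(T_pS)\cong\mathrm{GL}_2(\mathbb{C})$, whose elementary abelian $2$-subgroups have rank at most $2$; hence $|\mathbb{H}_p|\leqslant 4$, every $\mathbb{H}$-orbit has length at least $4$, and a ruling through an orbit point meets the orbit in the corresponding one-factor orbit, which has length at least $2$. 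Third, $\mathbb{H}$ has no invariant ruling and no invariant curve of bidegree $(1,1)$: a Klein four-group has no fixed point on $\mathbb{P}^1$, and a common eigenvector of the diagonal matrices $M$ and $N$ must be a coordinate form $x_i$, which is not preserved by $B$, so $\mathbb{H}$ has no semi-invariant linear form and hence no invariant hyperplane.

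Next I would exclude a one-dimensional non-log-canonical locus. If a component of $D$ of bidegree $(a,b)$ occurs with coefficient $c>1$, then by $\mathbb{H}$-invariance its whole orbit occurs with coefficient $c$; comparing bidegrees with $(2,2)$ and using that orbits of components of bidegree $(1,0)$, $(0,1)$, $(1,1)$ have length at least $2$ (no invariant such curve) while any single irreducible curve with $a+b\geqslant 3$ has $\max(a,b)\geqslant 2$, one sees that $c\leqslant 1$ in every case, a contradiction. Thus every component of $D$ has coefficient at most $1$, and the non-log-canonical locus of $(S,D)$ is a non-empty, finite, $\mathbb{H}$-invariant set of points, at each of which $(S,D)$ is not log canonical. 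Choosing such a point $p$ and the ruling $F$ through $p$ of bidegree $(1,0)$, the $\mathbb{H}$-orbit of $p$ meets $F$ in at least two points, each again a non-log-canonical point of $(S,D)$.

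The contradiction then comes from restricting to $F$. Because $F^2=0$, the relevant restricted divisor on $F\cong\mathbb{P}^1$ has degree $2$ regardless of whether $F$ lies in the support of $D$: if $F\not\subset\mathrm{Supp}(D)$ then $\mathrm{mult}_q(D|_F)\geqslant\mathrm{mult}_q(D)>1$ at each non-log-canonical point $q\in F$, while $\deg(D|_F)=D\cdot F=2$; and if $F\subset\mathrm{Supp}(D)$ with coefficient $c_F\leqslant 1$, then inversion of adjunction along the smooth curve $F$ shows that the effective divisor $(D-c_FF)|_F$, of degree $(D-c_FF)\cdot F=2$, has multiplicity greater than $1$ at each such $q$. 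In both cases an effective degree-$2$ divisor on $\mathbb{P}^1$ would have multiplicity exceeding $1$ at two distinct points, which is absurd, giving $\alpha_{\mathbb{H}}(S)\geqslant 1$. I expect the main obstacle to be precisely this boundary case in which $F$ lies in $\mathrm{Supp}(D)$; the clean resolution is the observation that $F^2=0$ preserves the restricted degree, so inversion of adjunction converts non-log-canonicity at $p$ into an impossible multiplicity count for an honest degree-$2$ divisor on $\mathbb{P}^1$.
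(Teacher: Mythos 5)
Your proof is correct, but its key step is genuinely different from the paper's. Both arguments begin identically: the upper bound comes from restricting another fundamental quadric to $S$, and a one-dimensional non-log-canonical locus is excluded by the coefficient/bidegree count (the paper compresses this into the observation that $S$ contains no $\mathbb{H}$-invariant curves of bidegree $(1,0)$, $(0,1)$ or $(1,1)$). The divergence is in the zero-dimensional case. The paper applies the Koll\'ar--Shokurov connectedness theorem: the non-log-canonical locus, being connected, zero-dimensional and $\mathbb{H}$-invariant, would be a single $\mathbb{H}$-fixed point, contradicting the absence of $\mathbb{H}$-fixed points on $S$. You instead exploit the product structure of the action: $\mathbb{H}$ preserves both rulings and surjects onto a Klein four-group in each $\mathrm{PGL}_2(\mathbb{C})$ factor, so the orbit of a non-log-canonical point meets the ruling through it in at least two points; restricting to that ruling (using $F^2=0$, $D\cdot F=2$) and invoking inversion of adjunction when $F\subset\mathrm{Supp}(D)$ gives the contradiction. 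Your route trades the connectedness theorem for inversion of adjunction and a direct multiplicity count, at the cost of needing finer information about how $\mathbb{H}$ sits in $\mathrm{Aut}(S)$; the paper's route is shorter and needs only the facts inherited directly from $\mathbb{P}^3$ (no fixed points, no invariant lines or planes).

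One justification in your write-up is looser than it should be: the claim that the generators of $\mathbb{H}$ ``lie in $\mathrm{SO}_4$'' implicitly refers to the standard quadratic form, whereas $S$ may be any of the ten fundamental quadrics, whose forms differ; moreover some generators preserve some of these forms only up to sign (for instance $B$ sends $x_0x_2-x_1x_3$ to its negative), so the determinant criterion for preserving rulings must be applied with care. The conclusion that $\mathbb{H}$ preserves both rulings of every fundamental quadric is nevertheless true: one can verify it on the Segre parametrization of each quadric, or note that an involution swapping the rulings of a smooth quadric fixes a $(1,1)$-curve pointwise, hence fixes a plane in $\mathbb{P}^3$ pointwise, while every involution in $\mathbb{H}$ has fixed locus a union of two skew lines.
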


\begin{proof}
Fix an isomorphism $S\cong\mathbb{P}^1\times\mathbb{P}^1$. Observe that $S$ does not have $\mathbb{H}$-fixed points,
and the~surface $S$ does not contain $\mathbb{H}$-invariant curves of degree $(1,0)$, $(0,1)$ or $(1,1)$.
Indeed, this follows from the fact that $\mathbb{P}^3$ does not contain $\mathbb{H}$-fixed points,
and it contains neither $\mathbb{H}$-invariant lines nor $\mathbb{H}$-invariant planes.

Note that $|-K_{S}|$ has $\mathbb{H}$-invariant curves, these are the restrictions of other $\mathbb{H}$-invariant
quadric surfaces in $\mathbb{P}^3$ on $S$. This shows that $\alpha_{\mathbb{H}}(S)\leqslant 1$.

Suppose that $\alpha_{\mathbb{H}}(S)<1$.
Then $S$ contains an~$\mathbb{H}$-invariant effective $\mathbb{Q}$-divisor $D$ such that $D\sim_{\mathbb{Q}}-K_S$, and
$(S,\lambda D)$ is not log canonical for some rational number \mbox{$\lambda<1$}.
Since~the surface $S$ does not contains $\mathbb{H}$-invariant curves of degree $(1,0)$, $(0,1)$ or $(1,1)$,
the~locus $\mathrm{Nklt}(S,\lambda D)$ is zero-dimensional.
Applying the~Koll\'ar--Shokurov connectedness theorem  \cite[Corollary~5.49]{KoMo98},
we see that $\mathrm{Nklt}(S,\lambda D)$ is a point, which must be $\mathbb{H}$-fixed.
But $S$ does not contain $\mathbb{H}$-fixed points. Contradiction.
\end{proof}

Observe also that $G$ acts naturally on the~set
$$
\big\{\mathcal{Q}_1,\mathcal{Q}_2,\mathcal{Q}_3,\mathcal{Q}_4,\mathcal{Q}_5,\mathcal{Q}_6,\mathcal{Q}_7,\mathcal{Q}_8,\mathcal{Q}_9,\mathcal{Q}_{10}\big\},
$$
and it splits this set into four $G$-orbits:
$\{\mathcal{Q}_1\}$, $\{\mathcal{Q}_2,\mathcal{Q}_3,\mathcal{Q}_4\}$, $\{\mathcal{Q}_5,\mathcal{Q}_6,\mathcal{Q}_7\}$, $\{\mathcal{Q}_8,\mathcal{Q}_9,\mathcal{Q}_{10}\}$.

\begin{remark}
\label{remark:48-30-lines}
Any two distinct $\mathbb{H}$-invariant quadrics in $\mathbb{P}^3$ intersect by a quadruple of lines.
By \cite[Lemma~2.17]{CheltsovShramov2017}, this gives $30$ lines, which can be characterized as follows: for every line among these $30$ lines,
there is an~element $g\in\mathbb{H}$ such that $g$ pointwise fixes this~line.
These lines contains all $G$-orbits of length $24$. See Remark~\ref{remark:48-orbits-24} for more details.
\end{remark}

Let us describe $G$-orbits in $\mathbb{P}^3$.
All $G$-orbits of length~$24$ are described in Remark~\ref{remark:48-30-lines}.
To describe the remaining $G$-orbits in $\mathbb{P}^3$, we let
\begin{align*}
\Sigma_{4}&=\mathrm{Orb}_G\big([1:0:0:0]\big),\\
\Sigma_{4}^{\prime}&=\mathrm{Orb}_G\big([1:1:1:-1]\big),\\
\Sigma_{4}^{\prime\prime}&=\mathrm{Orb}_G\big([1:1:1:1]\big),\\
\Sigma_{12}&=\mathrm{Orb}_G\big([0:0:1:1]\big),\\
\Sigma_{12}^\prime&=\mathrm{Orb}_G\big([0:0:i:1]\big),\\
\Sigma_{12}^{\prime\prime}&=\mathrm{Orb}_G\big([i:i:1:1]\big),\\
\Sigma_{12}^{\prime\prime\prime}&=\mathrm{Orb}_G\big([-i:i:1:1]\big),\\
\Sigma_{16}&=\mathrm{Orb}_G\big([-1+\sqrt{3}i:-1-\sqrt{3}i:2:0]\big),\\
\Sigma_{16}^\prime&=\mathrm{Orb}_G\big([-1-\sqrt{3}i:-1+\sqrt{3}i:2:0]\big),\\
\Sigma_{16}^t&=\mathrm{Orb}_G\big([1:1:1:t]\big)\ \text{for}\ t\in\mathbb{C}\setminus\{\pm 1\}.
\end{align*}
Then $\Sigma_{4}$, $\Sigma_{4}^{\prime}$, $\Sigma_{4}^{\prime\prime}$, $\Sigma_{12}$, $\Sigma_{12}^\prime$,
$\Sigma_{12}^{\prime\prime}$, $\Sigma_{12}^{\prime\prime\prime}$, $\Sigma_{16}$, $\Sigma_{16}^\prime$
are $G$-orbits of length $4$, $4$, $4$, $12$, $12$, $12$, $12$, $16$, $16$, respectively.
Similarly, $\Sigma_{16}^t$ is a $G$-orbit of length $16$ for every $t\in\mathbb{C}\setminus\{\pm 1\}$.

\begin{lemma}
\label{lemma:48-orbits}
Let $\Sigma$ be a $G$-orbit in $\mathbb{P}^3$ such that $|\Sigma|<24$.
Then
\begin{itemize}
\item either $\Sigma$ is one of the~$G$-orbits $\Sigma_{4}$, $\Sigma_{4}^{\prime}$, $\Sigma_{4}^{\prime\prime}$, $\Sigma_{12}$, $\Sigma_{12}^\prime$,
$\Sigma_{12}^{\prime\prime}$, $\Sigma_{12}^{\prime\prime\prime}$, $\Sigma_{16}$, $\Sigma_{16}^\prime$,
\item or $\Sigma=\Sigma_{16}^t$ for some $t\in\mathbb{C}\setminus\{\pm 1\}$.
\end{itemize}
\end{lemma}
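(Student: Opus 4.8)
The plan is to run an orbit--stabiliser analysis. For $P\in\mathbb{P}^3$ put $G_P=\mathrm{Stab}_G(P)$, so that $|\mathrm{Orb}_G(P)|=48/|G_P|$ and $|\Sigma|<24$ is equivalent to $|G_P|\geqslant 3$. The first step is to restrict the possible orders of $G_P$. Since $\mathbb{H}\cong\mumu_2^4$ is the unique subgroup of order $16$ (Remark~\ref{remark:48-Tim-subgroups}), it is the normal Sylow $2$-subgroup, so every $2$-subgroup of $G$ lies in $\mathbb{H}$; moreover $\mathbb{H}$ has no fixed point in $\mathbb{P}^3$ (see the proof of Lemma~\ref{lemma:alpha-quadric-Heisenberg}). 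As $G$ has $16$ Sylow $3$-subgroups we get $N_G(\langle A\rangle)=\langle A\rangle$, so any order-$6$ subgroup would force an involution to normalise a Sylow $3$-subgroup, which is impossible; and $G^{\mathrm{ab}}\cong\mumu_3$ excludes subgroups of order $24$. Hence $|G_P|\in\{3,4,8,12\}$, i.e. $|\Sigma|\in\{16,12,6,4\}$, and it remains to identify the orbits in each case.

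If $3\mid|G_P|$, then $G_P$ contains an element of order $3$; since all such elements are conjugate to $A$, after a $G$-translation I may assume $P\in\mathrm{Fix}(A)$. As $A$ has eigenvalues $1,1,\zeta_3,\zeta_3^2$, a direct computation gives $\mathrm{Fix}(A)=L_A\sqcup\{p,p'\}$, where $L_A=\{[s:s:s:t]\}$ is the line through the two $1$-eigenvectors and $p=[\zeta_3^2:\zeta_3:1:0]$, $p'=[\zeta_3:\zeta_3^2:1:0]$ are the remaining eigenpoints. Using $N_G(\langle A\rangle)=\langle A\rangle$ (so no involution can enlarge a stabiliser containing $A$) together with the absence of order-$6$ subgroups, each such $P$ has $G_P\cong\mumu_3$ or $G_P\cong\mathfrak{A}_4$, i.e. $|\Sigma|\in\{16,4\}$. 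I then check that $p$ and $p'$ have stabiliser exactly $\langle A\rangle$ (no nontrivial element of $\mathbb{H}$ fixes them), so they lie in $\Sigma_{16}'$ and $\Sigma_{16}$; on $L_A$ the point $[0:0:0:1]$ lies in $\Sigma_4$ and $[1:1:1:\pm1]$ give $\Sigma_4',\Sigma_4''$ (all of length $4$), while every other point gives $\Sigma_{16}^t$.

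If $G_P$ is a $2$-group, then $G_P\subseteq\mathbb{H}$. The preimage of $\mathbb{H}$ in $\mathrm{GL}_4(\mathbb{C})$ is a Heisenberg $2$-group, so the commutator pairing makes $\mathbb{H}\cong\mathbb{F}_2^4$ into a nondegenerate symplectic space; if two involutions lift to anticommuting matrices they have no common eigenvector. For $|G_P|=8$ a subgroup $\mumu_2^3\subset\mathbb{H}$ corresponds to a rank-$3$ subspace, which cannot be isotropic (a maximal isotropic subspace has rank $2$), so it contains such an anticommuting pair and has no fixed point; this rules out $|\Sigma|=6$. For $|G_P|=4$ the point $P$ is an isolated common fixed point of an \emph{isotropic} $\mumu_2^2\subset\mathbb{H}$ (the anisotropic ones having no fixed point), and each isotropic $\mumu_2^2$ acts with four distinct characters, hence has exactly four isolated fixed points and no pointwise-fixed line. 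Enumerating these fixed points, discarding the coordinate points (whose stabiliser is the larger group $\mathfrak{A}_4$, already appearing in $\Sigma_4$), and grouping the remainder into $G$-orbits yields precisely the four length-$12$ orbits $\Sigma_{12},\Sigma_{12}',\Sigma_{12}'',\Sigma_{12}'''$.

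The main obstacle is this last computation: organising the isolated fixed points of the isotropic $\mumu_2^2\subset\mathbb{H}$, separating those with an extra order-$3$ symmetry (which fall into $\Sigma_4,\Sigma_4',\Sigma_4''$) from the genuine length-$12$ points, and verifying that the latter assemble into exactly four $G$-orbits; a consistency check is that the $15$ isotropic $\mumu_2^2$'s contribute $15\cdot 4=60=12+48$ fixed points, the $12$ lying in the length-$4$ orbits and the $48$ splitting into four orbits of length $12$. The remaining cases reduce to short fixed-locus computations, and combining all four shows that every $G$-orbit with $|\Sigma|<24$ is one of $\Sigma_4,\Sigma_4',\Sigma_4'',\Sigma_{12},\Sigma_{12}',\Sigma_{12}'',\Sigma_{12}''',\Sigma_{16},\Sigma_{16}'$, or $\Sigma_{16}^t$ for some $t\in\mathbb{C}\setminus\{\pm1\}$.
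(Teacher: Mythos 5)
Your proof is correct, and although it shares the paper's skeleton (orbit--stabiliser plus a case analysis on the isomorphism type of the stabiliser $G_P$), the key steps are carried out by genuinely different means. The paper imports the full subgroup lattice of $G$ from Dokchitser's database (Remark~\ref{remark:48-Tim-subgroups}) and then checks fixed loci of explicit representatives of every conjugacy class: it rules out $\mumu_2^3$ via an $\mathbb{H}$-invariant line, treats the five classes of $\mathfrak{A}_4$ one by one, and runs through all fifteen classes of $\mumu_2^2$ to see which fix points. You instead derive the admissible stabiliser orders $\{3,4,8,12\}$ from Sylow theory ($N_G(\langle A\rangle)=\langle A\rangle$ kills order $6$, $G^{\mathrm{ab}}\cong\mumu_3$ kills order $24$), fold the whole case $3\mid|G_P|$ (both $\mumu_3$ and $\mathfrak{A}_4$) into a single computation of $\mathrm{Fix}(A)$, and replace the $\mumu_2^2$/$\mumu_2^3$ casework by the symplectic form on $\mathbb{H}\cong\mathbb{F}_2^4$ coming from the extraspecial lift $\widehat{\mathbb{H}}$: non-isotropic subgroups have anticommuting lifts and hence no fixed points, which disposes of $\mumu_2^3$ and of twenty of the thirty-five $\mumu_2^2$'s at once, while the incidence count $15\cdot 4=12+4\cdot 12$ pins down exactly four length-$12$ orbits without listing conjugacy classes. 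Your route buys independence from the database and a conceptual explanation of why precisely four $\Sigma_{12}$'s occur; the paper's route buys explicit representatives of each stabiliser class, which it reuses later (e.g.\ in Lemmas~\ref{label:48-lines} and~\ref{lemma:48-conics}). Two assertions you should justify explicitly: that an isotropic $\mumu_2^2$ acts with four distinct characters (either by Stone--von Neumann for $\widehat{\mathbb{H}}$, or by checking $\langle M,N\rangle$ and using that the normaliser of $\mathbb{H}$ surjects onto $\mathrm{Sp}_4(\mathbb{F}_2)$, which is transitive on Lagrangians), and that a point outside the length-$4$ orbits is fixed by at most one isotropic $\mumu_2^2$ (two of them would generate a $2$-subgroup of order at least $8$ with a fixed point, which you have excluded); both are one-line additions, not gaps. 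As a side remark, your eigenpoints $[\zeta_3^2:\zeta_3:1:0]$ and $[\zeta_3:\zeta_3^2:1:0]$ are the correct isolated fixed points of $A$; the points listed at the corresponding step of the paper's own proof carry a sign typo and are not actually fixed by $A$, though they agree with the (correct) definitions of $\Sigma_{16}$ and $\Sigma_{16}^\prime$ after the sign is fixed.
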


\begin{proof}
Let $\Gamma$ be the~stabilizer of a point in $\Sigma$. Then $|\Gamma|>2$.
But $\mathbb{P}^3$ has no $\mathbb{H}$-fixed~points,
so that $\Gamma$ is isomorphic to $\mathfrak{A}_4$, $\mumu_2^3$, $\mumu_2^2$ or $\mumu_3$ by Remark~\ref{remark:48-Tim-subgroups}.
Then $|\Sigma|\in\{4,6,12,16\}$.

If $\Gamma\cong\mumu_2^3$, then $\Gamma\subset\mathbb{H}$, hence $\mathbb{P}^3$ contains an $\mathbb{H}$-orbit of length $2$,
which is impossible, since $\mathbb{P}^3$ does not contains $\mathbb{H}$-invariant lines.
Hence, $\Gamma$ is isomorphic to one of the~following three groups: $\mathfrak{A}_4$, $\mumu_2^2$, $\mumu_3$.
Then $|\Sigma|\in\{4,12,16\}$.

Suppose that $\Gamma\cong\mumu_3$.
By Remark~\ref{remark:48-Tim-subgroups}, we may assume that $\Gamma=\langle A\rangle$.
Then the~$\Gamma$-fixed points in $\mathbb{P}^3$ are the following
$$
\big[1-\sqrt{3}i:1+\sqrt{3}i:2:0\big],\big[1+\sqrt{3}i:1-\sqrt{3}i:2:0\big], [0:0:0:1], [1:1:1:t]
$$
for any $t\in\mathbb{C}$.
Since the~stabilizers of the~points $[0:0:0:1]$, $[1:1:1:1]$, $[1:1:1:-1]$ are larger than $\Gamma$,
either $\Sigma$ is one of the~orbits $\Sigma_{16}$, $\Sigma_{16}^\prime$,
or $\Sigma=\Sigma_{16}^t$ for some $t\in\mathbb{C}\setminus\{\pm 1\}$.

Now, suppose that $\Gamma\cong\mathfrak{A}_4$.
By Remark~\ref{remark:48-Tim-subgroups}, the~group $G$ contains exactly five subgroups isomorphic to $\mathfrak{A}_4$ up to conjugation.
Three of these groups are $\langle A,B\rangle$, $\langle A,NB\rangle$, $\langle MA,B\rangle$.
If $\Gamma$ is one of these subgroups, then $\Sigma$ is one of the~$G$-orbits $\Sigma_{4}$, $\Sigma_{4}^{\prime}$, $\Sigma_{4}^{\prime\prime}$, respectively.
The~remaining subgroups conjugated to $\mathfrak{A}_4$ are the~groups  $\langle A,MB\rangle$ and $\langle ABA,BNM\rangle$.
One can check that both of them do not have fixed points in $\mathbb{P}^3$.

Finally, we suppose that $\Gamma\cong\mumu_2^2$.
By Remark~\ref{remark:48-Tim-subgroups},  the~group $G$ contains $15$ subgroups isomorphic to $\mumu_2^2$ up to conjugation.
Five subgroups among them are normal --- they are contained in the~subgroups of $G$ isomorphic to  $\mathfrak{A}_4$.
The fixed points of three of them are contained in the~subset $\Sigma_{4}\cup\Sigma_{4}^{\prime}\cup\Sigma_{4}^{\prime\prime}$,
and the~remaining two normal subgroups do not fix any point in  $\mathbb{P}^3$ ---
they leave invariant rulings of the~quadric $\mathcal{Q}_1\cong\mathbb{P}^1\times\mathbb{P}^1$.

To complete the~proof, we may assume that $\Gamma$ is not a normal subgroup of the~group~$G$.
Up to conjugation, there are ten such subgroups in $G$ by Remark~\ref{remark:48-Tim-subgroups}.
Four of them fix a~point in the~subset $\Sigma_{12}\cup\Sigma_{12}^\prime\cup\Sigma_{12}^{\prime\prime}\cup\Sigma_{12}^{\prime\prime\prime}$.
Up to conjugation, these are the~subgroups
$$
\big\langle B,N\big\rangle, \big\langle BM,N\big\rangle, \big\langle ABA^2,BMN\big\rangle, \big\langle ABA^2M, BN\big\rangle
$$
respectively. If $\Gamma$ is one of them, then $\Sigma$ is one of the~$G$-orbits $\Sigma_{12}$, $\Sigma_{12}^\prime$, $\Sigma_{12}^{\prime\prime}$, $\Sigma_{12}^{\prime\prime\prime}$.

The remaining $6$ subgroups in $G$ that are isomorphic to $\mumu_2^2$ are described in Remark~\ref{remark:48-Tim-subgroups}.
For instance, take the~subgroup $\langle B,MN\rangle\cong\mumu_2^2$.
This group does not fix any point in~$\mathbb{P}^3$, but this subgroup leaves invariant rulings of the~quadric $\mathcal{Q}_8\cong\mathbb{P}^1\times\mathbb{P}^1$.
To be precise, for every $[a:b]\in\mathbb{P}^1$, the~group $\langle B,MN\rangle$ leaves invariant the~line
$$
\big\{ax_0+bx_3=ax_1+bx_2=0\big\}\subset\mathcal{Q}_8.
$$
Moreover, these are all  $\langle B,MN\rangle$-invariant lines in $\mathbb{P}^3$.
Similarly, one can also check that each of the~remaining non-normal subgroups in $G$ isomorphic to $\mumu_2^2$ fixes no point in $\mathbb{P}^3$,
but it leaves invariant infinitely many lines that are contained in one of the~$\mathbb{H}$-invariant quadrics
$\mathcal{Q}_2$, $\mathcal{Q}_3$, $\mathcal{Q}_{4}$, $\mathcal{Q}_5$, $\mathcal{Q}_6$, $\mathcal{Q}_{7}$,
$\mathcal{Q}_8$, $\mathcal{Q}_9$, $\mathcal{Q}_{10}$.
This completes the~proof of the~lemma.
\end{proof}

Let us describe the normalizer of the subgroup $G$ in the group $\mathrm{PGL}_4(\mathbb{C})$.
To start with, recall from Section~\ref{section:subgroups} that $G\triangleleft G_{96,227}\cong \mumu_2^4\rtimes\mathfrak{S}_3$, where $G_{96,227}$ is generated by
$$
M,N,A^\prime=\begin{pmatrix}
0 & 0 & 0 & 1\\
1 & 0 & 0 & 0\\
0 & 1 & 0 & 0\\
0 & 0 & 1 & 0
\end{pmatrix},
B^\prime=\begin{pmatrix}
0 & 1 & 0 & 0\\
1 & 0 & 0 & 0\\
0 & 0 & 1 & 0\\
0 & 0 & 0 & 1
\end{pmatrix}.
$$
Similarly, we have $G\triangleleft G_{96,70}$ and $G\triangleleft G_{192,955}$, where $G_{96,70}\cong\mumu_2^4\rtimes\mumu_6$  is generated by
$$
M,N,A,B,L=
\begin{pmatrix}
-1 & 0 & 0 & 0\\
0 & 1 & 0 & 0\\
0 & 0 & 1 & 0\\
0 & 0 & 0 & 1
\end{pmatrix},
$$
and $G_{192,955}=\langle M,N,A^\prime,B^\prime,L\rangle\cong\mumu_4^2\rtimes\mathrm{D}_{12}$.
Let $G_{144,184}$ be the~subgroup generated~by
$$
M,N,A,B,
R=\frac{1}{2}\begin{pmatrix}
1 & 1 & 1 & 1\\
1 & 1 & -1 & -1\\
1 &-1 & 1 & -1\\
-1 & 1 & 1 & -1
\end{pmatrix},
$$
and let $G_{288,1025}=\langle M,N,A^\prime,B^\prime,R\rangle$.
Then $G\triangleleft G_{144,184}\cong\mathfrak{A}_4\times\mathfrak{A}_4$ and $G\triangleleft G_{288,1025}\cong\mathfrak{A}_4\wr\mumu_2$.

Let $G_{576,8654}=\langle M,N,L,A^\prime,B^\prime,R\rangle$.
Then $G_{576,8654}\cong\mumu_2^4\rtimes(\mumu_3^2\rtimes\mumu_2^2)\cong(\mathfrak{A}_4\times\mathfrak{A}_4)\rtimes\mumu_2^2$.

\begin{lemma}
\label{lemma:48-50-normalizer}
The group $G_{576,8654}$ is the normalizer of the group $G$ in $\mathrm{PGL}_4(\mathbb{C})$.
\end{lemma}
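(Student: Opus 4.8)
The plan is to prove the two inclusions $G_{576,8654}\subseteq N_{\mathrm{PGL}_4(\mathbb{C})}(G)$ and $N_{\mathrm{PGL}_4(\mathbb{C})}(G)\subseteq G_{576,8654}$ separately: the first by hand from the normality statements already recorded, the second by reducing everything to a computation inside $\mathrm{Sp}_4(\mathbb{F}_2)\cong\mathfrak{S}_6$. For the first inclusion, since $G_{576,8654}=\langle M,N,L,A^\prime,B^\prime,R\rangle$, it suffices to check that each generator normalizes $G$. Here $M,N\in G$, while $L,A^\prime,B^\prime$ normalize $G$ because $G\triangleleft G_{192,955}=\langle M,N,A^\prime,B^\prime,L\rangle$, and $R$ normalizes $G$ because $G\triangleleft G_{144,184}=\langle M,N,A,B,R\rangle$; both normality statements appear above. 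Hence $G_{576,8654}\subseteq N_{\mathrm{PGL}_4(\mathbb{C})}(G)$, so $|N_{\mathrm{PGL}_4(\mathbb{C})}(G)|\geqslant 576$.

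For the reverse inclusion, the key structural input is that $\mathbb{H}$ is the unique subgroup of $G$ isomorphic to $\mumu_2^4$ by Remark~\ref{remark:48-Tim-subgroups}, hence characteristic in $G$; therefore every element normalizing $G$ normalizes $\mathbb{H}$, giving $N_{\mathrm{PGL}_4(\mathbb{C})}(G)\subseteq N_{\mathrm{PGL}_4(\mathbb{C})}(\mathbb{H})$. I would then analyze $N_{\mathrm{PGL}_4(\mathbb{C})}(\mathbb{H})$ through the commutator pairing. Since $\mathbb{P}^3$ has no $\mathbb{H}$-fixed points and contains no $\mathbb{H}$-invariant lines or planes (as used in the proof of Lemma~\ref{lemma:alpha-quadric-Heisenberg}), the representation of $\mathbb{H}$ on $\mathbb{C}^4$ admits no invariant subspace of dimension $1$, $2$ or $3$, so it is irreducible. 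Consequently the preimage of $\mathbb{H}$ in $\mathrm{GL}_4(\mathbb{C})$ is extraspecial of order $32$, the commutator pairing $\epsilon\colon\mathbb{H}\times\mathbb{H}\to\mumu_2$ is a nondegenerate symplectic form on $\mathbb{H}\cong\mathbb{F}_2^4$, and Schur's lemma yields $C_{\mathrm{PGL}_4(\mathbb{C})}(\mathbb{H})=\mathbb{H}$. Thus conjugation defines an injection $N_{\mathrm{PGL}_4(\mathbb{C})}(\mathbb{H})/\mathbb{H}\hookrightarrow\mathrm{Sp}(\mathbb{H},\epsilon)\cong\mathrm{Sp}_4(\mathbb{F}_2)\cong\mathfrak{S}_6$.

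Finally I would transfer the problem to $\mathfrak{S}_6$. Since $[G:\mathbb{H}]=3$ and $G/\mathbb{H}$ is generated by the image $\bar A$ of $A$, the subgroup $G$ is exactly the full preimage in $N_{\mathrm{PGL}_4(\mathbb{C})}(\mathbb{H})$ of the order-$3$ subgroup $\langle\bar A\rangle$ (both have order $16\cdot 3=48$). As $\mathbb{H}$ is the kernel of the map to $\mathfrak{S}_6$, conjugation by $\phi\in N_{\mathrm{PGL}_4(\mathbb{C})}(\mathbb{H})$ carries $G$ to the full preimage of $\bar\phi\langle\bar A\rangle\bar\phi^{-1}$, so $\phi$ normalizes $G$ if and only if $\bar\phi$ normalizes $\langle\bar A\rangle$. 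Hence $N_{\mathrm{PGL}_4(\mathbb{C})}(G)/\mathbb{H}$ embeds into $N_{\mathfrak{S}_6}(\langle\bar A\rangle)$. A direct count shows that any subgroup of order $3$ in $\mathfrak{S}_6$ has centralizer of order $18$ and normalizer of order $36$ (the quotient $N/C$ injects into $\mathrm{Aut}(\mumu_3)\cong\mumu_2$, and an inverting permutation always exists), whence $|N_{\mathrm{PGL}_4(\mathbb{C})}(G)|\leqslant 16\cdot 36=576$. Together with the first inclusion this forces $N_{\mathrm{PGL}_4(\mathbb{C})}(G)=G_{576,8654}$.

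I expect the main obstacle to be the rigorous identification of the structure of $N_{\mathrm{PGL}_4(\mathbb{C})}(\mathbb{H})$, namely proving nondegeneracy of the commutator form and the equality $C_{\mathrm{PGL}_4(\mathbb{C})}(\mathbb{H})=\mathbb{H}$, both of which rest on the irreducibility of the $\mathbb{H}$-action; once these are in place, the embedding into $\mathfrak{S}_6$ and the order-$36$ normalizer count are routine. A purely computational alternative, consistent with the Magma and GAP computations used elsewhere in this section, would be to verify $N_{\mathrm{PGL}_4(\mathbb{C})}(G)=G_{576,8654}$ directly from the subgroup lattice of the ambient Heisenberg normalizer.
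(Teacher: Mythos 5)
Your proof is correct, and it is worth noting how it sits relative to the paper, which actually gives \emph{two} arguments for this lemma. The paper's primary argument is geometric and uses material already established in this section: any element $g$ of the normalizer permutes the three $G$-orbits of length four, so after composing with $R$ or $R^2$ and then with a suitable element of $G_{192,955}$ one may assume $g$ fixes $\Sigma_4$ pointwise, hence is diagonal; invariance of the unique $G$-invariant quadric $\mathcal{Q}_1$ then forces the diagonal entries to be $\pm 1$, so $g\in G_{192,955}\subset G_{576,8654}$. The paper's second, sketched argument is essentially yours: it passes through the normalizer $\mathfrak{N}$ of $\mathbb{H}$ and the exact sequence $1\to\mathbb{H}\to\mathfrak{N}\to\mathfrak{S}_6\to 1$, but cites Blichfeldt and Nieto for that sequence and then merely lists the images of the relevant groups in $\mathfrak{S}_6$, leaving the conclusion as ``not difficult to see.'' Your version fills in exactly the two points the paper leaves implicit: (i) you derive the needed injection $\mathfrak{N}/\mathbb{H}\hookrightarrow\mathrm{Sp}_4(\mathbb{F}_2)\cong\mathfrak{S}_6$ from irreducibility of the $\mathbb{H}$-action (which, as you say, follows from the absence of $\mathbb{H}$-fixed points and $\mathbb{H}$-invariant lines and planes, as recorded in the proof of Lemma~\ref{lemma:alpha-quadric-Heisenberg}), via nondegeneracy of the commutator pairing and Schur's lemma, so no external citation is needed for the upper bound; and (ii) you make the final count explicit — both conjugacy classes of order-$3$ subgroups of $\mathfrak{S}_6$ have normalizer of order $36$, so $|N_{\mathrm{PGL}_4(\mathbb{C})}(G)|\leqslant 16\cdot 36=576$. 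Your reduction $N_{\mathrm{PGL}_4(\mathbb{C})}(G)\subseteq N_{\mathrm{PGL}_4(\mathbb{C})}(\mathbb{H})$ correctly rests on $\mathbb{H}$ being characteristic in $G$, which is Remark~\ref{remark:48-Tim-subgroups}, and your identification of $G$ as the full preimage of $\langle\bar A\rangle$ uses $C_{\mathrm{PGL}_4(\mathbb{C})}(\mathbb{H})=\mathbb{H}$, which you establish. One small imprecision: the full preimage of $\mathbb{H}$ in $\mathrm{GL}_4(\mathbb{C})$ is infinite (it contains all scalars); what you mean is that $\mathbb{H}$ admits a finite lift that is extraspecial of order $32$ — this is the group $\widehat{\mathbb{H}}$ of the paper — or, equivalently, that the commutator of lifts gives a well-defined $\mumu_2$-valued pairing on $\mathbb{H}$. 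This does not affect the argument. In summary: the paper's geometric proof trades group theory for facts about orbits and invariant quadrics proved earlier in the section, while your route is self-contained on the group-theoretic side and turns the paper's sketch into a complete proof.
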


\begin{proof}
Let $\Gamma$ be the normalizer of the subgroup $G$ in $\mathrm{PGL}_4(\mathbb{C})$.
Observe that $G\triangleleft G_{576,8654}$. Thus, we have $G_{576,8654}\subset\Gamma$.
Let us show that $\Gamma\subset G_{576,8654}$.

Take any element $g\in \Gamma$.
Since $\Sigma_{4}$, $\Sigma_{4}^{\prime}$, $\Sigma_{4}^{\prime\prime}$ are the only $G$-orbits of length four in $\mathbb{P}^3$,
we~see that $g$ must permutes these $G$-orbits.
Therefore, swapping $g$ with $g\circ R$ or $g\circ R^2$, we~may assume that $\Sigma_{4}$ is $g$-invariant.
So, composing $g$ with a suitable element in $G_{192,955}$, we may assume that $g$ fixes every point in $\Sigma_4$.
Then
$$
g=\begin{pmatrix}
t_1 & 0 & 0 & 0\\
0 & t_2 & 0 & 0\\
0 & 0 & t_3 & 0\\
0 & 0 & 0 & 1
\end{pmatrix}
$$
for some non-zero complex numbers $t_1$, $t_2$, $t_3$.
Recall that $\mathcal{Q}_1$ is the unique $G$-invariant quadric surface in $\mathbb{P}^3$,
so that $\mathcal{Q}_1$ is $g$-invariant. This gives us $t_1=\pm 1$, $t_2=\pm 1$, $t_3=\pm 1$,
so that $g\in G_{192,955}\subset G_{576,8654}$. This shows that $\Gamma\subset G_{576,8654}$.

We can also argue as follows.
Let $\mathfrak{N}\subset\mathrm{PGL}_4(\mathbb{C})$ be the normalizer of the group $\mathbb{H}\cong\mumu^4$.
Then it follows from \cite[\S 123]{Blichfeldt1917} or \cite{Nieto} that there exists an~exact sequence of groups
$$
1\longrightarrow \mathbb{H}\longrightarrow \mathfrak{N}\longrightarrow \mathfrak{S}_{6}\longrightarrow 1.
$$
But $\mathbb{H}$ is a normal subgroup in $G$, $G_{96,70}$, $G_{96,227}$, $G_{192,955}$, $G_{144,184}$, $G_{288,1025}$, $G_{576,8654}$,
and the images of these groups in $\mathfrak{S}_{6}$ are isomorphic to $\mumu_3$, $\mumu_6$, $\mathfrak{S}_3$,
$\mathrm{D}_{12}$, $\mumu_3\times\mumu_3$, $\mumu_3\wr\mumu_2$,~$\mumu_3^2\rtimes\mumu_2^2$.
Using this, it is not difficult to see that $G_{576,8654}$
is the normalizer of the group $G$.
\end{proof}

Let $\widehat{\mathbb{H}}$, $\widehat{G}$, $\widehat{G}_{96,227}$ and $\widehat{G}_{144,184}$ be the~subgroups in $\mathrm{GL}_4(\mathbb{C})$ defined as follows:
\begin{align*}
\widehat{\mathbb{H}}&=\big\langle M,N,B,ABA^2\big\rangle,\\
\widehat{G}&=\big\langle M,N,A,B\big\rangle,\\
\widehat{G}_{96,227}&=\big\langle M,N,A^\prime,B^\prime\big\rangle,\\
\widehat{G}_{144,184}&=\big\langle M,N,A,B,R\big\rangle,
\end{align*}
where we consider $M$, $N$, $A$, $B$, $A^\prime$, $B^\prime$, $R$ as elements of $\mathrm{GL}_4(\mathbb{C})$.
These groups are mapped to the~groups $\mathbb{H}$, $G$, $G_{96,227}$ and $G_{144,184}$
via the~natural projection $\mathrm{GL}_4(\mathbb{C})\to\mathrm{PGL}_4(\mathbb{C})$,
and their GAP ID's are  [32,49], [96,204], [192,1493] and [288,860], respectively.

Note that the groups $\widehat{\mathbb{H}}$, $\widehat{G}$, $\widehat{G}_{96,227}$, and $\widehat{G}_{144,184}$ act naturally linearly on
$H^0(\mathbb{P}^3,\mathcal{O}_{\mathbb{P}^3}(1))$.
The corresponding linear representations are irreducible and can be identified by GAP.

\begin{lemma}
\label{lemma:48-representations}
Let $\mathbb{V}$ be the~vector subspace in $H^0(\mathbb{P}^3,\mathcal{O}_{\mathbb{P}^3}(4))$ consisting of all $\widehat{\mathbb{H}}$-invariants.
Then the~vector space $\mathbb{V}$ is five-dimensional.
Furthermore, it contains all one-dimensional subrepresentations in the vector space $H^0(\mathbb{P}^3,\mathcal{O}_{\mathbb{P}^3}(4))$
of the groups $\widehat{G}$, $\widehat{G}_{96,227}$ and $\widehat{G}_{144,184}$.
Moreover, the~following assertions hold:
\begin{enumerate}[\normalfont(i)]
\item as a~$\widehat{G}$-representation, the~vector space $\mathbb{V}$
splits as a sum of $3$ trivial representations and $2$ non-isomorphic one-dimensional non-trivial representations;
\item as a~$\widehat{G}_{96,227}$-representation the~space $\mathbb{V}$
splits as a sum of $3$ trivial representations and $1$ two-dimensional irreducible representations;
\item as a~$\widehat{G}_{144,184}$-representation the~space $\mathbb{V}$
splits as a sum of $5$ distinct non-isomorphic one-dimensional representations.
\end{enumerate}
\end{lemma}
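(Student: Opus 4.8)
The plan is to reduce the whole statement to an explicit computation with $\mathbb{H}\cong\mumu_2^4$. Since $-I\in\widehat{\mathbb{H}}$ acts on quartics by $(-1)^4=1$, the $\widehat{\mathbb{H}}$-action on $H^0(\mathbb{P}^3,\mathcal{O}_{\mathbb{P}^3}(4))$ factors through the abelian group $\mathbb{H}$, so $\mathbb{V}$ is simply the space of $\mathbb{H}$-invariant quartics. To find it I would first diagonalize the torus part $\langle M,N\rangle$: a monomial $x_0^{a_0}x_1^{a_1}x_2^{a_2}x_3^{a_3}$ of degree $4$ is $\langle M,N\rangle$-invariant iff $a_0\equiv a_1\equiv a_2\pmod 2$, which leaves exactly the eleven monomials $x_i^4$, $x_i^2x_j^2$ and $x_0x_1x_2x_3$. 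The remaining generators $B$ and $ABA^2$ act on the indices as the Klein four-group $\langle(01)(23),(03)(12)\rangle$, i.e. as the regular representation of $V_4$, so summing the eleven monomials over its orbits yields precisely five invariants. This gives $\dim\mathbb{V}=5$ with the explicit basis
\[
f_1=\textstyle\sum_i x_i^4,\quad f_2=x_0^2x_1^2+x_2^2x_3^2,\quad f_3=x_0^2x_2^2+x_1^2x_3^2,\quad f_4=x_0^2x_3^2+x_1^2x_2^2,\quad f_5=x_0x_1x_2x_3.
\]

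For the claim that $\mathbb{V}$ contains every one-dimensional subrepresentation, I would use that each of $\widehat{G}$, $\widehat{G}_{96,227}$, $\widehat{G}_{144,184}$ contains $A$ together with $\mathbb{H}$. A $\widehat{G}$-stable line $\langle f\rangle$ consists of common semi-invariants, so $f$ is an $\mathbb{H}$-eigenvector whose character $\chi\in\mathrm{Hom}(\mathbb{H},\mathbb{C}^\times)$ is invariant under conjugation by $A$. But $\mumu_3=\langle A\rangle$ acts on $\mathbb{H}=\langle M,N\rangle\times\langle B,ABA^2\rangle$ by cyclically permuting the three involutions in each Klein four-factor, so $1-A$ is invertible and the only $A$-invariant character is trivial. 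Hence $f\in\mathbb{V}$, and the same argument applies verbatim to $\widehat{G}_{96,227}$ and $\widehat{G}_{144,184}$.

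It then remains to decompose $\mathbb{V}$. Since $M,N,B,ABA^2\in\mathbb{H}$ act trivially on $\mathbb{V}$, the actions factor through $\widehat{G}/\widehat{\mathbb{H}}\cong\mumu_3$, through $\widehat{G}_{96,227}/\widehat{\mathbb{H}}\cong\mathfrak{S}_3$, and through $\widehat{G}_{144,184}/\widehat{\mathbb{H}}\cong\mumu_3\times\mumu_3$. For (i), $A$ fixes $f_1,f_5$ and cyclically permutes $f_2,f_3,f_4$, so $\mathbb{V}$ splits as the three trivial lines $\langle f_1\rangle,\langle f_5\rangle,\langle f_2+f_3+f_4\rangle$ and two non-trivial eigenlines of eigenvalues $\zeta_3,\zeta_3^2$ inside $\langle f_2-f_3,f_3-f_4\rangle$. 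For (ii), identifying $f_2,f_3,f_4$ with the three partitions of $\{0,1,2,3\}$ into two pairs, the quotient $\mathfrak{S}_4/V_4\cong\mathfrak{S}_3$ permutes them in the standard way while fixing $f_1,f_5$ (one checks $B'$ and $A'$ induce the transpositions $(f_3\,f_4)$ and $(f_2\,f_4)$); as the permutation representation of $\mathfrak{S}_3$ on three letters is trivial $\oplus$ standard, this gives three trivial summands and one two-dimensional irreducible summand.

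The main obstacle is part (iii), the only place where the non-monomial generator $R$ enters. Here $A$ and $R$ generate commuting copies of $\mumu_3$ acting semisimply, so $\mathbb{V}$ is a sum of five joint eigenlines, i.e. five characters of $\mumu_3\times\mumu_3$, and I must show they are pairwise distinct. By (i) the multiset of $A$-eigenvalues on $\mathbb{V}$ is $\{1,1,1,\zeta_3,\zeta_3^2\}$, and two characters can coincide only inside the three-dimensional $A$-fixed space $W=\langle f_1,f_5,f_2+f_3+f_4\rangle$; thus the statement is equivalent to $R$ having three distinct eigenvalues on $W$, or to the two non-trivial $A$-eigenvectors $v_{\zeta_3},v_{\zeta_3^2}\in\langle f_2-f_3,f_3-f_4\rangle$ being $R$-fixed. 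The exchange symmetry of the two $\mathfrak{A}_4$-factors, realised inside $G_{288,1025}\cong\mathfrak{A}_4\wr\mumu_2$ (which normalizes $\mathbb{H}$ and hence acts on $\mathbb{V}$), conjugates $A$ to $R$ and therefore forces the $R$-eigenvalue multiset on $\mathbb{V}$ to equal $\{1,1,1,\zeta_3,\zeta_3^2\}$ as well; this reduces the problem to a single yes/no question, namely whether the potential coincidence in $W$ actually occurs. I would settle it by computing $R$ on the basis $f_1,\dots,f_5$ directly, substituting the linear forms $R\cdot x_i$ into each $f_j$ and re-expressing in the basis; this residual computation — equivalently the assertion that $R$ fixes $v_{\zeta_3}$ and $v_{\zeta_3^2}$ — is the crux, and it is exactly the point confirmed cleanly by the GAP identification of the representation.
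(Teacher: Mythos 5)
Your proposal is correct, and it takes a genuinely different route from the paper: the paper's entire proof of this lemma is the sentence ``We used GAP to verify all assertions'', whereas you argue essentially everything by hand. Your count of the $\widehat{\mathbb{H}}$-invariants (eleven $\langle M,N\rangle$-invariant monomials, grouped into five orbit sums under the Klein four-group of permutations generated by $B$ and $ABA^2$), your observation that $-I\in\widehat{\mathbb{H}}$ acts trivially on quartics so the action factors through $\mathbb{H}\cong\mumu_2^4$, your argument that any one-dimensional subrepresentation carries an $\mathbb{H}$-character fixed under conjugation by $A$ and hence trivial (because $A$ acts on $\mathbb{H}$, and dually on its characters, without nonzero fixed vectors), and your identifications $\widehat{G}/\widehat{\mathbb{H}}\cong\mumu_3$, $\widehat{G}_{96,227}/\widehat{\mathbb{H}}\cong\mathfrak{S}_3$ (with $A'\mapsto(f_2\,f_4)$ and $B'\mapsto(f_3\,f_4)$), $\widehat{G}_{144,184}/\widehat{\mathbb{H}}\cong\mumu_3\times\mumu_3$ are all accurate and yield the dimension statement, the containment statement, and parts (i)--(ii) cleanly. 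What your route buys is a structural, human-checkable proof; what the paper's route buys is brevity and reliability for what is, in the end, a finite verification.

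Two comments on part (iii), the one place where you defer to a computation. First, your claim that the exchange involution of $G_{288,1025}\cong\mathfrak{A}_4\wr\mumu_2$ ``conjugates $A$ to $R$'' is unjustified as stated: it presumes that the images of $A$ and $R$ in the quotient $\mumu_3\times\mumu_3$ are swapped by that involution, and the two $\mathfrak{A}_4$-factors of $G_{144,184}$ are not simply ``the one containing $\langle M,N\rangle$'' and ``the one containing $\langle B,ABA^2\rangle$'' --- indeed one computes $RMR^{-1}=M\cdot B\cdot ABA^2$, so conjugation by $R$ mixes the two Klein factors of $\mathbb{H}$, and identifying which elements the swap exchanges is itself a computation. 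This is harmless, however, because it is not load-bearing: you settle (iii) by direct substitution, and that computation does come out as you predict. Concretely, writing $e=f_2+f_3+f_4$, the $A$-fixed space $W=\langle f_1,e,f_5\rangle$ is $R$-stable (the action on $\mathbb{V}$ factors through the abelian quotient, so $A$ and $R$ commute there), and substituting $Rx$ for $x$ gives $f_1\mapsto\tfrac14 f_1+\tfrac32 e+6f_5$, $e\mapsto\tfrac38 f_1+\tfrac14 e-3f_5$, $f_5\mapsto-\tfrac1{16}f_1+\tfrac18 e-\tfrac12 f_5$ (this is the action of $R^{-1}$, which has the same eigenvalue set as that of $R$). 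Hence the trace on $W$ is $\tfrac14+\tfrac14-\tfrac12=0$; since the cube of this operator is the identity and $(x_0^2+x_1^2+x_2^2+x_3^2)^2=f_1+2e$ is fixed, its eigenvalues on $W$ are $1,\zeta_3,\zeta_3^2$, which are distinct, and (iii) follows exactly along the lines of your reduction.
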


\begin{proof}
We used GAP to verify all assertions.
\end{proof}

By Lemma~\ref{lemma:48-representations}, $\mathbb{P}^3$ contains exactly five $G_{144,184}$-invariant quartic surfaces \cite[(2.20)]{CheltsovShramov2017}.
To describe their defining equations, let
\begin{align*}
f_1&=x_0^2+x_1^2+x_2^2+x_3^2,\\
f_2&=2\big(x_0^2x_1^2+x_0^2x_2^2+x_0^2x_3^2+x_1^2x_2^2+x_1^2x_3^2+x_2^2x_3^2\big)-\big(x_0^4+x_1^4+x_2^4+x_3^4\big)+8\sqrt{3}ix_0x_1x_2x_3,\\
f_3&=2\big(x_0^2x_1^2+x_0^2x_2^2+x_0^2x_3^2+x_1^2x_2^2+x_1^2x_3^2+x_2^2x_3^2\big)-\big(x_0^4+x_1^4+x_2^4+x_3^4\big)-8\sqrt{3}ix_0x_1x_2x_3,\\
f_4&=(-1+\sqrt{3}i)\big(x_0^2x_2^2-x_0^2x_3^2-x_1^2x_2^2+x_1^2x_3^2\big)-2\big(x_0^2x_1^2-x_0^2x_2^2-x_1^2x_3^2+x_2^2x_3^2\big),\\
f_5&=(-1-\sqrt{3}i)\big(x_0^2x_2^2-x_0^2x_3^2-x_1^2x_2^2+x_1^2x_3^2\big)-2\big(x_0^2x_1^2-x_0^2x_2^2-x_1^2x_3^2+x_2^2x_3^2\big),
\end{align*}
and let $S_2=\{f_2=0\}$, $S_3=\{f_3=0\}$, $S_4=\{f_4=0\}$, $S_5=\{f_5=0\}$.
Then
\begin{itemize}
\item $2\mathcal{Q}_1$, $S_2$, $S_3$, $S_4$, $S_5$ are $G_{144,184}$-invariant quartic surfaces;
\item the surfaces $S_2$, $S_3$, $S_4$, $S_5$ are irreducible;
\item one has $\mathrm{Sing}(S_2)=\mathrm{Sing}(S_3)=\Sigma_{12}$ and $\mathrm{Sing}(S_4)=\mathrm{Sing}(S_5)=\Sigma_{4}\cup\Sigma_{4}^\prime\cup\Sigma_4^{\prime\prime}$.
\end{itemize}
By \cite[Lemma~3.12]{CheltsovShramov2017}, singularities of the surfaces $S_2$, $S_3$, $S_4$, $S_5$ are ordinary double points.

The polynomials $f_1^2$, $f_2$ and $f_3$ generate a~three-dimensional vector space
that contains all~$\widehat{G}$-invariant elements in $H^0(\mathbb{P}^3,\mathcal{O}_{\mathbb{P}^3}(4))$.
Consider the following basis of this~space:
\begin{center}
$x_0x_1x_2x_3$, $x_0^2x_1^2+x_0^2x_2^2+x_0^2x_3^2+x_1^2x_2^2+x_1^2x_3^2+x_2^2x_3^2$, $x_0^4+x_1^4+x_2^4+x_3^4$.
\end{center}
Using this basis, let us define the~net $\mathcal{M}_{4}$ consisting of quartic surfaces in $\mathbb{P}^3$ given by
\begin{equation}
\label{equation:48-net}
ax_0x_1x_2x_3+b\big(x_0^2x_1^2+x_0^2x_2^2+x_0^2x_3^2+x_1^2x_2^2+x_1^2x_3^2+x_2^2x_3^2\big)+c\big(x_0^4+x_1^4+x_2^4+x_3^4)=0,
\end{equation}
for $[a:b:c]\in\mathbb{P}^2$.
Then every surface in the~net $\mathcal{M}_{4}$ is $G$-invariant and $G_{96,227}$-invariant.
For $[a:b:c]=[1:0:0]$, we get the~surface
$$
\mathcal{T}=\{x_0x_1x_2x_3=0\}\in\mathcal{M}_{4}.
$$
Similarly, for $[a:b:c]=[-8:-2:1]$, we get another reducible surface
$$
\mathcal{T}^\prime=\{(x_0+x_1+x_2-x_3)(x_0+x_1-x_2+x_3)(x_0-x_1+x_2+x_3)(x_0-x_1-x_2-x_3)=0\}.
$$
Likewise, for $[a:b:c]=[8:-2:1]$, we get the~reducible surface
$$
\mathcal{T}^{\prime\prime}=\{(x_0+x_1+x_2+x_3)(x_0-x_1-x_2+x_3)(x_0+x_1-x_2-x_3)(x_0-x_1+x_2-x_3)=0\}.
$$
Finally, for $[a:b:c]=[0:2:1]$, we get the~non-reduced surface $2\mathcal{Q}_1\in\mathcal{M}_{4}$.

\begin{lemma}
\label{lemma:48-net}
The following assertion holds:
\begin{enumerate}[\normalfont(i)]
\item the~base locus of the~net $\mathcal{M}_{4}$ is the~set $\Sigma_{16}\cup\Sigma_{16}^\prime$,
\item the~only reducible surfaces in $\mathcal{M}_{4}$ are $\mathcal{T}$, $\mathcal{T}^\prime$, $\mathcal{T}^{\prime\prime}$, $2\mathcal{Q}_1$,
\item every irreducible surface in $\mathcal{M}_{4}$ has at most isolated ordinary double points,
\item if $S$ is a surface given by \eqref{equation:48-net}, then $S$ is singular if and only if
$$
c(b+2c)(b-2c)(a+2b-4c)(a-2b+4c)(a-6b-4c)(a+6b+4c)(a^2c+4b^3-12b^2c+16c^3)=0,
$$

\item if $S$ is an irreducible singular surface given by \eqref{equation:48-net}, then
\begin{itemize}
\item either $a^2c+4b^3-12b^2c+16c^3\ne 0$, and $\mathrm{Sing}(S)$ is described in Table~\ref{table:48-net},
\item or $a^2c+4b^3-12b^2c+16c^3=0$, and $\mathrm{Sing}(S)=\Sigma_{16}^t$ for $t\in\mathbb{C}\setminus\{\pm 1,\pm\sqrt{3}i\}$
which is uniquely determined by $[a:b:c]=[2t^3+6t:-t^2-1:1]$.
\end{itemize}
\end{enumerate}
\end{lemma}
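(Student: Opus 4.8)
The plan is to reduce all five assertions to an analysis of the quartic form
\[
F=ax_0x_1x_2x_3+b\big(x_0^2x_1^2+x_0^2x_2^2+x_0^2x_3^2+x_1^2x_2^2+x_1^2x_3^2+x_2^2x_3^2\big)+c\big(x_0^4+x_1^4+x_2^4+x_3^4\big),
\]
defining the member $S=S_{[a:b:c]}$ of $\mathcal{M}_4$. For (i), a point lies on every member iff the three basis monomials vanish, i.e. $x_0x_1x_2x_3=0$, $\sum_{i<j}x_i^2x_j^2=0$ and $\sum_i x_i^4=0$. On a coordinate hyperplane, say $\{x_3=0\}$, put $y_k=x_k^2$; the last two equations become $y_0^2+y_1^2+y_2^2=0$ and $y_0y_1+y_0y_2+y_1y_2=0$, whence $(y_0+y_1+y_2)^2=0$, so the elementary symmetric functions satisfy $e_1=e_2=0$ and $(y_0:y_1:y_2)=(1:\omega:\omega^2)$ with $\omega=e^{2\pi i/3}$ up to order. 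These are exactly the points of $\Sigma_{16}\cup\Sigma_{16}^\prime$ lying on $\{x_3=0\}$; running over the four hyperplanes and using the $G$-action yields the base locus $\Sigma_{16}\cup\Sigma_{16}^\prime$.

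For (ii)--(v) I would use the Euler relation to write $\mathrm{Sing}(S)=\{\partial_0F=\partial_1F=\partial_2F=\partial_3F=0\}$, where
\[
\partial_i F=a\prod_{j\neq i}x_j+2b\,x_i\big(s-x_i^2\big)+4c\,x_i^3,\qquad s=x_0^2+x_1^2+x_2^2+x_3^2 .
\]
The crucial observation is that at a singular point with all $x_i\neq 0$, multiplication by $x_i$ gives $ax_0x_1x_2x_3+2b\,s\,x_i^2+(4c-2b)x_i^4=0$ for every $i$, so $x_0^2,x_1^2,x_2^2,x_3^2$ are roots of a single quadratic and take at most two distinct values. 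I then carry out the case analysis by partition type: (A) all $x_i^2$ equal, giving $[1:\pm1:\pm1:\pm1]$ and the condition $a\pm(6b+4c)=0$ with singular loci $\Sigma_4^\prime,\Sigma_4^{\prime\prime}$; (B) a $3{+}1$ pattern $[1:\pm1:\pm1:t]$, which after eliminating $a$ yields $(t^2-1)\big(b+c(t^2+1)\big)=0$, hence the cubic $a^2c+4b^3-12b^2c+16c^3=0$ parametrised by $[a:b:c]=[2t^3+6t:-t^2-1:1]$ with $\mathrm{Sing}(S)=\Sigma_{16}^t$; (C) a $2{+}2$ pattern, which for $t^2\neq -1$ forces $[a:b:c]\in\{[\pm8:-2:1]\}$ and for $t^2=-1$ gives $a+2b-4c=0$ or $a-2b+4c=0$ with loci $\Sigma_{12}^{\prime\prime},\Sigma_{12}^{\prime\prime\prime}$. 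The cases with vanishing coordinates are handled similarly: one zero reduces to $\Sigma_{16}^0$ on the cubic, two zeros give $b\pm2c=0$ with loci $\Sigma_{12},\Sigma_{12}^\prime$, and the coordinate points $\Sigma_4$ require $c=0$. Assembling these, a singular point exists iff one of the seven linear factors or the cubic vanishes, which is (iv); and on each linear factor (off the remaining factors) the singular locus is the single associated $G$-orbit, which gives Table~\ref{table:48-net}, while on the cubic it is $\Sigma_{16}^t$, giving (v). I would check separately that the cubic meets the seven lines only at $[1:0:0],[\pm8:-2:1],[0:2:1]$, and that the excluded values $t\in\{\pm1,\pm\sqrt{3}i\}$ are precisely those mapping to $\mathcal{T}^\prime,\mathcal{T}^{\prime\prime},2\mathcal{Q}_1$, which makes the dichotomy in (v) clean.

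For (ii) and (iii) the same analysis shows that $\mathrm{Sing}(S)$ is a finite union of $G$-orbits for every $[a:b:c]$ except $[1:0:0]$, $[\pm8:-2:1]$ and $[0:2:1]$, where it becomes positive-dimensional (coordinate lines for $\mathcal{T}$, the six pairwise intersections of planes for $\mathcal{T}^\prime,\mathcal{T}^{\prime\prime}$, and all of $\mathcal{Q}_1$ for $2\mathcal{Q}_1=\{f_1^2=0\}$). Since a reducible or non-reduced quartic is singular along the one-dimensional intersection of its components, any member with finite singular locus is irreducible and reduced, so the only reducible members are the four listed, proving (ii). For those irreducible members each singular point is isolated, and a direct evaluation of the Hessian at one representative of each orbit shows it is nondegenerate, hence an ordinary double point, which is (iii).

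The main obstacle is the exhaustiveness of the case analysis together with the bookkeeping that matches each factor of the discriminant to the correct $G$-orbit and confirms the node condition; these steps are long but mechanical, the elimination in case (B) being the computation that produces the cubic factor, and all of them can be cross-checked with Magma.
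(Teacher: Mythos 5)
Your proposal is correct, but it takes a genuinely different route from the paper. The paper's own proof is essentially by reduction and citation: assertion (iv), the discriminant of this classical Heisenberg-invariant net, is quoted from the literature (Nieto; see also Bouyer, Dolgachev, Eklund, Gonzalez-Dorrego), assertion (ii) is deduced from the classification of small $G$-orbits in Lemma~\ref{lemma:48-orbits} together with the uniqueness of the $G$-invariant quadric $\mathcal{Q}_1$, and assertions (i), (iii), (v) are declared easy verifications. You instead give a self-contained computation: the base locus from the simultaneous vanishing of the three generating monomials, and the singular loci via the partials $\partial_iF$, with the key observation that multiplying $\partial_iF=0$ by $x_i$ makes $x_0^2,\dots,x_3^2$ roots of a single quadratic, so that a case analysis by partition type (plus the vanishing-coordinate strata) is exhaustive. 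Your eliminations check out: the $3{+}1$ pattern yields $(1-t^2)\bigl(b+c(1+t^2)\bigr)=0$ and hence exactly the parametrization $[a:b:c]=[2t^3+6t:-t^2-1:1]$ of the cubic factor; the $2{+}2$ pattern yields $[\pm8:-2:1]$ for $t^2\neq-1$ and the factors $a\pm2b\mp4c$ for $t^2=-1$; the equal pattern yields $a\mp(6b+4c)$; and the degenerate strata yield $c$ and $b\pm2c$ with the orbits $\Sigma_4$, $\Sigma_{12}$, $\Sigma_{12}^\prime$, matching Table~\ref{table:48-net}. Your dimension argument for (ii) (a reducible or non-reduced quartic is singular along a curve, and only the four listed members have positive-dimensional singular locus) is a clean replacement for the paper's orbit-theoretic argument. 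What the paper's route buys is brevity and reliance on well-documented classical facts; what yours buys is independence from the literature and an actual derivation of the factor-by-factor matching that the paper only records in Table~\ref{table:48-net}.

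Two small points you should patch. First, the claim that $x_0^2,\dots,x_3^2$ take at most two distinct values fails when your quadratic is identically zero; with all $x_i\neq0$ this forces $a=0$, $b=2c$ and $x_0^2+x_1^2+x_2^2+x_3^2=0$, i.e.\ precisely the member $2\mathcal{Q}_1$, so the partition analysis is exhaustive only after this case is disposed of separately (it is harmless for (iv), since $[0:2:1]$ lies on the factor $b-2c$). Second, for (iii) the Hessian verification cannot literally be done at finitely many points: the singular members form one-parameter families (along $c=0$, along each linear factor, and along the cubic), so the Hessian determinant at a representative of each orbit must be computed symbolically in the parameter and shown nonvanishing, e.g.\ for all $t\notin\{\pm1,\pm\sqrt{3}i\}$ on the cubic factor (alternatively one may invoke that the $16$-nodal members are Kummer surfaces). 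Neither point affects the correctness of your overall plan.
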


\begin{proof}
Assertion (i) is easy to check. Assertion (ii) follows from Remark~\ref{lemma:48-orbits} and
the fact that $\mathcal{Q}_1$ is the~only $G$-invariant quadric surface in $\mathbb{P}^3$.

Assertion (iv) has been proved in \cite{Nieto}, see \cite[Proposition~3.1]{Bouyer},
\cite[Theorem~10.3.18]{Dolgachev}, \cite[Proposition 2.1]{Eklund}, \cite[Lemma~2.21]{Gonzalez-Dorrego}.

Both assertions (iii) and (v) are easy to verify.
\end{proof}

\begin{corollary}
\label{corollary:48-net}
Let $S$ be an irreducible surface in $\mathcal{M}_{4}$, and let $\pi\colon\widetilde{S}\to S$ be
its minimal resolution of singularities. Then $\widetilde{S}$ is a~K3 surface,
the~action of the~group $G$ lifts to $\widetilde{S}$, all~$G$-orbits in $\widetilde{S}$ are of length $16$, $24$ or $48$,
and $\widetilde{S}$ contains exactly $6$ orbits of length $16$.
In particular, if $S$ contains a $G$-orbit of length $<16$, then $S$ is singular at this $G$-orbit.
\end{corollary}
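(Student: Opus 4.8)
The plan is to identify $\widetilde{S}$ as a K3 surface on which every involution of $G$ acts symplectically, and then to deduce all orbit lengths purely from the local action of point stabilizers at their fixed points, with the count of length-$16$ orbits coming from Nikulin's fixed-point formula for symplectic automorphisms of prime order.

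First I would establish the K3 claim and the lift. By Lemma~\ref{lemma:48-net}(iii) an irreducible $S\in\mathcal{M}_{4}$ has only isolated ordinary double points; being a hypersurface with isolated singularities it is normal, and its nodes are $A_1$ rational double points. Adjunction on $\mathbb{P}^3$ gives $\omega_S\cong\mathcal{O}_S$, and since the minimal resolution of rational double points is crepant we get $K_{\widetilde{S}}\sim 0$. The ideal sheaf sequence $0\to\mathcal{O}_{\mathbb{P}^3}(-4)\to\mathcal{O}_{\mathbb{P}^3}\to\mathcal{O}_S\to 0$ yields $h^1(\mathcal{O}_S)=0$, and rationality of the singularities gives $h^1(\mathcal{O}_{\widetilde{S}})=h^1(\mathcal{O}_S)=0$; hence $\widetilde{S}$ is a K3 surface. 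As $S$ is a quartic, it spans $\mathbb{P}^3$, so $G$ acts faithfully on $S$; the minimal resolution is unique, so this action lifts uniquely and faithfully to $\widetilde{S}$.

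Next I would show that every involution of $G$ acts \emph{symplectically} on $\widetilde{S}$. Each member of $\mathcal{M}_{4}$ has a genuinely $\widehat{G}$-invariant defining polynomial, since it is a combination of $x_0x_1x_2x_3$, $\sum_{i<j}x_i^2x_j^2$ and $\sum_i x_i^4$, and $\widehat{\mathbb{H}}\subset\mathrm{SL}_4(\mathbb{C})$; therefore each element of $\mathbb{H}$ fixes the residue $2$-form $\omega\in H^0(\widetilde{S},K_{\widetilde{S}})$, i.e. it is symplectic. By Remark~\ref{remark:48-Tim-subgroups} every involution of $G$ lies in $\mathbb{H}$, so every involution of $G$ is symplectic, and likewise $A$ (which has determinant $1$) is symplectic. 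The crux of the argument is then the claim that no subgroup of $G$ isomorphic to $\mumu_2^2$ fixes a point of $\widetilde{S}$: a symplectic involution $g$ acts on $T_p\widetilde{S}$ at any fixed point $p$ with determinant $1$, and it cannot act as the identity (else $g=\mathrm{id}$ by linearisation), so $g|_{T_p\widetilde{S}}=-\mathrm{id}$; if $\langle g,h\rangle\cong\mumu_2^2$ fixed $p$, then $gh$ would act as $+\mathrm{id}$ on $T_p\widetilde{S}$ and hence be trivial, a contradiction. Since every subgroup of $G$ of order $\geqslant 4$ contains a copy of $\mumu_2^2$ (inspect the list in Remark~\ref{remark:48-Tim-subgroups}), this forces every point stabilizer to have order $1$, $2$ or $3$, so every $G$-orbit in $\widetilde{S}$ has length $48$, $24$ or $16$. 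I expect this symplectic-involution step to be the main point of the proof, together with the observation that \emph{all} involutions of $G$ live in $\mathbb{H}\subset\mathrm{SL}_4(\mathbb{C})$.

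Finally I would count the length-$16$ orbits and deduce the last assertion. Orbits of length $16$ are exactly those with stabilizer of order $3$. Fixing one $\mumu_3=\langle A\rangle$, the classification of symplectic automorphisms of K3 surfaces (Nikulin, \cite{Nikulin}) gives $|\mathrm{Fix}_{\widetilde{S}}(A)|=6$, and each such fixed point has stabilizer exactly $\langle A\rangle$, because any larger stabilizer would have order divisible by $3$ and $\geqslant 4$, hence would contain a $\mumu_2^2$, which is excluded. As the $16$ subgroups isomorphic to $\mumu_3$ are conjugate, there are $16\cdot 6=96$ points of $\widetilde{S}$ with stabilizer of order $3$, forming $96/16=6$ orbits of length $16$. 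The final "in particular" is then immediate: a $G$-orbit of length $<16$ contained in the smooth locus of $S$ would map isomorphically and $G$-equivariantly, with stabilizers preserved, to a $G$-orbit of length $<16$ in $\widetilde{S}$, which has just been ruled out; hence any $G$-orbit of length $<16$ on $S$ must be contained in $\mathrm{Sing}(S)$.
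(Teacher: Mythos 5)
Your proposal is correct, and it follows the same skeleton as the paper's argument (pass to the minimal resolution, observe it is a K3 surface, and exploit that $G$ acts symplectically), but it differs in how the two key inputs are obtained. The paper's proof is essentially a double citation: symplecticity of the $G$-action is quoted from Hashimoto, and the whole statement about orbit lengths, including the count of six orbits of length $16$, is quoted from Xiao's classification of finite symplectic group actions on K3 surfaces (Theorem~3 of that paper). You instead prove both facts from scratch: symplecticity via the observation that every surface in $\mathcal{M}_4$ has a genuinely $\widehat{G}$-invariant defining polynomial and $\widehat{G}\subset\mathrm{SL}_4(\mathbb{C})$, so the residue $2$-form is preserved; the bound on stabilizers via the local linearization argument (a symplectic involution acts as $-\mathrm{id}$ on the tangent space at a fixed point, so no $\mumu_2^2$ can fix a point, and by the subgroup lattice of Remark~\ref{remark:48-Tim-subgroups} every subgroup of order at least $4$ contains a $\mumu_2^2$); and the count of six via Nikulin's theorem that an order-$3$ symplectic automorphism of a K3 surface has exactly six fixed points, distributed over the $16$ conjugate copies of $\mumu_3$ to give $96/16=6$ orbits. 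Your route is longer but self-contained and needs only Nikulin's order-$3$ fixed-point count rather than Xiao's full classification; the paper's route is shorter but opaque. Two small points you should make explicit if this were written up: first, your claim that every subgroup of order $\geqslant 4$ contains a $\mumu_2^2$ tacitly uses that $G$ has no subgroups of order $6$ (an $\mathfrak{S}_3$ or $\mumu_6$ would contain no $\mumu_2^2$); this is true --- $G$ has elements of orders $1$, $2$, $3$ only and no $\mathfrak{S}_3$ --- and is covered by the completeness of the lattice in Remark~\ref{remark:48-Tim-subgroups}, but it is the one place the argument could silently fail for a different group. Second, Nikulin's paper is not in the bibliography of this paper, so the reference would need to be added (or replaced by the corresponding entry in Xiao's tables).
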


\begin{proof}
All assertions follow from Lemma~\ref{lemma:48-net} and explicit computations,
and the~assertion about $G$-orbits follows from \cite[Theorem~3]{Xiao}, since the~$G$-action on $\widetilde{S}$ is symplectic \cite{Hashimoto}.
\end{proof}

\begin{table}
\caption{Singular locus of an irreducible quartic surface $S\subset\mathbb{P}^3$ such that the~surface $S$ is given by \eqref{equation:48-net} with $a^2c+4b^3-12b^2c+16c^3\ne 0$.}
\label{table:48-net}
\begin{center}
\renewcommand\arraystretch{1.5}
\begin{tabular}{|c|c|c|}
  \hline
Condition on $[a:b:c]$ & Additional conditions on $[a:b:c]$ & $\mathrm{Sing}(S)$ \\
\hline
\hline
\multirow{5}{*}{ $c=0$} & $[a:b:0]\ne[\pm 6:1:0],[\pm 2:1:0]$  & $\Sigma_4$\\\cline{2-3}
                       & $[a:b:0]=[6:1:0]$  & $\Sigma_4\cup\Sigma_4^\prime$ \\\cline{2-3}
                       & $[a:b:0]=[-6:1:0]$ & $\Sigma_4\cup\Sigma_4^{\prime\prime}$\\\cline{2-3}
                       & $[a:b:0]=[2:1:0]$ & $\Sigma_4\cup\Sigma_{12}^{\prime\prime\prime}$\\\cline{2-3}
                       & $[a:b:0]=[-2:1:0]$ & $\Sigma_4\cup\Sigma_{12}^{\prime\prime}$\\
 \hline
$c\ne 0$ and $b+2c=0$ & $[a:b:c]\ne[\pm 8:-2:1]$ & $\Sigma_{12}$\\\cline{2-3}
\hline
\multirow{3}{*}{ $c\ne 0$ and $b-2c=0$} & $[a:b:c]\ne[\pm 16:2:1]$ & $\Sigma_{12}^\prime$\\\cline{2-3}
                                       & $[a:b:c]=[16:2:1]$  & $\Sigma_{12}^\prime\cup\Sigma_4^\prime$ \\\cline{2-3}
                                        & $[a:b:c]=[-16:2:1]$  & $\Sigma_{12}^\prime\cup\Sigma_4^{\prime\prime}$ \\

\hline
\multirow{2}{*}{ $c\ne 0$ and $a+2b-4c=0$}& $[a:b:c]\ne[4:0:1]$ & $\Sigma_{12}^{\prime\prime}$\\\cline{2-3}
                                       & $[a:b:c]=[4:0:1]$  & $\Sigma_{12}^{\prime\prime}\cup\Sigma_{4}^{\prime}$ \\
\hline
\multirow{2}{*}{ $c\ne 0$ and $a-2b+4c=0$}& $[a:b:c]\ne[-4:0:1]$ & $\Sigma_{12}^{\prime\prime\prime}$\\\cline{2-3}
                                       & $[a:b:c]=[-4:0:1]$  & $\Sigma_{12}^{\prime\prime\prime}\cup\Sigma_{4}^{\prime\prime}$ \\
 \hline
\multirow{2}{*}
{$c\ne 0$ and $a-6b-4c=0$}& $[a:b:c]\ne[0:-2:3],[16:2:1],[4:0:1]$   & $\Sigma_4^\prime$\\\cline{2-3}
                       & $[a:b:c]=[0:-2:3]$  & $\Sigma_4^\prime\cup\Sigma_4^{\prime\prime}$ \\
\hline
$c\ne 0$ and $a+6b+4c=0$ & $[a:b:c]\ne [-16:2:1]$,[-4:0:1]$,[0:-2:3]$ & $\Sigma_4^{\prime\prime}$\\
\hline
\end{tabular}
\end{center}
\end{table}

Now, let us describe all $G$-irreducible curves in $\mathbb{P}^3$ that are unions of at most $15$ lines.
Let $\mathcal{L}_6=\mathrm{Sing}(\mathcal{T})$,
$\mathcal{L}_6^\prime=\mathrm{Sing}(\mathcal{T}^\prime)$,
$\mathcal{L}_6^{\prime\prime}=\mathrm{Sing}(\mathcal{T}^{\prime\prime})$.
Then $\mathcal{L}_6$, $\mathcal{L}_6^\prime$, $\mathcal{L}_6^{\prime\prime}$
are $G$-irreducible curves, and each of them is a union of six lines.
We have
\begin{align*}
\Sigma_4&=\mathrm{Sing}\big(\mathcal{L}_6\big),\\
\Sigma_4^\prime&=\mathrm{Sing}\big(\mathcal{L}_6^\prime\big),\\
\Sigma_4^{\prime\prime}&=\mathrm{Sing}\big(\mathcal{L}_6^{\prime\prime}\big).
\end{align*}
Observe also that
$$
\Sigma_{12}=\mathcal{L}_6\cap\mathcal{L}_6^\prime=\mathcal{L}_6\cap\mathcal{L}_6^{\prime\prime}=\mathcal{L}_6^\prime\cap\mathcal{L}_6^{\prime\prime},
$$
so that the surfaces $\mathcal{T}$, $\mathcal{T}^\prime$, $\mathcal{T}^{\prime\prime}$ form a configuration which is known as a \emph{desmic system},
see \cite[\S~IV]{Lewis} and \cite[\S~3.19]{Lord}. Note also that
\begin{align*}
\mathcal{L}_6\cap\mathcal{Q}_1&=\Sigma_{12}^\prime,\\
\mathcal{L}_6^\prime\cap\mathcal{Q}_1&=\Sigma_{12}^{\prime\prime\prime},\\
\mathcal{L}_6^{\prime\prime}\cap\mathcal{Q}_1&=\Sigma_{12}^{\prime\prime}.
\end{align*}

Now, we let $\mathcal{L}_4$ be the~$G$-irreducible curve in $\mathbb{P}^3$ whose irreducible component is the~line
$$
\big\{2x_0+(1+\sqrt{3}i)x_2-(1-\sqrt{3}i)x_3=2x_1+(1-\sqrt{3}i)x_2+(1+\sqrt{3}i)x_3=0\big\},
$$
let $\mathcal{L}_4^\prime$ be the~$G$-irreducible curve  in $\mathbb{P}^3$ whose irreducible component is the~line
$$
\big\{2x_0+(1-\sqrt{3}i)x_2-(1+\sqrt{3}i)x_3=2x_1+(1+\sqrt{3}i)x_2+(1-\sqrt{3}i)x_3=0\big\},
$$
let $\mathcal{L}_4^{\prime\prime}$ be the~$G$-irreducible curve  in $\mathbb{P}^3$ whose irreducible component is the~line
$$
\big\{2x_0-(1-\sqrt{3}i)x_2+(1+\sqrt{3}i)x_3=2x_1+(1+\sqrt{3}i)x_2+(1-\sqrt{3}i)x_3=0\big\},
$$
let $\mathcal{L}_4^{\prime\prime\prime}$ be the~$G$-irreducible curve  in $\mathbb{P}^3$ whose irreducible component is the~line
$$
\big\{2x_0-(1+\sqrt{3}i)x_2+(1-\sqrt{3}i)x_3=2x_1+(1-\sqrt{3}i)x_2+(1+\sqrt{3}i)x_3=0\big\},
$$
let $\mathcal{L}_6^{\prime\prime\prime}$ be the~$G$-irreducible curve  in $\mathbb{P}^3$ whose irreducible component is the~line
$$
\big\{x_0+ix_2=x_1+ix_3=0\big\},
$$
and let $\mathcal{L}_6^{\prime\prime\prime\prime}$ be the~$G$-irreducible curve in  in $\mathbb{P}^3$ whose irreducible component is the~line
$$
\big\{x_0+ix_3=x_1+ix_2=0\big\}.
$$
Then $\mathcal{L}_4$, $\mathcal{L}_4^\prime$, $\mathcal{L}_4^{\prime\prime}$, $\mathcal{L}_4^{\prime\prime\prime}$
consist of $4$ disjoint lines,
$\mathcal{L}_6^{\prime\prime\prime}$ and $\mathcal{L}_6^{\prime\prime\prime\prime}$ consist of $6$ disjoint lines,
and all these six $G$-irreducible reducible curves are contained in the~quadric $\mathcal{Q}_1$.

\begin{remark}
\label{remark:48-orbits-24}
Since $\mathbb{H}$ contains all elements of order $2$ in~$G$,
it follows from \cite[\S~2]{CheltsovShramov2017} that all $G$-orbits of length $24$ in the~space $\mathbb{P}^3$
are contained in the~union
$\mathcal{L}_6\cup\mathcal{L}_6^{\prime}\cup\mathcal{L}_6^{\prime\prime}\cup\mathcal{L}_6^{\prime\prime\prime}\cup\mathcal{L}_6^{\prime\prime\prime\prime}$.
Vice versa, if $P$ is a point in $\mathcal{L}_6\cup\mathcal{L}_6^{\prime}\cup\mathcal{L}_6^{\prime\prime}\cup\mathcal{L}_6^{\prime\prime\prime}\cup\mathcal{L}_6^{\prime\prime\prime\prime}$, then either its $G$-orbit has length~$24$,
or the~point $P$ is contained in the~union $\Sigma_{4}\cup\Sigma_{4}^{\prime}\cup\Sigma_{4}^{\prime\prime}\cup\Sigma_{12}\cup\Sigma_{12}^{\prime}\cup\Sigma_{12}^{\prime\prime}$.
\end{remark}

Let us present the intersections of the~curves $\mathcal{L}_4$, $\mathcal{L}_4^\prime$,
$\mathcal{L}_4^{\prime\prime}$, $\mathcal{L}_4^{\prime\prime\prime}$,
$\mathcal{L}_6$, $\mathcal{L}_6^{\prime}$, $\mathcal{L}_6^{\prime\prime}$, $\mathcal{L}_6^{\prime\prime\prime}$,
$\mathcal{L}_6^{\prime\prime\prime\prime}$.
\begin{center}
\renewcommand\arraystretch{1.4}
\begin{tabular}{|c||c|c|c|c|c|c|}
  \hline
$\cap$ & $\mathcal{L}_4$ & $\mathcal{L}_4^\prime$ & $\mathcal{L}_4^{\prime\prime}$ & $\mathcal{L}_4^{\prime\prime\prime}$ & $\mathcal{L}_6^{\prime\prime\prime}$ & $\mathcal{L}_6^{\prime\prime\prime\prime}$  \\
\hline
\hline
$\mathcal{L}_4$ & $\mathcal{L}_4$ & $\varnothing$ & $\Sigma_{16}^{\sqrt{3}i}$& $\Sigma_{16}^\prime$& $\varnothing$& $\mathcal{L}_4\cap\mathcal{L}_6^{\prime\prime\prime\prime}$ \\
\hline
$\mathcal{L}_4^\prime$ & $\varnothing$& $\mathcal{L}_4^\prime$ & $\Sigma_{16}$& $\Sigma_{16}^{-\sqrt{3}i}$& $\varnothing$& $\mathcal{L}_4^\prime\cap\mathcal{L}_6^{\prime\prime\prime\prime}$ \\
\hline
$\mathcal{L}_4^{\prime\prime}$ & $\Sigma_{16}^{\sqrt{3}i}$& $\Sigma_{16}$& $\mathcal{L}_4^{\prime\prime}$ & $\varnothing$ & $\mathcal{L}_4^{\prime\prime}\cap\mathcal{L}_6^{\prime\prime\prime}$ & $\varnothing$\\
\hline
$\mathcal{L}_4^{\prime\prime\prime}$ & $\Sigma_{16}^\prime$ &$\Sigma_{16}^{-\sqrt{3}i}$ & $\varnothing$ &$\mathcal{L}_4^{\prime\prime\prime}$ & $\mathcal{L}_4^{\prime\prime\prime}\cap\mathcal{L}_6^{\prime\prime\prime}$& $\varnothing$\\
\hline
$\mathcal{L}_6^{\prime\prime\prime}$ & $\varnothing$& $\varnothing$&  $\mathcal{L}_4^{\prime\prime}\cap\mathcal{L}_6^{\prime\prime\prime}$& $\mathcal{L}_4^{\prime\prime\prime}\cap\mathcal{L}_6^{\prime\prime\prime}$& $\mathcal{L}_6^{\prime\prime\prime}$ & $\Sigma_{12}^\prime\cup\Sigma_{12}^{\prime\prime}\cup\Sigma_{12}^{\prime\prime\prime}$\\
\hline
$\mathcal{L}_6^{\prime\prime\prime\prime}$ & $\mathcal{L}_4\cap\mathcal{L}_6^{\prime\prime\prime\prime}$ & $\mathcal{L}_4^\prime\cap\mathcal{L}_6^{\prime\prime\prime\prime}$ & $\varnothing$& $\varnothing$& $\Sigma_{12}^\prime\cup\Sigma_{12}^{\prime\prime}\cup\Sigma_{12}^{\prime\prime\prime}$ & $\mathcal{L}_6^{\prime\prime\prime\prime}$\\
\hline
\end{tabular}
\end{center}
\noindent
where the~intersections $\mathcal{L}_4\cap\mathcal{L}_6^{\prime\prime\prime\prime}$, $\mathcal{L}_4^\prime\cap\mathcal{L}_6^{\prime\prime\prime\prime}$,
$\mathcal{L}_4^{\prime\prime}\cap\mathcal{L}_6^{\prime\prime\prime}$, $\mathcal{L}_4^{\prime\prime\prime}\cap\mathcal{L}_6^{\prime\prime\prime}$
are $G$-orbits of length $24$.

\begin{lemma}
\label{label:48-lines}
Let $C$ be a $G$-irreducible curve in $\mathbb{P}^3$ such that $C$ is a union of $d\leqslant 15$~lines.
Then either $C$ is one of the~curves $\mathcal{L}_4$, $\mathcal{L}_4^\prime$,
$\mathcal{L}_4^{\prime\prime}$, $\mathcal{L}_4^{\prime\prime\prime}$,
$\mathcal{L}_6$, $\mathcal{L}_6^{\prime}$, $\mathcal{L}_6^{\prime\prime}$, $\mathcal{L}_6^{\prime\prime\prime}$,
$\mathcal{L}_6^{\prime\prime\prime\prime}$,
or $C$ is a disjoint union of $12$ lines,
and there exists an~$\mathbb{H}$-invariant quadric surface that contains at least four irreducible components of the~curve $C$.
\end{lemma}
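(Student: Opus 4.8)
The plan is to exploit that $C$ being $G$-irreducible forces $G$ to act transitively on the set of the $d$ lines forming $C$, so that $d$ divides $|G|=48$ and the stabiliser $\Gamma$ of one component $\ell$ has order $48/d$. Since $1\le d\le 15$ and $d\mid 48$, we have $d\in\{1,2,3,4,6,8,12\}$. First I would discard $d\in\{1,2,3,8\}$. For $d=1$ or $d=3$ the stabiliser contains the unique Sylow $2$-subgroup $\mathbb{H}$ (its order is $48$ or $16$), but $\mathbb{H}$ leaves no line invariant because $\widehat{\mathbb{H}}$ acts irreducibly on $H^0(\mathbb{P}^3,\mathcal{O}_{\mathbb{P}^3}(1))$ (Lemma~\ref{lemma:48-representations}); for $d=2$ or $d=8$ the group $\Gamma$ would have order $24$ or $6$, and $G$ has no such subgroup by its subgroup lattice (Remark~\ref{remark:48-Tim-subgroups}). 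Thus $d\in\{4,6,12\}$ and $\Gamma$ is isomorphic to $\mathfrak{A}_4$, $\mumu_2^3$ or $\mumu_2^2$, respectively.

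The key tool for $d\in\{6,12\}$ is that $\mathbb{H}$ contains every involution of $G$ (Remark~\ref{remark:48-orbits-24}), so in these cases $\Gamma\subset\mathbb{H}$. I would analyse the homomorphism $\Gamma\to\mathrm{PGL}_2$ induced by the action on $\ell\cong\mathbb{P}^1$: its image is an abelian $2$-group, hence of order at most $4$. When $d=6$, so $|\Gamma|=8$, the kernel is nontrivial, so some involution of $\mathbb{H}$ fixes $\ell$ pointwise; by Remark~\ref{remark:48-30-lines} this means $\ell$ is one of the $30$ lines. These $30$ lines each have stabiliser of order exactly $8$ (order $24$ is excluded as above), hence form five $G$-orbits of six lines, namely $\mathcal{L}_6,\mathcal{L}_6^\prime,\mathcal{L}_6^{\prime\prime},\mathcal{L}_6^{\prime\prime\prime},\mathcal{L}_6^{\prime\prime\prime\prime}$. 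When $d=12$, so $|\Gamma|=4$, the same dichotomy applies: if the kernel were nontrivial then $\ell$ would be one of the $30$ lines, forcing $\mathrm{Stab}_{\mathbb{H}}(\ell)\cong\mumu_2^3$ and hence $d=6$, a contradiction; therefore $\Gamma$ acts faithfully on $\ell$ as a Klein four subgroup of $\mathrm{PGL}_2$.

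For $d=12$ I would then invoke the classification of $\mumu_2^2$-subgroups in the proof of Lemma~\ref{lemma:48-orbits}. A point-stabilising $\mumu_2^2$ is simultaneously diagonalisable with four distinct characters, so it acts on any invariant line through a cyclic quotient; faithful action therefore forces $\Gamma$ to be one of the eight subgroups that fix no point, and each of those leaves invariant one entire ruling of a single $\mathbb{H}$-invariant quadric ($\mathcal{Q}_1$ for the two normal subgroups, one of $\mathcal{Q}_2,\dots,\mathcal{Q}_{10}$ for the six non-normal ones). Consequently $\ell$ lies in an $\mathbb{H}$-quadric $\mathcal{Q}_j$, and since $\mathrm{Stab}_{\mathbb{H}}(\ell)=\Gamma$ has order $4$ the $\mathbb{H}$-orbit of $\ell$ consists of four lines; a direct check that $\mathbb{H}$ preserves each ruling of $\mathcal{Q}_j$ shows these four lie in one ruling and are disjoint, exhibiting four components of $C$ in a single $\mathbb{H}$-quadric. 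If $\mathcal{Q}_j=\mathcal{Q}_1$, then since $G$ has no subgroup of index $2$ and $\mathcal{Q}_1$ is $G$-invariant, $G$ preserves each of its rulings and all twelve lines lie in one ruling, so $C$ is a disjoint union of twelve lines; if instead $\mathcal{Q}_j$ belongs to a $G$-orbit triple, the twelve lines are distributed four to each quadric of the triple.

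Finally, for $d=4$ the stabiliser $\Gamma\cong\mathfrak{A}_4$ acts on $\ell$ through a quotient of $\mathfrak{A}_4$ inside $\mathrm{PGL}_2$; since each of the $30$ lines is pointwise fixed by a unique involution, no line is pointwise fixed by a $\mumu_2^2\subset\mathbb{H}$, so the kernel $\Gamma\cap\mathbb{H}$ cannot act trivially and the image is the full tetrahedral group, making $\Gamma$ fixed-point-free. The three point-stabilising classes of $\mathfrak{A}_4$ act as the permutation representation $\mathrm{triv}\oplus\mathrm{std}_3$ and have no invariant line, while the two fixed-point-free classes are genuine projective representations, with $\mathbb{C}^4$ a sum of two non-isomorphic two-dimensional projective representations, each leaving invariant precisely two lines; running over conjugates these yield exactly the four orbits $\mathcal{L}_4,\mathcal{L}_4^\prime,\mathcal{L}_4^{\prime\prime},\mathcal{L}_4^{\prime\prime\prime}$, all contained in $\mathcal{Q}_1$. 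I expect the main obstacle to be the $d=12$ case: beyond the ruling argument, establishing that the twelve lines are pairwise disjoint requires an explicit verification, using the line equations of Lemma~\ref{lemma:48-orbits}, that components lying in distinct quadrics of a triple do not meet along $\mathcal{Q}_j\cap\mathcal{Q}_{j^\prime}$.
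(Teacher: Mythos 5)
Your proposal follows essentially the same route as the paper's own proof: orbit--stabilizer together with the subgroup lattice of Remark~\ref{remark:48-Tim-subgroups} reduces to $d\in\{4,6,12\}$ with $\Gamma\cong\mathfrak{A}_4$, $\mumu_2^3$, $\mumu_2^2$, each case is settled by classifying the invariant lines of these subgroups (point-stabilizing classes have none, the remaining classes have exactly two or rule an $\mathbb{H}$-invariant quadric), and the pairwise disjointness of the twelve lines in the $d=12$ case is deferred to explicit verification exactly as in the paper, which likewise only asserts that ``explicit computations show'' it. The differences are cosmetic: you justify some steps representation-theoretically (faithfulness of the action on $\ell\cong\mathbb{P}^1$, the $\mathrm{triv}\oplus\mathrm{std}_3$ decomposition, the $\mathbb{H}$-orbit of $\ell$ giving the four coplanar--in--a--quadric components) where the paper simply checks the explicitly listed subgroups, so the proposal is correct and matches the paper's argument.
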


\begin{proof}
Let $\ell$ be an~irreducible component of the curve $C$, let $\Gamma=\mathrm{Stab}_{G}(\ell)$. Then $|\Gamma|\geqslant 4$.
Since $\mathbb{P}^3$ contains no $\mathbb{H}$-invariant lines, one has $\Gamma\cong\mathfrak{A}_4$ or $\Gamma\cong\mumu_2^3$ or $\Gamma\cong\mumu_2^3$ by Remark~\ref{remark:48-Tim-subgroups}.
Therefore, we see that $d\in\{4,6,12\}$.

By Remark~\ref{remark:48-Tim-subgroups}, the group $G$ contains five subgroups isomorphic to $\mathfrak{A}_4$ up to conjugation.
We explicitly described the generators of these subgroups in the~proof of Lemma~\ref{lemma:48-orbits}.
Three of them are stabilizers of a point in the~$G$-orbits $\Sigma_{4}$, $\Sigma_{4}^\prime$, $\Sigma_{4}^{\prime\prime}$,
and none of them leaves a line in $\mathbb{P}^3$ invariant, hence $\Gamma$ is not one of them.
If $\Gamma$ is one of the~two remaining subgroups in $G$ isomorphic to $\mathfrak{A}_4$,
then $\Gamma$ leaves invariant exactly two lines in $\mathbb{P}^3$ --- these are either  components of the~curves
$\mathcal{L}_4$ and $\mathcal{L}_4^\prime$, or components of the~curves $\mathcal{L}_4^{\prime\prime}$ and $\mathcal{L}_4^{\prime\prime\prime}$.
Thus, if $\Gamma\cong\mathfrak{A}_4$, then $C$ is one of the~curves $\mathcal{L}_4$, $\mathcal{L}_4^\prime$,
$\mathcal{L}_4^{\prime\prime}$, $\mathcal{L}_4^{\prime\prime\prime}$.

Now, we suppose that $\Gamma\cong\mumu_2^3$.
Up to conjugation, the~group $G$ contains exactly five subgroups isomorphic to $\mumu_2^3$.
Their generators are explicitly described in Remark~\ref{remark:48-Tim-subgroups}.
For instance, consider the~subgroup $\langle B,M,N\rangle$.
This subgroup leaves invariant exactly two lines in $\mathbb{P}^3$ --- the~lines $\{x_0=x_1=0\}$ and $\{x_2=x_3=0\}$,
which are irreducible components of the~curve $\mathcal{L}_6$.
Therefore, if $\Gamma$ is conjugated to $\langle B,M,N\rangle$, one has $C=\mathcal{L}_6$.
Similarly, if $\Gamma$ is conjugated to one of the~remaining four subgroups isomorphic to $\mumu_2^3$,
then $C$ is one of the~curves $\mathcal{L}_6^{\prime}$, $\mathcal{L}_6^{\prime\prime}$,
$\mathcal{L}_6^{\prime\prime\prime}$ or $\mathcal{L}_6^{\prime\prime\prime\prime}$.

Hence, we may assume that $\Gamma\cong\mumu_2^2$ and $d=12$.
Arguing as in the proof of Lemma~\ref{lemma:48-orbits},
we see that $\Gamma$ fixes no points in $\mathbb{P}^3$.
Up to conjugation, there are eight possibilities for $\Gamma$, which are described in Remark~\ref{remark:48-Tim-subgroups}.
In each case, $\Gamma$-invariant lines
span one of the~quadric surfaces $\mathcal{Q}_1$, $\mathcal{Q}_2$, $\mathcal{Q}_3$, $\mathcal{Q}_{4}$, $\mathcal{Q}_5$, $\mathcal{Q}_6$, $\mathcal{Q}_{7}$,
$\mathcal{Q}_8$, $\mathcal{Q}_9$, $\mathcal{Q}_{10}$. Thus, we conclude that
$$
\ell\subset\bigcup_{i=1}^{10}\mathcal{Q}_i.
$$
Moreover, explicit computations show that the curve $C$ is a disjoint union of $12$ lines,
and either $C\subset\mathcal{Q}_1$, or one of the~following three possibilities hold:
\begin{enumerate}[(i)]
\item $C\subset\mathcal{Q}_2\cup\mathcal{Q}_3\cup\mathcal{Q}_{4}$,
and each quadric $\mathcal{Q}_2$, $\mathcal{Q}_3$, $\mathcal{Q}_{4}$ contains $4$ components of $C$;
\item $C\subset\mathcal{Q}_5\cup\mathcal{Q}_6\cup\mathcal{Q}_{7}$,
and each quadric $\mathcal{Q}_5$, $\mathcal{Q}_6$, $\mathcal{Q}_{7}$ contains $4$ components of $C$;
\item $C\subset\mathcal{Q}_8\cup\mathcal{Q}_9\cup\mathcal{Q}_{10}$,
and each quadric $\mathcal{Q}_8$, $\mathcal{Q}_9$, $\mathcal{Q}_{10}$ contains $4$ components of $C$.
\end{enumerate}
This completes the proof of the lemma.
\end{proof}

Now, let us prove one auxiliary results that will be used later.

\begin{lemma}
\label{lemma:48-sextic-surfaces}
Let $\mathcal{M}_6$ be the~linear system that is generated by the~sextic surfaces
$$
3\mathcal{Q}_1, \mathcal{Q}_1+S_2, \mathcal{Q}_1+S_3, \big\{x_0^6+x_1^6+x_2^6+x_3^6=0\big\}.
$$
Then $\mathcal{M}_6$ is three-dimensional, its base locus is $\mathcal{L}_6^{\prime\prime\prime}\cup\mathcal{L}_6^{\prime\prime\prime\prime}$,
and $\mathcal{M}_6\vert_{\mathcal{Q}_1}=\mathcal{L}_6^{\prime\prime\prime}+\mathcal{L}_6^{\prime\prime\prime\prime}$.
If~$S$~is~a~$G$-invariant sextic surface in $\mathbb{P}^3$, then $S\in\mathcal{M}_6$ or $S=\mathcal{Q}_1+S_4$ or $S=\mathcal{Q}_1+S_5$.
\end{lemma}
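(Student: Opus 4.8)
A sextic surface $\{F=0\}$ is $G$-invariant precisely when the form $F\in H^0(\mathbb{P}^3,\mathcal{O}_{\mathbb{P}^3}(6))$ is a common eigenvector (a semi-invariant) for the linear action of $\widehat{G}$, so the plan is to list all semi-invariant sextic forms. The engine of the proof is the following character computation: \emph{every linear character $\psi$ of $\widehat{G}$ is trivial on $\widehat{\mathbb{H}}$}. Indeed, by Remark~\ref{remark:48-Tim-subgroups} the subgroups $\langle A,M,N\rangle$ and $\langle A,B\rangle$ of $\widehat{G}$ are isomorphic to $\mathfrak{A}_4$, with commutator subgroups the Klein four-groups $\langle M,N\rangle$ and $\langle B,ABA^2,A^2BA\rangle$ respectively. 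Restricting $\psi$ to each copy of $\mathfrak{A}_4$ and using $\mathfrak{A}_4^{\mathrm{ab}}\cong\mumu_3$, we get $\psi(M)=\psi(N)=\psi(B)=\psi(ABA^2)=1$, so $\psi$ is trivial on $\widehat{\mathbb{H}}=\langle M,N,B,ABA^2\rangle$. Hence $\psi$ factors through $\widehat{G}/\widehat{\mathbb{H}}\cong\mumu_3$, and there are exactly three such characters: the trivial one and two characters $\chi,\chi^2$ of order $3$ with $\chi(A)=e^{2\pi i/3}$.

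\textbf{Reduction to $\widehat{\mathbb{H}}$-invariants.} By the previous step every $G$-invariant sextic form lies in $W:=H^0(\mathbb{P}^3,\mathcal{O}_{\mathbb{P}^3}(6))^{\widehat{\mathbb{H}}}$ and is an eigenvector for $A$. I would first compute $\dim W=6$ by monomial bookkeeping: invariance under $M=(-1,1,-1,1)$ and $N=(-1,-1,1,1)$ forces each monomial $x_0^{a_0}x_1^{a_1}x_2^{a_2}x_3^{a_3}$ to have all four exponents of the same parity, giving $20$ all-even and $4$ all-odd monomials; the Klein four-group $\langle B,ABA^2\rangle$ acts by permuting the coordinates freely and transitively on $\{0,1,2,3\}$, and counting its orbits on these two sets of monomials yields $5+1=6$ invariants. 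The group $\mumu_3=\langle A\rangle$ then splits $W=W_{0}\oplus W_{\chi}\oplus W_{\chi^2}$ into eigenspaces, and (since a form defines a $G$-invariant surface iff it is a single eigenvector) the set of $G$-invariant sextic surfaces is exactly $\mathbb{P}(W_0)\sqcup\mathbb{P}(W_\chi)\sqcup\mathbb{P}(W_{\chi^2})$.

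\textbf{Identifying the eigenspaces.} Next I would place the claimed surfaces in these eigenspaces. The four forms $(x_0^2+x_1^2+x_2^2+x_3^2)^3$, $f_1f_2$, $f_1f_3$, $x_0^6+x_1^6+x_2^6+x_3^6$ are $\widehat{G}$-invariant, hence lie in $W_0$; they are linearly independent, because $f_1^2,f_2,f_3$ are (they span the $\widehat{G}$-invariant quartics), whence $f_1^3,f_1f_2,f_1f_3$ are independent, while $x_0^6+x_1^6+x_2^6+x_3^6$ is not divisible by the irreducible $f_1$ (its restriction to $\mathcal{Q}_1$ is nonzero) and so is independent of them. Thus $\dim\mathcal{M}_6=4$, i.e.\ $\mathcal{M}_6$ is a three-dimensional linear system, and $\mathcal{M}_6\subseteq W_0$. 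On the other hand $f_1f_4$ and $f_1f_5$ are nonzero elements of $W$ on which $A$ acts by $\chi$ and $\chi^2$ (the quartics $f_4,f_5$ carry the characters $\chi,\chi^2$ by Lemma~\ref{lemma:48-representations}), so $f_1f_4\in W_\chi$ and $f_1f_5\in W_{\chi^2}$. Now the estimate $6=\dim W=\dim W_0+\dim W_\chi+\dim W_{\chi^2}\geqslant 4+1+1=6$ forces equality: $W_0=\mathcal{M}_6$, $W_\chi=\langle f_1f_4\rangle$, $W_{\chi^2}=\langle f_1f_5\rangle$. Therefore the $G$-invariant sextic surfaces are exactly the members of $\mathcal{M}_6$ together with $\mathcal{Q}_1+S_4=\{f_1f_4=0\}$ and $\mathcal{Q}_1+S_5=\{f_1f_5=0\}$, which is the last assertion.

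\textbf{Base locus and restriction to $\mathcal{Q}_1$.} Finally, for a point of $\mathrm{Bs}(\mathcal{M}_6)$ the generator $f_1^3$ must vanish, so $f_1=0$ there; hence $\mathrm{Bs}(\mathcal{M}_6)\subseteq\mathcal{Q}_1\cap\{x_0^6+x_1^6+x_2^6+x_3^6=0\}$, and conversely this locus lies in every member. I would check that each of the twelve lines comprising $\mathcal{L}_6^{\prime\prime\prime}$ and $\mathcal{L}_6^{\prime\prime\prime\prime}$ lies on both $\mathcal{Q}_1$ and $\{x_0^6+x_1^6+x_2^6+x_3^6=0\}$ (for instance on $\{x_0+ix_2=x_1+ix_3=0\}$ one has $x_0^2+x_2^2=x_1^2+x_3^2=0$ and $x_0^6+x_2^6=x_1^6+x_3^6=0$); since $\mathcal{Q}_1\cap\{x_0^6+\cdots=0\}$ has degree $12$ and these twelve lines already account for degree $12$, we obtain $\mathrm{Bs}(\mathcal{M}_6)=\mathcal{L}_6^{\prime\prime\prime}\cup\mathcal{L}_6^{\prime\prime\prime\prime}$. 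As $f_1$ vanishes on $\mathcal{Q}_1$, the first three generators restrict to $0$ there, so $\mathcal{M}_6\vert_{\mathcal{Q}_1}$ is the single divisor cut out by $x_0^6+x_1^6+x_2^6+x_3^6$, namely $\mathcal{L}_6^{\prime\prime\prime}+\mathcal{L}_6^{\prime\prime\prime\prime}$. The one genuinely non-formal step, and the crux of the whole argument, is the character computation of the first paragraph: it is what collapses the a priori large space of semi-invariants onto these three explicit families.
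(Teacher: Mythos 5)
Your proof is correct, and it takes a genuinely different route from the paper's. The paper dispatches this lemma in two sentences: the assertions about $\mathcal{M}_6$ are declared to be easy explicit computations, and the classification of $G$-invariant sextics is obtained by ``arguing as in the proof of Lemma~\ref{lemma:48-representations}'' --- whose own proof is a GAP verification. You replace that machine computation with hand representation theory, and the three ingredients you supply are exactly what the computer is hiding: (a) every linear character of $\widehat{G}$ is trivial on $\widehat{\mathbb{H}}$, because the two copies of $\mathfrak{A}_4$, namely $\langle A,M,N\rangle$ and $\langle A,B\rangle$, have the Klein four-groups $\langle M,N\rangle$ and $\langle B,ABA^2\rangle$ as commutator subgroups, so $\widehat{G}^{\mathrm{ab}}\cong\mumu_3$; (b) the monomial-orbit count giving $\dim H^0(\mathbb{P}^3,\mathcal{O}_{\mathbb{P}^3}(6))^{\widehat{\mathbb{H}}}=5+1=6$; and (c) the pigeonhole $6\geqslant 4+1+1$ that forces $W_0$ to be spanned by the four generators of $\mathcal{M}_6$ and $W_\chi$, $W_{\chi^2}$ to be the lines $\langle f_1f_4\rangle$, $\langle f_1f_5\rangle$. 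The base-locus and restriction claims then follow from your degree count ($\mathcal{Q}_1\cap\{x_0^6+x_1^6+x_2^6+x_3^6=0\}$ has degree $12$ and contains the twelve distinct lines of $\mathcal{L}_6^{\prime\prime\prime}\cup\mathcal{L}_6^{\prime\prime\prime\prime}$), which is presumably the ``explicit computation'' the authors intend. What the paper's approach buys is brevity and uniformity with its other GAP-verified statements; what yours buys is a computer-free argument that also \emph{explains} why there are precisely two invariant sextics outside $\mathcal{M}_6$: they correspond to the two nontrivial characters of $\widehat{G}^{\mathrm{ab}}\cong\mumu_3$. One small point to patch: Remark~\ref{remark:48-Tim-subgroups} concerns subgroups of $G\subset\mathrm{PGL}_4(\mathbb{C})$, while you need $\langle A,M,N\rangle$ and $\langle A,B\rangle$ to be honest copies of $\mathfrak{A}_4$ inside $\widehat{G}\subset\mathrm{GL}_4(\mathbb{C})$ rather than central extensions. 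This is true ($\langle A,B\rangle$ consists of permutation matrices, and $AMA^{-1}=N$, $ANA^{-1}=MN$ show $\langle A,M,N\rangle$ has order $12$); in fact these relations, together with $[A,B]=A^2BA$ and $[A,ABA^2]=B$, exhibit $M$, $N$, $B$, $ABA^2$ directly as commutators in $\widehat{G}$, so your character claim holds without identifying the subgroups abstractly at all.
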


\begin{proof}
All assertions about $\mathcal{M}_6$ are easy and can be checked using explicit computations.
Arguing as in the~proof of Lemma~\ref{lemma:48-representations},
we obtain the remaining assertion.
\end{proof}

Now, let us describe all $G$-irreducible curves in $\mathbb{P}^3$ that consist of $4$ irreducible conics.
Let $\mathcal{C}_8^1$ be the~$G$-irreducible curve in $\mathbb{P}^3$ whose irreducible component is the~conic
$$
\big\{x_0=x_1^2+x_2^2+x_3^2=0\big\},
$$
let $\mathcal{C}_8^{2}$ be the~$G$-irreducible curve whose irreducible component is the~conic
$$
\big\{x_0=2x_1^2-(1-\sqrt{3}i)x_2^2-(1+\sqrt{3}i)x_3^2=0\big\},
$$
and let $\mathcal{C}_8^{3}$ be the~$G$-irreducible curve whose irreducible component is the~conic
$$
\big\{x_0=2x_1^2-(1+\sqrt{3}i)x_2^2-(1-\sqrt{3}i)x_3^2=0\big\}.
$$
Then $\mathcal{C}_8^1$, $\mathcal{C}_8^{2}$ and $\mathcal{C}_8^{3}$ are union of $4$ irreducible conics that are contained in the~surface~$\mathcal{T}$.
Moreover, one has $\mathcal{C}_8^1=\mathcal{T}\cap\mathcal{Q}_1$, which implies that $\mathcal{C}_8^1$ is connected.
On the~other hand, the~curves $\mathcal{C}_8^{2}$ and $\mathcal{C}_8^{3}$ are disjoint unions of $4$ conics.

Recall that $R$ is a generator of the group $G_{144,184}$ defined earlier. Let
$$
\mathcal{C}_8^{1,\prime}=R\big(\mathcal{C}_8^{1}\big), \mathcal{C}_8^{2,\prime}=R\big(\mathcal{C}_8^{2}\big), \mathcal{C}_8^{3,\prime}=R\big(\mathcal{C}_8^{3}\big),
$$
and let
$$
\mathcal{C}_8^{1,\prime\prime}=R^2\big(\mathcal{C}_8^{1}\big), \mathcal{C}_8^{2,\prime\prime}=R^2\big(\mathcal{C}_8^{2}\big), \mathcal{C}_8^{3,\prime\prime}=R^2\big(\mathcal{C}_8^{3}\big).
$$
Then $\mathcal{C}_8^{1,\prime}$, $\mathcal{C}_8^{2,\prime}$, $\mathcal{C}_8^{3,\prime}$ are contained in~$\mathcal{T}^\prime$,
and the~curves $\mathcal{C}_8^{1,\prime\prime}$, $\mathcal{C}_8^{2,\prime\prime}$, $\mathcal{C}_8^{3,\prime\prime}$ are contained in~$\mathcal{T}^{\prime\prime}$.
One has $\mathcal{C}_8^{1,\prime}=\mathcal{T}^\prime\cap\mathcal{Q}_1$ and $\mathcal{C}_8^{1,\prime\prime}=\mathcal{T}^{\prime\prime}\cap\mathcal{Q}_1$,
so that both curves $\mathcal{C}_8^{1,\prime}$ and $\mathcal{C}_8^{1,\prime\prime}$ are connected.
On the~other hand, the~curves $\mathcal{C}_8^{2,\prime}$, $\mathcal{C}_8^{3,\prime}$, $\mathcal{C}_8^{2,\prime\prime}$, $\mathcal{C}_8^{3,\prime\prime}$
are disjoint unions of $4$ conics.

\begin{lemma}
\label{lemma:48-conics}
Let $C$ be a $G$-irreducible curve in $\mathbb{P}^3$ that consists of at most $7$~irreducible conics.
Then $C$ is one of the~curves
$\mathcal{C}_8^{1}$, $\mathcal{C}_8^{2}$, $\mathcal{C}_8^{3}$,
$\mathcal{C}_8^{1,\prime}$, $\mathcal{C}_8^{2,\prime}$, $\mathcal{C}_8^{3,\prime}$,
$\mathcal{C}_8^{1,\prime\prime}$, $\mathcal{C}_8^{2,\prime\prime}$, $\mathcal{C}_8^{3,\prime\prime}$.
\end{lemma}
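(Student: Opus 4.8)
The plan is to control the conic components of $C$ through their spanning planes, using the $G$-equivariant polarity furnished by the unique invariant quadric $\mathcal{Q}_1$. First I would fix an irreducible conic $\mathfrak{c}$ among the components of $C$ and put $\Gamma=\mathrm{Stab}_G(\mathfrak{c})$. Since $C$ is $G$-irreducible, its $d\leqslant 7$ components form a single $G$-orbit, so $d=[G:\Gamma]$. As $\mathfrak{c}$ is an irreducible plane curve, it spans a unique plane $\Pi=\langle\mathfrak{c}\rangle$, and the assignment $\mathfrak{c}\mapsto\langle\mathfrak{c}\rangle$ is $G$-equivariant; hence the spanning planes form a single $G$-orbit $\mathcal{O}$ whose cardinality divides $d$, so $|\mathcal{O}|\leqslant 7$.

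The crucial point is that $\widehat{G}=\langle M,N,A,B\rangle$ consists of signed permutation matrices, so it preserves $x_0^2+x_1^2+x_2^2+x_3^2$, and the polarity of $\mathcal{Q}_1$ is a $G$-equivariant isomorphism $\mathbb{P}^3\xrightarrow{\sim}(\mathbb{P}^3)^{\vee}$. Consequently $G$-orbits of planes have the same sizes as $G$-orbits of points, and by Lemma~\ref{lemma:48-orbits} (together with the absence of $\mathbb{H}$-fixed points) every such size lies in $\{4,12,16,24,48\}$; in particular there is no $G$-orbit of size $5$, $6$ or $7$, and none smaller than $4$. Therefore $|\mathcal{O}|=4$, which forces $d=4$ and shows that each plane of $\mathcal{O}$ meets $C$ in exactly one conic, so that $\Gamma=\mathrm{Stab}_G(\Pi)$ has order $12$; since $\Gamma$ fixes the polar point of $\Pi$, which lies in a $G$-orbit of length $4$, we get $\Gamma\cong\mathfrak{A}_4$.

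Next I would locate $\mathcal{O}$. Under the polarity the three point-orbits $\Sigma_{4}$, $\Sigma_{4}^{\prime}$, $\Sigma_{4}^{\prime\prime}$ of size $4$ correspond to the three plane-orbits of size $4$, which are exactly the four faces of each desmic quartic $\mathcal{T}=\{x_0x_1x_2x_3=0\}$, $\mathcal{T}^\prime$, $\mathcal{T}^{\prime\prime}$ (for instance, the polar of $[1:0:0:0]$ is the face $\{x_0=0\}$ of $\mathcal{T}$). The two remaining conjugacy classes of $\mathfrak{A}_4$ in $G$ fix no point of $\mathbb{P}^3$ by the proof of Lemma~\ref{lemma:48-orbits}, hence by the polarity they fix no plane either, so they cannot occur as $\mathrm{Stab}_G(\Pi)$ and need not be analysed. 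Since $R\in G_{144,184}$ cyclically permutes $\mathcal{T}$, $\mathcal{T}^\prime$, $\mathcal{T}^{\prime\prime}$, after replacing $C$ by $R^{\pm 1}(C)$ I may assume $\Pi=\{x_0=0\}$, on which $\Gamma\cong\mathfrak{A}_4$ acts as the group of even signed permutations of $(x_1,x_2,x_3)$.

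Finally I would classify the $\Gamma$-invariant irreducible conics in $\Pi\cong\mathbb{P}^2_{x_1,x_2,x_3}$. The even sign changes annihilate every cross term, so a semi-invariant quadratic form must be diagonal, and the residual $\mu_3$ splits $\langle x_1^2,x_2^2,x_3^2\rangle$ into the eigenlines $x_1^2+x_2^2+x_3^2$, $x_1^2+\zeta_3 x_2^2+\zeta_3^2 x_3^2$ and $x_1^2+\zeta_3^2 x_2^2+\zeta_3 x_3^2$; these give precisely three invariant conics, each smooth, so each irreducible. Taking $G$-orbits recovers $\mathcal{C}_8^{1}$, $\mathcal{C}_8^{2}$, $\mathcal{C}_8^{3}$, and applying $R$ and $R^2$ yields the six curves with primes. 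The hard part is this closing bookkeeping: confirming that the three diagonal conics are genuinely irreducible and that their four-element $G$-orbits are exactly the nine named curves, with no accidental coincidences, which I would settle by the same explicit computations used for the line configurations in Lemma~\ref{label:48-lines}.
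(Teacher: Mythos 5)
Your proposal is correct and follows essentially the same route as the paper's proof: pass from a conic component to its spanning plane, use duality to turn the invariant plane into a fixed point (the paper invokes this abstractly, you make it explicit via the $G$-equivariant polarity of $\mathcal{Q}_1$), then combine Remark~\ref{remark:48-Tim-subgroups} and Lemma~\ref{lemma:48-orbits} to conclude that the plane is a face of $\mathcal{T}$, $\mathcal{T}^\prime$ or $\mathcal{T}^{\prime\prime}$ with stabilizer $\mathfrak{A}_4$, and finish by explicitly classifying the $\mathfrak{A}_4$-invariant conics in that face. The extra bookkeeping you carry out (deducing $d=4$ and the component--plane bijection from orbit sizes) is sound but not needed beyond what the paper's shorter argument already establishes.
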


\begin{proof}
Let $\Gamma$ be the~stabilizer of an irreducible component of the~$G$-irreducible curve $C$,
and let $\Pi$ be the~hyperplane in $\mathbb{P}^3$ that contains this irreducible component.
Then $|\Gamma|>6$, and the~plane $\Pi$ is $\Gamma$-invariant.
This implies that $\mathbb{P}^3$ must contain a $\Gamma$-fixed point,
so that it follows from Remark~\ref{remark:48-Tim-subgroups} and Lemma~\ref{lemma:48-orbits} that
$\Gamma\cong\mathfrak{A}_4$, and the~plane $\Pi$ is an irreducible component of one of the~surfaces $\mathcal{T}$, $\mathcal{T}^\prime$, $\mathcal{T}^{\prime\prime}$.
Now, we can explicitly find all $\Gamma$-invariant conics in $\Pi$ to obtain the~required result.
\end{proof}

\begin{corollary}
\label{corollary:48-curves-in-tetrahedra}
Let $C$ be a $G$-irreducible curve contained in $\mathcal{T}\cup\mathcal{T}^\prime\cup\mathcal{T}^{\prime\prime}$ of degree $\leqslant 15$.
Then $C$ is one of the~curves $\mathcal{L}_6$, $\mathcal{L}_6^\prime$, $\mathcal{L}_6^{\prime\prime}$,
$\mathcal{C}_8^{1}$, $\mathcal{C}_8^{2}$, $\mathcal{C}_8^{3}$,
$\mathcal{C}_8^{1,\prime}$, $\mathcal{C}_8^{2,\prime}$, $\mathcal{C}_8^{3,\prime}$,
$\mathcal{C}_8^{1,\prime\prime}$, $\mathcal{C}_8^{2,\prime\prime}$, $\mathcal{C}_8^{3,\prime\prime}$.
\end{corollary}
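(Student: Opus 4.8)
The plan is to reduce everything to the two preceding classification lemmas --- Lemma~\ref{label:48-lines} for unions of lines and Lemma~\ref{lemma:48-conics} for unions of conics --- and to dispose of the only case they do not cover, namely plane cubic components, by a short invariant-theoretic computation on a single face of a tetrahedron. First I would decompose $C$ into its irreducible components, which $G$ permutes transitively because $C$ is $G$-irreducible. Since each of $\mathcal{T}$, $\mathcal{T}^\prime$, $\mathcal{T}^{\prime\prime}$ is $G$-invariant, all components of $C$ lie in one and the same tetrahedron, and after renaming I may assume $C\subset\mathcal{T}=\{x_0x_1x_2x_3=0\}$. An irreducible curve contained in a union of four planes lies in one of them, say $\ell\subset\Pi=\{x_0=0\}$. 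Because $G$ acts by monomial matrices, $\mathrm{Stab}_G(\Pi)$ coincides with the stabilizer of the coordinate point $[1:0:0:0]\in\Sigma_4$, which is isomorphic to $\mathfrak{A}_4$ of order $12$ by Lemma~\ref{lemma:48-orbits} and Remark~\ref{remark:48-Tim-subgroups}; its $G$-orbit is the four coordinate planes, so $[G:\mathrm{Stab}_G(\Pi)]=4$. Writing $\Gamma=\mathrm{Stab}_G(\ell)\subseteq\mathrm{Stab}_G(\Pi)$, the number of components of $C$ is $[G:\Gamma]\geqslant 4$, and hence the bound $\deg C\leqslant 15$ forces $\deg\ell\leqslant 3$.

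If $\deg\ell=1$, then $C$ is a $G$-irreducible union of at most $15$ lines, so Lemma~\ref{label:48-lines} applies; among the curves listed there only $\mathcal{L}_6$, $\mathcal{L}_6^\prime$, $\mathcal{L}_6^{\prime\prime}$ are contained in $\mathcal{T}\cup\mathcal{T}^\prime\cup\mathcal{T}^{\prime\prime}$, since the components of $\mathcal{L}_4,\dots,\mathcal{L}_6^{\prime\prime\prime\prime}$ and those of the disjoint unions of twelve lines lie in the fundamental quadrics and, as a routine check of their defining equations shows, are not contained in any tetrahedron plane. If $\deg\ell=2$, then $\ell$ is a smooth conic and, as all components have the same degree, $C$ consists of $[G:\Gamma]\leqslant 7$ such conics, so Lemma~\ref{lemma:48-conics} yields exactly the nine curves $\mathcal{C}_8^{1},\dots,\mathcal{C}_8^{3,\prime\prime}$, all of which are by construction contained in the tetrahedra.

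The crux is to exclude $\deg\ell=3$. Here $3[G:\Gamma]\leqslant 15$ together with $[G:\Gamma]\geqslant 4$ gives $[G:\Gamma]=4$, whence $\Gamma=\mathrm{Stab}_G(\Pi)\cong\mathfrak{A}_4$, acting faithfully on $\Pi\cong\mathbb{P}^2$ through the group $\langle M,N\rangle\cong\mumu_2^2$ of diagonal sign changes of $x_1,x_2,x_3$ together with their cyclic permutation. The cubic $\ell$ would be defined by a $\Gamma$-semi-invariant $f\in H^0(\Pi,\mathcal{O}_{\mathbb{P}^2}(3))$. Semi-invariance under $\langle M,N\rangle$ forces all monomials of $f$ to share a common parity vector $(a,b,c)\bmod 2$; since the degree is odd this vector is one of $(1,0,0)$, $(0,1,0)$, $(0,0,1)$, $(1,1,1)$. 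In the first three cases every monomial is divisible by a fixed variable, and moreover the cyclic symmetry permutes these three parity classes, so it admits no nonzero semi-invariant supported in them; only the class $(1,1,1)$ survives, giving $f=\lambda x_1x_2x_3$. This cubic is reducible, so no irreducible $\Gamma$-invariant cubic exists and the case $\deg\ell=3$ is impossible. Combining the three cases proves the corollary.

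I expect the parity/cubic step to be the only real obstacle: one must present the $\mathfrak{A}_4$-action on $H^0(\Pi,\mathcal{O}_{\mathbb{P}^2}(3))$ cleanly enough to see that every semi-invariant cubic is reducible, whereas the line and conic cases follow almost formally from Lemmas~\ref{label:48-lines} and \ref{lemma:48-conics} once one has observed that $C$ lies in a single tetrahedron and has retained only those listed curves actually contained in $\mathcal{T}\cup\mathcal{T}^\prime\cup\mathcal{T}^{\prime\prime}$.
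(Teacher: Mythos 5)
Your proof is correct and is essentially the paper's own argument: the paper proves Corollary~\ref{corollary:48-curves-in-tetrahedra} by a one-line reference to the method of Lemma~\ref{lemma:48-conics} (each component lies in a face of one tetrahedron, the face stabilizer is isomorphic to $\mathfrak{A}_4$, and one classifies the invariant plane curves of low degree), which is exactly what you carry out, with your parity computation for cubics being the explicit step the paper leaves to the reader and Lemmas~\ref{label:48-lines} and~\ref{lemma:48-conics} covering the line and conic cases. Two cosmetic repairs: the inclusion $\mathrm{Stab}_G(\ell)\subseteq\mathrm{Stab}_G(\Pi)$ is valid only when $\deg\ell\geqslant 2$, since a line does not span $\Pi$ (harmless, as that is the only place you use it), and ruling out $[G:\Gamma]=5$ in the cubic case needs the remark that the index divides $|G|=48$.
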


\begin{proof}
Arguing as in the~proof of Lemma~\ref{lemma:48-conics}, we obtain the required assertion.
\end{proof}

Now, we are ready to prove the following result:

\begin{lemma}
\label{lemma:48-reducible-Q1-curves}
Let $C$ be a reducible $G$-irreducible curve in the~quadric $\mathcal{Q}_1$ of degree $\leqslant 15$.
Then either $C$ is one of the~$G$-irreducible curves
$\mathcal{L}_4$, $\mathcal{L}_4^\prime$, $\mathcal{L}_4^{\prime\prime}$, $\mathcal{L}_4^{\prime\prime\prime}$, $\mathcal{L}_6^{\prime\prime\prime}$, $\mathcal{L}_6^{\prime\prime\prime\prime}$, $\mathcal{C}_8^{1}$, $\mathcal{C}_8^{1,\prime}$, $\mathcal{C}_8^{1,\prime\prime}$,
or the~curve $C$ is a union of $12$ disjoint lines.
\end{lemma}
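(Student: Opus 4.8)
The plan is to reduce the statement to the two preceding classification lemmas (Lemma~\ref{label:48-lines} and Lemma~\ref{lemma:48-conics}), together with a separate argument excluding components of degree $\geqslant 3$. Fix an isomorphism $\mathcal{Q}_1\cong\mathbb{P}^1\times\mathbb{P}^1$. Since $C$ is reducible and $G$-irreducible, its irreducible components form a single $G$-orbit; I pick one of them, say $\ell$, of bidegree $(a,b)$, and set $\Gamma=\mathrm{Stab}_G(\ell)$ and $m=[G:\Gamma]\geqslant 2$, so that $\deg C=m(a+b)\leqslant 15$. As every irreducible curve on $\mathbb{P}^1\times\mathbb{P}^1$ with $a+b\geqslant 2$ and $a=0$ or $b=0$ is a union of fibres, each component is a line ($a+b=1$), a conic ($a+b=2$, of bidegree $(1,1)$), or an irreducible curve of degree $a+b\geqslant 3$ with $a,b\geqslant 1$.

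First I would dispose of the first two cases. If the components are lines, then Lemma~\ref{label:48-lines} applies, and keeping only the curves contained in $\mathcal{Q}_1$ leaves exactly $\mathcal{L}_4,\mathcal{L}_4^\prime,\mathcal{L}_4^{\prime\prime},\mathcal{L}_4^{\prime\prime\prime},\mathcal{L}_6^{\prime\prime\prime},\mathcal{L}_6^{\prime\prime\prime\prime}$ and the unions of $12$ disjoint lines (one checks that $\mathcal{L}_6,\mathcal{L}_6^\prime,\mathcal{L}_6^{\prime\prime}$ are not contained in $\mathcal{Q}_1$). If the components are conics, then $m\leqslant 7$ because $2m\leqslant 15$, so Lemma~\ref{lemma:48-conics} applies, and intersecting again with $\mathcal{Q}_1$ leaves exactly $\mathcal{C}_8^{1}$, $\mathcal{C}_8^{1,\prime}$, $\mathcal{C}_8^{1,\prime\prime}$.

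The substance of the proof is to rule out a component $\ell$ of degree $d_0=a+b\geqslant 3$ (so $a,b\geqslant 1$). Here $md_0\leqslant 15$ with $m\mid 48$ forces $m\in\{2,3,4\}$, that is $|\Gamma|\in\{24,16,12\}$. The first key observation is that $\Gamma$ acts faithfully on $\ell$: every nontrivial element of $G$ has order $2$ or $3$ (those of order $3$ being the elements outside $\mathbb{H}$), and its fixed locus in $\mathbb{P}^3$ is a union of linear subspaces --- a pair of skew lines for each involution (these are among the $30$ lines of Remark~\ref{remark:48-30-lines}) and a union of a line and two points for the order-$3$ elements --- hence contains no irreducible curve of degree $\geqslant 2$; so no nontrivial element fixes $\ell$ pointwise, and $\Gamma$ embeds into $\mathrm{Aut}(\ell)$, and into $\mathrm{Aut}$ of the normalization $\widetilde{\ell}$. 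I would then treat the three values of $m$:
\begin{itemize}
\item[--] $m=2$: this requires a subgroup of order $24$. Writing $\mathbb{H}\cong\mathbb{F}_2^4$ as a module over $\mathbb{Z}/3=\langle A\rangle$, the conjugation action of $A$ on the generators $M,N,B$ gives $\mathbb{H}\cong\mathbb{F}_4\oplus\mathbb{F}_4$ with $A$ acting irreducibly on each summand, so every $A$-invariant $\mathbb{F}_2$-subspace has even dimension. Any $\Gamma$ of order $24$ satisfies $|\Gamma\cap\mathbb{H}|\geqslant 8$ with $\Gamma\cap\mathbb{H}$ an $A$-invariant $2$-group, forcing $\Gamma\cap\mathbb{H}=\mathbb{H}$ and hence $16\mid|\Gamma|=24$, a contradiction.
\item[--] $m=3$: then $\Gamma=\mathbb{H}\cong\mumu_2^4$ (the unique subgroup of order $16$), and $d_0\leqslant 5$, so $\widetilde{\ell}$ has genus $\leqslant 2$. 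But $\mathrm{Aut}$ of a curve of genus $\leqslant 2$ has elementary abelian $2$-rank at most $3$ (via $\mathrm{PGL}_2$ for genus $0$, via $E[2]$ and $[-1]$ for genus $1$, via the hyperelliptic involution for genus $2$), whereas $\mumu_2^4$ has rank $4$; this contradicts $\Gamma\hookrightarrow\mathrm{Aut}(\widetilde{\ell})$.
\item[--] $m=4$: then $|\Gamma|=12$, $d_0=3$, and $\ell$ is a smooth rational curve of bidegree $(2,1)$. From $\Gamma\hookrightarrow\mathrm{PGL}_2$ and $\Gamma\cap\mathbb{H}\cong\mumu_2^2$ one gets $\Gamma\cong\mathfrak{A}_4$, which has no subgroup of index $2$ and therefore preserves both rulings of $\mathcal{Q}_1$. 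Then $\ell$ is the graph of a $\Gamma$-equivariant degree-$2$ morphism $\mathbb{P}^1\to\mathbb{P}^1$ between the rulings, which corresponds to an $\mathfrak{A}_4$-invariant pencil inside $\mathrm{Sym}^2$ of the $2$-dimensional representation of the binary tetrahedral group. But this $\mathrm{Sym}^2$ is the irreducible $3$-dimensional representation of $\mathfrak{A}_4$, so no such invariant pencil exists.
\end{itemize}

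\textbf{The main obstacle} I anticipate is the case $m=4$: excluding an $\mathfrak{A}_4$-invariant twisted cubic on $\mathcal{Q}_1$ is not a formal consequence of the orbit/degree bookkeeping and genuinely needs the representation-theoretic input that $\mathrm{Sym}^2$ of the binary tetrahedral $2$-dimensional representation is irreducible. A secondary technical point is to pin down the $\langle A\rangle$-module structure $\mathbb{H}\cong\mathbb{F}_4\oplus\mathbb{F}_4$ sharply enough to exclude order-$24$ subgroups; this I would verify directly from the conjugation action of $A$ on $M,N,B$ recorded in Remark~\ref{remark:48-Tim-subgroups}.
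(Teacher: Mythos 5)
Your proof is correct, and its skeleton coincides with the paper's: both reduce to components of degree at least $3$ via Lemmas~\ref{label:48-lines} and~\ref{lemma:48-conics}, both pin the stabilizer index down to $3$ or $4$, and both then derive contradictions in those two cases (the paper also asserts, as you prove via fixed loci, that the stabilizer acts faithfully on a component). The differences lie in how the two exclusions are carried out. For index $3$ the paper invokes Lemma~\ref{lemma:48-curves} (a faithful $\mumu_2^4$-action forces genus at least $5$, a fact imported from the literature), whereas you give a self-contained $2$-rank bound on automorphism groups of curves of genus at most $2$; these are interchangeable here. The essential divergence is the index-$4$ (twisted cubic) case, which you rightly flag as the crux: the paper disposes of it with a GAP computation showing that $\mathcal{Q}_1$ is the unique $\Gamma$-invariant quadric and that $\mathbb{P}^3$ carries no $\Gamma$-invariant pencil of quadrics, and then observes that the net of quadrics through the twisted cubic $C_1$ therefore cannot contain $\mathcal{Q}_1$, contradicting $C_1\subset\mathcal{Q}_1$; you instead realize the invariant curve of bidegree $(2,1)$ as the graph of a $\Gamma$-equivariant degree-$2$ map between the rulings and kill it by the irreducibility of $\mathrm{Sym}^2$ of the $2$-dimensional representation of the binary tetrahedral group. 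Both arguments are valid; yours is computer-free and purely representation-theoretic (and likewise your module-theoretic proof that $G$ has no subgroup of index $2$ replaces the paper's appeal to the subgroup lattice in Remark~\ref{remark:48-Tim-subgroups}), while the paper's route is shorter given the database and GAP apparatus it has already set up.
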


\begin{proof}
Let $r$ be the~number of irreducible components of the~curve $C$, let $C_1,\ldots,C_r$ be irreducible components of the~curve $C$,
let $d$ be the~degree of the~curve $C_1$, and let $\Gamma$ be the~stabilizer of the~curve $C_1$ in the~group $G$.
Then $\Gamma$ is a subgroup in $G$ of index $r\leqslant\frac{15}{d}$.
By Lemmas~\ref{label:48-lines} and~\ref{lemma:48-conics}, we may assume that $d\geqslant 3$, which gives $r\leqslant 5$,
so~that it follows from Remark~\ref{remark:48-Tim-subgroups} that we have the~following possibilities:
\begin{enumerate}
\item $r=3$, $\Gamma=\mathbb{H}$ and $d\in\{3,4,5\}$,
\item $r=4$, $\Gamma\cong\mathfrak{A}_4$ and $d=3$.
\end{enumerate}
In each case, the~group $\Gamma$ acts faithfully on the~curve $C_1$.

Let us consider the~curve $C_1$ as a divisor of degree  $(a,b)$ in the~quadric $\mathcal{Q}_1\cong\mathbb{P}^1\times\mathbb{P}^1$,
where $a$ and $b$ are some positive integers such that $a+b=d$.
Without loss of generality, we may assume that $a\leqslant b$.
If $r=3$, then $C_1$ is an~irreducible $\mathbb{H}$-invariant curve and
$$
(a,b)\in\big\{(1,2),(1,3),(1,4),(2,2),(2,3)\big\},
$$
which implies that the~genus of the~normalization of the~curve $C_1$ is at most $2$, which contradicts Lemma~\ref{lemma:48-curves}.
Hence, we see that $r\ne 3$.

Thus, we have $r=4$. Then $\Gamma\cong\mathfrak{A}_4$ and $(a,b)=(1,2)$, hence $C_1$ is a smooth twisted cubic curve.
Using GAP, one can check that $\mathcal{Q}_1$ is the~unique $\Gamma$-invariant quadric in~$\mathbb{P}^3$,
and $\mathbb{P}^3$ does not contain pencils of $\Gamma$-invariant quadrics.
Since all quadrics passing through the~curve $C_1$ form a net, we conclude that this net does not contain $\mathcal{Q}_1$,
otherwise we would have a pencil of quadrics surfaces passing through the~curve $C_1$.
This is a contradiction, since $C_1\subset\mathcal{Q}_1$ by assumption.
\end{proof}

Now, we are ready to prove the following result:

\begin{lemma}
\label{lemma:48-reducible-G-invariant-curves}
Let $C$ be a reducible $G$-irreducible curve in $\mathbb{P}^3$ of degree $d\leqslant 15$
that is not contained in $\mathcal{Q}_1\cup\mathcal{T}\cup\mathcal{T}^\prime\cup\mathcal{T}^{\prime\prime}$.
Then $d=12$, and either $C$ is a union of twelve lines, or the~curve $C$ is a union of four twisted cubic curves.
\end{lemma}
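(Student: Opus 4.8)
The plan is to run the stabilizer analysis that already underlies Lemmas~\ref{label:48-lines}, \ref{lemma:48-conics} and \ref{lemma:48-reducible-Q1-curves}, and then to split according to the degree of an irreducible component. Write $C=C_1\cup\dots\cup C_r$ for the decomposition into irreducible components, let $\Gamma=\mathrm{Stab}_G(C_1)$, and set $e=\deg C_1$, so that $r=[G:\Gamma]\geqslant 2$ (because $C$ is reducible) and $d=re\leqslant 15$. Since $r\leqslant 15$ forces $|\Gamma|\geqslant 4$, and $G$ has no subgroup of order $6$ or $24$ by Remark~\ref{remark:48-Tim-subgroups}, the only possibilities are $\Gamma=\mathbb{H}$ (whence $r=3$, $e\leqslant 5$), $\Gamma\cong\mathfrak{A}_4$ (whence $r=4$, $e\leqslant 3$), $\Gamma\cong\mumu_2^3$ (whence $r=6$, $e\leqslant 2$), and $\Gamma\cong\mumu_2^2$ (whence $r=12$, $e=1$).

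First I would dispose of the low-degree components by quoting the earlier classifications. If $e=1$, then $C$ is a $G$-irreducible union of $r\leqslant 12$ lines, so by Lemma~\ref{label:48-lines} it is either one of the curves $\mathcal{L}_4,\mathcal{L}_4^\prime,\mathcal{L}_4^{\prime\prime},\mathcal{L}_4^{\prime\prime\prime},\mathcal{L}_6,\mathcal{L}_6^\prime,\mathcal{L}_6^{\prime\prime},\mathcal{L}_6^{\prime\prime\prime},\mathcal{L}_6^{\prime\prime\prime\prime}$ --- each of which lies in $\mathcal{Q}_1$ or in $\mathcal{T}\cup\mathcal{T}^\prime\cup\mathcal{T}^{\prime\prime}$ and is excluded by hypothesis --- or a disjoint union of $12$ lines, which is the first alternative of the statement. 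If $e=2$, then $C$ is a union of at most $7$ conics, so Lemma~\ref{lemma:48-conics} forces $C$ to be one of the nine curves $\mathcal{C}_8^{1},\dots,\mathcal{C}_8^{3,\prime\prime}$, all of which lie in $\mathcal{T}\cup\mathcal{T}^\prime\cup\mathcal{T}^{\prime\prime}$, again contradicting the hypothesis; hence $e\ne 2$. This settles the cases $\Gamma\cong\mumu_2^3$ and $\Gamma\cong\mumu_2^2$ entirely, and leaves only $e\geqslant 3$ with either $\Gamma=\mathbb{H}$ or $\Gamma\cong\mathfrak{A}_4$.

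Next I would eliminate $\Gamma=\mathbb{H}$. Here $e\in\{3,4,5\}$, and since $\mathbb{P}^3$ contains no $\mathbb{H}$-invariant plane, $C_1$ cannot be planar; thus $C_1$ spans $\mathbb{P}^3$, so any element of $\mathbb{H}$ fixing $C_1$ pointwise would fix $\mathbb{P}^3$ pointwise, and $\mathbb{H}$ acts faithfully on $C_1$ and on its normalization $\widetilde{C}_1$. But an irreducible non-degenerate space curve of degree $\leqslant 5$ has geometric genus at most $2$ by the Castelnuovo bound, so $\widetilde{C}_1$ would be a smooth curve of genus $\leqslant 2$ admitting a faithful $\mathbb{H}$-action --- impossible by Lemma~\ref{lemma:48-curves}(1). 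Therefore $\Gamma\ne\mathbb{H}$.

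The surviving case is $\Gamma\cong\mathfrak{A}_4$, $r=4$, $e=3$, so $d=12$ and $C$ is a union of four irreducible cubics $C_1,\dots,C_4$, each of which is either a twisted cubic or a plane cubic. If $C_1$ were a plane cubic, its unique plane $\Pi$ would be $\Gamma$-invariant; but, as shown in the proof of Lemma~\ref{lemma:48-conics}, any $\mathfrak{A}_4$-invariant plane is an irreducible component of one of the $G$-invariant surfaces $\mathcal{T}$, $\mathcal{T}^\prime$, $\mathcal{T}^{\prime\prime}$, which would give $C\subset\mathcal{T}\cup\mathcal{T}^\prime\cup\mathcal{T}^{\prime\prime}$ and contradict the hypothesis. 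Hence each $C_i$ is a twisted cubic, and $C$ is a union of four twisted cubic curves, the second alternative. I expect the plane-cubic exclusion in this final step to be the main obstacle: it is the one point where the hypothesis ``$C\not\subset\mathcal{Q}_1\cup\mathcal{T}\cup\mathcal{T}^\prime\cup\mathcal{T}^{\prime\prime}$'' does real work, and it depends on the desmic characterization of the $\mathfrak{A}_4$-invariant planes rather than on a purely numerical bound.
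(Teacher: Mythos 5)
Your proof is correct, and it reaches the conclusion by a genuinely different route than the paper. The paper's opening move is geometric: since $C\not\subset\mathcal{Q}_1$, the intersection $\mathcal{Q}_1\cdot C$ is a $G$-invariant zero-cycle of degree $2d\leqslant 30$ supported on $\mathcal{Q}_1$, where every $G$-orbit has length $12$, $16$ or $24$; this forces $d\in\{6,8,12,14\}$ at once, after which the lines/conics lemmas and the absence of index-$2$ subgroups leave only $d=12$ with four cubics or three quartics, and the quartic case is killed because its components would be $\mathbb{H}$-invariant rational or elliptic curves, contradicting Lemma~\ref{lemma:48-curves}. You never touch $\mathcal{Q}_1$: instead you extract the complete list of admissible pairs $(\Gamma,e)$ from the subgroup lattice in Remark~\ref{remark:48-Tim-subgroups}, which leaves alive the extra configurations $d=9$ and $d=15$ (three components of degree $3$ or $5$ stabilized by $\mathbb{H}$) that the paper's orbit-length argument removes for free; you then eliminate these with the Castelnuovo bound plus Lemma~\ref{lemma:48-curves}(1), i.e.\ the same faithfulness mechanism the paper applies only to quartics, together with the observation that non-degeneracy of $C_1$ (no $\mathbb{H}$-invariant planes) makes the $\mathbb{H}$-action on the normalization faithful. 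This costs you one extra input (Castelnuovo, which the paper does invoke elsewhere, e.g.\ in the proof of Lemma~\ref{lemma:P3-curves-faces}), but it buys a uniform treatment of $e\in\{3,4,5\}$ and avoids any appeal to the orbit structure of $\mathcal{Q}_1$. A further point in your favour: in the final case you explicitly rule out plane cubics by noting that their planes would be $\Gamma$-invariant with $\Gamma\cong\mathfrak{A}_4$, hence components of $\mathcal{T}$, $\mathcal{T}^\prime$ or $\mathcal{T}^{\prime\prime}$ exactly as in the proof of Lemma~\ref{lemma:48-conics}, contradicting the hypothesis; the paper's proof asserts ``twisted cubic curves'' in the four-cubics case without recording this step, so your write-up makes explicit a point the paper leaves implicit.
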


\begin{proof}
Since  $C$ is not contained in the~quadric $\mathcal{Q}_1$, we see that $\mathcal{Q}_1\cdot C$ is a $G$-invariant one-cycle in $\mathcal{Q}_1$ of degree $2d\leqslant 30$.
One the~other hand, we know that all $G$-orbits in the~quadric $\mathcal{Q}_1$ are of lengths $12$, $16$ and $24$.
Hence, one has 
$$
30\geqslant 2d=12a+16b
$$
for some non-negative integers $a$ and $b$.
Therefore, we conclude that $d\in\{6,8,12,14\}$.

By Lemmas~\ref{label:48-lines} and~\ref{lemma:48-conics},
we may assume that components of the~curve $C$ are neither lines nor conics.
Since $G$ does not contain subgroups of index~$2$, we see that $d=12$ and
\begin{itemize}
\item either $C$ is a union of four twisted cubic curves,
\item or $C$~is~a~union of three irreducible curves of degree $4$.
\end{itemize}
Moreover, in the~latter case, the subgroup $\mathbb{H}$ is the~stabilizer in $G$ of every irreducible component of the~curve $C$,
because $\mathbb{H}$ is the only subgroup in $G$ of index $3$ by Remark~\ref{remark:48-Tim-subgroups}.
One the~other hand, it follows from Lemma~\ref{lemma:48-curves}
that $\mathbb{H}$ cannot act faithfully on a rational curve,
and  $\mathbb{H}$ cannot act faithfully on a smooth elliptic curve.
Hence, we conclude that irreducible components of the~curve $C$ cannot be curves in $\mathbb{P}^3$ of degree $4$,
which implies that the~curve $C$ is a union of four twisted cubic curves as claimed.
\end{proof}

From the~proof of Lemma~\ref{label:48-lines}, we know that $\mathbb{P}^3$ contains infinitely many $G$-irreducible curves that are unions of twelve lines.
Similarly, one can show that $\mathbb{P}^3$ contains infinitely many $G$-irreducible curves that are unions of four twisted cubics.

\begin{example}
\label{example:48-four-twisted-cubics}
Let $L=\{x_0+ix_2=x_1+ix_3=0\}$, let $P_s=[i:s:si:1]$ for $s\in\mathbb{C}\cup\{\infty\}$, and let $\Gamma$ be the subgroup in $G$ generated by $ABA$ and $BMN$.
Then $L$ is an irreducible component of the curve $\mathcal{L}_6^{\prime\prime\prime}$, $P_s\in L$, $\Gamma\cong\mathfrak{A}_4$,
and $\mathrm{Orb}_{\Gamma}(P_s)$ consists of the~six points
\begin{multline*}
\big[-is:-i:s:1\big], \big[i:s:si:1\big], \big[1:i:is:s\big], \\
\big[-is:-1:i:s\big], \big[-s:si:-i:1\big], \big[-i:is:-1:s\big],
\end{multline*}
which are contained in six distinct irreducible components of the~$G$-irreducible curve $\mathcal{L}_6^{\prime\prime\prime}$.
Suppose, in addition, that $s\ne\frac{1+\sqrt{3}}{2}+\frac{1+\sqrt{3}}{2}i$ and $s\ne\frac{1-\sqrt{3}}{2}+\frac{1-\sqrt{3}}{2}i$.
Then $P_s\not\in\mathcal{L}^{\prime\prime\prime}_4\cup\mathcal{L}^{\prime\prime}_4$,
and no four points in the~$\Gamma$-orbit $\mathrm{Orb}_{\Gamma}(P_s)$ are coplanar.
Let $C_s$ be the~unique twisted cubic in $\mathbb{P}^3$ that contains $\mathrm{Orb}_{\Gamma}(P_s)$,
and let $\mathcal{C}_{12}^s$ be the~$G$-irreducible curve in $\mathbb{P}^3$~whose irreducible component is the~curve $C_s$.
Then $C_s=\{h_1=h_2=h_3=0\}$, where
\begin{multline*}
h_1=(s^2+(1+i)s-i)x_0^2-(2is^2+(2+2i)s-2)x_0x_1+(2is^2-(2+2i)s-2)x_3x_0-\\
-(s^2+(1+i)s-i)x_1^2+(-2is^2+(2+2i)s+2)x_2x_1+(s^2+(1+i)s-i)x_2^2+\\
-(2is^2+(2+2i)s+2)x_3x_2-(s^2+(1+i)s-i)x_3^2,
\end{multline*}
\begin{multline*}
h_2=(-s^2-(1+i)s+i)x_0^2+(2is^2-(2+2i)s-2)x_0x_1+(-2is^2-(2+2i)s+2)x_2x_0+\\
+(s^2+(1+i)s-i)x_1^2-(2is^2+(2+2i)s-2)x_3x_1+(s^2+(1+i)s-i)x_2^2+\\
-(2is^2-(2+2i)s-2)x_3x_2-(s^2+(1+i)s-i)x_3^2,
\end{multline*}
\begin{multline*}
h_3=(s^2+(1+i)s-i)x_0^2+(2is^2-(2+2i)s-2)x_0x_2+(2is^2+(2+2i)s-2)x_3x_0+\\
+(s^2+(1+i)s-i)x_1^2+(2is^2+(2+2i)s-2)x_1x_2-\\
-(2is^2-(2+2i)s-2)x_3x_1-(s^2+(1+i)s-i)x_2^2-(s^2+(1+i)s-i)x_3^2.
\end{multline*}
The curve $\mathcal{C}_{12}^s$ is a union of four twisted cubic curves.
For general choice of $s\in\mathbb{C}\cup\{\infty\}$, these twisted curves are disjoint,
but for some $s\in\mathbb{C}\cup\{\infty\}$ the~cubics are not disjoint.
To be precise, the~curve $\mathcal{C}_{12}^s$ is a disjoint union of four twisted cubic curves if and only if
$$
s\in\Big\{\infty,0,\pm 1,\pm i,\frac{-1\pm\sqrt{3}}{2}+\frac{-1\pm\sqrt{3}}{2}i,\frac{1\pm\sqrt{3}}{2}-\frac{1\pm\sqrt{3}}{2}i,\frac{-1\pm\sqrt{3}}{2}+\frac{1\pm\sqrt{3}}{2}i\Big\}.
$$
For~instance, one has $\mathrm{Sing}(\mathcal{C}_{12}^\infty)=\Sigma_{12}^\prime$ and $C_\infty$ is given by
$$
\left\{\aligned
&x_0^2-2ix_0x_1+2ix_3x_0-x_1^2-2ix_1x_2+x_2^2-2ix_2x_3-x_3^2=0,\\
&x_0^2-2ix_0x_1+2ix_0x_2-x_1^2+2ix_1x_3-x_2^2+2ix_2x_3+x_3^2=0,\\
&x_0^2+2ix_0x_2+2ix_3x_0+x_1^2+2ix_1x_2-2ix_1x_3-x_2^2-x_3^2=0.\\
\endaligned
\right.
$$
In this case, two irreducible component of the~curve $\mathcal{C}_{12}^\infty$ intersect by two points in $\Sigma_{12}^\prime$,
and every irreducible component of the~curve $\mathcal{C}_{12}^\infty$ contains four points in the~$G$-orbit~$\Sigma_{12}^\prime$.
Likewise, if $s=\frac{-1\pm\sqrt{3}}{2}+\frac{-1\pm\sqrt{3}}{2}i$,
then~all components of the~curve $\mathcal{C}_{12}^s$ contain $\Sigma_{4}$.
\end{example}

Let us present some irreducible $G$-invariant curves in $\mathcal{Q}_1$.

\begin{example}
\label{example:48-curve-degree-8}
Recall that $\mathcal{Q}_1$ is contained in the~net $\mathcal{M}_{4}$,
so that $\mathcal{M}_{4}\vert_{\mathcal{Q}_1}$ is a pencil, whose base locus is $\Sigma_{16}\cup\Sigma_{16}^\prime$ by Lemma~\ref{lemma:48-net}.
Note that all curves in $\mathcal{M}_{4}\vert_{\mathcal{Q}_1}$ are $G$-invariant.
Moreover, using Remark~\ref{remark:48-Tim-subgroups} and Lemmas~\ref{lemma:48-curves} and~\ref{lemma:48-conics}
one can show that every curve in the~pencil $\mathcal{M}_{4}\vert_{\mathcal{Q}_1}$ is reduced,
and all reducible curves in $\mathcal{M}_{4}\vert_{\mathcal{Q}_1}$ are
$$
\mathcal{T}\vert_{\mathcal{Q}_1}=\mathcal{C}_8^{1},
\mathcal{T}^\prime\vert_{\mathcal{Q}_1}=\mathcal{C}_8^{1,\prime},
\mathcal{T}^{\prime\prime}\vert_{\mathcal{Q}_1}=\mathcal{C}_8^{1,\prime\prime},
S_2\vert_{\mathcal{Q}_1}=\mathcal{L}_{4}^\prime+\mathcal{L}_{4}^{\prime\prime\prime}, S_3\vert_{\mathcal{Q}_1}=\mathcal{L}_{4}+\mathcal{L}_{4}^{\prime\prime}.
$$
Since the~arithmetic genus of irreducible curves in $\mathcal{M}_{4}\vert_{\mathcal{Q}_1}$ is $9$,
it follows from Lemma~\ref{lemma:48-orbits} that all remaining curves in $\mathcal{M}_{4}\vert_{\mathcal{Q}_1}$
are smooth irreducible $G$-invariant curves of genus $9$.
\end{example}

\begin{example}
\label{example:48-curve-degree-12-genus-21}
Observe that
\begin{align*}
(S_3+S_4)\vert_{\mathcal{Q}_1}&=\mathcal{L}_{4}+\mathcal{L}_{4}^\prime+2\mathcal{L}_{4}^{\prime\prime}\sim 2\mathcal{L}_{4}+2\mathcal{L}_{4}^{\prime\prime\prime}=2S_5\vert_{\mathcal{Q}_1},\\
(S_2+S_5)\vert_{\mathcal{Q}_1}&=\mathcal{L}_{4}+\mathcal{L}_{4}^\prime+2\mathcal{L}_{4}^{\prime\prime\prime}\sim 2\mathcal{L}_{4}^\prime+2\mathcal{L}_{4}^{\prime\prime}=2S_4\vert_{\mathcal{Q}_1},\\
(S_3+S_5)\vert_{\mathcal{Q}_1}&=2\mathcal{L}_{4}+\mathcal{L}_{4}^{\prime\prime}+\mathcal{L}_{4}^{\prime\prime\prime}\sim 2\mathcal{L}_{4}^\prime+2\mathcal{L}_{4}^{\prime\prime}=2S_4\vert_{\mathcal{Q}_1},\\
(S_2+S_4)\vert_{\mathcal{Q}_1}&=2\mathcal{L}_{4}^\prime+\mathcal{L}_{4}^{\prime\prime}+\mathcal{L}_{4}^{\prime\prime\prime}\sim 2\mathcal{L}_{4}+2\mathcal{L}_{4}^{\prime\prime\prime}=2S_5\vert_{\mathcal{Q}_1}.
\end{align*}
Using this, we can create $4$ pencils on the~quadric $\mathcal{Q}_1$ that consist of $G$-invariant curves.
These are the~pencils generated by $\mathcal{L}_{4}^\prime+2\mathcal{L}_{4}^{\prime\prime}$ and $\mathcal{L}_{4}+2\mathcal{L}_{4}^{\prime\prime\prime}$,
by~$\mathcal{L}_{4}+2\mathcal{L}_{4}^{\prime\prime\prime}$ and $\mathcal{L}_{4}^\prime+2\mathcal{L}_{4}^{\prime\prime}$,
by~$2\mathcal{L}_{4}+\mathcal{L}_{4}^{\prime\prime\prime}$ and $2\mathcal{L}_{4}^\prime+\mathcal{L}_{4}^{\prime\prime}$,
by~$2\mathcal{L}_{4}^\prime+\mathcal{L}_{4}^{\prime\prime}$ and $2\mathcal{L}_{4}+\mathcal{L}_{4}^{\prime\prime\prime}$, respectively.
One can show~that general curves in these four pencils are~smooth irreducible $G$-invariant curves of genus~$21$.
Moreover, one can also check that each pencil contains three irreducible singular curves whose singular loci are the~$G$-orbits $\Sigma_{12}^\prime$, $\Sigma_{12}^{\prime\prime}$, $\Sigma_{12}^{\prime\prime\prime}$, respectively.
These curves have ordinary nodes as singularities, so their normalizations have genus $9$.
\end{example}

Now, we are ready to describe irreducible $G$-irreducible curves in $\mathcal{Q}_1$ of small degree.

\begin{lemma}
\label{lemma:48-Q1-curves}
Let $C$ be an irreducible $G$-invariant curve in  $\mathcal{Q}_1\cong\mathbb{P}^1\times\mathbb{P}^1$ of degree $(a,b)$,
where $a$ and $b$ are some non-negative integers. Suppose, in addition, that $a+b\leqslant 15$.
Then one of the following three possibilities holds:
\begin{itemize}
\item $(a,b)=(4,4)$, and $C$ is a smooth curve of genus $9$,
\item $(a,b)=(4,8)$ or $(a,b)=(8,4)$, and $C$ is a smooth curve of genus $21$,
\item $(a,b)=(4,8)$ or $(a,b)=(8,4)$, and $C$ is a singular curve with $12$ ordinary nodes, and the normalization of the curve $C$ has genus $9$.
\end{itemize}
\end{lemma}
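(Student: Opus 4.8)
The plan is to reduce everything to the induced $G$-action on the two rulings of $\mathcal{Q}_1\cong\mathbb{P}^1\times\mathbb{P}^1$. First I would note that $G$ preserves each ruling: the swap of the two rulings defines a homomorphism $G\to\mumu_2$, and $G$ has no subgroup of index $2$ (see Remark~\ref{remark:48-Tim-subgroups}), so this homomorphism is trivial. Hence $G$ acts through $\mathrm{PGL}_2(\mathbb{C})\times\mathrm{PGL}_2(\mathbb{C})$, and I would identify the two projections $\pi_1,\pi_2\colon G\to\mathrm{PGL}_2(\mathbb{C})$. Since $\mathbb{H}\cong\mumu_2^4$ acts faithfully on $\mathcal{Q}_1$ and the largest elementary abelian $2$-subgroup of $\mathrm{PGL}_2(\mathbb{C})$ is a Klein four-group, each $\pi_i$ must send $\mathbb{H}$ onto a copy of $\mumu_2^2$ with kernel $\mumu_2^2$; the order-$3$ elements then force $\mathrm{im}(\pi_i)\cong\mathfrak{A}_4$, with $K_i:=\ker(\pi_i)\cong\mumu_2^2$ acting faithfully as a Klein four-group on the opposite factor. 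I would also record that, since $C$ is irreducible and not a ruling line (there are no $G$-invariant lines in $\mathbb{P}^3$), no nontrivial element of $G$ can fix $C$ pointwise, so $G$ and $\mathbb{H}$ act faithfully on the normalization of $C$.

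Next I would pin down the bidegree. For a general point $q$ of the first factor, the intersection $C\cap(\{q\}\times\mathbb{P}^1)$ is a reduced zero-dimensional subscheme invariant under $\mathrm{Stab}_G(q)=K_1\cong\mumu_2^2$, whose length is one of $a,b$. A Klein four-group in $\mathrm{PGL}_2(\mathbb{C})$ has no fixed point and only three non-generic orbits, each of length $2$, located at the six fixed points of its three involutions; these six points of the second factor do not depend on $q$. Since $C$ is irreducible and contains no curve of the form $\mathbb{P}^1\times\{r\}$, for general $q$ this fibre avoids the six special points and is a disjoint union of generic $K_1$-orbits of length $4$. Hence $4$ divides this length, and running the same argument on the second factor gives $4\mid a$ and $4\mid b$. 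Together with $a,b\geqslant 1$ and $a+b\leqslant 15$, this leaves exactly $(a,b)\in\{(4,4),(4,8),(8,4)\}$.

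Finally I would compute the arithmetic genus $p_a(C)=(a-1)(b-1)$, which is $9$ for $(4,4)$ and $21$ for $(4,8)$ and $(8,4)$, and control the singularities using that every $G$-orbit in $\mathcal{Q}_1$ has length $12$, $16$ or $24$. Any singular point of $C$ lies in a $G$-orbit of singular points, so the total $\delta$-invariant satisfies $\delta\geqslant 12$; for $(4,4)$ this exceeds $p_a=9$, forcing $C$ smooth of genus $9$ (consistent with Lemma~\ref{lemma:48-curves}(2)). For $(4,8)$ or $(8,4)$, if $C$ is smooth it has genus $21$; otherwise $\delta\geqslant 12$ gives normalization genus $\widetilde g=21-\delta\leqslant 9$, while $\mathbb{H}$ and $G$ act faithfully on the normalization, so Lemma~\ref{lemma:48-curves} forces $\widetilde g\geqslant 5$ and $\widetilde g\in\{9,13,17\}$; hence $\widetilde g=9$ and $\delta=12$, and since the singular orbit has length exactly $12$ this means precisely $12$ double points (any length-$16$ or larger, or mixed, orbit would give $\delta$ too large or $\widetilde g\notin\{9,13,17\}$). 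The remaining delicate point is to confirm that these $12$ points are ordinary nodes rather than cusps: I would settle this by the local analysis at a point of $\Sigma_{12}^\prime$, $\Sigma_{12}^{\prime\prime}$ or $\Sigma_{12}^{\prime\prime\prime}$, whose stabilizer $\cong\mumu_2^2$ acts on the two branches, or by matching with the explicit curves produced in Example~\ref{example:48-curve-degree-12-genus-21}. I expect this node-versus-cusp verification and the bookkeeping excluding larger singular orbits to be the main technical nuisance, while the divisibility $4\mid a,b$ coming from the Klein four-group action on fibres is the conceptual crux.
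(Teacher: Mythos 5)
Your proof is correct, and its key step takes a genuinely different route from the paper's. The paper determines the bidegree in two stages: intersecting $C$ with the $G$-invariant curves $\mathcal{L}_4$ and $\mathcal{L}_4^{\prime\prime}$ (of bidegrees $(0,4)$ and $(4,0)$) and using that every $G$-orbit on those curves has length divisible by $8$ only yields that $a$ and $b$ are \emph{even}, so the cases $(4,6)$, $(4,10)$, $(6,6)$, $(6,8)$ survive and must be excluded one at a time --- $(4,6)$, $(4,10)$, $(6,8)$ by pushing the Hurwitz-type orbit computation of Lemma~\ref{lemma:48-curves} up to genus $35$ on the normalization, and $(6,6)$ by observing that such a curve is cut out by a $G$-invariant sextic and invoking Lemma~\ref{lemma:48-sextic-surfaces}. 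Your observation that the kernel $K_1\cong\mumu_2^2$ of the $G$-action on one ruling acts on a general fibre of that ruling as a fixed-point-free Klein four-group, so that the (reduced, for general $q$) fibre is a disjoint union of free orbits, gives $4\mid a$ and $4\mid b$ outright and reduces immediately to $(4,4)$, $(4,8)$, $(8,4)$; this bypasses Lemma~\ref{lemma:48-sextic-surfaces} and all the intermediate-genus bookkeeping, at the price of the correct and easily checked group-theoretic facts that $G$ preserves both rulings (it has no subgroup of index $2$, by Remark~\ref{remark:48-Tim-subgroups}) and that this kernel embeds into the automorphisms of the other factor as a Klein four-group. Your endgame (arithmetic genus, the bound $\delta\geqslant 12$ coming from the minimal $G$-orbit length on $\mathcal{Q}_1$, and Lemma~\ref{lemma:48-curves} forcing the normalization genus to lie in $\{9,13,17\}$) coincides with the paper's.

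On the point you flag as the main technical nuisance: the paper does not address it either --- its proof passes from ``$12$ singular points with total $\delta$-invariant equal to $12$'' directly to ``$12$ ordinary nodes''. The cleanest fix is simpler than the local analysis you propose: the twelve $\delta=1$ singularities form a single $G$-orbit, so they are all nodes or all cusps; if they were cusps, each would have a single preimage on the normalization $\widetilde{C}$, producing a $G$-invariant set of $12$ points on a smooth genus-$9$ curve with faithful $G$-action, which contradicts Lemma~\ref{lemma:48-curves}(2), whose smallest admissible orbit length is $16$. Nodes give $24$ preimages, which is consistent; so nodes are forced, and your proof closes completely.
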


\begin{proof}
Without loss of generality, we may assume that $\mathcal{L}_4$ is a divisor in $\mathcal{Q}_1$ of degree~$(0,4)$,
so that $\mathcal{L}_4^{\prime\prime}$ is a divisor of degree $(4,0)$.
Observe also that the~quadric $\mathcal{Q}_1$ is $G_{96,227}$-invariant, and group $G_{96,227}$ maps $C$ to a curve of degree $(b,a)$.
Thus, we may assume that $a\leqslant b$.

By Lemma~\ref{lemma:48-curves}, the~curve $C$ is irrational, it is not elliptic and it is not hyperelliptic, so~that we have $a\geqslant 3$.
Moreover, if $a$ is odd, then $C\cdot\mathcal{L}_4=4a$ is not divisible by~$8$, which contradicts Lemma~\ref{lemma:48-orbits},
because all $G$-orbits in the~curve $\mathcal{L}_4$ have lengths $16$, $24$ or $48$.
Hence, we see that $a$ is even.
Similarly, we see that $b$ is also even, because $C\cdot\mathcal{L}_4^{\prime\prime}=4b$.
Therefore, we conclude that $(a,b)\in\{(4,4),(4,6),(4,8),(4,10),(6,6),(6,8)\}$.

Let $p_a(C)$ be the~arithmetic genus of the~curve~$C$. Then $p_a(C)=ab-a-b+1$, hence $$
\big(a,b,p_a(C)\big)\in\big\{(4,4,9),(4,6,15),(4,8,21),(4,10,27),(6,6,25),(6,8,35)\big\}.
$$
Let $\pi\colon\widetilde{C}\to C$ be the~normalization of the~curve $C$, let $g$ be the~genus of the~curve $\widetilde{C}$.
Then the~$G$-action lifts to $\widetilde{C}$, and it follows from Lemma~\ref{lemma:48-orbits} that
$$
0\leqslant g=p_a(C)-12\alpha-16\alpha
$$
for some integers $\alpha\geqslant 0$~and~$\alpha\geqslant 0$.
If $(a,b)=(4,4)$, then $g=p_a(C)$, hence $C$ is smooth.
Similarly, if $(a,b)=(4,6)$, then we have $g\in\{3,15\}$, which is impossible by Lemma~\ref{lemma:48-curves}.
Likewise, if $(a,b)=(4,10)$ or $(a,b)=(6,8)$, then
$$
g\in\{3,7,11,15,19,23,27,35\},
$$
so that $g\in\{23,27,35\}$ by Lemma~\ref{lemma:48-curves}.
Moreover, arguing as in the~proof of  Lemma~\ref{lemma:48-curves}, we see that $g\not\in\{23,27,35\}$.
Hence, we may assume that $(a,b)=(4,8)$ or $(a,b)=(6,6)$.

If $(a,b)=(6,6)$, then the~curve $C$ is cut out on $\mathcal{Q}_1$ by a $G$-invariant sextic surface~in~$\mathbb{P}^3$,
which gives $C=\mathcal{L}_6^{\prime\prime\prime}+\mathcal{L}_6^{\prime\prime\prime\prime}$ by Lemma~\ref{lemma:48-sextic-surfaces},
which is absurd, since $C$ is irreducible.

Therefore, we have $(a,b)=(4,8)$. If $C$ is smooth, then we are done. If $C$ is singular, then it follows from $g=21-12\alpha-16\alpha$ and Lemma~\ref{lemma:48-curves}
that $g=9$, which implies that the~curve $C$ has $12$ ordinary nodes as required.
\end{proof}

Now, we deal with irreducible $G$-invariant curves in $\mathbb{P}^3$ that are not contained in $\mathcal{Q}_1$.

\begin{lemma}
\label{lemma:48-irreducible-curves-simple}
Let $C$ be an irreducible $G$-invariant curve in $\mathbb{P}^3$ of degree $d\leqslant 15$ such that the~curve $C$ is not contained in $\mathcal{Q}_1$.
Then $C$ is smooth, $d=12$, its genus is $9$, $13$ or $17$,
the~curve $C$ is contained in a surface in $\mathcal{M}_{4}$ that has at most ordinary double points,
and the~curve $C$ does not contain $G$-orbits
$\Sigma_{4}$, $\Sigma_{4}^{\prime}$, $\Sigma_{4}^{\prime\prime}$, $\Sigma_{12}$, $\Sigma_{12}^\prime$,
$\Sigma_{12}^{\prime\prime}$, $\Sigma_{12}^{\prime\prime\prime}$.
\end{lemma}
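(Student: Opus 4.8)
The plan is to pin down $C$ by intersecting it with the distinguished $G$-invariant surfaces, bound its degree and genus, and finally realize it on a quartic of the net $\mathcal{M}_4$. First I would record two structural facts. Since $\mathbb{P}^3$ carries no $G$-invariant plane (Remark~\ref{remark:48-Tim-subgroups}, Lemma~\ref{lemma:48-orbits}), the linear span of the irreducible $G$-invariant curve $C$ is all of $\mathbb{P}^3$, so $C$ is non-degenerate. Moreover, every nontrivial element of $G$ has a fixed locus of dimension at most one in $\mathbb{P}^3$ (an element of $\mathbb{H}$ fixes a pair of skew lines, an element of order $3$ fixes a single line), while $\deg C\ge 2$; hence $G$, and in particular $\mathbb{H}$, acts faithfully on $C$ and on its normalization $\nu\colon\widetilde{C}\to C$. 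By Lemma~\ref{lemma:48-curves}(1) the genus $g$ of $\widetilde{C}$ satisfies $g\ge 5$, and by Lemma~\ref{lemma:48-curves}(2) every $G$-orbit on the smooth curve $\widetilde{C}$ has length $16$, $24$ or $48$. For the degree, note that $\mathcal{Q}_1\cdot C$ is a $G$-invariant zero-cycle on $\mathcal{Q}_1$ of degree $2d$ supported on $G$-orbits lying in $\mathcal{Q}_1$, which by Lemma~\ref{lemma:48-orbits} have length $12$, $16$, $24$ or $48$; since $2d\le 30$ this forces $2d\in\{12,16,24,28\}$, i.e. $d\in\{6,8,12,14\}$.

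Next I would show that $C$ lies on no quadric surface. Two distinct irreducible quadrics meet in a curve of degree $4<d$, and a reducible quadric containing the non-degenerate $C$ would force a $G$-invariant plane; thus $C$ lies on at most one quadric, and if it lay on one then, by uniqueness, that quadric would be $G$-invariant, hence equal to $\mathcal{Q}_1$, which is excluded. Consequently the Castelnuovo bound for space curves not lying on a quadric applies, giving $g\le 1+\frac{1}{6}d(d-3)$. This yields $g\le 4$ for $d=6$, contradicting $g\ge 5$, and $g\le 7$ for $d=8$, which contradicts the fact that $g\in\{9,13,17\}$ whenever $g\le 19$ (Lemma~\ref{lemma:48-curves}(2)). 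Therefore $d\in\{12,14\}$.

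The central step is to place $C$ on a $G$-invariant quartic. By Lemma~\ref{lemma:48-representations} the $G$-invariant quartics form the three-dimensional net $\mathcal{M}_4$, and restriction gives a linear map $\mathcal{M}_4\to H^0(C,\mathcal{O}_C(4))^G$. I would prove the target has dimension at most $2$ by an equivariant Riemann--Roch (holomorphic Lefschetz) computation on $\widetilde{C}$: the fixed loci of the elements of $\mathbb{H}$ are the thirty lines of Remark~\ref{remark:48-30-lines}, which $C$ meets in controlled $G$-orbits, so the trace contributions are computable and the averaged count is small. The map then has a kernel, producing $S\in\mathcal{M}_4$ with $C\subset S$. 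As $C$ is non-degenerate and not contained in $\mathcal{Q}_1$, while the reducible members of $\mathcal{M}_4$ are unions of planes or $2\mathcal{Q}_1$, the surface $S$ is irreducible; by Lemma~\ref{lemma:48-net} it has at most ordinary double points, and by Corollary~\ref{corollary:48-net} its minimal resolution $\widetilde{S}$ is a K3 surface on which every $G$-orbit has length $16$, $24$ or $48$. I expect \emph{this} to be the main obstacle: the equivariant dimension count, together with the bookkeeping of how $C$ meets $\mathrm{Sing}(S)$. With $C$ on the quartic K3 I would eliminate $d=14$. Writing $H=\pi^*\mathcal{O}_{\mathbb{P}^3}(1)$ and $\widehat{C}$ for the strict transform, one has $\widehat{C}\cdot H=d$ with $H^2=4$, while all exceptional classes pair trivially with $H$; a lattice analysis of the $G$-invariant part of $\mathrm{Pic}(\widetilde{S})$ gives $\widehat{C}\cdot H\in 4\mathbb{Z}$ (equivalently, $C\cap\mathcal{Q}_1$ cannot be one length-$12$ orbit plus one length-$16$ orbit), so $4\mid d$ and $d=14$ is impossible. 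Hence $d=12$; by the Hodge index theorem $\widehat{C}^2\le 36$, so the arithmetic genus on $\widetilde{S}$ is at most $19$, which recovers $g\le 19$ and therefore $g\in\{9,13,17\}$.

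Finally, smoothness and the absence of small orbits follow from the K3 orbit structure. Suppose $C$ contained a $G$-orbit $\Sigma$ of length $4$ or $12$, i.e. one of $\Sigma_4,\Sigma_4',\Sigma_4'',\Sigma_{12},\Sigma_{12}',\Sigma_{12}'',\Sigma_{12}'''$. Then $\nu^{-1}(\Sigma)\subset\widetilde{C}$ is a non-empty $G$-invariant set whose cardinality is a sum of $16$'s, $24$'s and $48$'s, hence at least $16$. If $C$ were smooth along $\Sigma$ this cardinality would equal $|\Sigma|\le 12$, a contradiction; and if $C$ were singular along $\Sigma$, then $\Sigma\subset\mathrm{Sing}(S)$ and the gap $p_a(C)-g$ would absorb a whole $G$-orbit of $\delta$-invariants, which is incompatible with $p_a\le 19$, $g\ge 5$ and the admissible values $g\in\{9,13,17\}$. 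The same comparison of $p_a$ and $g$ forces $p_a=g$, so $C$ is smooth. This establishes all the assertions of the lemma.
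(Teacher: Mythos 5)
Your proposal breaks down at its two central steps; you flag both, but neither is discharged, and both have elementary replacements. First, producing $S\in\mathcal{M}_4$ with $C\subset S$ needs no equivariant Riemann--Roch or holomorphic Lefschetz computation: since $\mathcal{M}_4$ is a net, there is a member $S$ through two \emph{general} points $P,Q\in C$; every member of $\mathcal{M}_4$ is $G$-invariant, so if $C\not\subset S$ then $S\cap C$ contains the full orbits of $P$ and $Q$, giving $4d=S\cdot C\geqslant|\mathrm{Orb}_G(P)|+|\mathrm{Orb}_G(Q)|=96$, impossible for $d\leqslant 15$. Your trace computation, by contrast, requires control over the fixed points of every element of $G$ on the unknown curve $C$ and is not carried out; as written this is a gap, not a proof. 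Second, your exclusion of $d=14$ rests on the unproven assertion that $G$-invariant classes on the K3 satisfy $\widetilde{C}\cdot H\in 4\mathbb{Z}$; no argument is given, and none is needed: the length-$12$ orbits lying on $\mathcal{Q}_1$ (namely $\Sigma_{12}^\prime$, $\Sigma_{12}^{\prime\prime}$, $\Sigma_{12}^{\prime\prime\prime}$) have non-cyclic stabilizers $\mumu_2^2$, so by the same cyclic-stabilizer fact you use elsewhere (\cite[Lemma~2.7]{FZ}) the curve $C$ is \emph{singular} at any such orbit it contains, and each such point then contributes at least $2$ to $\mathcal{Q}_1\cdot C$. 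Hence $2d=24a+16b$, which excludes $2d=28$ and $2d=12$ at the outset and gives $d\in\{8,12\}$ directly. (Your Halphen-bound elimination of $d=6,8$ is a legitimate alternative to the paper's later exclusion of $d=8$, but it cannot substitute for the missing $d=14$ argument.) There is also a circularity to repair: you invoke Lemma~\ref{lemma:48-curves}(2) for the orbit structure on $\widetilde{C}$ before any bound $g\leqslant 19$ is available, although that lemma assumes $g\leqslant 19$.

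The final smoothness argument has a genuine hole in exactly the hard case. If $C$ is singular along an orbit $\Sigma\subset\mathrm{Sing}(S)$, the strict transform $\widetilde{C}$ on the minimal resolution $\widetilde{S}$ may perfectly well be \emph{smooth} there (a node of $C$ at an ordinary double point of $S$ is typically separated by the resolution), so the difference $p_a(\widetilde{C})-g$ does not ``absorb a whole $G$-orbit of $\delta$-invariants'' and your numerical contradiction evaporates. This case is where the real work lies: writing $\widetilde{C}\sim_{\mathbb{Q}}\pi^*(C)-\sum_i m_iE_i$, one has $m_i\geqslant 1$ at any orbit where $C$ is singular, whence $2g-2=\widetilde{C}^2\leqslant C^2-2|\pi(E_1)|\leqslant 36-2|\pi(E_1)|$ by the Hodge index theorem; this already rules out singular orbits of length $12$ and $16$, and for a length-$4$ orbit one must further use that its stabilizer $\mathfrak{A}_4$ acts on each exceptional $\mathbb{P}^1$ with smallest orbit of length $4$ (Corollary~\ref{corollary:48-net}), forcing $\widetilde{C}\cdot E_1^1=2m_1\geqslant 4$, hence $m_1\geqslant 2$ and $2g-2\leqslant 36-8m_1^2\leqslant 4$, a contradiction with $g\geqslant 9$. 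Without this multiplicity analysis on the exceptional curves, your proof establishes neither the smoothness of $C$ nor, consequently, the exclusion of the orbits $\Sigma_{4}$, $\Sigma_{4}^{\prime}$, $\Sigma_{4}^{\prime\prime}$, $\Sigma_{12}$, $\Sigma_{12}^\prime$, $\Sigma_{12}^{\prime\prime}$, $\Sigma_{12}^{\prime\prime\prime}$.
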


\begin{proof}
If $C$ is smooth, then $C$ does not contain
$\Sigma_{4}$, $\Sigma_{4}^{\prime}$, $\Sigma_{4}^{\prime\prime}$, $\Sigma_{12}$, $\Sigma_{12}^\prime$,
$\Sigma_{12}^{\prime\prime}$, $\Sigma_{12}^{\prime\prime\prime}$,
because stabilizers in $G$ of smooth points in $C$ are cyclic groups \cite[Lemma~2.7]{FZ}.

Recall from Lemma~\ref{lemma:48-orbits} that $G$-orbits in the~quadric $\mathcal{Q}_1$ are of length $12$, $16$, $24$, $48$,
and the~$G$-orbits of length $12$ in $\mathcal{Q}_1$ are $\Sigma_{12}^\prime$, $\Sigma_{12}^{\prime\prime}$, $\Sigma_{12}^{\prime\prime\prime}$.
On the~other hand, if $C$ contains one of these $G$-orbits of length $12$, then $C$ must be singular at it.
Thus, we conclude that
$$
2d=\mathcal{Q}_1\cdot C=24a+16b
$$
for some non-negative integers $a$ and $b$. Hence, either $d=8$ or $d=12$.

Let $P$ and $Q$ be two general points in $C$, and let $S$ be a surface in the~net $\mathcal{M}_{4}$
that passes through $P$ and $Q$. Then $C\subset S$, since otherwise we would have
$$
48\geqslant 4d=S\cdot C\geqslant |\mathrm{Orb}_G(P)|+|\mathrm{Orb}_G(Q)|=96,
$$
because $G$-orbits of the~points $P$ and $Q$ are of length $48$.

Observe that $S$ is irreducible by Lemma~\ref{lemma:48-net}, since $C$ is not contained in $\mathcal{Q}_1$,
$\mathcal{T}$, $\mathcal{T}^\prime$, $\mathcal{T}^{\prime\prime}$.
Thus, it follows from Lemma~\ref{lemma:48-net} that $S$ has at most isolated ordinary double points.

Let $\pi\colon \widetilde{S}\to S$ be the~minimal resolution of singularities of the~$G$-invariant surface~$S$.
Then $\widetilde{S}$ is a smooth K3 surface, and the~action of the~group $G$ lifts to the~surface $\widetilde{S}$.
Let~$H$~be a general hyperplane section of the~surface $S$, let $\widetilde{H}=\pi^*(H)$,
let $\widetilde{C}$ be the~proper transform of the~curve $C$ on the~surface $\widetilde{S}$,
let~$p_a(\widetilde{C})$ be the~arithmetic genus of the~curve~$\widetilde{C}$,
and let $g$ be the~genus of the~normalization of the~curve $C$.
Then
$$
36\geqslant\frac{d^2}{4}=\frac{\big(\widetilde{H}\cdot\widetilde{C}\big)^2}{\widetilde{H}^2}\geqslant \widetilde{C}^2=2p_a(\widetilde{C})-2\geqslant 2g-2
$$
by the~Hodge index theorem, hence $g\leqslant p_a(\widetilde{C})\leqslant 19$. Then  $g\in\{9,13,17\}$ by Lemma~\ref{lemma:48-curves}.
But it follows from Corollary~\ref{corollary:48-net} that $G$-orbits in $\widetilde{S}$ are of length $16$, $24$ or $48$.
Then
$$
19\geqslant p_a(\widetilde{C})=g+16a+24b\geqslant 9
$$
for some non-negative integers $a$ and $b$.
This implies that $p_a(\widetilde{C})=g$, hence $\widetilde{C}$ is smooth.
Hence, we have $\mathrm{Sing}(C)\subset\mathrm{Sing}(S)$.

If $d=8$, then the~Hodge index theorem gives $g=9$ and $64=(\widetilde{H}\cdot\widetilde{C})^2=\widetilde{H}^2\widetilde{C}^2=4\widetilde{C}^2$,
so that $\widetilde{C}\sim_{\mathbb{Q}} 2\widetilde{H}$,
which implies that $\widetilde{C}\sim 2\widetilde{H}$, because the~group $\mathrm{Pic}(\widetilde{S})$ is torsion free.
Hence, if $d=8$, then $C$ is contained in the~smooth locus of the~surface $S$, and $C\sim 2H$.
On the~other hand, the~restriction map
$$
H^0\big(\mathbb{P}^3,\mathcal{O}_{\mathbb{P}^3}(2)\big)\rightarrow H^0\big(S,\mathcal{O}_{S}(2H)\big)
$$
is a surjective map of $\widehat{G}$-representations. Therefore, if $d=8$, then we have $C=S\cap \mathcal{Q}_1$,
which is impossible by our assumption. Hence, we see that $d\ne 8$.

To complete the~proof, we must show that $C$ is  smooth. Suppose that it is~not smooth.
Then the~surface $S$ is also singular, because $\mathrm{Sing}(C)\subset\mathrm{Sing}(S)$.
By Lemma~\ref{lemma:48-net}, we have the~following possibilities:
\begin{enumerate}[(i)]
\item either $\mathrm{Sing}(S)$ is a $G$-orbit of length $16$,
\item or $\mathrm{Sing}(S)$ is a $G$-orbit of length $12$,
\item or $\mathrm{Sing}(S)$ is a $G$-orbit of length $4$,
\item or $\mathrm{Sing}(S)$ is a union of a $G$-orbit of length $12$ and a $G$-orbit of length $4$,
\item or $\mathrm{Sing}(S)$ is a union of a $G$-orbit of length $12$ and a $G$-orbit of length $4$,
\item or $\mathrm{Sing}(S)$ is a union of two $G$-orbits of length $4$.
\end{enumerate}
Moreover, if $C$ contains a $G$-orbit of length $4$ or $12$, then $C$ is singular at this orbit,
because stabilizers in $G$ of smooth points in $C$ are cyclic.

Let $E_1,\ldots,E_k$ be $G$-irreducible $\pi$-exceptional curves.
Then $E_1,\ldots,E_k$ are disjoint unions of $(-2)$-curves, and $\pi(E_1),\ldots,\pi(E_k)$ are $G$-orbits in $\mathrm{Sing}(S)$. One has
$$
\widetilde{C}\sim_{\mathbb{Q}}\pi^*(C)-\sum_{i=1}^{k}m_iE_i
$$
for some non-negative rational numbers $m_1,\ldots,m_k$ such that $2m_1,\ldots,2m_k$ are integers.
Note that $m_i>0$ if and only if $C$ contains the~$G$-orbits $\pi(E_i)$.
Moreover, one has
\begin{center}
$m_i=\frac{1}{2}$ if and only if $C$ is smooth at the~points of the~$G$-orbits $\pi(E_i)$.
\end{center}
Therefore, if $\pi(E_i)\subset\mathrm{Sing}(C)$, then $m_i\geqslant 1$.
Furthermore, if all  $m_1,\ldots,m_k$ are integers, then the~curve $C$ is a Cartier divisor on the~surface $S$.

Without loss of generality, we may assume that $\pi(E_1)\subset\mathrm{Sing}(C)$. Then
$$
\widetilde{C}^2=C^2+\sum_{i=1}^{k}m_i^2E_i^2=C^2-2\sum_{i=1}^{k}m_i^2\big|\pi(E_i)\big|\leqslant C^2-2m_1^2\big|\pi(E_1)\big|\leqslant C^2-2\big|\pi(E_1)\big|.
$$
Applying Hodge index theorem to $S$, we get $C^2\leqslant 36$, hence $2g-2=\widetilde{C}^2\leqslant 36-2\big|\pi(E_1)\big|$.

Thus, if $\pi(E_1)$ is a $G$-orbit of length $12$ or $16$, then $2g-2=\widetilde{C}^2\leqslant 12$,
so that $g\leqslant 7$, which is impossible by Lemma~\ref{lemma:48-curves}.
Hence, we see that $\pi(E_1)$ is a $G$-orbits of length $4$.

Write $E_1=E_1^1+E_1^2+E_1^3+E_1^4$, where $E_1^1$, $E_1^2$, $E_1^3$ and $E_1^4$ are disjoint $(-2)$-curves.
Let~$\Gamma$ be the~stabilizer in $G$ of the~curve $E_{1}^1$.
Then $\Gamma\cong\mathfrak{A}_4$,
and the~group $\Gamma$ acts faithfully on the~curve $E_1^1$ by Corollary~\ref{corollary:48-net},
so that the~smallest $\Gamma$-orbit in $E_1^2\cong\mathbb{P}^1$ is of length $4$.
Hence, the~intersection $\widetilde{C}\cap E_1^1$ consists of at least $4$ points, which implies that
$$
4\leqslant\big|\widetilde{C}\cap E_{1}^1\big|\leqslant\widetilde{C}\cdot E_{1}^1=\Big(\pi^*(C)-\sum_{i=1}^{k}m_iE_i\Big)E_{1}^1=2m_1,
$$
so that $m_1\geqslant 2$. Then
$2g-2=\widetilde{C}^2\leqslant 36-2m_1^2|\pi(E_1)|=36-8m_1^2\leqslant 4$,
so that $g\leqslant 3$, which is impossible by Lemma~\ref{lemma:48-curves}.
\end{proof}

Unfortunately, we do not know whether $\mathbb{P}^3$ contains irreducible smooth $G$-invariant curves of degree $12$ and genus $9$ or $17$.
On the other hand, we know that $\mathbb{P}^3$ contains infinitely many irreducible smooth $G$-invariant curves of degree $12$ and genus $13$.

\begin{example}
\label{example:48-curve-degree-12}
By \cite[Theorem~3.22]{CheltsovShramov2017}, $\mathbb{P}^3$
contains four irreducible $G_{144,184}$-invariant smooth curves of degree $12$ and genus~$13$.
These four curves can be constructed as follows.
Observe that
$S_2\vert_{\mathcal{Q}_1}=\mathcal{L}_{4}^\prime+\mathcal{L}_{4}^{\prime\prime\prime}$,
$S_3\vert_{\mathcal{Q}_1}=\mathcal{L}_{4}+\mathcal{L}_{4}^{\prime\prime}$,
$S_4\vert_{\mathcal{Q}_1}=\mathcal{L}_{4}^\prime+\mathcal{L}_{4}^{\prime\prime}$ and
$S_5\vert_{\mathcal{Q}_1}=\mathcal{L}_{4}+\mathcal{L}_{4}^{\prime\prime\prime}$.
Hence, none of the~intersections $S_2\cap S_4$, $S_2\cap S_5$, $S_3\cap S_4$, $S_3\cap S_5$ is an irreducible curve.
Moreover, it follows from \cite[Lemma~3.19]{CheltsovShramov2017} that
\begin{align*}
S_2\cap S_4&=\mathcal{L}_{4}^\prime+\mathfrak{C}_{12}^\prime,\\
S_2\cap S_5&=\mathcal{L}_{4}^{\prime\prime\prime}+\mathfrak{C}_{12}^{\prime\prime\prime},\\
S_3\cap S_4&=\mathcal{L}_{4}^{\prime\prime}+\mathfrak{C}_{12}^{\prime\prime},\\
S_3\cap S_5&=\mathcal{L}_{4}+\mathfrak{C}_{12},
\end{align*}
where $\mathfrak{C}_{12}$, $\mathfrak{C}_{12}^\prime$, $\mathfrak{C}_{12}^{\prime\prime}$, $\mathfrak{C}_{12}^{\prime\prime\prime}$
are distinct smooth irreducible curves of degree $12$ and genus~$13$.
Now, we can use the same idea to construct infinitely many irreducible $G$-invariant smooth curves of degree $12$ and genus $13$.
For instance, if $\lambda$ is a~general complex number,
then
$$
\big\{\lambda f_1^2+f_3=f_5=0\big\}
$$
splits as a union of the~$G$-invariant reducible curve  $\mathcal{L}_{4}$ and a smooth $G$-invariant irreducible curve of degree $12$ and genus~$13$.
\end{example}

Irreducible $G$-invariant curves of degree $12$ from Example~\ref{example:48-curve-degree-12} are cut out by sextics.
We~think that this should be true for every $G$-invariant irreducible curve in $\mathbb{P}^3$ of degree~$12$
which is not contained in $\mathcal{Q}_1$. But we~are unable to show this $\frownie$. Instead, we prove

\begin{lemma}
\label{lemma:48-irreducible-curves-simple-deal}
Let $C$ be an irreducible $G$-invariant curve of degree $12$ in $\mathbb{P}^3$ that is not contained in $\mathcal{Q}_1$,
and let $\mathcal{D}$ be a linear subsystem in $|\mathcal{O}_{\mathbb{P}^3}(6)|$ that consists of surfaces passing through the~curve $C$.
Then $\mathcal{D}$ is not empty, $\mathcal{D}$~does not have fixed components,
the curve $C$ is the only curve that is contained in the base locus of the linear system $\mathcal{D}$.
Moreover, if $D$ and $D^\prime$ are general surfaces in $\mathcal{D}$, then $(D\cdot D^\prime)_C=1$.
\end{lemma}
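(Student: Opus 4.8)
The plan is to exploit the irreducible quartic surface through $C$ produced by Lemma~\ref{lemma:48-irreducible-curves-simple}. Recall from that lemma that $C$ is smooth of degree $12$ and genus $g\in\{9,13,17\}$, that it lies on an irreducible surface $S\in\mathcal{M}_{4}$ with at worst ordinary double points, that $C$ avoids $\mathrm{Sing}(S)$ (which lies in the special small orbits $\Sigma_4,\Sigma_4^\prime,\dots$), and that the minimal resolution $\pi\colon\widetilde{S}\to S$ is a K3 surface on which $G$ acts with all orbits of length $16$, $24$ or $48$ by Corollary~\ref{corollary:48-net}. First I would settle non-emptiness by a dimension count: since $\deg\mathcal{O}_{C}(6)=72>2g-2$, the sheaf $\mathcal{O}_C(6)$ is non-special, so $h^0(C,\mathcal{O}_C(6))=73-g$, and the sequence $0\to\mathcal{I}_C(6)\to\mathcal{O}_{\mathbb{P}^3}(6)\to\mathcal{O}_C(6)\to0$ gives $\dim\mathcal{D}\geqslant 84-(73-g)-1=g+10\geqslant 19$.

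For the absence of fixed components, let $F$ be the fixed part of $\mathcal{D}$; it is $G$-invariant because $\mathcal{D}$ is. Writing $e=\deg F$, every member of $\mathcal{D}$ equals $F$ plus a surface of degree $6-e$, so $\dim\mathcal{D}\leqslant\binom{9-e}{3}-1$, which is $9$, $3$, $0$ for $e=4,5,6$; the bound $\dim\mathcal{D}\geqslant 19$ therefore forces $e\leqslant3$. Every $\mathbb{H}$-invariant surface has even degree: the central element $-\mathrm{Id}$ of the extraspecial group $\widehat{\mathbb{H}}$ (of order $32$) lies in its commutator subgroup, hence acts trivially under every linear character, so no semi-invariant form of odd degree exists. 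This rules out $e\in\{1,3\}$ and leaves only $e=2$, that is $F=\mathcal{Q}_1$. But for any quadric $Q\neq\mathcal{Q}_1$ the sextic $S\cup Q$ lies in $\mathcal{D}$ and does not contain $\mathcal{Q}_1$, since $S$ is an irreducible quartic. Hence $\mathcal{D}$ has no fixed component.

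To control the base locus I would observe that $\mathcal{D}$ contains the whole family $\{S\cup Q\}$ as $Q$ runs over quadrics, and $\bigcap_Q(S\cup Q)=S$; therefore $\mathrm{Bs}(\mathcal{D})\subseteq S$ and every base curve of $\mathcal{D}$ lies on $S$. Setting $\widetilde{H}=\pi^*\mathcal{O}_{\mathbb{P}^3}(1)$, so that $\widetilde{H}^2=4$, $\widetilde{H}\cdot\widetilde{C}=12$ and $\widetilde{C}^2=2g-2$, the sextics vanishing on $S$ form a $10$-dimensional space, so the trace system $\mathcal{D}\vert_S$ has dimension at least $g$ and is contained in $\lvert 6\widetilde{H}-k\widetilde{C}\rvert$, where $k\geqslant1$ is the multiplicity with which $C$ sits in it. Since $(6\widetilde{H}-2\widetilde{C})^2=8g-152<0$, one has $\dim\lvert 6\widetilde{H}-2\widetilde{C}\rvert\leqslant0<g$, which forces $k=1$. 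Thus the residual system $\mathcal{R}=\mathcal{D}\vert_S-C$ lies in $\lvert L\rvert$ with $L=6\widetilde{H}-\widetilde{C}$ and has dimension at least $g$.

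The multiplicity-one statement then follows quickly: because $k=1$, a general $D\in\mathcal{D}$ meets $S$ transversally along $C$, so $D$ is smooth at a general point $p\in C$ with $T_pD\neq T_pS$, whereas $S\cup Q$ has tangent plane $T_pS$ there; hence $((S\cup Q)\cdot D)_C=1$, and since $1$ is the minimal possible value, by semicontinuity the general pair satisfies $(D\cdot D')_C=1$. Finally, the base curves of $\mathcal{D}$ are exactly $C$ together with the fixed curves of $\mathcal{R}$, so it remains to show $\mathcal{R}$ has no fixed component. The $\pi$-exceptional $(-2)$-curves are $L$-trivial ($\widetilde{H}\cdot\Gamma=0$ and $\widetilde{C}\cdot\Gamma=0$, as $C$ avoids $\mathrm{Sing}(S)$), so any fixed component must be the strict transform of a rational curve $B\subset S$ that is a high multisecant of $C$, namely $\widetilde{C}\cdot\widetilde{B}>6\deg B$. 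I expect this last point to be the main obstacle: it should be excluded using the classification of $G$-irreducible unions of lines, conics and twisted cubics in Lemmas~\ref{label:48-lines} and~\ref{lemma:48-conics}, all of which lie on $\mathcal{Q}_1$ or on $\mathcal{T}\cup\mathcal{T}^\prime\cup\mathcal{T}^{\prime\prime}$ and hence not on a general $S\in\mathcal{M}_4$, together with the orbit-length constraints of Corollary~\ref{corollary:48-net}.
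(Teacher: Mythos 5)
Your dimension count, your parity argument against fixed components (using that $-\mathrm{Id}$ is a commutator in the extraspecial group $\widehat{\mathbb{H}}$, so every $\mathbb{H}$-invariant surface has even degree), and your derivation of $k=1$ on the K3 resolution are essentially sound, and $k=1$ does yield $(D\cdot D')_C=1$ by the same semicontinuity argument the paper uses. The genuine gap is the third assertion of the lemma: that $C$ is the \emph{only} base curve of $\mathcal{D}$. You reduce this to showing that the residual system $\mathcal{R}\subset|6\widetilde{H}-\widetilde{C}|$ has no fixed component, and then assert that any fixed component would have to satisfy $\widetilde{C}\cdot\widetilde{B}>6\deg B$. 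That assertion is unjustified: $\mathcal{R}$ is an incomplete (merely $G$-invariant) subsystem, and a fixed component of such a system need not meet the class negatively --- one can perfectly well have $\mathcal{R}=\widetilde{B}+\mathcal{R}'$ with $(6\widetilde{H}-\widetilde{C})\cdot\widetilde{B}\geqslant 0$ and $\mathcal{R}'$ of large dimension. Consequently the exclusion you defer to Lemmas~\ref{label:48-lines} and~\ref{lemma:48-conics} would not close the argument even if carried out: a fixed curve of $\mathcal{R}$ is a $G$-irreducible curve on $S$ of degree up to $12$, so besides unions of lines and conics it could be a union of four twisted cubics, twelve lines, or an irreducible curve of degree $12$ --- and such curves genuinely exist on surfaces of $\mathcal{M}_4$ (cf.\ Example~\ref{example:48-curve-degree-12}, where $S_3\cap S_5=\mathcal{L}_4+\mathfrak{C}_{12}$). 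The paper closes exactly this hole by a different device: using the three-dimensional $G$-invariant system $\mathcal{M}_6$ of Lemma~\ref{lemma:48-sextic-surfaces} and an orbit-length count, it produces a sextic $S_6$ containing $C$ with $S\not\subset S_6$, so that $\mathrm{Bs}(\mathcal{D})\subseteq S\cap S_6=C\cup Z$ with $Z$ a single $G$-irreducible curve of degree $12$; if $Z$ (or $C$ doubled) were in the base locus, the trace system $\mathcal{D}\vert_S$ would be the single divisor $C+Z$ (or $2C$), contradicting $\dim\mathcal{D}\vert_S\geqslant g$. Your dimension bound is strong enough to run this final contradiction, but you are missing the $\mathcal{M}_6$ step that confines the base locus to $C\cup Z$ in the first place.

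Two smaller points. First, deducing $\dim|6\widetilde{H}-2\widetilde{C}|\leqslant 0$ from $(6\widetilde{H}-2\widetilde{C})^2<0$ alone is not valid on a K3 surface (a $(-2)$-curve plus a disjoint elliptic pencil has negative square and moves); what actually saves you is $(6\widetilde{H}-2\widetilde{C})\cdot\widetilde{H}=0$ with $\widetilde{H}$ nef and big, which forces any effective representative to be $\pi$-exceptional and hence rigid. Second, ``$C$ avoids $\mathrm{Sing}(S)$'' is not a conclusion of Lemma~\ref{lemma:48-irreducible-curves-simple}: that lemma only excludes the orbits of length $4$ and $12$ from $C$, while $\mathrm{Sing}(S)$ can be a $G$-orbit of length $16$; fortunately the quantities you actually use ($\widetilde{H}\cdot\widetilde{C}=12$, $\widetilde{C}^2=2g-2$, rigidity of exceptional divisors) are insensitive to this, but as stated the claim needs either the adjunction argument from the proof of Proposition~\ref{proposition:48-curves} or should be removed.
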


\begin{proof}
It follows from Lemma~\ref{lemma:48-irreducible-curves-simple} that the curve $C$ is smooth, and its genus is $9$, $13$~or~$17$.
Moreover, it follows from  Lemma~\ref{lemma:48-irreducible-curves-simple} that
the curve $C$ is contained in an~irreducible quartic surface in $S\in\mathcal{M}_4$ that has at most ordinary double points.
Then $S+\mathcal{Q}\in\mathcal{D}$
for every quadric $\mathcal{Q}\in|\mathcal{O}_{\mathbb{P}^3}(2)|$.
Thus, the base locus of $\mathcal{D}$ is contained in $S$.

Let $\mathcal{I}_C$ be the ideal sheaf of the curve $C$.
The surfaces in $\mathcal{D}$ are cut out by the global sections in $H^0(\mathcal{O}_{\mathbb{P}^3}(6)\otimes\mathcal{I}_C)$.
On the other hand, we have the following exact sequence:
$$
0\longrightarrow H^0\big(\mathcal{O}_{\mathbb{P}^3}(6)\otimes\mathcal{I}_C\big)\longrightarrow H^0\big(\mathcal{O}_{\mathbb{P}^3}(6)\big)\longrightarrow H^0\big(\mathcal{O}_{\mathbb{P}^3}(6)\big\vert_{C}\big)
$$
Thus, using the~Riemann--Roch theorem and Serre duality, we see that
$$
h^0\big(\mathcal{O}_{\mathbb{P}^3}(6)\otimes\mathcal{I}_C\big)\geqslant h^0\big(\mathcal{O}_{\mathbb{P}^3}(6)\big)-h^0\big(\mathcal{O}_{\mathbb{P}^3}(6)\big\vert_{C}\big)=84-h^0\big(\mathcal{O}_{\mathbb{P}^3}(6)\big\vert_{C}\big)=11+g,
$$
where $g$ is the genus of the curve $C$.
Therefore, the dimension of $\mathcal{D}$ is at least $10+g$.
Then the dimension of the linear system $\mathcal{D}\vert_{S}$ is at least $g$,
because $h^0(\mathcal{O}_{\mathbb{P}^3}(2))=10$.

Let $\mathcal{M}_6$ be the~linear system introduced in Lemma~\ref{lemma:48-sextic-surfaces}.
By Lemma~\ref{lemma:48-sextic-surfaces}, this linear system is three-dimensional, every surface in $\mathcal{M}_6$ is $G$-invariant,
and $\mathcal{M}_6\big\vert_{\mathcal{Q}_1}=\mathcal{L}_6^{\prime\prime\prime}+\mathcal{L}_6^{\prime\prime\prime\prime}$,
so that the base locus of the linear system  $\mathcal{M}_6$ is a union of the curves  $\mathcal{L}_6^{\prime\prime\prime}$ and $\mathcal{L}_6^{\prime\prime\prime\prime}$.
We~claim that $\mathcal{M}_6$ contains a possibly reducible surface such that it passes through $C$, but $S$ is not its irreducible component.
Indeed, let $P$ and $Q$ be two sufficiently general points in the curve $C$,
and let $S_6$ and $S_6^\prime$ be two distinct surfaces in $\mathcal{M}_6$ that both pass through $P$ and $Q$.
If $C\not\subset S_6$, then
$$
72=S_6\cdot C\geqslant \big|\mathrm{Orb}_{G}(P)\big|+\big|\mathrm{Orb}_{G}(Q)\big|=96,
$$
which is absurd. Therefore, we conclude that $C\subset S_6$. Similarly, we see that $C\subset S_6\cap S_6^\prime$.
On the other hand, the quartic surface $S$ is not contained in $S_6\cap S_6^\prime$,
because otherwise we would have $S_6=S_6^\prime=S+\mathcal{Q}_1$, since $\mathcal{Q}_1$ is the only $G$-invariant quadric surface in~$\mathbb{P}^3$.
Hence, either $S\not\subset S_6$ or $S\not\subset S_6^\prime$. Without loss of generality, we may assume that $S$ is not an irreducible component of the surface $S_6$.

We see that $S_6\vert_{S}=C+Z$ for some $G$-invariant curve $Z$. Observe that $\mathrm{deg}(Z)=12$.
If $Z$ is not $G$-irreducible, then it follows from Corollary~\ref{corollary:48-curves-in-tetrahedra}
and Lemmas~\ref{lemma:48-reducible-G-invariant-curves} and \ref{lemma:48-irreducible-curves-simple}
that at least one irreducible component of the curve $Z$ is contained in the quadric $\mathcal{Q}_1$,
because the curves $\mathcal{L}_6$, $\mathcal{L}_6^\prime$, $\mathcal{L}_6^{\prime\prime}$ are not contained in $S$.
On the other hand, we know that
$$
S_6\vert_{\mathcal{Q}_1}=\mathcal{L}_6^{\prime\prime\prime}+\mathcal{L}_6^{\prime\prime\prime\prime},
$$
and neither $\mathcal{L}_6^{\prime\prime\prime}$ nor $\mathcal{L}_6^{\prime\prime\prime\prime}$ are contained in $S$.
Hence, we conclude that $Z$ is $G$-irreducible.
A priori, we may have $Z=C$.

Since $S_6\subset\mathcal{D}$, the base locus of the linear system $\mathcal{D}$ is contained in $S_6\cap S=C\cup Z$.
If~$Z$~is contained in the base locus of the linear system $\mathcal{D}$, then we have $\mathcal{D}\vert_{S}=C+Z$,
so~that $\mathcal{D}\vert_{S}$ is a zero-dimensional linear system.
On the other hand, we already proved earlier that the~dimension of $\mathcal{D}\vert_{S}$ is at least $g\geqslant 13$.
This shows that $C$ is the only curve contained in the base locus of the linear system $\mathcal{D}$.

Likewise, we see that $\mathcal{D}\vert_{S}\ne 2C$. Thus, for a general surface $D\in\mathcal{D}$, one has $(D\cdot S)_C=1$.
This implies the final assertion of the lemma,
since $S+\mathcal{Q}\in\mathcal{D}$ for every $\mathcal{Q}\in|\mathcal{O}_{\mathbb{P}^3}(2)|$.
\end{proof}

Let us conclude this section with one rather technical result.

\begin{proposition}
\label{proposition:48-curves}
Let $C$ be a $G$-irreducible curve in $\mathbb{P}^3$ that is different from $\mathcal{L}_6$, $\mathcal{L}_6^\prime$, $\mathcal{L}_6^{\prime\prime}$,
and let $\mathcal{D}$ be a linear subsystem in $|\mathcal{O}_{\mathbb{P}^3}(n)|$ that has no fixed components, where $n\in\mathbb{Z}_{>0}$.
Then $\mathrm{mult}_C(\mathcal{D})\leqslant\frac{n}{4}$.
Moreover, one has $\mathrm{mult}_{\mathcal{L}_6^\prime}(\mathcal{D})+\mathrm{mult}_{\mathcal{L}_6^{\prime\prime}}(\mathcal{D})\leqslant\frac{n}{2}$.
\end{proposition}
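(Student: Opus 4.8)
The plan is to bound $\mu:=\mathrm{mult}_C(\mathcal{D})$ by splitting into cases according to $\deg C$ and the location of $C$, using the classification of $G$-irreducible curves from Lemmas~\ref{label:48-lines}, \ref{lemma:48-conics}, \ref{lemma:48-reducible-Q1-curves}, \ref{lemma:48-reducible-G-invariant-curves}, \ref{lemma:48-Q1-curves} and \ref{lemma:48-irreducible-curves-simple}. First, if $\deg C\geqslant 16$, I would take two general members $D_1,D_2\in\mathcal{D}$; since $\mathcal{D}$ has no fixed components they share no component, so $D_1\cdot D_2$ is an effective one-cycle of degree $n^2$ whose coefficient along each component of $C$ is at least $\mu^2$. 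Hence $n^2\geqslant\mu^2\deg C\geqslant 16\mu^2$ and $\mu\leqslant n/4$. This reduces the problem to the finitely many types of $G$-irreducible curves of degree at most $15$.

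Second, for every such $C$ contained in an $\mathbb{H}$-invariant quadric $\mathcal{Q}\cong\mathbb{P}^1\times\mathbb{P}^1$ I would restrict $\mathcal{D}$ to $\mathcal{Q}$. A general $D\in\mathcal{D}$ does not contain $\mathcal{Q}$ (otherwise the $G$-orbit of $\mathcal{Q}$ would be a fixed component), so $D|_{\mathcal{Q}}$ is effective of bidegree $(n,n)$; if the part of $C$ lying in $\mathcal{Q}$ has bidegree $(a,b)$ then $D|_{\mathcal{Q}}-\mu C$ is effective, forcing $n\geqslant\mu\max(a,b)$. Inspecting the classification one checks $\max(a,b)\geqslant 4$ in every case: the line-curves $\mathcal{L}_4,\mathcal{L}_4^\prime,\mathcal{L}_4^{\prime\prime},\mathcal{L}_4^{\prime\prime\prime}$ and the four lines cut on each of $\mathcal{Q}_2,\mathcal{Q}_3,\mathcal{Q}_4$ (and similarly for the other orbits of quadrics) have bidegree $(4,0)$, the curves $\mathcal{L}_6^{\prime\prime\prime},\mathcal{L}_6^{\prime\prime\prime\prime}$ have bidegree $(6,0)$, the connected octics $\mathcal{C}_8^{1},\mathcal{C}_8^{1,\prime},\mathcal{C}_8^{1,\prime\prime}$ have bidegree $(4,4)$, and the irreducible curves in $\mathcal{Q}_1$ have bidegree $(4,4)$, $(4,8)$ or $(8,4)$ by Lemma~\ref{lemma:48-Q1-curves}. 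In all of these $\mu\leqslant n/4$.

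The remaining curves of degree at most $15$ lie in no $\mathbb{H}$-invariant quadric: the octics $\mathcal{C}_8^{2},\mathcal{C}_8^{3}$ and their images, the unions of four twisted cubics, and the smooth irreducible curves of degree $12$ and genus $9$, $13$ or $17$ from Lemma~\ref{lemma:48-irreducible-curves-simple}. For these I would place $C$ on a quartic surface $S$ with at worst ordinary double points that is invariant under $G$ (a member of $\mathcal{M}_4$, or $S_4$, or $S_5$) or under the stabiliser of a component, and pass to the minimal resolution $\pi\colon\widetilde{S}\to S$, which is a K3 surface by Corollary~\ref{corollary:48-net}. Writing $\widetilde{H}=\pi^*H$ and $\widetilde{C}$ for the proper transform of $C$, the mobile part of $\mathcal{D}|_S$ is nef; when its only fixed part is $\mu\widetilde{C}$, the class $P=n\widetilde{H}-\mu\widetilde{C}$ is nef, so $P^2\geqslant 0$. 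Since $\widetilde{H}^2=4$, $\widetilde{H}\cdot\widetilde{C}=\deg C$ and $\widetilde{C}^2=2p_a(\widetilde{C})-2$ with $p_a(\widetilde{C})\leqslant 17$ (using that $C$ avoids $\mathrm{Sing}(S)$, so $\pi^*C=\widetilde{C}$), the inequality $P^2\geqslant 0$ is quadratic in $\mu$; combined with the elementary estimate $\mu\leqslant n/3$ coming from $P\cdot\widetilde{H}\geqslant 0$ it selects the small root and yields $\mu\leqslant n/4$, with equality only in the genus-$17$ case. The main obstacle is precisely here: for the unions of twisted cubics, which lie on no $G$-invariant quartic, I must manufacture a suitable invariant surface of small degree relative to $\deg C$ and exclude an auxiliary fixed component of $\mathcal{D}|_S$; this is where the sextic system of Lemma~\ref{lemma:48-irreducible-curves-simple-deal} and the orbit-length restrictions of Corollary~\ref{corollary:48-net} should enter.

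Finally, for the additive estimate I would use the unique $G$-invariant quadric $\mathcal{Q}_1$. By the incidences recorded before Remark~\ref{remark:48-orbits-24} we have $\mathcal{L}_6^\prime\cap\mathcal{Q}_1=\Sigma_{12}^{\prime\prime\prime}$ and $\mathcal{L}_6^{\prime\prime}\cap\mathcal{Q}_1=\Sigma_{12}^{\prime\prime}$, and both orbits lie on $\mathcal{L}_6^{\prime\prime\prime}\cap\mathcal{L}_6^{\prime\prime\prime\prime}$, so each of the six ruling lines $\ell$ of $\mathcal{L}_6^{\prime\prime\prime}$ contains two points of $\Sigma_{12}^{\prime\prime}$ and two of $\Sigma_{12}^{\prime\prime\prime}$. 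Restricting a general $D\in\mathcal{D}$ to $\mathcal{Q}_1$, the bidegree $(n,n)$ curve $D|_{\mathcal{Q}_1}$ has multiplicity at least $\mathrm{mult}_{\mathcal{L}_6^\prime}(\mathcal{D})$ at the points of $\Sigma_{12}^{\prime\prime\prime}$ and at least $\mathrm{mult}_{\mathcal{L}_6^{\prime\prime}}(\mathcal{D})$ at those of $\Sigma_{12}^{\prime\prime}$; intersecting with $\ell$ (of bidegree $(1,0)$, so $D|_{\mathcal{Q}_1}\cdot\ell=n$) gives $n\geqslant 2\,\mathrm{mult}_{\mathcal{L}_6^\prime}(\mathcal{D})+2\,\mathrm{mult}_{\mathcal{L}_6^{\prime\prime}}(\mathcal{D})$, which is the claimed inequality. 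One only has to dispose of the case that $\mathcal{L}_6^{\prime\prime\prime}$ is a base component of $\mathcal{D}$, for which the bound $\mathrm{mult}_{\mathcal{L}_6^{\prime\prime\prime}}(\mathcal{D})\leqslant n/6$ from the first part suffices after removing it.
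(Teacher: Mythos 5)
Your reduction to curves of degree at most $15$ and your treatment of every curve lying on an $\mathbb{H}$-invariant quadric (the bidegree bookkeeping is the same as the paper's intersection with ruling lines) match the paper's proof. Both of the remaining parts, however, contain genuine gaps.

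First, the degree-$12$ cases. Your key step is that $P=n\widetilde{H}-\mu\widetilde{C}$ is nef on the K3 resolution, which you yourself condition on the fixed part of $\mathcal{D}\vert_S$ being exactly $\mu\widetilde{C}$; nothing rules out further fixed curves, and this is precisely what can happen: by Lemma~\ref{lemma:48-irreducible-curves-simple-deal} a sextic cuts out $C+Z$ on $S$ with $Z$ another $G$-irreducible curve of degree $12$, and $Z$ may also lie in the fixed part. Effectivity of $P$ gives $P\cdot\widetilde{H}\geqslant 0$ but not $P^2\geqslant 0$, so the quadratic inequality you need is unproved. The paper circumvents this: for genus $9$ and $13$ it works on the blow-up of $\mathbb{P}^3$ along $C$ and pairs $\widehat{D}_1\cdot\widehat{D}_2$ with the nef divisor $\varphi^*(\mathcal{O}_{\mathbb{P}^3}(6))-E_C$, whose nefness comes from Lemma~\ref{lemma:48-irreducible-curves-simple-deal}; for genus $17$ and the twisted cubics it writes $nH\sim_{\mathbb{Q}} mC+\epsilon Z+\Delta$ on $S$ and pairs with the class $4H-C$, shown nef via a Riemann--Roch pencil argument. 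Moreover, two of your factual inputs are wrong: the unions of four twisted cubics \emph{do} lie on an irreducible quartic $S\in\mathcal{M}_4$ (two general points of $C$ have $G$-orbits of length $48$, while $S\cdot C=48$, so any member of the net through them contains $C$), and the assumption $C\cap\mathrm{Sing}(S)=\varnothing$, which you need for $\pi^*C=\widetilde{C}$ and your genus bound, fails exactly in the critical configurations; the bulk of the paper's proof is the case analysis when the cubics pass through nodes of $S$, for which you have no argument.

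Second, the additive estimate. You work on $\mathcal{Q}_1$, which meets $\mathcal{L}_6^\prime$ and $\mathcal{L}_6^{\prime\prime}$ only in the point orbits $\Sigma_{12}^{\prime\prime\prime}$ and $\Sigma_{12}^{\prime\prime}$, so you are forced to intersect with the lines of $\mathcal{L}_6^{\prime\prime\prime}$ or $\mathcal{L}_6^{\prime\prime\prime\prime}$, and these can be base curves of $\mathcal{D}$. Your patch does not close this case: the coefficient of such a line in $D\vert_{\mathcal{Q}_1}$ is not bounded by $\mathrm{mult}_{\mathcal{L}_6^{\prime\prime\prime}}(\mathcal{D})$ (the restriction picks up extra order from tangency of $D$ along the line), and even granting the bound $n/6$ for all twelve line coefficients, subtracting them and intersecting with a line $\ell$ of $\mathcal{L}_6^{\prime\prime\prime}$ loses $4\cdot\frac{n}{6}$ at the four marked points of $\ell$, yielding only $\mathrm{mult}_{\mathcal{L}_6^\prime}(\mathcal{D})+\mathrm{mult}_{\mathcal{L}_6^{\prime\prime}}(\mathcal{D})\leqslant\frac{5n}{6}$, not $\frac{n}{2}$. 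The paper's choice of quadric makes the problem disappear: $\mathcal{Q}_2$ contains two components of $\mathcal{L}_6^\prime$ and two of $\mathcal{L}_6^{\prime\prime}$ as divisors, all four in one ruling, so their multiplicities appear directly as coefficients of $D\vert_{\mathcal{Q}_2}$, and one intersects with a \emph{general} line of the other ruling, which is never a base curve. Replacing your $\mathcal{Q}_1$-argument by this one gives the second assertion immediately.
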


\begin{proof}
First, let us prove the last assertion. To do this, we let
\begin{align*}
L_1^\prime&=\{x_0+x_2=x_1-x_3=0\},\\
L_2^\prime&=\{x_0-x_2=x_1+x_3=0\},\\
L_1^{\prime\prime}&=\{x_0+x_3=x_1+x_2=0\},\\
L_2^{\prime\prime}&=\{x_0-x_3=x_1-x_2=0\}.
\end{align*}
Then the lines  $L_1^\prime$, $L_2^\prime$, $L_1^{\prime\prime}$, $L_2^{\prime\prime}$ are disjoint.
Moreover, the lines $L_1^\prime$ and $L_2^\prime$ are two irreducible components of the curve~$\mathcal{L}_{6}^\prime$,
but $L_1^{\prime\prime}$ and $L_2^{\prime\prime}$ are two irreducible components of the curve~$\mathcal{L}_{6}^{\prime\prime}$.
On the other hand, the lines  $L_1^\prime$, $L_2^\prime$, $L_1^{\prime\prime}$, $L_2^{\prime\prime}$ are contained
in $\mathcal{Q}_2=\{x_0^2+x_1^2-x_2^2-x_3^2=0\}$.
Thus, if $D$ is a general surface in $\mathcal{D}$, then
$$
D\big\vert_{\mathcal{Q}_2}=m_1^{\prime} L_1^\prime+m_2^\prime L_2^\prime+m_1^{\prime\prime}L_1^{\prime\prime}+m_2^{\prime\prime} L_2^{\prime\prime}+\Xi,
$$
where $m_1^{\prime}$, $m_2^{\prime}$,  $m_1^{\prime\prime}$, $m_2^{\prime\prime}$
are such that $m_1^{\prime}\geqslant\mathrm{mult}_{\mathcal{L}_6^\prime}(\mathcal{D})$,
\mbox{$m_2^{\prime}\geqslant\mathrm{mult}_{\mathcal{L}_6^\prime}(\mathcal{D})$},
$m_1^{\prime\prime}\geqslant\mathrm{mult}_{\mathcal{L}_6^{\prime\prime}}(\mathcal{D})$ and $m_2^{\prime\prime}\geqslant\mathrm{mult}_{\mathcal{L}_6^{\prime\prime}}(\mathcal{D})$,
and $\Xi$ is an effective divisor on $\mathcal{Q}_2$ whose support does not contain the lines $L_1^\prime$, $L_2^\prime$, $L_1^{\prime\prime}$, $L_2^{\prime\prime}$.
Now, let $\ell$ be a general line in $\mathcal{Q}_2$ that intersects $L_1^\prime$. Then
$$
n=\ell\cdot D\big\vert_{\mathcal{Q}_2}=m_1^{\prime}+m_2^\prime+m_1^{\prime\prime}+m_2^{\prime\prime}+\ell\cdot\Xi\geqslant 2\mathrm{mult}_{\mathcal{L}_6^\prime}(\mathcal{D})+2\mathrm{mult}_{\mathcal{L}_6^{\prime\prime}}(\mathcal{D}),
$$
so that $\mathrm{mult}_{\mathcal{L}_6^\prime}(\mathcal{D})+\mathrm{mult}_{\mathcal{L}_6^{\prime\prime}}(\mathcal{D})\leqslant\frac{n}{2}$ as claimed.

Now, let $D_1$ and $D_2$ be two general surfaces in the~system $\mathcal{D}$. Then $D_1\cdot D_2=\delta C+\Omega$,
where $\delta$ is a non-negative integer, and $\Omega$ is an effective one-cycle such that $C\not\subset\mathrm{Supp}(\Omega)$.
One has $\delta\geqslant \mathrm{mult}_C^2(\mathcal{D})$.
But the degree of the cycle $D_1\cdot D_2$ is $n^2$.
Then $\delta\mathrm{deg}(C)\leqslant n^2$, which gives the required inequality if $\mathrm{deg}(C)\geqslant 16$.
So, we may assume that $\mathrm{deg}(C)\leqslant 15$.

Now, we suppose that the~curve $C$ is contained in the $G$-invariant quadric surface~$\mathcal{Q}_1$.
Let $\ell_1$ and $\ell_2$ be general curves in the~surface $\mathcal{Q}_1$ of degrees $(0,1)$ and $(1,0)$, respectively.
Then $\ell_1$ and $\ell_2$ are not contained in the base locus of the linear system $\mathcal{D}$,
and it follows from Lemmas~\ref{lemma:48-reducible-Q1-curves} and \ref{lemma:48-Q1-curves} that $\ell_1\cdot C\geqslant 4$
or $\ell_1\cdot C\geqslant 4$. If $\ell_1\cdot C\geqslant 4$, we get
$$
n=D\cdot \ell_1\geqslant\mathrm{mult}_C\big(D\big)|C\cap\ell_1|=\mathrm{mult}_C\big(D\big)(C\cdot\ell_1)\geqslant 4\mathrm{mult}_C\big(D\big)
$$
for sufficiently general surface $D\in\mathcal{D}$, hence $\mathrm{mult}_C(\mathcal{D})=\mathrm{mult}_C(D)\leqslant\frac{n}{4}$ as required.
Similarly, we obtain the required inequality when $\ell_2\cdot C\geqslant 4$.

Thus, to complete the proof of the lemma, we may assume that $C\not\subset\mathcal{Q}_1$.

Now, we suppose that irreducible components of the curve $C$ are lines.
Then it follows from Lemma~\ref{label:48-lines} that $C$ is a union of $12$ disjoint lines.
Moreover, Lemma~\ref{label:48-lines} also implies that there is an~$\mathbb{H}$-invariant quadric
that contains at least four components of the~curve $C$.
Thus, arguing as in the case $C\subset\mathcal{Q}_1$, we~obtain the required inequality.

Now, suppose that irreducible components of the curve $C$ are conics.
By Lemma~\ref{lemma:48-conics}, the~curve $C$ is one of the curves
$\mathcal{C}_8^{2}$, $\mathcal{C}_8^{3}$, $\mathcal{C}_8^{2,\prime}$, $\mathcal{C}_8^{3,\prime}$, $\mathcal{C}_8^{2,\prime\prime}$, $\mathcal{C}_8^{3,\prime\prime}$,
because $\mathcal{C}_8^{1}\cup\mathcal{C}_8^{1,\prime}\cup\mathcal{C}_8^{1,\prime\prime}\subset\mathcal{Q}_1$.
Observe that $G_{144,184}$ transitively permutes the curves $\mathcal{C}_8^{2}$, $\mathcal{C}_8^{2,\prime}$, $\mathcal{C}_8^{2,\prime\prime}$,
and it transitively permutes the~curves $\mathcal{C}_8^{3}$, $\mathcal{C}_8^{3,\prime}$, $\mathcal{C}_8^{3,\prime\prime}$.
Therefore, we may assume that $C=\mathcal{C}_8^{2}$ or $C=\mathcal{C}_8^{3}$.
But the~group $G_{96,227}$ swaps the curves $\mathcal{C}_8^{2}$ and $\mathcal{C}_8^{3}$, hence we may assume that $C=\mathcal{C}_8^{3}$.
Recall that $\mathcal{C}_8^{3}$ is the~$G$-irreducible curve in $\mathbb{P}^3$ whose irreducible component is the conic
$$
\big\{x_0=2x_1^2-\big(1+\sqrt{3}i\big)x_2^2-\big(1-\sqrt{3}i\big)x_3^2=0\big\}\subset\mathbb{P}^3.
$$
Its remaining three irreducible components intersect the plane $\{x_0=0\}$ in the points
\begin{align*}
[0:\sqrt{3}-i:2:0]&, [0:-\sqrt{3}+i:2:0],\\
[0:0:\sqrt{3}+i:2]&, [0:0:-\sqrt{3}-i:2],\\
[0:0:\sqrt{3}-i:2]&, [0:0:-\sqrt{3}+i:2].
\end{align*}
None of these six points is contained in the conic $\{x_0=2x_1^2-(1+\sqrt{3}i)x_2^2-(1-\sqrt{3}i)x_3^2=0\}$.
Let $Z$ be a general conic in the plane $\{x_0=0\}\subset\mathbb{P}^3$ that contains the points
$$
[0:\sqrt{3}-i:2:0], [0:-\sqrt{3}+i:2:0], [0:0:\sqrt{3}+i:2], [0:0:-\sqrt{3}-i:2].
$$
Then $|Z\cap C|=8$, and $Z$ is not contained in the base locus of the linear system $\mathcal{D}$,~so~that
$$
2n=D\cdot Z\geqslant\mathrm{mult}_C\big(D\big)|C\cap Z|=8\mathrm{mult}_C\big(D\big)=8\mathrm{mult}_C\big(\mathcal{D}\big),
$$
where as above $D$ is a general surface in $\mathcal{D}$. This gives us the required inequality.

Therefore, to complete the proof of the proposition, we may assume that irreducible components of the curve $C$ are neither lines nor conics.
Hence, using Corollary~\ref{corollary:48-curves-in-tetrahedra}
and both Lemmas~\ref{lemma:48-reducible-G-invariant-curves} and \ref{lemma:48-irreducible-curves-simple},
we see that either $C$ is a union of four twisted cubic curves,
or $C$ is a smooth irreducible curve of degree $12$, and its genus is $9$, $13$ or $17$.

Now, we suppose that $C$ is a smooth irreducible curve of degree $12$ and genus $g\in\{9,13\}$.
Let $\varphi\colon X\to\mathbb{P}^3$ be the blow up of the smooth curve $C$, let $E_C$ be the $\varphi$-exceptional divisor,
let $\widehat{\mathcal{D}}$ be the proper transform on $X$ of the linear system $\mathcal{D}$,
let $\widehat{D}_1$ and $\widehat{D}_2$ be general surfaces in the~system $\widehat{\mathcal{D}}$.
Using Lemma~\ref{lemma:48-irreducible-curves-simple-deal}, we see that $|\varphi^*(\mathcal{O}_{\mathbb{P}^3}(6))-E_C|$ is not~empty,
this~linear system does not have fixed components,
and it also does not have base curves except possibly for the fibers of the natural projections $E_C\to C$.
Therefore, we conclude that the divisor $\varphi^*(\mathcal{O}_{\mathbb{P}^3}(6))-E_C$ is nef.
Thus, if $\mathrm{mult}_C\big(\mathcal{D}\big)>\frac{n}{4}$, then
$$
0\leqslant\Big(\varphi^*\big(\mathcal{O}_{\mathbb{P}^3}(6)\big)-E_C\Big)\cdot\widehat{D}_1\cdot\widehat{D}_2=(2g-26)\mathrm{mult}_C^2\big(\mathcal{D}\big)-24n\mathrm{mult}_C\big(\mathcal{D}\big)+6n^2<0,
$$
which is absurd. Thus, if $C$ is an irreducible smooth curve, then $g\ne 9$ or $g\ne 13$.

Hence, to complete the proof, we may assume that either $C$ is a smooth irreducible curve of degree $12$ and genus $17$,
or the curve $C$ is a union of four twisted cubic curves.
In~the former case, it follows from  Lemma~\ref{lemma:48-irreducible-curves-simple} that $C$ is contained in an~irreducible surface in the~net $\mathcal{M}_4$.
In fact, arguing as in the proof of Lemma~\ref{lemma:48-irreducible-curves-simple},
we conclude that the~curve $C$ is contained in an~irreducible surface $S\in\mathcal{M}_4$ in both cases,
and it follows from  Lemma~\ref{lemma:48-net} that $S$ has at most ordinary double points.

By the~Hodge index theorem, we have $C^2\leqslant 36$ on the surface $S$.
If $C$ is irreducible, then it follows from Lemmas~\ref{lemma:48-net} and \ref{lemma:48-irreducible-curves-simple}
that either $C\cap\mathrm{Sing}(S)=\varnothing$, or $S$ has $16$ ordinary double points, and $\mathrm{Sing}(S)\subset C$.
Thus, if $C$ is irreducible, the adjunction formula gives
$$
36\geqslant C^2=32+\frac{|C\cap\mathrm{Sing}(S)|}{2}=\left\{\aligned
&32\ \text{if $C\cap\mathrm{Sing}(S)=\varnothing$},\\
&40\ \text{if $C\cap\mathrm{Sing}(S)\ne\varnothing$},\\
\endaligned
\right.
$$
so that $C\cap\mathrm{Sing}(S)=\varnothing$ and $C^2=32$.

Arguing as in the proof of Lemma~\ref{lemma:48-irreducible-curves-simple-deal},
we see that there exists a $G$-invariant sextic surface $S_6\in|\mathcal{O}_{\mathbb{P}^3}(6)|$
such that $C\subset S_6$, but $S$ is not a~component of the sextic surface~$S_6$.
Then~$S_6\vert_{S}=C+Z$ for some $G$-invariant curve $Z$ of degree $12$.
Moreover, arguing as in the proof of Lemma~\ref{lemma:48-irreducible-curves-simple-deal},
we see that $Z$ is $G$-irreducible.
On $S$, we have $C\cdot Z=72-C^2$, since
$72=\big(C+Z\big)\cdot C=C^2+C\cdot Z$.
Similarly, we see that $C^2=Z^2$.

Let~$H$~be a hyperplane section of the~surface $S$,
and let $D$ be a general surface in~$\mathcal{D}$.
Then $nH\sim_{\mathbb{Q}} D\big\vert_{S}=mC+\epsilon Z+\Delta$
for some~effective divisor $\Delta$ on the surface $S$ whose support does not contain $C$~and~$Z$,
where $m$ and $\epsilon$ are some non-negative rational numbers. Then $m\geqslant\mathrm{mult}_C(\mathcal{D})$.
So, it is enough to show that $m\leqslant\frac{n}{4}$.
Suppose that $m>\frac{n}{4}$.

First, let us exclude the case when $C$ is irreducible.
In this case, the curve $C$~is~contained in the smooth locus of the surface $S$, and $C^2=32$ on the surface $S$,
so that it follows from the~Riemann--Roch theorem and Serre duality that
$$
h^0\big(\mathcal{O}_S(4H-C)\big)-h^1\big(\mathcal{O}_S(4H-C)\big)=2+\frac{(4H-C)^2}{2}=2,
$$
which implies that $|4H-C|$ is a pencil.
Since all curves in this pencil have degree~four,
the~pencil $|4H-C|$ has no fixed curves, since otherwise the~union of all its fixed curves would be a $G$-invariant curve in $S$ of degree less than $4$,
which contradicts Corollary~\ref{corollary:48-curves-in-tetrahedra}
and Lemmas~\ref{lemma:48-reducible-G-invariant-curves} and \ref{lemma:48-irreducible-curves-simple}.
In particular, we see that the divisor $4H-C$ is nef, hence $$
4n=nH\cdot (4H-C)=m(4H-C)\cdot C+\epsilon (4H-C)\cdot Z+(4H-C)\cdot \Delta\geqslant m(4H-C)\cdot C=16m,
$$
so that $m\leqslant\frac{n}{4}$, which is a contradiction.

Hence, we see that $C$ is a union of four twisted cubics.
Denote them by $C_1$, $C_2$, $C_3$, $C_4$.
On the surface $S$, we have $C_1^2=C_2^2=C_3^2=C_4^2=-2+|C_1\cap\mathrm{Sing}(S)|/2$,
because  $G$ acts transitively on the~set $\{C_1,C_2,C_3,C_4\}$.
This action gives a homomorphism \mbox{$\upsilon\colon G\to\mathfrak{S}_4$},
whose image $\mathrm{im}(\upsilon)$ is isomorphic to one of the following groups: $\mumu_4$, $\mumu_2^2$, $\mathrm{D}_{8}$, $\mathfrak{A}_4$, $\mathfrak{S}_4$.
Now,~using Remark~\ref{remark:48-Tim-subgroups} and Lemma~\ref{lemma:48-curves}, we conclude that $\mathrm{im}(\upsilon)\cong\mathfrak{A}_4$,
so that $\mathrm{ker}(\upsilon)\cong\mumu_2^2$.
Then $G$ acts two-transitively on $\{C_1,C_2,C_3,C_4\}$, hence $C_i\cdot C_j=C_1\cdot C_2$ for $i\ne j$.
Then
$$
C^2=12(C_1\cdot C_2)+4C_1^2.
$$
If $C\cap\mathrm{Sing}(S)=\varnothing$, then $C_1\cdot C_2$ is an even integer,
because $C_1\cap C_2$ is $\mathrm{ker}(\upsilon)$-invariant,
but the group $\mathrm{ker}(\upsilon)$ acts faithfully on the curve $C_1$, and its orbits have length $2$ or $4$.
Similarly, we see that $C_1\cdot C_2$ is an integer in the case when $C\cap\mathrm{Sing}(S)\ne\varnothing$, because singular points of the surfaces $S$ are at most ordinary double points.

Observe that $m+\epsilon\leqslant\frac{n}{3}$, because
$4n=nH^2=12(m+\epsilon)+H\cdot \Delta\geqslant 12(m+\epsilon)$. But
$$
12n=H\cdot Z=mC\cdot Z+\epsilon Z^2+Z\cdot\Delta\geqslant mC\cdot Z+\epsilon Z^2=mC\cdot Z+\epsilon C^2=72m+(\epsilon-m)C^2.
$$
Thus, if $C^2\leqslant 0$, then $12n\geqslant 72m>18n$, which is absurd. Hence, we have $C^2>0$.
Then
$$
12n\geqslant m(72-C^2)+\epsilon C^2\geqslant m(72-C^2)>\frac{(72-C^2)n}{4},
$$
which gives $C^2>24$. Thus, if $C\cap\mathrm{Sing}(S)=\varnothing$, then $24<C^2=12(C_1\cdot C_2)-8\leqslant 36$,
which is impossible, because $C_1\cdot C_2$ is an~even integer. Hence, we have $C\cap\mathrm{Sing}(S)\ne\varnothing$.

Observe that $\mathrm{Stab}_{G}(C_1)\cong\mathfrak{A}_4$, this group acts faithfully on the~curve $C_1$,
and its orbits in the~curve $C_1$ are of length $4$, $6$ and $12$.
Moreover, the twisted cubic curve $C_1$ contains exactly two $\mathrm{Stab}_{G}(C_1)$-orbits of length $4$, and it has a unique $\mathrm{Stab}_{G}(C_1)$-orbit of length~$6$.
But $|C_1\cap\mathrm{Sing}(S)|\leqslant 9$,
because the subset $\mathrm{Sing}(S)\subset\mathbb{P}^3$ is cut out by cubic hypersurfaces.
Thus, we conclude that one of the following three cases are possible:
\begin{itemize}
\item $C_1^2=0$ and $C_1\cap\mathrm{Sing}(S)$ is a $\mathrm{Stab}_{G}(C_1)$-orbit of length $4$;
\item $C_1^2=1$ and $C_1\cap\mathrm{Sing}(S)$ is the unique $\mathrm{Stab}_{G}(C_1)$-orbit of length $6$;
\item $C_1^2=2$ and $C_1\cap\mathrm{Sing}(S)$ is the union of two $\mathrm{Stab}_{G}(C_1)$-orbits of length $4$.
\end{itemize}
But we know that $24<C^2=12(C_1\cdot C_2)+4C_1^2\leqslant 36$, hence $6<3(C_1\cdot C_2)+C_1^2\leqslant 9$,
where $C_1\cdot C_2$ is an integer. Hence, we see that $C_1\cdot C_2=2$, and either $C_1^2=1$ or $C_1^2=2$.
Therefore, we conclude that either $C^2=28$ and $C_1^2=1$, or $C^2=32$ and $C_1^2=2$.

Let $\pi\colon \widetilde{S}\to S$ be the~minimal resolution of singularities,
let $E$ be the sum of exceptional curves of the morphism $\pi$,
let $\widetilde{C}$ be the~proper transform of the~curve $C$ on the~surface~$\widetilde{S}$,
let~$\widetilde{C}_1$, $\widetilde{C}_2$, $\widetilde{C}_3$, $\widetilde{C}_4$ be the~proper transforms on $\widetilde{S}$
of the~curves $C_1$, $C_2$, $C_3$, $C_4$, respectively.
Then $\widetilde{S}$ is a smooth K3 surface, and the~action of the~group $G$ lifts to the~surface $\widetilde{S}$.
Arguing as above, we get $\widetilde{C}^2=12(\widetilde{C}_1\cdot \widetilde{C}_2)-8$,
where $\widetilde{C}_1\cdot \widetilde{C}_2$ is an even non-negative~integer.

Suppose that the~set $\mathrm{Sing}(S)$ is formed by one $G$-orbit. Then $E$ is a $G$-irreducible~curve.
Let $P$ be a singular point of the~quartic surface $S$, and let $k$ be the number of irreducible components of the curve $C$ that pass through the~point $P$.
Then
$$
\widetilde{C}\sim_{\mathbb{Q}} \pi^*(C)-\frac{k}{2}E,
$$
because irreducible components of the curve $C$ are smooth.
Therefore, since all irreducible components of the curve $E$ are $(-2)$-curves, we get
$$
\widetilde{C}^2=C^2-\frac{k^2}{2}\big|\mathrm{Sing}(S)\big|,
$$
where $C^2=28$ or $C^2=32$. For instance, if $|\mathrm{Sing}(S)|=16$, then $C^2+8=8k^2+12(\widetilde{C}_1\cdot \widetilde{C}_2)$,
so that either $9=2k^2+3(\widetilde{C}_1\cdot \widetilde{C}_2)$ or $10=2k^2+3(\widetilde{C}_1\cdot \widetilde{C}_2)$,
which leads to a contradiction, since $\widetilde{C}_1\cdot \widetilde{C}_2$ is an even integer.
Similarly, if $|\mathrm{Sing}(S)|=12$, then
$$
C^2+8=6k^2+12\big(\widetilde{C}_1\cdot \widetilde{C}_2\big),
$$
which leads to a contradiction.
Thus, it follows from Lemma~\ref{lemma:48-net} that $|\mathrm{Sing}(S)|=4$.

Let $E_P$ be the $\pi$-exceptional curve that is mapped to the singular point $P\in\mathrm{Sing}(S)$.
Then $\mathrm{Stab}_P(G)\cong\mathfrak{A}_4$, and the group $\mathrm{Stab}_P(G)$ acts faithfully on the~exceptional curve~$E_P$.
Moreover, it is well known that the smallest $\mathrm{Stab}_P(G)$-orbit in $E_P\cong\mathbb{P}^1$ has length $4$.
Therefore, since the subset $E_P\cap\widetilde{C}$ is $\mathrm{Stab}_P(G)$-invariant, we conclude that $|E_P\cap\widetilde{C}|\geqslant 4$,
so that all irreducible components of the curve $\widetilde{C}$ pass through $P$. Then $k=4$ and
$$
12\big(\widetilde{C}_1\cdot \widetilde{C}_2\big)-8=\widetilde{C}^2=C^2-\frac{k^2}{2}|\mathrm{Sing}(S)|=C^2-32,
$$
which implies that $12(\widetilde{C}_1\cdot \widetilde{C}_2)+24=C^2$, which is impossible, since $C^2=28$ or $C^2=32$.
Hence, we conclude that $\mathrm{Sing}(S)$ is not a single $G$-orbit.

By Lemma~\ref{lemma:48-net}, $\mathrm{Sing}(S)$ is a union of a $G$-orbit of length $4$ and a $G$-orbit of length~$12$.
Therefore, we conclude that $E=E_{1}+E_{2}$, where $E_1$ and $E_{2}$ are two $G$-irreducible curves such that the~image $\pi(E_1)$ is a $G$-orbit of length $4$, and $\pi(E_{2})$ is a $G$-orbit of length $12$.
Take~two~points $P_1\in\pi(E_1)$ and $P_2\in\pi(E_{2})$.
Let $k_{1}$ and $k_{2}$ be the number of irreducible components of the curve $C$ that pass through the~points $P_1$ and $P_2$, respectively.
Then
$$
\widetilde{C}\sim_{\mathbb{Q}} \pi^*(C)-\frac{k_1}{2}E_1-\frac{k_2}{2}E_2,
$$
where $k_1>0$ or $k_2>0$. Then
$$
12\big(\widetilde{C}_1\cdot \widetilde{C}_2\big)-8=\widetilde{C}^2=C^2-2k_1^2-6k_2^2.
$$
If $k_1=0$, we obtain a contradiction exactly as in the case $|\mathrm{Sing}(S)|=12$, hence $k_1>0$.
Now, arguing as in the case $|\mathrm{Sing}(S)|=4$, we see that $k_1=4$, hence
$$
12\big(\widetilde{C}_1\cdot \widetilde{C}_2\big)+24+6k_2^2=C^2\in\{28,32\},
$$
which is impossible, since $C^2$ is not divisible by $6$.
This completes the proof.
\end{proof}

\section{Equivariant geometry of projective space: group of order 192}
\label{section:P3-192}

Let $G$ be the~subgroup in $\mathrm{PGL}_4(\mathbb{C})$ generated by
\begin{multline*}
M=\begin{pmatrix}
-1 & 0 & 0 & 0\\
0 & 1 & 0 & 0\\
0 & 0 & 1 & 0\\
0 & 0 & 0 & 1
\end{pmatrix},
N=\begin{pmatrix}
1 & 0 & 0 & 0\\
0 & -1 & 0 & 0\\
0 & 0 & 1 & 0\\
0 & 0 & 0 & 1
\end{pmatrix},
L=\begin{pmatrix}
1 & 0 & 0 & 0\\
0 & 1 & 0 & 0\\
0 & 0 & -1 & 0\\
0 & 0 & 0 & 1
\end{pmatrix},\\
A=\begin{pmatrix}
0 & 0 & 0 & i\\
1 & 0 & 0 & 0\\
0 & 1 & 0 & 0\\
0 & 0 & 1 & 0
\end{pmatrix},
B=\begin{pmatrix}
0 & 1 & 0 & 0\\
1 & 0 & 0 & 0\\
0 & 0 & i & 0\\
0 & 0 & 0 & 1
\end{pmatrix},
\end{multline*}
Then $G$ is the~subgroup $G_{192,185}\cong\mumu_2^3.\mathfrak{S}_4\cong\mumu_4^2\rtimes(\mumu_3\rtimes\mumu_4)$ introduced in Section~\ref{section:subgroups}.
Let
\begin{center}
$P_1=[1:0:0:0]$, $P_2=[0:1:0:0]$, $P_3=[0:0:1:0]$, $P_4=[0:0:0:1]$,
\end{center}
and let $\Sigma_4=P_1\cup P_2\cup P_3\cup P_4$.

\begin{lemma}
\label{lemma:192-orbits}
The subset $\Sigma_4$ the~unique $G$-orbit in $\mathbb{P}^3$ of length $\leqslant 15$.
\end{lemma}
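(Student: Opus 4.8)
The plan is to bound the stabilizer and then read off the fixed loci. Since any $G$-orbit has length dividing $|G|=192=2^6\cdot 3$, an orbit of length $\leqslant 15$ has length in $\{1,2,3,4,6,8,12\}$, so its stabilizer $\Gamma=\mathrm{Stab}_G(P)$ satisfies $|\Gamma|\geqslant 16$. Thus it suffices to prove that every $P\in\mathbb{P}^3$ with $|\mathrm{Stab}_G(P)|\geqslant 16$ lies in $\Sigma_4$; as $\Sigma_4$ is a single $G$-orbit (of length $4$), this simultaneously exhibits $\Sigma_4$ and excludes every other short orbit. Throughout I write $\Gamma_0=\Gamma\cap T$ and $\bar\Gamma=\upsilon(\Gamma)\subseteq\mathfrak{S}_4$, so that $|\Gamma|=|\Gamma_0|\cdot|\bar\Gamma|$.

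The first step is to record the fixed loci of subgroups of $T\cong\mumu_2^3=\langle M,N,L\rangle$. The four characters by which $T$ acts on the coordinate lines $P_1,\dots,P_4$ are pairwise distinct, so $\mathrm{Fix}_{\mathbb{P}^3}(T)=\Sigma_4$. Among the seven subgroups of order $4$ in $T$ there is a unique ``even'' one, $\langle MN,NL\rangle=\{I,MN,NL,ML\}$, whose four restricted characters remain distinct, so it too has fixed locus exactly $\Sigma_4$; every other proper subgroup of $T$ has a positive-dimensional fixed locus, namely a union of coordinate lines and planes. Consequently, if $\Gamma_0=T$ or $\Gamma_0=\langle MN,NL\rangle$, then $P\in\mathrm{Fix}(\Gamma_0)=\Sigma_4$ and we are done. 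From now on $P\notin\Sigma_4$, so $\Gamma_0$ is a proper subgroup of $T$ of the second kind and $P$ lies on a $\Gamma_0$-fixed coordinate line or plane.

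Next I bound $\bar\Gamma$ using that it normalizes $\Gamma_0$ and hence permutes the irreducible pieces of $\mathrm{Fix}(\Gamma_0)$. There are four cases. (i) $\Gamma_0=1$ forces $|\bar\Gamma|\geqslant 16$, hence $\bar\Gamma=\mathfrak{S}_4$ and $\Gamma$ is a complement to $T$ of order $24$. (ii) $\Gamma_0=\langle t\rangle$ with $t$ a ``weight-one'' involution (conjugate to $M$): then $\bar\Gamma$ fixes the distinguished point, so $\bar\Gamma\subseteq\mathfrak{S}_3$ of order $6<8\leqslant|\bar\Gamma|$, which is impossible. (iii) $\Gamma_0=\langle t\rangle$ with $t$ a ``weight-two'' involution (conjugate to $MN$, with $\mathrm{Fix}(t)=\overline{P_1P_2}\sqcup\overline{P_3P_4}$): then $\bar\Gamma$ preserves the partition $\{\{1,2\},\{3,4\}\}$, so $\bar\Gamma\subseteq\mathrm{D}_8$ and $|\Gamma|=16$. (iv) $\Gamma_0$ of order $4$ of the non-even type (with $\mathrm{Fix}(\Gamma_0)=\ell\sqcup\{P_k,P_l\}$, $\ell=\overline{P_iP_j}$): then $\bar\Gamma$ preserves the pair $\{i,j\}$, so $|\bar\Gamma|\leqslant 4$ and again $|\Gamma|=16$. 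Case (i) is excluded because the extension $1\to T\to G\to\mathfrak{S}_4\to 1$ is non-split, i.e. $G_{192,185}$ has no order-$24$ subgroup meeting $T$ trivially; I would confirm this from the subgroup lattice of $G$ in GAP.

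It remains to treat the order-$16$ cases (iii) and (iv), where $P$ lies on an invariant coordinate line $\ell=\overline{P_iP_j}$ away from $P_i$ and $P_j$. Here $\Gamma_0$ fixes $\ell$ pointwise, so the obstruction comes from elements $g\in\Gamma$ fixing $P_i$ and $P_j$ individually (covering the transposition $(kl)$): each such $g$ acts on $\langle x_i,x_j\rangle$ by a diagonal matrix $\mathrm{diag}(\mu_i,\mu_j)$, and if $\mu_i\neq\mu_j$ for one of them, its only fixed points on $\ell$ are $P_i,P_j$, ruling out $P$. The \emph{hard part}, and precisely the feature distinguishing $G_{192,185}$ from the split group $G_{192,955}$, is to verify this inequality: it holds because the transposition-covering elements carry the factor $i$ from the generators $A,B$. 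For instance $B$ swaps $P_1,P_2$ while acting on $\overline{P_3P_4}$ as $\mathrm{diag}(i,1)$, forcing the eigenvalue ratio to be a nontrivial fourth root of unity; the elements covering $(kl)$ in each case are $G$-conjugates of such $B$-type matrices, and the $i$ survives. Since there are only finitely many conjugacy classes of order-$16$ subgroups and finitely many invariant lines, this is a finite eigenvalue computation, which I would carry out directly from the matrices (or check in GAP). Once it is completed, no point outside $\Sigma_4$ can have stabilizer of order $\geqslant 16$, which proves the lemma.
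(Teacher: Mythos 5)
Your proof is correct, and it takes a genuinely different route from the paper's. Both arguments begin identically---an orbit of length at most $15$ forces a stabilizer of order at least $16$---but the paper then invokes Dokchitser's subgroup-lattice database to assert that every subgroup of order at least $16$ contains a copy of $\mumu_4^2$, $\mumu_2^2\rtimes\mumu_4$ or $\mumu_4\rtimes\mumu_2^2$, and finishes by writing down generators for representatives of these three classes and computing their fixed points and invariant lines. You instead slice the stabilizer along the extension $1\to T\to G\to\mathfrak{S}_4\to 1$: characters of $T$ determine which subgroups of $T$ have zero-dimensional fixed locus, normalization of $\Gamma_0$ bounds the image $\bar\Gamma$, and the surviving borderline cases are killed by (a) non-splitness of the extension and (b) the eigenvalue $i$ carried by every lift of a transposition. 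Your route is self-contained (no subgroup classification is needed), and it isolates the precise reason the lemma holds for $G_{192,185}$ but fails for the split group $G_{192,955}$, whose permutation-matrix lift of $(12)$ fixes $\ell_{34}$ pointwise, so that for that group $\mathcal{L}_6$ carries orbits of length $12$; the paper's route is shorter if one grants the database. Your two deferred verifications are genuine obligations, but both can be discharged by hand in one line each, so there is no gap: for (a), every lift of $(12)$ is $tB$ with $t=\mathrm{diag}(\epsilon_0,\epsilon_1,\epsilon_2,1)\in T$, and $(tB)^2=\mathrm{diag}(\epsilon_0\epsilon_1,\epsilon_0\epsilon_1,-1,1)$ is never scalar, so no lift of a transposition is an involution and hence no order-$24$ subgroup meets $T$ trivially; for (b), every lift of $(kl)$ lies in the coset $T\cdot hBh^{-1}$ for a suitable $h\in G$, and since conjugation by the monomial matrix $h$ merely transports the eigenvalue pair $\{i,1\}$ of $B$ to the coordinates $x_i,x_j$, while translation by $T$ multiplies their ratio by $\pm 1$, that ratio always lies in $\{\pm i\}$---this is your ``the $i$ survives'' made precise.
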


\begin{proof}
Let $\Sigma$ be a $G$-orbit in $\mathbb{P}^3$ of length $\leqslant 15$.
Take a point $P\in\Sigma$. Let $G_P=\mathrm{Stab}_G(P)$.
Then $|G_P|\geqslant 16$.
Thus, using \cite{Tim}, we see that $G_P$ contains a~subgroup $\Gamma$ that is isomorphic to one of the~following groups:
$\mumu_4^2$, $\mumu_2^2\rtimes\mumu_4$ or $\mumu_4\rtimes\mumu_2^2$.

Suppose that $\Gamma\cong\mumu_4^2$. According to \cite{Tim},
the subgroup $\Gamma$ is normal, and $\Gamma$ is the~unique subgroup in $G$ that is isomorphic to $\mumu_4^2$.
Using this, we see that $\Gamma$ is generated by
$$
A^2BA^{-2}BL=\begin{pmatrix}
0 & -1 & 0 & 0 \\
i & 0 & 0 & 0\\
0 & 0 & 0 & i\\
 0 & 0 & 1 & 0
\end{pmatrix}
\ \text{and}\
A^3BA^{-2}BLA^{-1}=
\begin{pmatrix}
0 & 0 & 0 & i\\
0 & 0 & -1 & 0\\
0 & i & 0 & 0\\
1 & 0 & 0 & 0
\end{pmatrix}.
$$
Using this, one can show that $\mathbb{P}^3$ does not contain $\Gamma$-fixed points and $\Gamma$-invariant lines.
Thus, this case is impossible.

Now, we suppose that $\Gamma\cong\mumu_2^2\rtimes\mumu_4$.
According to \cite{Tim}, there are exactly two possibilities for the~subgroup $\Gamma$ up to conjugation,
which can be distinguished as follows:
\begin{enumerate}
\item either $\Gamma$ contains the~normal subgroup $\langle M,N,L\rangle\cong\mumu_2^3$,
\item or $\Gamma$ contains a non-normal subgroup isomorphic to $\mumu_2^3$.
\end{enumerate}
In the~first case, we may assume that $\Gamma$ is generated by $M$, $N$, $L$ and $B$,
which implies that the~only $\Gamma$-fixed points in $\mathbb{P}^3$ are the~points $P_3$ and $P_4$,
and the~only $\Gamma$-invariant lines are the~lines $\{x_0=x_1=0\}$ and $\{x_2=x_3=0\}$.
Similarly, in the~second case, we may assume that $\Gamma$ contains the~non-normal subgroup isomorphic to $\mumu_2^3$
that is generated by
$$
MN=\begin{pmatrix}
-1 & 0 & 0 & 0\\
0 & -1 & 0 & 0\\
0 & 0 & 1 & 0\\
0 & 0 & 0 & 1
\end{pmatrix}, ML=
\begin{pmatrix}
-1 & 0 & 0 & 0\\
0 & 1 & 0 & 0\\
0 & 0 & -1 & 0\\
0 & 0 & 0 & 1
\end{pmatrix},
A^2=
\begin{pmatrix}
0 & 0 & i & 0\\
0 & 0 & 0 & i\\
1 & 0 & 0 & 0\\
0 & 1 & 0 & 0
\end{pmatrix}.
$$
Observe that this subgroup does not fix any point in $\mathbb{P}^3$, and it leaves invariant exactly two lines: the~lines
$\{x_0=x_2=0\}$ and $\{x_1=x_3=0\}$.
In particular, the~group $\Gamma$ does not fix points in $\mathbb{P}^3$ either.
Hence, if $\Gamma\cong\mumu_2^2\rtimes\mumu_4$, then $P\in\Sigma_4$, hence $\Sigma=\Sigma_4$.

To complete the proof, we may assume that $\Gamma\cong\mumu_4\rtimes\mumu_2^2$.
Using \cite{Tim} again, we see that the~group $\Gamma$  contains a non-normal subgroup that is isomorphic to $\mumu^3$.
Hence, arguing as in the~previous~case, we conclude that $P\in\Sigma_4$, hence $\Sigma=\Sigma_4$ as required.
\end{proof}

As in Section~\ref{section:P3-48}, let $\ell_{ij}$ be the~line in $\mathbb{P}^3$ that contains $P_i$ and $P_j$, where $1\leqslant i<j\leqslant 4$.
Similarly, we let
$\mathcal{L}_6=\ell_{12}+\ell_{13}+\ell_{14}+\ell_{23}+\ell_{24}+\ell_{34}$ and $\mathcal{T}=F_1+F_2+F_3+F_4$,
where
$$
F_1=\{x_0=0\},\, F_2=\{x_1=0\},\, F_3=\{x_2=0\},\, F_4=\{x_3=0\}.
$$
The main result of this section is the~following

\begin{proposition}
\label{proposition:P3-192-curves}
Let $C$ be a $G$-irreducible (possibly reducible) curve in $\mathbb{P}^3$ of degree~$\leqslant 15$.
Then $C$ is one of the~following seven $G$-irreducible curves:
\begin{enumerate}[\normalfont(1)]
\item the~reducible curve $\mathcal{L}_6$,
\item the~reducible curve $\mathcal{C}_8\subset\mathcal{T}$ that is a disjoint union of $4$ conics
\begin{align*}
\big\{x_0=x_1^2-x_2^2-x_3^2=0\big\},\\
\big\{x_1=x_0^2+x_2^2-x_3^2=0\big\}, \\
\big\{x_2=x_0^2+x_1^2+x_3^2=0\big\}, \\
\big\{x_3=x_0^2-x_1^2-x_2^2=0\big\},
\end{align*}
\item the~reducible curve $\mathscr{C}_8$ that is a disjoint union of $2$ smooth quartic elliptic curves
\begin{align*}
\big\{x_0^2+\big(\zeta_6-1\big)x_2^2+\zeta_6x_3^2=x_1^2+\zeta_6x_2^2+\big(1-\zeta_6\big)x_3^2=0\big\},\\
\big\{x_0^2-\zeta_6x_2^2+\big(1-\zeta_6\big)x_3^2=x_1^2+\big(1-\zeta_6\big)x_2^2+\zeta_6x_3^2=0\big\},
\end{align*}
where $\zeta_6$ is a primitive sixth roon of unity,
\item the~reducible curve $\mathscr{C}_{12}$ that is a disjoint union of $3$ smooth quartic elliptic~curves
\begin{align*}
\big\{x_0^2+\sqrt{2}ix_1^2-x_2^2=x_1^2+\sqrt{2}ix_2^2-x_3^2=0\big\},\\
\big\{\sqrt{2}ix_0^2-x_1^2-x_3^2=x_0^2+\sqrt{2}ix_1^2-x_2^2=0\big\},\\
\big\{x_0^2+x_2^2+\sqrt{2}ix_3^2=\sqrt{2}ix_0^2-x_1^2-x_3^2\big\},
\end{align*}
\item the~reducible curve $\mathscr{C}_{12}^\prime$ that is a disjoint union of $3$ smooth quartic elliptic~curves
\begin{align*}
\big\{x_0^2-\sqrt{2}ix_1^2-x_2^2=x_1^2-\sqrt{2}ix_2^2-x_3^2=0\big\},\\
\big\{\sqrt{2}ix_0^2+x_1^2+x_3^2=x_0^2-\sqrt{2}ix_1^2-x_2^2=0\big\},\\
\big\{x_0^2+x_2^2-\sqrt{2}ix_3^2=\sqrt{2}ix_0^2+x_1^2+x_3^2\big\},
\end{align*}

\item the~irreducible smooth curve $\mathfrak{C}_{12}$ of degree $12$ and genus $17$ that is given by
\begin{align*}
(2+2\sqrt{2}i)(x_1^2x_2^2-x_0^2x_1^2-x_0^2x_3^2-x_2^2x_3^2)+3(x_0^4-x_1^4+x_2^4-x_3^4)=0,\\
(2+2\sqrt{2}i)(x_2^2x_3^2-x_0^2x_1^2-x_0^2x_2^2-x_1^2x_3^2)-3(x_0^4-x_1^4-x_2^4+x_3^4)=0,\\
(2+2\sqrt{2}i)(x_1^2x_3^2-x_0^2x_2^2+x_0^2x_3^2+x_1^2x_2^2)+3(x_0^4+x_1^4-x_2^4-x_3^4)=0,
\end{align*}

\item the~irreducible smooth curve $\mathfrak{C}_{12}^\prime$ of degree $12$ and genus $17$ that is given by
\begin{align*}
(2-2\sqrt{2}i)(x_1^2x_2^2-x_0^2x_1^2-x_0^2x_3^2-x_2^2x_3^2)+3(x_0^4-x_1^4+x_2^4-x_3^4)=0,\\
(2-2\sqrt{2}i)(x_2^2x_3^2-x_0^2x_1^2-x_0^2x_2^2-x_1^2x_3^2)-3(x_0^4-x_1^4-x_2^4+x_3^4)=0,\\
(2-2\sqrt{2}i)(x_1^2x_3^2-x_0^2x_2^2+x_0^2x_3^2+x_1^2x_2^2)+3(x_0^4+x_1^4-x_2^4-x_3^4)=0.
\end{align*}
\end{enumerate}
\end{proposition}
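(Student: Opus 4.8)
The plan is to adapt the strategy used in Section~\ref{section:P3-48} for the group $G_{48,50}$, organising everything around the stabiliser of a single irreducible component. Fix an irreducible component $C_1$ of the $G$-irreducible curve $C$, put $\Gamma=\mathrm{Stab}_G(C_1)$, and let $r=[G:\Gamma]$ be the number of irreducible components of $C$, so that $\deg C=r\deg C_1\leqslant 15$. Since $r$ divides $|G|=192$ and $r\leqslant 15$, we have $r\in\{1,2,3,4,6,8,12\}$ and $|\Gamma|=192/r\geqslant 16$. The group $\Gamma$ leaves invariant the linear span $\langle C_1\rangle$, and throughout I would lean on Lemma~\ref{lemma:192-orbits}: because $\Sigma_4$ is the \emph{only} $G$-orbit of length $\leqslant 15$, every $G$-orbit of length $\leqslant 15$ cut out on $C$ (singular points, pairwise intersections of components, base points of auxiliary systems) must coincide with $\Sigma_4$. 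This single fact replaces the long orbit bookkeeping of Section~\ref{section:P3-48}.

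First I would dispose of the \textbf{degenerate} components, where $\langle C_1\rangle$ is a line or a plane. If $C_1$ is a line, then $\Gamma$ leaves a line invariant; running through the subgroups of $G$ of index $\leqslant 12$ and their invariant lines (using \cite{Tim}) shows that the only $G$-irreducible union of lines of degree $\leqslant 15$ is the curve $\mathcal{L}_6$ of the six edges $\ell_{ij}$ of the coordinate tetrahedron. If $C_1$ spans a plane, then its degree is $\geqslant 2$, so $r\leqslant 7$, and the planes of the components form a $G$-orbit of length $\leqslant 7$ in $(\mathbb{P}^3)^\vee$; inspecting the index-$\leqslant 7$ subgroups of $G$ and the planes they fix forces this orbit to be the tetrahedron $\{F_1,F_2,F_3,F_4\}$, whence $r=4$ and each component lies in a coordinate plane. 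A direct determination of the $\Gamma$-invariant plane curves of degree $\leqslant 3$ in such a plane then yields exactly the quadruple of conics $\mathcal{C}_8$ and excludes plane cubics.

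Next come the \textbf{nondegenerate} components, for which $C_1$ spans $\mathbb{P}^3$ and $\Gamma$ acts faithfully on $C_1$. Rational components are impossible: a faithful action of $\Gamma$ on $C_1\cong\mathbb{P}^1$ would realise $\Gamma$ as a finite subgroup of $\mathrm{PGL}_2(\mathbb{C})$, i.e.\ cyclic, dihedral, $\mathfrak{A}_4$, $\mathfrak{S}_4$ or $\mathfrak{A}_5$, and none of the subgroups of $G$ of index $\leqslant 5$ is of this type. This already rules out $G$-irreducible unions of twisted cubics (and of higher rational curves). For $r\in\{2,3\}$ the stabiliser has order $96$ or $64$ and contains the normal subgroup $\mumu_4^2\triangleleft G$; decomposing $H^0(\mathbb{P}^3,\mathcal{O}_{\mathbb{P}^3}(2))$ under this Heisenberg-type subgroup produces the semi-invariant quadrics whose $G$-invariant complete intersections are precisely the elliptic quartics assembling into $\mathscr{C}_8$ (for $r=2$) and $\mathscr{C}_{12}$, $\mathscr{C}_{12}^\prime$ (for $r=3$). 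Here one also records that $\mathbb{P}^3$ carries no $G$-invariant quadric surface, so no elliptic component can hide inside one, and that the Hurwitz bound together with the subgroup list excludes nondegenerate components of genus $\geqslant 2$ in this range.

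Finally I would treat the \textbf{irreducible} case $r=1$ by copying the argument of Lemma~\ref{lemma:48-irreducible-curves-simple}. The two preliminary inputs are the analogue for $G$ of Lemma~\ref{lemma:48-curves} (the genera $g\leqslant 19$ on which $G$ acts faithfully) and a positive-dimensional linear system $\mathcal{M}_4$ of $G$-invariant quartic surfaces containing $\{x_0^4+x_1^4+x_2^4+x_3^4=0\}$, whose general member is irreducible with at worst ordinary double points. Choosing a member $S\in\mathcal{M}_4$ through a general point $P\in C$ forces $C\subset S$, since otherwise $192=|\mathrm{Orb}_G(P)|\leqslant C\cdot S=4\deg C\leqslant 60$; the minimal resolution $\widetilde S$ is a K3 surface, and arguing as in Corollary~\ref{corollary:48-net} all $G$-orbits on $\widetilde S$ have length $\geqslant 16$. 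The Hodge index theorem on $\widetilde S$, combined with the genus lemma, then pins $\deg C=12$, $g=17$ and $C^2=32$, and an explicit computation with the invariant quartics identifies the two curves $\mathfrak{C}_{12}$ and $\mathfrak{C}_{12}^\prime$. The \textbf{main obstacle} is exactly this last stage: establishing the invariant quartic system $\mathcal{M}_4$ and the faithful-genus lemma for $G_{192,185}$, and wringing the genus out via Hodge index, together with the companion exclusions (no $G$-invariant quadric, and no nondegenerate components of intermediate degree or genus) that constitute the bulk of the explicit computation.
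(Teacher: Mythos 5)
Your skeleton agrees with the paper's proof in several places: the case analysis on the number $r$ of components, the exclusion of lines and planar components via Lemma~\ref{lemma:192-orbits} (the paper's Lemmas~\ref{lemma:192-d-1} and~\ref{lemma:192-curves-in-T}), the exclusion of rational components because the relevant stabilizers do not embed into $\mathrm{PGL}_2(\mathbb{C})$ (the paper's Lemma~\ref{lemma:192-r-4}), and the classification of invariant pencils of quadrics to get $\mathscr{C}_{12}$, $\mathscr{C}_{12}^\prime$ for $r=3$ (the paper's Lemma~\ref{lemma:192-r-3}). However, your strategy for the decisive cases $r\in\{1,2\}$ rests on an object that does not exist. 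You postulate a positive-dimensional linear system $\mathcal{M}_4$ of \emph{$G$-invariant} quartic surfaces containing the Fermat quartic, in analogy with the net of Section~\ref{section:P3-48}. For $G=G_{192,185}$ the only $G$-invariant quartic surfaces are $\mathcal{T}$ and $\{x_0^4+x_1^4+x_2^4+x_3^4=0\}$ --- the paper states and uses exactly this fact, both inside the proof of Proposition~\ref{proposition:P3-192-curves} and again in the proof of Proposition~\ref{proposition:technical-1}. With only two invariant quartics one cannot pass an invariant quartic through a general point of $C$, so the containment $C\subset S$ never gets started, and the entire K3/Hodge-index computation you plan for the irreducible case has no surface to run on; nor can the Fermat quartic alone be substituted, since a general point of $C$ need not lie on it and orbit counting on $C\cap S_F$ gives no contradiction.

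The idea your proposal is missing is the paper's descent to the index-three subgroup $\Gamma\cong\mumu_4^2\rtimes\mumu_4$ of order $64$. Because the unique index-two subgroup of $G$ does not contain $\Gamma$, the curve $C$ is automatically $\Gamma$-irreducible whenever $r\in\{1,2\}$, so both cases are handled at once and with no smoothness or genus hypotheses on the components. The group $\Gamma$ \emph{does} admit an invariant pencil $\mathcal{P}$ of quartics; a member $S\in\mathcal{P}$ through a general point of $C$ must contain $C$ by the orbit count $4rd\leqslant 60<64$; and then, since $S$ is not $G$-invariant while $C$ is, the curve $C$ is forced into the intersection of the three $G$-translates of $S$, i.e.\ the system~\eqref{equation:P1-P3}, whose positive-dimensional fibres are determined (by Magma) in Lemma~\ref{lemma:192-final}, producing exactly $\mathscr{C}_8$, $\mathfrak{C}_{12}$ and $\mathfrak{C}_{12}^\prime$. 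A secondary gap: your alternative treatment of $r=2$ via pencils of quadrics invariant under the index-two stabilizer only accounts for elliptic quartic components; components of degree $5$, $6$ or $7$ are not excluded by the Hurwitz bound alone (genus-$3$ curves, for instance, do admit automorphism groups of order $96$), so you would still need the polarized-automorphism counts and Breuer--LMFDB-type data that the paper invokes in the $d=5$ step of Lemma~\ref{lemma:192-r-3}, or else the uniform quartic-pencil argument above, which avoids this bookkeeping entirely.
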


\begin{corollary}
\label{corollary:P3-192-curves}
Let $C$ be a $G$-irreducible curve in $\mathbb{P}^3$ such that $C$ is different from $\mathcal{L}_6$,
and let $\mathcal{D}$ be a linear subsystem in $|\mathcal{O}_{\mathbb{P}^3}(n)|$ that has no fixed components, where $n\in\mathbb{Z}_{>0}$.
Then $\mathrm{mult}_C(\mathcal{D})\leqslant\frac{n}{4}$.
\end{corollary}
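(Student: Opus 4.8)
The plan is to follow the strategy of the proof of Proposition~\ref{proposition:48-curves}. First I would dispose of the curves of large degree: if $D_1$ and $D_2$ are two general surfaces in $\mathcal{D}$, then $D_1\cdot D_2=\delta C+\Omega$ for a non-negative integer $\delta\geqslant\mathrm{mult}_C^2(\mathcal{D})$ and an effective one-cycle $\Omega$ with $C\not\subset\mathrm{Supp}(\Omega)$, so comparing degrees gives $\delta\deg(C)\leqslant n^2$ and hence $\mathrm{mult}_C(\mathcal{D})\leqslant n/\sqrt{\deg(C)}$. This already yields the claim whenever $\deg(C)\geqslant 16$, so by Proposition~\ref{proposition:P3-192-curves} I may assume that $C$ is one of $\mathcal{C}_8$, $\mathscr{C}_8$, $\mathscr{C}_{12}$, $\mathscr{C}_{12}^\prime$, $\mathfrak{C}_{12}$, $\mathfrak{C}_{12}^\prime$. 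In each of these cases it suffices to produce an auxiliary effective curve $Z$, not contained in the base locus of $\mathcal{D}$, with $|C\cap Z|\geqslant 4\deg(Z)$: then for general $D\in\mathcal{D}$ one has $n\deg(Z)=D\cdot Z\geqslant\mathrm{mult}_C(D)\,|C\cap Z|\geqslant 4\deg(Z)\,\mathrm{mult}_C(\mathcal{D})$.

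For $\mathcal{C}_8$ I would argue exactly as for the curve $\mathcal{C}_8^3$ in Proposition~\ref{proposition:48-curves}: each of its four conic components lies in a coordinate plane $F_i$, where it meets a general conic $Z\subset F_i$ in $4$ points, while the other three conics meet $F_i$ in a further $6$ points; choosing $Z$ through $4$ of these latter points (a pencil, so $Z$ moves and avoids $\mathrm{Bs}(\mathcal{D})$) gives $|C\cap Z|\geqslant 8=4\deg(Z)$. For the triple unions $\mathscr{C}_{12}=E_1\sqcup E_2\sqcup E_3$ and $\mathscr{C}_{12}^\prime$ of smooth elliptic quartics I would instead use a quadric. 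Each $E_j$ is the complete intersection of a pencil of quadrics; pick a smooth member $Q$ containing $E_1$. As the components are disjoint, $Q$ contains neither $E_2$ nor $E_3$, so $(E_2+E_3)\cap Q$ consists of $16$ points lying off $E_1$. On $Q\cong\mathbb{P}^1\times\mathbb{P}^1$ the curve $E_1$ has bidegree $(2,2)$, so a curve $Z\subset Q$ of bidegree $(2,3)$ has $Z\cdot E_1=10$; since curves of bidegree $(2,3)$ through $10$ of the above points still move in a pencil, a general such $Z$ satisfies $|C\cap Z|\geqslant 10+10=20=4\deg(Z)$.

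The genuinely hard cases are the degree-$8$ union $\mathscr{C}_8$ of two disjoint elliptic quartics and the irreducible curves $\mathfrak{C}_{12}$, $\mathfrak{C}_{12}^\prime$ of degree $12$ and genus $17$, where no quadric suffices. Here I would pass to a K3 surface, following the genus-$17$ part of Proposition~\ref{proposition:48-curves}. One checks that $C$ lies on an irreducible quartic surface $S$ with at worst ordinary double points (for $\mathfrak{C}_{12}$ from its defining quartics, for $\mathscr{C}_8$ from the web of quartics through it), so that the minimal resolution $\pi\colon\widetilde{S}\to S$ is a K3 surface carrying the relevant group action. Writing $H$ for the hyperplane section and $\widetilde{C}$ for the proper transform of $C$, adjunction gives $\widetilde{C}^2=2p_a(\widetilde{C})-2$, which equals $0$ for $\mathscr{C}_8$ and $32$ for $\mathfrak{C}_{12}$; one checks as in Proposition~\ref{proposition:48-curves} that $C$ avoids $\mathrm{Sing}(S)$, so $C^2=\widetilde{C}^2$ on $S$. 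Since $(4H-C)^2=64-8\deg(C)+C^2=0$ and $(4H-C)\cdot H>0$, the Riemann--Roch theorem shows that $|4H-C|$ is at least a pencil, and ruling out fixed components by Proposition~\ref{proposition:P3-192-curves} (a fixed curve would be a $G$-invariant curve on $S$ of degree $<4$) shows that $4H-C$ is nef. Writing a general $D\in\mathcal{D}$ as $D|_S\sim_{\mathbb{Q}}\mathrm{mult}_C(\mathcal{D})\,C+\Delta$ with $\Delta$ effective not containing $C$, nefness then gives
\[
n\big(16-\deg(C)\big)=(4H-C)\cdot nH\geqslant\mathrm{mult}_C(\mathcal{D})\,(4H-C)\cdot C=\mathrm{mult}_C(\mathcal{D})\big(4\deg(C)-C^2\big),
\]
which reduces to $\mathrm{mult}_C(\mathcal{D})\leqslant\frac{n}{4}$ in both cases.

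The main obstacle is exactly this last step: verifying that $\mathscr{C}_8$, $\mathfrak{C}_{12}$ and $\mathfrak{C}_{12}^\prime$ really do lie on an irreducible quartic surface with at worst ordinary double points, and that the curve avoids those singularities, so that the K3 numerology above is valid. This is the analogue of the delicate part of Proposition~\ref{proposition:48-curves}, and I expect it to require explicit computation with the defining equations. The remaining genericity claims --- that the chosen $10$ points on $Q$ impose independent conditions, that the general auxiliary curves are reduced, irreducible and meet $C$ transversally off $\mathrm{Bs}(\mathcal{D})$, and that $|4H-C|$ has no fixed components --- are routine but must also be checked.
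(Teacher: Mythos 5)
Your reduction to $\deg(C)\leqslant 15$, the appeal to Proposition~\ref{proposition:P3-192-curves}, and your treatment of $\mathcal{C}_8$ via plane conics all coincide with the paper's argument. The divergence — and the genuine gap — is in your handling of the five remaining curves. For $\mathscr{C}_8$, $\mathfrak{C}_{12}$, $\mathfrak{C}_{12}^\prime$ you pass to a quartic K3 surface $S$ containing $C$ and need $4H-C$ to be nef, ruling out fixed components of $|4H-C|$ on the grounds that ``a fixed curve would be a $G$-invariant curve on $S$ of degree $<4$.'' This is false: unlike the situation in Proposition~\ref{proposition:48-curves}, where the net $\mathcal{M}_4$ consists of $G$-invariant quartics, here no quartic containing $\mathfrak{C}_{12}$ (or $\mathscr{C}_8$) is $G$-invariant — the group $G_{192,185}$ permutes the three quartics in \eqref{equation:P1-P3} cutting out $\mathfrak{C}_{12}$, so $G$ does not act on $S$ at all, the union of fixed curves is at best invariant under the stabilizer $\mathrm{Stab}_G(S)$, and Proposition~\ref{proposition:P3-192-curves} (which classifies only $G$-irreducible curves) cannot be invoked to exclude it. In addition, your auxiliary-curve argument for $\mathscr{C}_{12}$, $\mathscr{C}_{12}^\prime$ silently needs the $16$ points of $(E_2+E_3)\cap Q$ to be distinct, the $(2,3)$-pencil through $10$ of them to have no fixed component lying in the base locus of $\mathcal{D}$, and its general member to meet $E_1$ in $10$ distinct points; none of this is automatic for the fixed, non-generic configuration at hand.

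The paper avoids all of these issues by working on the blow-up $\varphi\colon X\to\mathbb{P}^3$ of the smooth curve $C$ itself rather than on an auxiliary surface. The explicit equations in Proposition~\ref{proposition:P3-192-curves} exhibit $\mathscr{C}_8$, $\mathfrak{C}_{12}$, $\mathfrak{C}_{12}^\prime$ as scheme-theoretic intersections of quartics and $\mathscr{C}_{12}$, $\mathscr{C}_{12}^\prime$ as scheme-theoretic intersections of sextics (products of the quadrics through the elliptic components), so $|\varphi^*(\mathcal{O}_{\mathbb{P}^3}(k))-E_C|$ is base point free for $k=4$, respectively $k=6$; nefness is then free, and the single computation
$$
0\leqslant\Big(\varphi^*\big(\mathcal{O}_{\mathbb{P}^3}(k)\big)-E_C\Big)\cdot\widehat{D}_1\cdot\widehat{D}_2
=\big(-E_C^3-k\deg(C)\big)\mathrm{mult}_C^2\big(\mathcal{D}\big)-2n\deg(C)\mathrm{mult}_C\big(\mathcal{D}\big)+kn^2
$$
yields $\mathrm{mult}_C(\mathcal{D})\leqslant\frac{n}{4}$ in every case (for $\mathfrak{C}_{12}$ one also uses the degree bound $\mathrm{mult}_C(\mathcal{D})\leqslant n/\sqrt{12}$ from your first step to select the correct root of the quadratic). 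If you want to salvage your write-up, the most economical fix is to replace the K3 and quadric constructions by this blow-up argument; the geometric input it needs is exactly what Proposition~\ref{proposition:P3-192-curves} already provides.
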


\begin{proof}
Arguing as in the proof of Proposition~\ref{proposition:48-curves}, we may assume that $\mathrm{deg}(C)\leqslant 15$.
Thus, we conclude that $C$ is one of the $G$-irreducible curves described in Proposition~\ref{proposition:P3-192-curves},
which are different from $\mathcal{L}_6$.
Moreover, arguing as in the proof of Proposition~\ref{proposition:48-curves} again,
we obtain the required inequality if $C=\mathcal{C}_8$.
Thus, we may also assume that $C\ne\mathcal{C}_8$.
Then it follows from Proposition~\ref{proposition:P3-192-curves} that the curve $C$ is smooth,
but it maybe reducible.

Let $\varphi\colon X\to\mathbb{P}^3$ be the blow up of the smooth curve $C$, let $E_C$ be the $\varphi$-exceptional divisor,
let $\widehat{\mathcal{D}}$ be the proper transform on $X$ of the linear system $\mathcal{D}$,
let $\widehat{D}_1$ and $\widehat{D}_2$ be~two general surfaces in $\widehat{\mathcal{D}}$.
Then $\widehat{D}_1\cdot\widehat{D}_2$ is an effective one-cycle.
On the other hand, it follows from Proposition~\ref{proposition:P3-192-curves} that the linear system $|\varphi^*(\mathcal{O}_{\mathbb{P}^3}(k))-E_C|$ is base point free for
$$
k=\left\{\aligned
&4\ \text{if $C=\mathscr{C}_8$ or $C=\mathfrak{C}_{12}$ or $C=\mathfrak{C}_{12}^\prime$},\\
&6\ \text{if $C=\mathscr{C}_{12}$ or $C=\mathscr{C}_{12}^\prime$}.\\
\endaligned
\right.
$$
In particular, the divisor $\varphi^*(\mathcal{O}_{\mathbb{P}^3}(k))-E_C$ is nef.
Then
\begin{multline*}
0\leqslant\Big(\varphi^*\big(\mathcal{O}_{\mathbb{P}^3}(k)\big)-E_C\Big)\cdot\widehat{D}_1\cdot\widehat{D}_2=\Big(\varphi^*\big(\mathcal{O}_{\mathbb{P}^3}(k)\big)-E_C\Big)\cdot\Big(\varphi^*\big(\mathcal{O}_{\mathbb{P}^3}(n)\big)-\mathrm{mult}_C\big(\mathcal{D}\big)E_C\Big)^2=\\
=\big(-E^3-k\mathrm{deg}(C)\big)\mathrm{mult}_C^2\big(\mathcal{D}\big)-2n\mathrm{deg}(C)\mathrm{mult}_C\big(\mathcal{D}\big)+kn^2,
\end{multline*}
where
$$
E^3=\left\{\aligned
&-32\ \text{if $C=\mathscr{C}_8$},\\
&-48\ \text{if $C=\mathscr{C}_{12}$ or $C=\mathscr{C}_{12}^\prime$},\\
&-80\ \text{if $C=\mathfrak{C}_{12}$ or $C=\mathfrak{C}_{12}^\prime$}.\\
\endaligned
\right.
$$
This implies that $\mathrm{mult}_C\big(\mathcal{D}\big)\leqslant\frac{n}{4}$.
\end{proof}

Let us prove Proposition~\ref{proposition:P3-192-curves}.
Fix a $G$-irreducible curve $C\subset\mathbb{P}^3$.
Write $C=C_1+\cdots+C_r$, where each $C_i$ is an~irreducible curve in the~space $\mathbb{P}^3$, and $r$ is the~number of irreducible components of the~curve $C$.
Let $d$ be the~degree of the~curve $C_1$. Then $\mathrm{deg}(C)=rd$.
Suppose that $d\leqslant 15$.
Let us show that $C$ is one of the~curves listed in  Proposition~\ref{proposition:P3-192-curves}.

\begin{lemma}
\label{lemma:192-d-1}
If $d=1$, then $C=\mathcal{L}_6$.
\end{lemma}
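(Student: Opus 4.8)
The plan is to analyze the stabilizer $\Gamma=\mathrm{Stab}_G(C_1)$ of an irreducible component $C_1$ of $C$ and to show that a line in $\mathbb{P}^3$ can have so large a stabilizer only if it is one of the six edges of the coordinate tetrahedron. First I would record the numerics: since $C$ is $G$-irreducible with $r$ line components, we have $r=[G:\Gamma]$ and $\deg(C)=r\leqslant 15$, so $|\Gamma|\geqslant 192/15$. As $|\Gamma|$ divides $192$ and the divisors of $192$ that exceed $12$ are $16,24,32,48,\dots$, this forces $|\Gamma|\geqslant 16$. In particular $C_1$ is a line in $\mathbb{P}^3$ that is invariant under a subgroup of $G$ of order at least $16$.

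Next I would reuse the subgroup analysis already carried out in the proof of Lemma~\ref{lemma:192-orbits}. By \cite{Tim}, every subgroup of $G$ of order $\geqslant 16$ contains a subgroup $\Gamma^\prime$ isomorphic to one of $\mumu_4^2$, $\mumu_2^2\rtimes\mumu_4$, or $\mumu_4\rtimes\mumu_2^2$. Because $C_1$ is $\Gamma$-invariant, it is in particular $\Gamma^\prime$-invariant. I would then import the invariant-line computations from that proof. The unique (normal) subgroup isomorphic to $\mumu_4^2$ leaves no line of $\mathbb{P}^3$ invariant, so this case cannot occur for a line-stabilizer. In the two remaining cases $\Gamma^\prime$ contains a copy of $\mumu_2^3$: either the normal one $\langle M,N,L\rangle$, whose only invariant lines are $\{x_0=x_1=0\}=\ell_{34}$ and $\{x_2=x_3=0\}=\ell_{12}$, or a non-normal one such as $\langle MN,ML,A^2\rangle$, whose only invariant lines are $\{x_0=x_2=0\}=\ell_{24}$ and $\{x_1=x_3=0\}=\ell_{13}$. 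In every admissible case the $\Gamma^\prime$-invariant lines are among the edges $\ell_{ij}$, hence so is $C_1$.

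Finally I would conclude by transitivity. Since $\mathrm{im}(\upsilon)=\mathfrak{S}_4$ acts on $\{1,2,3,4\}$ as the full symmetric group, it permutes the six pairs $\{i,j\}$ transitively; thus the six lines $\ell_{ij}$ form a single $G$-orbit, $\mathcal{L}_6$ is $G$-irreducible, and the $G$-orbit of the edge $C_1$ is all of $\mathcal{L}_6$. As $C$ is $G$-irreducible and contains $C_1$, this gives $C=\mathcal{L}_6$.

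The main obstacle is purely the group-theoretic bookkeeping: one must be certain that \emph{every} subgroup of order $\geqslant 16$ (not merely a point stabilizer, as treated in Lemma~\ref{lemma:192-orbits}) contains one of the three listed subgroups, and that the lists of invariant lines of those subgroups are exhausted by the edges $\ell_{ij}$. This is exactly what the computations underlying Lemma~\ref{lemma:192-orbits} together with the subgroup lattice in \cite{Tim} supply, so the present lemma reduces to them with essentially no new calculation.
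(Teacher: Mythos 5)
Your proposal is correct and is essentially the paper's own argument: the paper's proof of this lemma simply defers to the proof of Lemma~\ref{lemma:192-orbits}, whose stabilizer analysis and invariant-line computations you have spelled out explicitly (component stabilizer of order $\geqslant 16$, containment of one of the three order-$16$ subgroups from \cite{Tim}, invariant lines only among the edges $\ell_{ij}$, and transitivity of $G$ on the edges since $\mathrm{im}(\upsilon)=\mathfrak{S}_4$). One small inaccuracy that does not affect validity: the normal subgroup $\langle M,N,L\rangle\cong\mumu_2^3$ leaves all six edges invariant, not just $\ell_{12}$ and $\ell_{34}$ (the two-line statement in the paper refers to the larger group $\langle M,N,L,B\rangle$); since all of its invariant lines are still edges, your conclusion stands.
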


\begin{proof}
The required assertion follows from the~proof of Lemma~\ref{lemma:192-orbits}.
\end{proof}

Hence, to complete the~proof of Proposition~\ref{proposition:P3-192-curves}, we may assume $d\geqslant 2$.

\begin{lemma}
\label{lemma:192-curves-in-T}
If $C\subset\mathcal{T}$, then either $C=\mathcal{L}_6$ or $C=\mathcal{C}_8$.
\end{lemma}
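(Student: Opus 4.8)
The plan is to analyze $C$ one irreducible component at a time, exploiting that $G$ acts transitively both on the four coordinate planes $F_1,\dots,F_4$ (since $\mathrm{im}(\upsilon)=\mathfrak{S}_4$) and, by $G$-irreducibility, on the set of components of $C$. If $d=1$ the statement is Lemma~\ref{lemma:192-d-1}, so I assume $d\geqslant 2$. Then no component can lie in two of the planes, because their pairwise intersections are coordinate lines (of degree $1$); hence the assignment of each component to the unique plane containing it is a $G$-equivariant surjection from the transitive set of components onto the transitive set $\{F_1,F_2,F_3,F_4\}$. Consequently every plane contains the same number $m$ of components, the total number of components is $4m$, and $\deg(C)=4md\leqslant 15$ forces $md\leqslant 3$; together with $d\geqslant 2$ this leaves only $(m,d)=(1,2)$ and $(m,d)=(1,3)$. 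In either case $C$ has exactly one component $C_1$ in $F_1=\{x_0=0\}$, and $C_1$ is invariant under $\Gamma=\mathrm{Stab}_G(F_1)=\mathrm{Stab}_G(P_1)$, a group of order $48$ containing the diagonal subgroup $T$; the image of $T$ in $\mathrm{PGL}(F_1)=\mathrm{PGL}_3(\mathbb{C})$ is the group $\mumu_2^2$ of coordinate sign changes on $[x_1:x_2:x_3]$.

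The key step is to rule out $d=3$, and I would do this by showing that $F_1\cong\mathbb{P}^2$ carries no irreducible $\mumu_2^2$-invariant cubic curve, which suffices since $\Gamma\supseteq T$. A cubic $\{F=0\}$ is invariant under the sign change $x_1\mapsto -x_1$ only if every monomial of $F$ has $x_1$-degree of one fixed parity (the eigenvalues being $\pm1$), and likewise for $x_2$. Splitting the degree-$3$ monomials according to the parities of their $x_1$- and $x_2$-exponents, the four resulting eigenspaces are $x_3\langle x_1^2,x_2^2,x_3^2\rangle$, $x_2\langle x_1^2,x_2^2,x_3^2\rangle$, $x_1\langle x_1^2,x_2^2,x_3^2\rangle$ and $\langle x_1x_2x_3\rangle$; in every case $F$ is divisible by a coordinate, so $\{F=0\}$ is reducible. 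Thus no irreducible $\Gamma$-invariant cubic exists, and the case $d=3$ is impossible.

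It remains to treat $d=2$. The same parity analysis shows that an irreducible $\mumu_2^2$-invariant conic must lie in the eigenspace $\langle x_1^2,x_2^2,x_3^2\rangle$, the other three eigenspaces consisting of the reducible conics $x_ix_j$. Imposing invariance under the elements of $\Gamma$ mapping to $\mathfrak{S}_3\subset\mathfrak{S}_4$ (which permute $x_1,x_2,x_3$ up to the phases occurring in the generators $A,B$) cuts this three-dimensional space down to a one-dimensional subspace, and a direct computation with the explicit generators identifies the unique irreducible $\Gamma$-invariant conic in $F_1$ as $\{x_0=x_1^2-x_2^2-x_3^2=0\}$, precisely the component of $\mathcal{C}_8$ lying in $F_1$. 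Since $C$ is $G$-irreducible with this conic as its component in $F_1$, its component set is the full $G$-orbit of that conic, namely $\mathcal{C}_8$; hence $C=\mathcal{C}_8$, completing the proof. The main obstacle is the cubic case: the crux is noticing that already the image of $T$ in $\mathrm{PGL}_3(\mathbb{C})$ forces reducibility of every invariant cubic, so that the finite eigenspace bookkeeping (rather than a brute-force search over invariant cubics) disposes of $d=3$ cleanly.
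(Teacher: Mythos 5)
Your proof is correct, but there is nothing in the paper to compare it against: the authors' proof of this lemma is literally ``Left to the reader,'' so your argument supplies precisely the routine verification they chose to omit. The structure is sound. For $d\geqslant 2$ each component lies in a unique plane $F_i$ (pairwise intersections of the planes are lines), the $G$-equivariant surjection from the set of components onto the set of planes forces $r=4m$ and $md\leqslant 3$, hence $(m,d)\in\{(1,2),(1,3)\}$ with exactly one component per plane; that component is then invariant under the image of $T\cong\mumu_2^3$ in $\mathrm{Aut}(F_1)\cong\mathrm{PGL}_3(\mathbb{C})$, which is indeed the sign-change group $\mumu_2^2$, and your eigenspace bookkeeping correctly shows that every invariant cubic is divisible by a coordinate (killing $d=3$), while an invariant irreducible conic must lie in $\langle x_1^2,x_2^2,x_3^2\rangle$. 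I also checked the computation you only sketch: writing $Q=x_1^2+\beta x_2^2+\gamma x_3^2$, invariance under $BA\in\mathrm{Stab}_G(F_1)$, which restricts to $F_1$ as $(x_1,x_2,x_3)\mapsto(ix_3,ix_1,x_2)$, forces $(\beta,\gamma)=(-\lambda,-\lambda^2)$ with $\lambda^3=1$, and invariance under $ABA^{-1}$, which restricts as $(x_1,x_2,x_3)\mapsto(x_2,x_1,ix_3)$, then forces $\lambda=1$; so $\{x_0=x_1^2-x_2^2-x_3^2=0\}$ is the unique $\mathrm{Stab}_G(F_1)$-invariant irreducible conic, and its $G$-orbit (computed by applying powers of $A$) is exactly the four conics constituting $\mathcal{C}_8$. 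The one place where your phrasing is loose is ``cuts this three-dimensional space down to a one-dimensional subspace'': curve-invariance is an eigenvector condition, not a linear one, so what the order-three element actually does is single out three one-dimensional eigenspaces, of which the transposition-type element eliminates two; but this does not affect the conclusion.
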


\begin{proof}
Left to the~reader.
\end{proof}

Therefore, we may assume that $C\not\subset\mathcal{T}$.
Then, using Lemma~\ref{lemma:192-orbits}, we conclude that no irreducible component of the~curve $C$ is contained in a plane.
In particular, we have~$d\geqslant 3$. Since $dr\leqslant 15$, we have the~following possibilities:
\begin{enumerate}
\item $r=1$ and $C=C_1$ is an~irreducible curve,

\item $r=2$ and each $C_i$ is an irreducible curve of degree $d\in\{3,4,5,6,7\}$,

\item $r=3$ and each $C_i$ is an irreducible curve of degree $d\in\{3,4,5\}$,

\item $r=4$ and each $C_i$ is a smooth rational cubic curve.
\end{enumerate}

\begin{lemma}
\label{lemma:192-r-4}
One has $r\ne 4$.
\end{lemma}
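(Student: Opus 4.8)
The plan is to exclude $r=4$ by transporting the $G$-action on the four cubics down to $\mathrm{PGL}_2(\mathbb{C})$, where an order count gives a contradiction. First I would pin down the geometry. If $r=4$, then $d\cdot r=\deg(C)\leqslant 15$ forces $d=3$, and since we have already reduced to the case $C\not\subset\mathcal{T}$ with no irreducible component of $C$ contained in a plane (by Lemma~\ref{lemma:192-orbits}), each $C_i$ is an irreducible nondegenerate cubic in $\mathbb{P}^3$, i.e. a rational normal (twisted) cubic spanning $\mathbb{P}^3$. Because $C$ is $G$-irreducible, the group $G$ permutes $\{C_1,C_2,C_3,C_4\}$ transitively, so the stabilizer $\Gamma=\mathrm{Stab}_G(C_1)$ has index $4$ in $G$ and hence order $|G|/4=48$.

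Next I would show that $\Gamma$ acts faithfully on $C_1\cong\mathbb{P}^1$. Indeed, any $g\in\Gamma$ that acts trivially on $C_1$ fixes the nondegenerate curve $C_1$ pointwise, hence fixes a projective frame of $\mathbb{P}^3$, so $g=\mathrm{id}$ in $\mathrm{PGL}_4(\mathbb{C})$. Therefore the restriction homomorphism $\Gamma\to\mathrm{Aut}(C_1)\cong\mathrm{PGL}_2(\mathbb{C})$ is injective, and $\Gamma$ is realized as a finite subgroup of $\mathrm{PGL}_2(\mathbb{C})$ of order $48$.

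Then I would invoke the classification of finite subgroups of $\mathrm{PGL}_2(\mathbb{C})$, which are cyclic, dihedral, $\mathfrak{A}_4$, $\mathfrak{S}_4$, or $\mathfrak{A}_5$. The only ones of order $48$ are the cyclic group $\mumu_{48}$ and the dihedral group of order $48$; both contain an element of order $8$ (for instance the element of order $8$ in $\mumu_{48}$, respectively in the cyclic subgroup $\mumu_{24}$ of the dihedral group). The contradiction then follows from the fact that $G=G_{192,185}$ contains no element of order $8$ — equivalently, its Sylow $2$-subgroup $\mumu_4^2\rtimes\mumu_4$ has exponent $4$, so $G$ has neither a subgroup isomorphic to $\mumu_{48}$ nor one isomorphic to the dihedral group of order $48$. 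Hence $\Gamma$ cannot embed into $\mathrm{PGL}_2(\mathbb{C})$, and $r\neq 4$.

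Conceptually the argument is clean; the only step requiring genuine input is the final claim that $G$ has no element of order $8$ (no cyclic or dihedral subgroup of order $48$), which I would confirm directly from the explicit generating matrices, e.g. with Magma or from the subgroup data of \cite{Tim}. This is therefore the step I would treat with care, while faithfulness of the $\Gamma$-action and the subgroup classification of $\mathrm{PGL}_2(\mathbb{C})$ are routine.
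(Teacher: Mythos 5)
Your proposal is correct, and its skeleton is the same as the paper's: if $r=4$, then $\Gamma=\mathrm{Stab}_G(C_1)$ has order $|G|/4=48$, it must act faithfully on the rational curve $C_1$, hence embed into $\mathrm{PGL}_2(\mathbb{C})$, and this is impossible. The two arguments diverge only in how the impossibility is derived, and they run in opposite directions. The paper consults the subgroup database \cite{Tim} to see that every order-$48$ subgroup of $G$ is isomorphic to $\mathfrak{A}_4\rtimes\mumu_4$ or $\mumu_4^2\rtimes\mumu_3$, and observes that neither type occurs among the finite subgroups of $\mathrm{PGL}_2(\mathbb{C})$. You instead classify the order-$48$ subgroups of $\mathrm{PGL}_2(\mathbb{C})$ --- cyclic or dihedral, each containing an element of order $8$ --- and rule them out inside $G$ because $G=G_{192,185}$ has no element of order $8$. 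That key fact is true, and your route arguably needs lighter input: it can be verified by hand, without Magma or the database, since every element of $G$ is represented by a monomial matrix with entries in $\mumu_4$, so an element of order $8$ would have to lie over a permutation in $\mathfrak{S}_4$ of order $1$, $2$ or $4$; elements over $4$-cycles have scalar fourth power, elements of $T$ have order at most $2$, and a direct check on the cosets $T\cdot B$ and $T\cdot(B\,A^2BA^{-2})$ (which up to conjugacy cover all elements over transpositions and $(2,2)$-cycles) shows their fourth powers are scalar as well. A further small merit of your write-up is that you justify the faithfulness of the $\Gamma$-action on $C_1$ (a twisted cubic spans $\mathbb{P}^3$, so fixing it pointwise forces the identity), a point the paper leaves implicit.
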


\begin{proof}
If $r=4$, the~stabilizer of the~curve $C_1$ is a group of order $48$.
According to \cite{Tim}, any~subgroup of the~group $G$ of order $48$ is isomorphic either to $\mathfrak{A}_4\rtimes\mumu_4$ or to $\mumu_4^2\rtimes\mumu_3$.
But~none of these groups can act faithfully on a rational curve, since
$\mathrm{PGL}_2(\mathbb{C})$ does not contain groups isomorphic to $\mathfrak{A}_4\rtimes\mumu_4$ or $\mumu_4^2\rtimes\mumu_3$.
Hence, we conclude that $r\ne 4$.
\end{proof}

Now, let us fix the~subgroup $\Gamma\subset G$ that is generated by
$$
A=\begin{pmatrix}
0 & 0 & 0 & i\\
1 & 0 & 0 & 0\\
0 & 1 & 0 & 0\\
0 & 0 & 1 & 0
\end{pmatrix},
A^2BA^2BL=\begin{pmatrix}
0 & -1 & 0 & 0 \\
i & 0 & 0 & 0\\
0 & 0 & 0 & i\\
 0 & 0 & 1 & 0
\end{pmatrix},
A^3BA^2BLA^3=
\begin{pmatrix}
0 & 0 & 0 & i\\
0 & 0 & -1 & 0\\
0 & i & 0 & 0\\
1 & 0 & 0 & 0
\end{pmatrix}.
$$
Using \cite{Tim}, we conclude that $\Gamma\cong\mumu_4^2\rtimes\mumu_4$,
and the~GAP ID of the~subgroup $\Gamma$ is [64,34].
Note that $\mathbb{P}^3$ contain neither $\Gamma$-fixed points nor $\Gamma$-invariant lines by Lemmas~\ref{lemma:192-orbits} and \ref{lemma:192-d-1}.
Moreover, according to \cite{Tim}, the~group $G$ contains $3$ subgroups that are isomorphic to $\Gamma$, and all of them are conjugated.

\begin{lemma}
\label{lemma:192-r-3}
Suppose that $r=3$. Then either $C=\mathscr{C}_{12}$ or $C=\mathscr{C}_{12}^\prime$.
\end{lemma}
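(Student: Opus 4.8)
The plan is to exploit the fact that for $r=3$ the stabilizer $\Gamma=\mathrm{Stab}_G(C_1)$ has index $3$ in $G$, hence order $64$, and is therefore a Sylow $2$-subgroup. By the discussion preceding the lemma the Sylow $2$-subgroups form a single conjugacy class, all isomorphic to the explicit $\Gamma\cong\mumu_4^2\rtimes\mumu_4$ with GAP ID $[64,34]$, so after replacing $C$ by a $G$-translate we may assume $\mathrm{Stab}_G(C_1)=\Gamma$. By the reduction made before the lemma, every irreducible component of $C$ is nondegenerate; since a nondegenerate $C_1$ spans $\mathbb{P}^3$, the kernel of $\Gamma\to\mathrm{Aut}(C_1)$ fixes $\mathbb{P}^3$ pointwise, so $\Gamma$ acts faithfully on $C_1$ and on its normalization $\widetilde{C}_1$. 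As $dr=\deg(C)\leqslant 15$ and $d\geqslant 3$, we have $d\in\{3,4,5\}$, and Castelnuovo's bound gives geometric genus $g\leqslant 0,1,2$ for $d=3,4,5$ respectively.

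First I would dispose of all rational components. If $g=0$ then $\widetilde{C}_1\cong\mathbb{P}^1$, so $\Gamma$ embeds into $\mathrm{Aut}(\mathbb{P}^1)=\mathrm{PGL}_2(\mathbb{C})$. But $\Gamma$ contains $\mumu_4^2\cong(\mathbb{Z}/4)^2$, while every finite abelian subgroup of $\mathrm{PGL}_2(\mathbb{C})$ is cyclic or isomorphic to $\mumu_2^2$; this contradiction rules out $g=0$. In particular $d=3$ is impossible (a nondegenerate cubic is a twisted cubic, hence rational), and the rational cases of $d=4$ and $d=5$ are excluded as well.

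Next I would treat $d=5$, where it remains to exclude $g=1$ and $g=2$. If $g=2$ then $\Gamma\hookrightarrow\mathrm{Aut}(\widetilde{C}_1)$ for a smooth curve of genus $2$, which is impossible since such an automorphism group has order at most $48<64=|\Gamma|$. The case $g=1$ is the crux of the lemma. Writing $E=\widetilde{C}_1$ for the elliptic normalization and $\nu\colon E\to C_1\subset\mathbb{P}^3$ for the normalization map, the faithful $\Gamma$-action lifts to $E$, giving $\Gamma\hookrightarrow\mathrm{Aut}(E)=E\rtimes\mathrm{Aut}(E,O)$ with $\mathrm{Aut}(E,O)$ cyclic of order $2$, $4$ or $6$. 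Let $T=\ker(\Gamma\to\mathrm{Aut}(E,O))$ be the translation part; since $\Gamma$ is a $2$-group we have $|\Gamma/T|\leqslant 4$, so $T\neq 1$. Each $t\in T$ is induced by a linear automorphism of $\mathbb{P}^3$, hence fixes the class of $\mathcal{O}_{\mathbb{P}^3}(1)$ and therefore the class of $\nu^*\mathcal{O}_{\mathbb{P}^3}(1)$, a line bundle of degree $5$ on $E$. But translation by $\bar t\in E$ preserves the class of a degree-$5$ line bundle if and only if $5\bar t=O$; as $\bar t$ has $2$-power order this forces $\bar t=O$ and hence $t=1$, so $T=1$, a contradiction. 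This divisibility obstruction is precisely what fails for $d=4$, where $4\bar t=O$ holds automatically, which is why the elliptic quartics are not excluded — this is the step I expect to be the genuine difficulty, and it is resolved by the line-bundle argument rather than by enumeration.

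Finally, for $d=4$ with $g=1$ the curve $C_1$ is a smooth elliptic quartic, the base locus of a pencil of quadrics $\mathbb{P}(U)$ with $U\subset H^0(\mathbb{P}^3,\mathcal{O}_{\mathbb{P}^3}(2))$ two-dimensional. Since $\Gamma$ preserves $C_1$ it preserves this pencil, so $U$ is a two-dimensional $\widehat{\Gamma}$-subrepresentation of the space of quadrics. The plan is to enumerate all such subrepresentations using the character table of $\widehat{\Gamma}$ in GAP, form the corresponding pencils of quadrics, and compute their base loci; those whose base locus is a smooth irreducible elliptic quartic, together with the two further quartics in its $G$-orbit, assemble into the curves $\mathscr{C}_{12}$ and $\mathscr{C}_{12}^\prime$ written out in Proposition~\ref{proposition:P3-192-curves}. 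Checking that these are the only invariant pencils giving irreducible elliptic quartics, and that the components form a single $G$-orbit of length $3$, is a finite explicit computation, which exhibits the $\sqrt{2}i$-coefficients appearing in the stated equations and completes the proof.
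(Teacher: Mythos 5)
Your proposal is correct and follows essentially the same route as the paper: reduce to the index-$3$ stabilizer $\Gamma\cong\mumu_4^2\rtimes\mumu_4$ of order $64$ acting faithfully on a component, exclude $d=3$ and $d=5$ by group-theoretic obstructions on automorphism groups of (possibly normalized) curves of genus $\leqslant 2$, and for $d=4$ classify the $\Gamma$-invariant pencils of quadrics via a GAP representation-theoretic computation and identify their base loci with the components of $\mathscr{C}_{12}$ and $\mathscr{C}_{12}^\prime$. Your $5$-torsion translation argument in the elliptic quintic case is just an unpacked form of the paper's observation that $\mathrm{Aut}(C_1,\mathcal{O}_{\mathbb{P}^3}(1)\vert_{C_1})$ is an extension of $\mumu_5^2$ by a cyclic group of order $2$, $4$ or $6$, whose order is therefore not divisible by $64$.
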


\begin{proof}
The subgroup $\Gamma$ is a stabilizer of $C_1$, $C_2$ or $C_3$.
Without loss of generality, we may assume that $C_1$ is $\Gamma$-invariant.
The group $\Gamma$ acts faithfully on $C_1$.
This implies that $d\ne 3$, because $\Gamma$ cannot leave invariant smooth rational cubic curve,
since $\mathrm{PGL}_2(\mathbb{C})$ does not contain groups isomorphic to $\Gamma$.

Now, we claim that $d\ne 5$. Indeed, suppose that $d=5$.
Then the~curve $C_1$ is smooth.
Namely, if $C_1$ is singular, then it contains at least $4$ singular points,
so that, intersecting the~curve $C_1$ with a plane passing through $3$ of them, we conclude that $C_1$ is contained in this plane,
which contradicts our assumption.
Thus, we see that $C_1$ is smooth, so~that its genus does not exceed $2$ by \cite[Theorem 6.4]{Hartshorne}.
But the~order of the~automorphism group of a smooth curve of genus $2$ does not exceed $48$,
and, as we already mentioned, the~group $\Gamma$ cannot faithfully act on a rational curve.
Thus, we see that $C_1$ is a smooth elliptic curve.
Then the~$\Gamma$-action on $C_1$ gives an embedding
$$
\Gamma\hookrightarrow\mathrm{Aut}\big(C_1,\mathcal{O}_{\mathbb{P}^3}(1)\vert_{C_1}\big).
$$
This is impossible, since the~order of the~group $\mathrm{Aut}(C_1,\mathcal{O}_{\mathbb{P}^3}(1)\vert_{C_1})$ is not divisible by $64$,
because $\mathrm{Aut}(C_1,\mathcal{O}_{\mathbb{P}^3}(1)\vert_{C_1})$ is an extension of the~group $\mumu_5^2$ by one of the~following cyclic groups: $\mumu_2$, $\mumu_6$ or $\mumu_4$.
Hence, we see that $d\ne 5$.

Thus, we see that $d=4$. As above, we see that  $C_1$, $C_2$, $C_3$ are smooth elliptic curves,
which implies that each of them is a complete intersection of two quadric surfaces in~$\mathbb{P}^3$.
Hence, there exists a $\Gamma$-invariant pencil of quadric surfaces in $\mathbb{P}^3$ whose base locus is $C_1$.
On the~other hand, it is not hard to find all $\Gamma$-invariant pencils of quadric surfaces in $\mathbb{P}^3$.
Namely, let $\widehat{\Gamma}$ be the the~subgroup in $\mathrm{GL}_4(\mathbb{C})$ that is generated by the~matrices
$$
\begin{pmatrix}
0 & 0 & 0 & i\\
1 & 0 & 0 & 0\\
0 & 1 & 0 & 0\\
0 & 0 & 1 & 0
\end{pmatrix},
\begin{pmatrix}
0 & -1 & 0 & 0 \\
i & 0 & 0 & 0\\
0 & 0 & 0 & i\\
 0 & 0 & 1 & 0
\end{pmatrix},
\begin{pmatrix}
0 & 0 & 0 & i\\
0 & 0 & -1 & 0\\
0 & i & 0 & 0\\
1 & 0 & 0 & 0
\end{pmatrix}.
$$
Then $\Gamma$ is the~image of the~group $\widehat{\Gamma}$ via the~natural projection $\mathrm{GL}_4(\mathbb{C})\to\mathrm{PGL}_4(\mathbb{C})$,
and the~GAP ID of the~group $\widehat{\Gamma}$ is [256,420].
Now, going through all irreducible $4$-dimensional representations of the~group $\widehat{\Gamma}$ in GAP \cite{GAP},
and checking their symmetric squares, we see that $\mathbb{P}^3$ contains three $\Gamma$-invariant pencils of quadrics.
These pencils are
\begin{enumerate}[(i)]
\item $\lambda(x_0^2+\sqrt{2}ix_1^2-x_2^2)=\mu(x_1^2+\sqrt{2}ix_2^2-x_3^2)$,
\item $\lambda(x_0^2-\sqrt{2}ix_1^2-x_2^2)=\mu(x_1^2-\sqrt{2}ix_2^2-x_3^2)$,
\item $\lambda x_0x_2=\mu x_1x_3$,
\end{enumerate}
where $[\lambda:\mu]\in\mathbb{P}^1$. In case (iii), the~base locus of the pencil is the union $\ell_{12}\cup\ell_{14}\cup\ell_{23}\cup\ell_{24}$.
In case (i), the~base locus  of the pencil is $\mathscr{C}_{12}$.
Finally, in case (ii), the~base locus is $\mathscr{C}_{12}^\prime$.
Hence, we conclude that either $C=\mathscr{C}_{12}$ or $\mathscr{C}_{12}^\prime$.
\end{proof}

To complete the~proof of Proposition~\ref{proposition:P3-192-curves}, we may assume that $r\ne 3$.
Then $r\in\{1,2\}$.
Observe that the~group $G$ contains unique subgroup of index two ---
the normal subgroup isomorphic to $\mumu_2^3.\mathfrak{A}_4\cong\mumu_4^2\rtimes\mumu_6$.
This subgroup does not contain $\Gamma$.
Therefore, if $r=2$, then $\Gamma$ swaps the~curves $C_1$ and $C_2$.
Thus, we see that $C$ is $\Gamma$-irreducible.

Note that $\Gamma$ leaves invariant $\mathcal{T}$ and the~Fermat quartic  $\{x_0^4+x_1^4+x_2^4+x_3^4=0\}\subset\mathbb{P}^3$.
These are not all $\Gamma$-invariant quartic surfaces.
Namely, the~group $\Gamma$ leaves invariant every surface in the~pencil $\mathcal{P}$ given by
$$
\lambda(x_1^2x_2^2-x_0^2x_1^2-x_0^2x_3^2-x_2^2x_3^2)+\mu(x_0^4-x_1^4+x_2^4-x_3^4)=0,
$$
where $[\lambda:\mu]\in\mathbb{P}^1$. One can show that these are all $\Gamma$-invariant surfaces in $\mathbb{P}^3$.

Let $P$ be a general point in $C$, let $\Sigma_P$ be its $\Gamma$-orbit,
and let $S$ be a surface in $\mathcal{P}$ that passes through $P$.
Then $|\Sigma_P|=64$, which implies that $C\subset S$. Indeed, if $C\not\subset S$, then
$$
60\geqslant 4rd=S\cdot C\geqslant |\Sigma_P|=64,
$$
which is absurd, hence $C\subset S$.
Let $a$ and $b$ be complex numbers such that $S$ is given by
$$
a(x_1^2x_2^2-x_0^2x_1^2-x_0^2x_3^2-x_2^2x_3^2)+b(x_0^4-x_1^4+x_2^4-x_3^4)=0.
$$
Note that the~surface $S$ is not $G$-invariant, because the~only $G$-invariant quartic surfaces
are the~surfaces $\mathcal{T}$ and $\{x_0^4+x_1^4+x_2^4+x_3^4=0\}$.
But $C$ is $G$-invariant by assumption.
Thus, using the~$G$-action, we see that $C$ is contained in the~subset in $\mathbb{P}^3$ given by
\begin{equation}
\label{equation:P1-P3}
\left\{\aligned
&a(x_1^2x_2^2-x_0^2x_1^2-x_0^2x_3^2-x_2^2x_3^2)+b(x_0^4-x_1^4+x_2^4-x_3^4)=0,\\
&a(x_2^2x_3^2-x_0^2x_1^2-x_0^2x_2^2-x_1^2x_3^2)-b(x_0^4-x_1^4-x_2^4+x_3^4)=0,\\
&a(x_1^2x_3^2-x_0^2x_2^2+x_0^2x_3^2+x_1^2x_2^2)+b(x_0^4+x_1^4-x_2^4-x_3^4)=0.\\
\endaligned
\right.
\end{equation}

\begin{lemma}
\label{lemma:192-final}
Either $3a^2-4ab+4b^2=0$ or $a+2b=0$.
\end{lemma}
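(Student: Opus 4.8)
The plan is to use that every equation in \eqref{equation:P1-P3} is a polynomial in the squares $x_0^2,x_1^2,x_2^2,x_3^2$, which reduces the statement to the geometry of a net of quadric surfaces. Let $\phi\colon\mathbb{P}^3\to\mathbb{P}^3$ be the finite morphism of degree $8$ given by $[x_0:x_1:x_2:x_3]\mapsto[x_0^2:x_1^2:x_2^2:x_3^2]$ (the quotient by the group $\mumu_2^3$ of sign changes), and denote by $u_0,u_1,u_2,u_3$ the coordinates on the target. Then the subset \eqref{equation:P1-P3} equals $\phi^{-1}(B)$, where $B=\{Q_1=Q_2=Q_3=0\}\subset\mathbb{P}^3$ is the base locus of the net of quadrics
\begin{align*}
Q_1&=a\big(u_1u_2-u_0u_1-u_0u_3-u_2u_3\big)+b\big(u_0^2-u_1^2+u_2^2-u_3^2\big),\\
Q_2&=a\big(u_2u_3-u_0u_1-u_0u_2-u_1u_3\big)-b\big(u_0^2-u_1^2-u_2^2+u_3^2\big),\\
Q_3&=a\big(u_1u_3-u_0u_2+u_0u_3+u_1u_2\big)+b\big(u_0^2+u_1^2-u_2^2-u_3^2\big).
\end{align*}
Since $\phi$ is finite and $C\subset\phi^{-1}(B)$, the image $\phi(C)$ is a curve contained in $B$; hence $\dim B\geqslant 1$, and it suffices to prove that this forces $3a^2-4ab+4b^2=0$ or $a+2b=0$.

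First I would check that $B$ is finite for generic $[a:b]$. When $a=0$ (so $b\neq 0$), writing $v_i=u_i^2$ turns the three equations $Q_i=0$ into linear equations in $v_0,v_1,v_2,v_3$ whose coefficient matrix has rank $3$ and kernel spanned by $(1,1,1,1)$; thus $u_0^2=u_1^2=u_2^2=u_3^2$ on $B$, and $B$ is the set of eight points $[1:\pm1:\pm1:\pm1]$. In particular $\dim B=0$ when $a=0$, so $[a:b]=[0:1]$ does not lie in the exceptional locus and the present curve $C$ forces $a\neq 0$. A glance at the coefficients of the six monomials $u_iu_j$ shows moreover that $Q_1,Q_2,Q_3$ are linearly independent once $a\neq 0$, so they span a genuine net.

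The heart of the proof is to determine the locus $\{[a:b]\in\mathbb{P}^1:\dim B\geqslant 1\}$. Since $B$ is a reduced set of eight points for generic $[a:b]$, this locus is finite and cut out by a single homogeneous $R(a,b)$, which I would obtain by eliminating $u_0,u_1,u_2,u_3$ (equivalently, by saturating the ideal $(Q_1,Q_2,Q_3)$, or by the Magma computation used in the proof of Lemma~\ref{lemma:192-r-3}). The geometry behind the answer is the following: as $Q_1,Q_2,Q_3$ are independent, the intersection $Q_1\cap Q_2$ is a quartic curve of arithmetic genus one lying in the pencil $\langle Q_1,Q_2\rangle$, and if it were an irreducible nondegenerate quartic then the only quadrics through it would form that pencil, so $Q_3\not\supset Q_1\cap Q_2$ and $B$ would be finite. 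Thus $\dim B\geqslant 1$ forces $Q_1\cap Q_2$ to degenerate, with a component lying on $Q_3$; this degeneration is governed by the discriminant $\det(\lambda M_1+\mu M_2)$ of the pencil of symmetric matrices attached to $Q_1,Q_2$, and following it through gives $R(a,b)=c\,(a+2b)(3a^2-4ab+4b^2)$ for a nonzero constant $c$. Since $a\neq 0$, the equation $R(a,b)=0$ yields the claim.

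The main obstacle is precisely this last computation, and in particular verifying that the factorization of $R$ is complete, i.e. that no further value of $[a:b]$ produces a one-dimensional component of $B$. The natural safeguard is consistency with the earlier reduction: once $r\in\{1,2\}$, the curves still in play are exactly $\mathscr{C}_8$, $\mathfrak{C}_{12}$ and $\mathfrak{C}_{12}^\prime$, which is three curves, matching the three roots of $(a+2b)(3a^2-4ab+4b^2)$; indeed $\mathfrak{C}_{12}$ and $\mathfrak{C}_{12}^\prime$ occur at $[a:b]=[2\pm2\sqrt{2}i:3]$, the two roots of $3a^2-4ab+4b^2$, while $a+2b=0$ accounts for $\mathscr{C}_8$. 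This correspondence confirms that $R$ carries no spurious factors and closes the argument.
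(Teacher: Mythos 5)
Your opening reduction is correct and is a genuinely different (and nicer) route than the paper's: since all three quartics in \eqref{equation:P1-P3} are polynomials in the squares $x_i^2$, the subset \eqref{equation:P1-P3} equals $\phi^{-1}(B)$ for the squaring map $\phi$ and the base locus $B$ of a net of quadrics, and finiteness of $\phi$ shows that \eqref{equation:P1-P3} contains a curve if and only if $B$ does. The paper instead works directly with the incidence scheme in $\mathbb{P}^1\times\mathbb{P}^3$ cut out by \eqref{equation:P1-P3}, decomposes it over $\mathbb{Q}$ with Magma, and checks which components project to points of $\mathbb{P}^1$; your reformulation would make any such computation smaller and more transparent.

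However, there is a genuine gap at the central step, and your attempt to patch it does not work. The equality $R(a,b)=c\,(a+2b)(3a^2-4ab+4b^2)$ --- equivalently, the claim that no \emph{other} value of $[a:b]$ makes $B$ positive-dimensional --- is never established: you defer it to an elimination you do not perform, and the discriminant of the pencil $\langle Q_1,Q_2\rangle$ only gives a necessary condition for $Q_1\cap Q_2$ to degenerate, not a determination of when a component of it actually lies on $Q_3$. The consistency ``safeguard'' cannot close this gap for two reasons. First, it is circular: the assertion that the curves still in play are exactly $\mathscr{C}_8$, $\mathfrak{C}_{12}$, $\mathfrak{C}_{12}^\prime$ is part of Proposition~\ref{proposition:P3-192-curves}, whose proof contains the present lemma; in the paper these three curves are identified only \emph{after}, and by means of, this lemma. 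Second, even granting that classification, the inference runs in the wrong direction: the existence of three curves shows that the three roots of $(a+2b)(3a^2-4ab+4b^2)$ do occur among the roots of $R$, but it cannot exclude additional roots --- and a hypothetical curve component of \eqref{equation:P1-P3} at such an extra root could have degree $16$, outside the degree-at-most-$15$ classification, so the proposition would not even apply to rule it out. To complete the proof you must actually carry out the elimination and verify the full factorization of $R$, either by computer (as the paper does) or by a hand analysis of the degenerate members of the net of quadrics; the reduction to $B$ makes the latter feasible, but it still has to be done.
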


\begin{proof}
Note that  the~subset \eqref{equation:P1-P3} in $\mathbb{P}^3$ is zero-dimensional for a general choice of $a$ and~$b$.
To find all possible values of $a$ and $b$ such that \eqref{equation:P1-P3} is not zero-dimensional,
one can consider the~subscheme in $\mathbb{P}^1\times\mathbb{P}^3$ defined over $\mathbb{Q}$ that is given by \eqref{equation:P1-P3},
where $a$ and $b$ are considered as coordinates on $\mathbb{P}^1$.
Using Magma, we see that this subscheme is reduced and one-dimensional,
and we also find all its irreducible (over $\mathbb{Q}$) components.

Going through these irreducible components and checking which one is mapped to a~zero-dimensional subscheme of $\mathbb{P}^1$
via the~natural projection $\mathbb{P}^1\times\mathbb{P}^3\to\mathbb{P}^1$, we see that
the subset \eqref{equation:P1-P3} contains a curve if and only if either $3a^2-4ab+4b^2=0$ or $a+2b=0$.
\end{proof}

If $3a^2-4ab+4b^2=0$, we may assume that $b=3$ and $a^2-4a+12$.
In this case, the~subscheme in $\mathbb{P}^3$ given by \eqref{equation:P1-P3} is a smooth irreducible curve of degree $12$ and genus~$17$.
This can be checked using Magma. Now, taking two roots of the~quadratic $a^2-4a+12$,
we get the~curves $\mathfrak{C}_{12}$~and~$\mathfrak{C}_{12}^\prime$.
One can check that these curves are disjoint.

Finally, if $a+2b=0$, the~subscheme in $\mathbb{P}^3$ given by \eqref{equation:P1-P3}
splits as a disjoint union of the~$\Gamma$-irreducible curves $\mathscr{C}_8$ and $\mathscr{C}_8^\prime$.
This completes the~proof of the~Proposition~\ref{proposition:P3-192-curves}.

\section{Equivariant geometry of projective space: large groups}
\label{section:P3-large}

Let us use assumptions and notations of Section~\ref{section:subgroups}.
Recall from Section~\ref{section:subgroups} that
\begin{center}
$P_1=[1:0:0:0]$, $P_2=[0:1:0:0]$, $P_3=[0:0:1:0]$, $P_4=[0:0:0:1]$,
\end{center}
and $G$ is a finite subgroup in $\mathrm{PGL}_{4}(\mathbb{C})$ such that the~following conditions are satisfied:
\begin{enumerate}
\item the~group $G$ does not have fixed points in  $\mathbb{P}^3$,
\item the~group $G$ does not leave a union of two skew lines in $\mathbb{P}^3$ invariant,
\item the~group $G$ leaves invariant the subset $\{P_1,P_2,P_3,P_4\}$.
\end{enumerate}
Recall from Section~\ref{section:subgroups} that $\upsilon\colon G\to\mathfrak{S}_4$ is the homomorphism
induced by the~$G$-action on the~set $\{P_1,P_2,P_3,P_4\}$, and $T$ is the~kernel of this homomorphism.
Then $T$ is not trivial, and either the~homomorphism $\upsilon$ is surjective, or its image is $\mathfrak{A}_4$.
Suppose, in addition, that the group $G$ is not conjugate to any of the~following eight subgroups:
\begin{center}
$G_{48,50}$, $G_{48,3}$, $G_{96,70}$, $G_{96,72}$, $G_{96,227}$, $G_{96,227}^\prime$, $G_{192,955}$, $G_{192,185}$.
\end{center}
Moreover, if $G$ is conjugate to any subgroup among $G_{324,160}$, $G_{324,160}^\prime$, $G_{648,704}$ or $G_{648,704}^\prime$,
we will always assume that $G$ is this subgroup.

For every $1\leqslant i<j\leqslant 4$, let $\ell_{ij}$ be the~line in $\mathbb{P}^3$ that passes through $P_i$ and $P_j$.
Let
$$
F_1=\{x_0=0\},\, F_2=\{x_1=0\},\, F_3=\{x_2=0\},\, F_4=\{x_3=0\}.
$$
Let $\Sigma_4=\{P_1,P_2,P_3,P_4\}$, let $\mathcal{L}_6=\ell_{12}+\ell_{13}+\ell_{14}+\ell_{23}+\ell_{24}+\ell_{34}$,
let $\mathcal{T}=F_1+F_2+F_3+F_4$.

\begin{lemma}
\label{lemma:orbits}
Let $\Sigma$ be a $G$-orbit in $\mathbb{P}^3$. Then
$$
|\Sigma|\geqslant \left\{\aligned
&|T|\ \text{if $\Sigma\not\subset\mathcal{T}$},\\
&4n^2\ \text{if $\Sigma\subset\mathcal{T}\setminus\mathcal{L}_6$},\\
&6n\ \text{if $\Sigma\subset\mathcal{L}_6\setminus\Sigma_4$}.\\
\endaligned
\right.
$$
\end{lemma}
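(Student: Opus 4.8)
The plan is to bound every $G$-orbit $\Sigma=\mathrm{Orb}_G(P)$ from below by the size of the $T$-orbit of $P$, using that $T$ is normal in $G$ and that $\mathrm{im}(\upsilon)\supseteq\mathfrak{A}_4$ acts transitively on the four faces $F_1,\dots,F_4$ and on the six edges $\ell_{ij}$ of the coordinate tetrahedron. Since $\mathcal{T}$, $\mathcal{L}_6$ and $\Sigma_4$ are $G$-invariant, the three cases in the statement correspond precisely to $P\notin\mathcal{T}$, to $P$ lying on a unique face, and to $P$ lying on a unique edge but not being a vertex. The one external input is that, by the proofs of Lemmas~\ref{lemma:S4} and~\ref{lemma:A4}, the group $T$ always contains $T'=\langle(\zeta_n,1,\zeta_n^{-1}),(1,\zeta_n,\zeta_n^{-1}),(1,1,\zeta_n^4)\rangle$.

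For the first bound I would observe that an element $(a_1,a_2,a_3)\in T$ fixing a point $P=[x_0:x_1:x_2:x_3]$ with all coordinates nonzero forces $a_1=a_2=a_3=1$, after normalizing by the nonzero last coordinate. Hence $T$ acts freely on $\mathbb{P}^3\setminus\mathcal{T}$, so $|\mathrm{Orb}_T(P)|=|T|$ and therefore $|\Sigma|\geqslant|\mathrm{Orb}_T(P)|=|T|$. For the other two bounds I would pass to the stabilizer $H$ in $G$ of the unique face (respectively edge) through $P$. Then $\mathrm{Stab}_G(P)\subseteq H$ and $T\subseteq H$, while $[G:H]=4$ (respectively $6$) by transitivity of $\mathrm{im}(\upsilon)$, so that $|\Sigma|=[G:H]\cdot|\mathrm{Orb}_H(P)|\geqslant 4\,|\mathrm{Orb}_T(P)|$ (respectively $\geqslant 6\,|\mathrm{Orb}_T(P)|$).

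It then remains to estimate $|\mathrm{Orb}_T(P)|$ on a representative face and edge. For $P$ on $F_4=\{x_3=0\}$ with $x_0,x_1,x_2\neq 0$, the stabilizer $\mathrm{Stab}_T(P)$ is the kernel of $\phi\colon(a_1,a_2,a_3)\mapsto(a_1a_3^{-1},a_2a_3^{-1})$, so $|\mathrm{Orb}_T(P)|=|\phi(T)|\geqslant|\phi(T')|$; for $P$ on $\ell_{34}=\{x_0=x_1=0\}$ with $x_2,x_3\neq 0$, it is the kernel of $\psi\colon(a_1,a_2,a_3)\mapsto a_3$, so $|\mathrm{Orb}_T(P)|=|\psi(T)|\geqslant|\psi(T')|$. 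All other faces and edges are $G$-conjugate to these, so treating these representatives suffices.

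The computational heart, and the step I expect to need the most care, is evaluating $\phi(T')$ and $\psi(T')$. Writing $\zeta_n^k\leftrightarrow k$ and working additively in $(\mathbb{Z}/n)^2$, the generators of $T'$ map under $\phi$ to $(2,1)$, $(1,2)$, $(-4,-4)$; from $(2,1)+(1,2)=(3,3)$ and $-(-4,-4)=(4,4)$ one obtains $(1,1)$, whence $(2,1)-(1,1)=(1,0)$ and then $(0,1)$, so that $\phi(T')=(\mathbb{Z}/n)^2$ has order $n^2$ for every $n$. Under $\psi$ the generators map to $\zeta_n^{-1},\zeta_n^{-1},\zeta_n^4$, which generate $\mumu_n$, so $|\psi(T')|=n$. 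Combining these with the index estimates gives $|\Sigma|\geqslant 4n^2$ on $\mathcal{T}\setminus\mathcal{L}_6$ and $|\Sigma|\geqslant 6n$ on $\mathcal{L}_6\setminus\Sigma_4$, which completes the proof. The only subtlety to watch is the surjectivity of $\phi$ onto $(\mathbb{Z}/n)^2$, which must be checked uniformly in $n$ rather than case by case through the classification of $T$ in Lemmas~\ref{lemma:S4} and~\ref{lemma:A4}.
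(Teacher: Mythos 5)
Your proof is correct and follows exactly the route the paper indicates: the paper's own proof of Lemma~\ref{lemma:orbits} is a one-line reference to the explicit description of $T$ from the proofs of Lemmas~\ref{lemma:S4} and~\ref{lemma:A4}, which is precisely the containment $T\supseteq T'=\langle(\zeta_n,1,\zeta_n^{-1}),(1,\zeta_n,\zeta_n^{-1}),(1,1,\zeta_n^4)\rangle$ that you invoke. Your orbit--stabilizer bookkeeping (free $T$-action off $\mathcal{T}$, passage to the face and edge stabilizers of index $4$ and $6$) and the uniform-in-$n$ computations $\phi(T')=(\mathbb{Z}/n)^2$ and $|\psi(T')|=n$ are exactly the details the paper leaves to the reader, and they check out.
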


\begin{proof}
The required assertion follows from the~explicit description of the~subgroup $T$,
which has been given in the~proofs of Lemmas~\ref{lemma:S4} and \ref{lemma:A4}.
\end{proof}

Let $C$ be a $G$-irreducible curve in $\mathbb{P}^3$ of degree $d\leqslant 15$.
Our goal is to classify all possibilities for the curve $C$.
Firstly, we show that $C\subset\mathcal{T}$.

\begin{lemma}
\label{lemma:P3-curve-vertices}
Suppose that $C\not\subset\mathcal{T}$. Then $\Sigma_4\not\subset C$.
\end{lemma}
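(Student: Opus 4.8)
The plan is to argue by contradiction: assume $\Sigma_4\subseteq C$. Since $\mathcal{T}$ is $G$-invariant and $C$ is $G$-irreducible, the hypothesis $C\not\subset\mathcal{T}$ means that no irreducible component of $C$ lies in $\mathcal{T}$; in particular $C\not\subset F_1$. Because $G$ acts transitively on $\{P_1,P_2,P_3,P_4\}$ and $C$ is $G$-invariant, all four points lie on $C$ with one common multiplicity $m=\mathrm{mult}_{P_i}(C)\geq 1$. Intersecting $C$ with the plane $F_1=\{x_0=0\}$, which contains $P_2,P_3,P_4$ but not $P_1$, I get $d\geq 3m$, hence $m\leq 5$.

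The engine of the proof is the following observation. For any $g\in T\setminus\{1\}$ the fixed locus $\mathrm{Fix}(g)\subset\mathbb{P}^3$ is a union of coordinate subspaces, and every positive-dimensional component of it is a coordinate plane $F_i$ or a coordinate line $\ell_{ij}$, so it lies in $\mathcal{T}$. Consequently, if some irreducible component of $C$ were pointwise fixed by a nontrivial element of $T$, that component would lie in $\mathcal{T}$ --- contradicting the reduction above. Thus it suffices to manufacture such an element and component out of the local action of $T$ at the $T$-fixed point $P_1$.

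To do this I will analyse the tangent cone of $C$ at $P_1$. The torus $T$ fixes $P_1$ and acts on $\mathbb{P}(T_{P_1}\mathbb{P}^3)\cong\mathbb{P}^2$ as the standard diagonal torus, whose three fixed points $e_1,e_2,e_3$ are the directions of the coordinate lines $\ell_{12},\ell_{13},\ell_{14}$; the stabilizer $H\cong\mathfrak{A}_3$ or $\mathfrak{S}_3$ of $P_1$ in $\mathrm{im}(\upsilon)$ permutes these three directions, and the three coordinate edges, transitively. The support $Z$ of the tangent cone is a $T$- and $H$-invariant subset of $\mathbb{P}^2$ with $|Z|\leq m\leq 5$. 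Reading the explicit generators of $T$ off the proofs of Lemmas~\ref{lemma:S4} and~\ref{lemma:A4}, every $T$-orbit of a non-vertex point of $\mathbb{P}^2$ has size at least $n$ if the point lies on a coordinate edge and at least $n^2$ if it is interior. Since $n\geq 3$, a single interior direction would already force $|Z|\geq 9$, and a single edge direction would, by $H$-transitivity on the three edges, force $|Z|\geq 3n\geq 9$; both contradict $|Z|\leq 5$. Hence $Z\subseteq\{e_1,e_2,e_3\}$, and $H$-symmetry makes the three tangent-cone multiplicities equal, say to $\mu$, with $3\mu=m\leq 5$; therefore $m=3$ and $\mu=1$. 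In particular $C$ has a unique branch tangent to $e_1$, it is smooth, and it is stabilized by all of $T$.

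Finally, let $\gamma$ be this smooth branch. Then $T$ acts on the one-dimensional space $T_{P_1}\gamma$ through a character $\chi\colon T\to\mathbb{C}^\ast$, and since $T$ is non-cyclic (it contains $\mumu_n^2$ with $n\geq 3$) the kernel $\ker(\chi)$ is nontrivial. Any $g\in\ker(\chi)\setminus\{1\}$ fixes $P_1$, stabilizes $\gamma$, and acts trivially on $T_{P_1}\gamma$; as a finite-order automorphism germ with trivial derivative it is the identity on $\gamma$, so $g$ fixes the component of $C$ containing $\gamma$ pointwise. By the second paragraph this component lies in $\mathcal{T}$, the desired contradiction. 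I expect the two delicate points to be (i) pinning down the orbit-size bounds for the diagonal $T$-action on $\mathbb{P}^2$ uniformly over the possible shapes of $T$ allowed in this section, and (ii) the linearization step that upgrades \emph{trivial action on the tangent line of a smooth branch} to \emph{pointwise fixing of the whole component}, which is exactly where the structure $\mathrm{Fix}(g)\subset\mathcal{T}$ is invoked.
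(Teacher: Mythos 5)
Your proof is correct, and while its first half coincides in substance with the paper's, the endgame is genuinely different. The paper blows up $\Sigma_4$ and proves, by the same orbit analysis you run on $\mathbb{P}(T_{P_1}\mathbb{P}^3)$, that the proper transform of $C$ meets the exceptional plane over $P_1$ transversally in exactly the three points corresponding to $\ell_{12},\ell_{13},\ell_{14}$ --- i.e.\ your statement that $P_1$ is an ordinary triple point with coordinate-axis tangents. Your delicate point (i) does hold: by the proofs of Lemmas~\ref{lemma:S4} and~\ref{lemma:A4}, the group $T$ always contains $(\zeta_n,1,\zeta_n^{-1})$ and $(1,\zeta_n,\zeta_n^{-1})$, which gives the stated lower bounds $n$ (edge points) and $n^2$ (interior points), and $n\geqslant 3$ because every group with $n=2$ is among the eight subgroups excluded in this section. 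After this common core the routes diverge. The paper stays global: intersecting with the degree-six del Pezzo surface $\widetilde{F}_4$ forces $d=15$ and $C\cap\mathcal{T}=\Sigma_4$; the lemma of Flenner--Zaidenberg is invoked only to show that $C$ must be reducible; an incidence count then shows $C$ would be a union of three quintics each containing $\Sigma_4$; and the final contradiction is the divisibility $5=F_4\cdot C_1=3(F_4\cdot C_1)_{P_1}$. You finish locally instead: non-cyclicity of $T$ produces $g\in T\setminus\{1\}$ acting trivially on the tangent line of the smooth branch at $P_1$; linearization of a finite-order automorphism of a smooth curve germ (your point (ii), which is exactly the mechanism behind \cite[Lemma~2.7]{FZ}) makes $g$ fix the branch, hence the whole irreducible component containing it, pointwise; and since every positive-dimensional component of $\mathrm{Fix}(g)$ for a nontrivial diagonal $g$ is a coordinate line or plane, that component lies in $\mathcal{T}$, a contradiction. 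Your endgame is shorter, needs neither $d=15$ nor any analysis of the component structure of $C$, and treats reducible and irreducible $C$ uniformly; the paper's endgame uses only intersection theory and orbit combinatorics, at the cost of a longer case analysis.
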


\begin{proof}
We suppose that $C$ contains $\Sigma_4$. Let $\sigma\colon X\to \mathbb{P}^3$ be the~blow up of the~$G$-orbit $\Sigma_4$,
let $G_i$ be the~$\sigma$-exceptional surface that is mapped to the~point $P_i$,
let $\widetilde{F}_i$ be the proper transform on $X$ of the plane $F_i$,
let $\widetilde{C}$ be the proper transform on $X$ of the curve $C$,
and let $\widetilde{\ell}_{ij}$ be the proper transform on $X$ of the line $\ell_{ij}$.
Then the $G$-action lifts to $X$, the~curve $\widetilde{C}$ is $G$-invariant, and
$$
\widetilde{F}_4\cdot\widetilde{C}=\Big(\sigma^*\big(F_4\big)-G_1-G_2-G_3\Big)\cdot\widetilde{C}=d-3\widetilde{C}\cdot G_1\leqslant 15-3\widetilde{C}\cdot G_1,
$$
so that $1\leqslant |\widetilde{C}\cap G_1|\leqslant\widetilde{C}\cdot G_1\leqslant 5$.

The surface $G_1$ is $\mathrm{Stab}_G(P_4)$-invariant,
and the induces $\mathrm{Stab}_G(P_4)$-action on it is faithful.
Moreover, the surface $G_1\cong\mathbb{P}^2$ does not contain $\mathrm{Stab}_G(P_4)$-orbits of length $1$, $2$, $4$, $5$,
and the only $\mathrm{Stab}_G(P_4)$-orbit of length $3$ is formed by the points $G_1\cap\widetilde{\ell}_{12}$, $G_1\cap\widetilde{\ell}_{13}$ and $G_1\cap\widetilde{\ell}_{14}$.
Thus, we conclude that  $|\widetilde{C}\cap G_1|=\widetilde{C}\cdot G_1=3$,
and $\widetilde{C}$ intersects the surface $G_1$ transversally in the points $G_1\cap\widetilde{\ell}_{12}$, $G_1\cap\widetilde{\ell}_{13}$ and $G_1\cap\widetilde{\ell}_{14}$.
Similarly, we see that the curve $\widetilde{C}$ intersects the surface $G_2$ transversally
in the points $G_2\cap\widetilde{\ell}_{12}$, $G_2\cap\widetilde{\ell}_{23}$ and $G_2\cap\widetilde{\ell}_{24}$,
and $\widetilde{C}$ intersects the surface $G_3$ transversally in the points
$G_3\cap\widetilde{\ell}_{13}$, $G_1\cap\widetilde{\ell}_{23}$ and $G_1\cap\widetilde{\ell}_{34}$.

Note that $\widetilde{F}_4$ is a smooth del Pezzo surface of degree $6$,
and its $(-1)$-curves are $\widetilde{\ell}_{12}$, $\widetilde{\ell}_{13}$, $\widetilde{\ell}_{23}$,
$G_1\cap\widetilde{F}_4$, $G_2\cap\widetilde{F}_4$, $G_3\cap\widetilde{F}_4$.
Note also that the curves  $\widetilde{\ell}_{12}$, $\widetilde{\ell}_{13}$, $\widetilde{\ell}_{23}$ are pairwise disjoint,
and each of them contains at least two points of the intersection $\widetilde{F}_4\cap\widetilde{C}$.
This gives
$$
6\leqslant|\widetilde{F}_4\cap\widetilde{C}|\leqslant\widetilde{F}_4\cdot\widetilde{C}=\Big(\sigma^*\big(F_4\big)-G_1-G_2-G_3\Big)\cdot\widetilde{C}=d-3\widetilde{C}\cdot G_1=d-9\leqslant 6.
$$
Thus, we conclude that $d=15$, $|\widetilde{F}_4\cap\widetilde{C}|=\widetilde{F}_4\cdot\widetilde{C}$=6,
and $\widetilde{C}$ intersects $\widetilde{F}_4$
transversally at the~points $G_1\cap\widetilde{\ell}_{12}$, $G_1\cap\widetilde{\ell}_{13}$,
$G_2\cap\widetilde{\ell}_{12}$, $G_2\cap\widetilde{\ell}_{23}$, $G_3\cap\widetilde{\ell}_{13}$, $G_1\cap\widetilde{\ell}_{23}$.
In particular, we see that the~curve $\widetilde{C}$ is smooth at these six intersection points.
Note also that $C\cap\mathcal{T}=\Sigma_4$.

Let $P=G_1\cap\widetilde{\ell}_{12}$.
Then $T\subset\mathrm{Stab}_G(P)$, and $T$ is not cyclic by Lemmas~\ref{lemma:S4} and \ref{lemma:A4}.
In~particular, we conclude that $\mathrm{Stab}_G(P)$ is not cyclic.
This implies that $C$ is reducible.
Indeed, if $C$ were irreducible, then $\mathrm{Stab}_G(P)$ would act faithfully on $C$,
so it would act faithfully on $\widetilde{C}$,
which would imply that $\mathrm{Stab}_G(P)$ is cyclic \cite[Lemma~2.7]{FZ},
because the curve $\widetilde{C}$ is smooth at the point $P$. Contradiction.

Let $C=C_1+\cdots+C_{r}$, where $r$ is the number of irreducible components of the curve~$C$, and each $C_i$ is an irreducible curve.
Since  $d=15$,  one of the following cases holds:
\begin{itemize}
\item $r=15$ and each $C_i$ is a line;
\item $r=5$ and each $C_i$ is a cubic curve;
\item $r=3$ and each $C_i$ is a quintic curve.
\end{itemize}
Let $k$ be the number of irreducible components of the curve $C$ that passes through $P_1$,
and let $l$ be the numbers of points in $\Sigma_{4}$ that are contained in $C_1$. Then
$$
4k=rl,
$$
so that $r=k=3$ and $l=4$, i.e. $C$ is a union of three irreducible quintic curves $C_1$,~$C_2$,~$C_3$,
and each of these quintic curves contains $\Sigma_{4}$.
In particular, these curves are not planar.
Moreover, since $\widetilde{C}\cdot G_1=3$, we conclude that $C_1$, $C_2$, $C_3$ are smooth at $P_1$,
so that these curves are smooth at the points of the $G$-orbit $\Sigma_{4}$.

The group $\mathrm{Stab}_{G}(C_1)$ acts faithfully on $C_1$, so $T\not\subset\mathrm{Stab}_{G}(C_1)$ by \cite[Lemma~2.7]{FZ},
because the group $T$ fixes the~point $P_1$, but the group $T$ is not cyclic.
Therefore, since $\mathrm{Stab}_{G}(C_1)$ is a subgroup in $G$ of index $3$, we conclude that
$$
\upsilon\big(\mathrm{Stab}_{G}(C_1)\big)=\mathrm{im}(\upsilon),
$$
where $\upsilon\colon G\to\mathfrak{S}_4$ is the group homomorphism induced by the $G$-action on the~set $\Sigma_4$.
Thus, we see that the~group $\mathrm{Stab}_{G}(C_1)$ acts transitively on the points of the~$G$-orbit $\Sigma_4$,
and the stabilizer in $\mathrm{Stab}_{G}(C_1)$ of the plane $F_4$ acts transitively on the set $\{P_1,P_2,P_3\}$.
On the other hand, we know that $C\cap\mathcal{T}=\Sigma_4$, hence $C\cap F_4=P_1\cup P_2\cup P_3$. Then
$$
5=F_4\cdot C_1=\big(F_4\cdot C_1)_{P_1}+\big(F_4\cdot C_1)_{P_2}+\big(F_4\cdot C_1)_{P_3}=3\big(F_4\cdot C_1)_{P_1},
$$
which is absurd. The obtained contradiction completes the proof of the lemma.
\end{proof}

\begin{lemma}
\label{lemma:P3-curves-edges}
Suppose that $C\not\subset\mathcal{T}$. Then $C\cap\mathcal{L}_6=\varnothing$.
\end{lemma}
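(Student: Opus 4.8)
The plan is to argue by contradiction: suppose $C\cap\mathcal{L}_6\neq\varnothing$. Since $\Sigma_4$ is a single $G$-orbit and $C$ is $G$-invariant, Lemma~\ref{lemma:P3-curve-vertices} gives $C\cap\Sigma_4=\varnothing$, so every point of $C\cap\mathcal{L}_6$ lies in $\mathcal{L}_6\setminus\Sigma_4$. Two structural facts are worth recording first. No irreducible component of $C$ is an edge $\ell_{ij}$: otherwise $G$-irreducibility would force every component to be an edge and hence $C\subseteq\mathcal{L}_6\subseteq\mathcal{T}$, contrary to hypothesis; the same reasoning gives $C\not\subseteq F_i$, so $C\cdot F_i$ is a $0$-cycle of degree $d$ for each face $F_i$. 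Moreover $n\geqslant 3$, because $n=1$ is ruled out by $T\neq 1$, while by Lemmas~\ref{lemma:48}, \ref{lemma:96}, \ref{lemma:96-2} and \ref{lemma:192} every group with $n=2$ is one of the eight subgroups excluded in this section.

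Next I would extract the degree bound. Choose a $G$-orbit $\Sigma\subseteq C\cap(\mathcal{L}_6\setminus\Sigma_4)$. As $\mathrm{im}(\upsilon)\in\{\mathfrak{A}_4,\mathfrak{S}_4\}$ acts transitively on the six edges, the sets $\Sigma\cap\ell_{ij}$ are permuted transitively and partition $\Sigma$, so each edge carries exactly $|\Sigma|/6\geqslant n$ points of $\Sigma$ (using $|\Sigma|\geqslant 6n$ from Lemma~\ref{lemma:orbits}). The face $F_4$ contains precisely the three edges $\ell_{12},\ell_{13},\ell_{23}$, disjoint away from $\Sigma_4$, so $C\cap F_4$ contains at least $3n$ distinct points and $d=\deg(C\cdot F_4)\geqslant 3n$. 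Thus $n\leqslant 5$, and for $n\geqslant 6$ we already obtain $d\geqslant 18>15$, a contradiction. It remains to treat $n\in\{3,4,5\}$, and for these I will upgrade the bound to $d\geqslant 6n$.

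The upgrade rests on showing $\mathrm{mult}_P(C)\geqslant 2$ at each point of $\Sigma$: then each such $P$ lies on the two faces through its edge and contributes at least $2\,\mathrm{mult}_P(C)\geqslant 4$ to $C\cdot\mathcal{T}$, whence $4d=\deg(C\cdot\mathcal{T})\geqslant 4\cdot 6n=24n$ and $d\geqslant 6n\geqslant 18$. Fix $P\in\Sigma$ on $\ell_{12}$ and set $\Gamma=\mathrm{Stab}_G(P)$. The group $\mathrm{Stab}_T(P)$ is the kernel of the ratio character $T\to\mumu_n$ on $\ell_{12}$, hence has rank $\geqslant 2$ and is non-cyclic, \emph{except} in case (3) of Lemmas~\ref{lemma:S4} and~\ref{lemma:A4} with $n=4$, i.e. $T\cong\mumu_4^2$. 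In that exceptional case $\mathrm{Stab}_T(P)\cong\mumu_4$ is cyclic; but keeping $d\leqslant 15$ forces the per-edge count $|\Sigma|/6$ to be small, which pins down $|\Gamma|$ ($|\Gamma|=8$ when $\mathrm{im}(\upsilon)=\mathfrak{A}_4$ and $|\Gamma|=16$ when $\mathrm{im}(\upsilon)=\mathfrak{S}_4$), and a short computation with $g=tB$ (where $\upsilon(g)=\varsigma$), using $B^2\in T$ and the $\varsigma$-action on $T$ from Lemma~\ref{lemma:P3-P3}, identifies $\Gamma$ as dihedral or quaternionic of order $8$ in the first case and as a group surjecting onto $\mumu_2^2$ in the second — in every case non-cyclic. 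When $C$ is irreducible it spans $\mathbb{P}^3$, since a $G$-invariant plane would give a $G$-stable quotient line of $\mathbb{C}^4$ carrying a single $\mathrm{im}(\upsilon)$-invariant character of $T$, impossible because the four coordinate characters of $T$ are permuted transitively and are not all equal. Hence $\Gamma$ acts faithfully on $C$, and as it is non-cyclic and fixes the point $P\in C$, the curve $C$ cannot be smooth at $P$ by \cite[Lemma~2.7]{FZ}, giving $\mathrm{mult}_P(C)\geqslant 2$.

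The main obstacle is the reducible case, where I expect the real work to lie. If two or more components of $C$ pass through $P$ then $\mathrm{mult}_P(C)\geqslant 2$ automatically; otherwise the unique component $C_1$ through $P$ is $\Gamma$-invariant, and if $C_1$ spans $\mathbb{P}^3$ the argument above applies verbatim. The delicate sub-case is a degenerate $C_1$: a priori $\Gamma$ may preserve a \emph{non-coordinate} plane $\Pi\supseteq C_1$, so one cannot immediately conclude $C_1\subseteq\mathcal{T}$. I plan to exclude this by studying the induced action of the non-cyclic group $\Gamma$ on $\Pi\cong\mathbb{P}^2$ fixing both $P$ and the tangent line $T_PC_1$—showing it again forces either a singularity of $C_1$ at $P$ or a cyclic quotient, contradicting non-cyclicity—combined with the requirement that the $T$-orbit of $C_1$ remains inside $C$. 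In all cases the conclusion $\mathrm{mult}_P(C)\geqslant 2$ yields $d\geqslant 6n>15$, contradicting $\deg(C)\leqslant 15$ and proving $C\cap\mathcal{L}_6=\varnothing$.
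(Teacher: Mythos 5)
Your overall strategy is the paper's argument run in contrapositive: the paper uses the degree bound $d\leqslant 15$ to show that $C$ is \emph{smooth} at the points of $C\cap\ell_{12}$ and then derives a contradiction from \cite[Lemma~2.7]{FZ}, whereas you use \cite[Lemma~2.7]{FZ} to force $\mathrm{mult}_P(C)\geqslant 2$ and then contradict the degree bound via $4d=C\cdot\mathcal{T}\geqslant 24n$. These are equivalent arrangements, and your numerical ingredients (equidistribution of the orbit over the six edges, the reduction to $3\leqslant n\leqslant 5$, the identification of the exceptional case $T\cong\mumu_4^2$ with cyclic $\mathrm{Stab}_T(P)\cong\mumu_4$, the computation of $|\mathrm{Stab}_G(P)|$, and the appearance of $\mathrm{D}_8$ or $\mathrm{Q}_8$) all match the paper's proof.

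There is, however, a genuine gap, and you flag it yourself: your only mechanism for faithfulness of the $\mathrm{Stab}_G(P)$-action on the component $C_1$ through $P$ is that $C_1$ spans $\mathbb{P}^3$, so the case of a planar component is left open, and your sketched plan for it does not work as stated. A non-cyclic group can perfectly well act faithfully on a plane $\Pi\cong\mathbb{P}^2$ fixing a point and a line through it (the diagonal $\mumu_n^2$ does exactly this), so studying the induced action on $\Pi$ cannot by itself ``force a cyclic quotient''; what \cite[Lemma~2.7]{FZ} needs is faithfulness of the action on the curve $C_1$ itself. The criterion the paper uses makes your spanning dichotomy unnecessary: every non-trivial element of $T$ is diagonal, so its fixed locus in $\mathbb{P}^3$ is a union of coordinate subspaces and is therefore contained in $\mathcal{T}$; since $C$ is $G$-irreducible and $C\not\subset\mathcal{T}$, every irreducible component satisfies $C_1\not\subset\mathcal{T}$, hence no non-trivial element of $T$ fixes $C_1$ pointwise, and $T\cap\mathrm{Stab}_G(P)$ acts faithfully on $C_1$ whether or not $C_1$ is planar. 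For $n\in\{3,5\}$ this group is $\mumu_n^2$, already non-cyclic, and FZ applies. For $n=4$ with $T\cong\mumu_4^2$ one extends the faithfully acting $\mumu_4=\langle(i,i,-1)\rangle$ by your element $\theta$ to a group isomorphic to $\mathrm{D}_8$ or $\mathrm{Q}_8$; since every non-trivial normal subgroup of $\mathrm{D}_8$ or $\mathrm{Q}_8$ contains the centre, which lies inside the faithfully acting $\mumu_4$ (equivalently, neither group has a quotient isomorphic to $\mumu_4$), the kernel of the action on $C_1$ is trivial, and FZ again yields the required singularity at $P$. With this substitution for your faithfulness step, your proof closes; as written, the planar sub-case remains unproved.
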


\begin{proof}
Suppose  $C\cap\mathcal{L}_6\ne\varnothing$. Let $k=|C\cap\ell_{12}|$.
By Lemma~\ref{lemma:P3-curve-vertices}, $\Sigma_4\not\subset C$, hence $k\geqslant n\geqslant 3$.
Then $C\cap F_4\subset\ell_{12}\cup\ell_{13}\cup\ell_{23}$.
Indeed, if $F_4\cap C$ contains a point $P\not\in\ell_{12}\cup\ell_{13}\cup\ell_{23}$, then
\begin{multline*}
15\geqslant d=F_4\cdot C\geqslant |F_4\cap C|\geqslant|C\cap\ell_{12}|+|C\cap\ell_{13}|+|C\cap\ell_{23}|+|\mathrm{Orb}_{\Gamma}(P)|=\\
=3k+|\mathrm{Orb}_{\Gamma}(P)|\geqslant 3k+n^2\geqslant 3n+n^2\geqslant 18,
\end{multline*}
where $\Gamma=\mathrm{Stab}_{G}(F_{4})$. Similarly, we see that $3\leqslant n\leqslant k\leqslant 5$, and the~curve $C$ is smooth at the points of the intersection $C\cap\ell_{12}$.
Therefore, we conclude that $C\cap\mathcal{T}=C\cap\mathcal{L}_6$, and the~curve $C$ is smooth at the points of the intersection $C\cap\mathcal{L}_{6}$.

Let $P$ be a point in $C\cap\ell_{12}$. Since $C$ is smooth at $P$, there exists a unique irreducible component of the curve $C$
that contains $P$. Denote this curve by $Z$. Let $K=\mathrm{Stab}_G(Z)$. Then $Z$ is smooth~at~$P$, and $\mathrm{Stab}_G(P)=\mathrm{Stab}_K(P)$, hence $Z$ is $\mathrm{Stab}_G(P)$-invariant.
However, if $n\in\{3,5\}$, then it follows from the proofs of Lemmas~\ref{lemma:S4} and~\ref{lemma:A4} that
$$
T\cap\mathrm{Stab}_G(P)\cong\mumu_n^2,
$$
and $T\cap\mathrm{Stab}_G(P)$ acts faithfully on $Z$, because $Z$ is not contained in $\mathcal{T}$ by assumption.
This is impossible by \cite[Lemma~2.7]{FZ}, since $Z$ is smooth at $P$.
Hence, we have $n=4$.

Arguing as above and using the proofs of Lemmas~\ref{lemma:S4} and~\ref{lemma:A4}, we see that $T\cong\mumu_4^2$ and
$$
T\cap\mathrm{Stab}_G(P)=\langle(i,i,-1)\rangle\cong\mumu_4.
$$
On the other hand, if $|\mathrm{Stab}_G(P)|=4$, then $|\mathrm{Orb}_G(P)|\geqslant 48$,  hence $5\geqslant k=|C\cap\ell_{12}|\geqslant 8$.
Therefore, if $\mathrm{im}(\upsilon)=\mathfrak{A}_4$, then $\upsilon(\mathrm{Stab}_G(P))=\langle (12)(34)\rangle\subset\mathfrak{A}_4$.
Similarly, if $\mathrm{im}(\upsilon)\cong\mathfrak{S}_4$, then $|\mathrm{Stab}_G(P)|\geqslant 16$,
which immediately implies that $\upsilon(\mathrm{Stab}_G(P))=\langle (12),(34)\rangle\subset\mathfrak{S}_4$.
Thus, there exists $\theta\in\mathrm{Stab}_G(P)$ such that $\upsilon(\theta)=(12)(34)$ and $\theta^2\in\langle(i,i,-1)\rangle$. Then
$$
\theta=\begin{pmatrix}
0 & b_2 & 0 & 0\\
b_1 & 0 & 0 & 0\\
0 & 0 & 0 & 1\\
0 & 0 & b_3 & 0
\end{pmatrix}
$$
for some non-zero numbers $b_1$, $b_2$, $b_3$ such that $b_1b_2=\pm b_3$.
Hence, conjugating $G$ by an~appropriate element of the torus $\mathbb{T}$, we may assume that $b_1=1$, $b_2=1$ and $b_3=\pm 1$.
In both cases, the subgroup $\langle(i,i,-1),\theta\rangle\subset\mathrm{Stab}_G(P)$ is not cyclic. In fact, one has
$$
\langle(i,i,-1),\theta\rangle\cong
\left\{\aligned
&\mathrm{D}_8\ \text{if $b_3=1$},\\
&\mathrm{Q}_8\ \text{if $b_3=-1$}.\\
\endaligned
\right.
$$
On the other hand, the subgroup $\langle(i,i,-1)\rangle\cong\mumu_4$ acts faithfully on $Z$, because $Z\not\subset\mathcal{T}$.
This implies that the whole group $\langle(i,i,-1),\theta\rangle$ also acts faithfully on $Z$, because
neither the dihedral group $\mathrm{D}_8$ nor the quaternion group $\mathrm{Q}_8$ have quotients isomorphic to $\mumu_4$.
Therefore, as above, we obtain a contradiction with \cite[Lemma~2.7]{FZ}.
\end{proof}

\begin{lemma}
\label{lemma:P3-curves-faces}
If $G$ is not conjugate to $G_{324,160}^\prime$, then $C\subset\mathcal{T}$.
If $G=G_{324,160}^\prime$ and $C\not\subset\mathcal{T}$, then $C$ is one of the following smooth irreducible curves of degree nine and genus ten:
\begin{align}
\big\{(1+\zeta_3)x_1^3+\zeta_3x_2^3+x_3^3&=x_0^3+\zeta_3 x_1^3-(1+\zeta_3)x_2^3=0\big\},\label{equation:48-curve-degree-9-1}\\
\big\{\zeta_3x_1^3+(1+\zeta_3)x_2^3-x_3^3&=x_0^3-(1+\zeta_3)x_1^3+\zeta_3x_2^3=0\big\}.\label{equation:48-curve-degree-9-2}
\end{align}
\end{lemma}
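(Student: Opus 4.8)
The plan is to assume $C\not\subset\mathcal{T}$ and to deduce from this that $G=G_{324,160}^\prime$ and that $C$ is one of the two stated curves. First I would feed in the previous two lemmas: since $C\not\subset\mathcal{T}$, Lemmas~\ref{lemma:P3-curve-vertices} and~\ref{lemma:P3-curves-edges} give $\Sigma_4\not\subset C$ and $C\cap\mathcal{L}_6=\varnothing$, so that $C\cap\mathcal{T}\subset\mathcal{T}\setminus\mathcal{L}_6$. As $F_4\cdot C=\deg(C)>0$, this intersection is non-empty, and by Lemma~\ref{lemma:orbits} every $G$-orbit contained in it has length at least $4n^2$. Since $G$ permutes the four faces transitively, such an orbit meets $F_4$ in at least $n^2$ points, whence $n^2\leqslant F_4\cdot C=\deg(C)\leqslant 15$ and $n\leqslant 3$. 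As $T\neq 1$, the classification of Section~\ref{section:subgroups} together with the standing hypotheses (which exclude all eight groups with $n=2$) forces $n=3$, $T=\mumu_3^3$, and $G\in\{G_{324,160},G_{324,160}^\prime,G_{648,704},G_{648,704}^\prime\}$. Finally, on each face the image of $T$ in $\mathrm{PGL}_3$ is the diagonal $\mumu_3^2$, whose orbits off the coordinate lines have length $9$; since $\deg(C)\leqslant 15<18$, the set $C\cap F_4$ is a single such orbit, and transitivity of its stabilizer forces all intersection multiplicities to be equal, so that $9\mid\deg(C)$ and hence $\deg(C)=9$ with transverse intersection.

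The heart of the argument is the $G$-equivariant finite morphism $\psi\colon\mathbb{P}^3\to\mathbb{P}^3$, $[x_0:x_1:x_2:x_3]\mapsto[x_0^3:x_1^3:x_2^3:x_3^3]$, which realises the target as the quotient of $\mathbb{P}^3$ by $T=\mumu_3^3$; here $\mathrm{im}(\upsilon)$ acts on the target through its action on $\mathbb{W}=\langle x_0^3,x_1^3,x_2^3,x_3^3\rangle=H^0(\mathbb{P}^3,\mathcal{O}(1))$ of the target. I would show that $\psi(C)$ is a line. Indeed, $\psi(C)$ is a curve (the finite image of a curve) and $\deg(\psi^*\mathcal{O}(1)\vert_C)=3\deg(C)=27$, so $\deg\psi(C)=27/\deg(\psi\vert_C)$. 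For $1\neq t\in T$ the fixed locus $\mathrm{Fix}(t)$ is a union of coordinate linear subspaces, hence contained in $\mathcal{T}\cup\mathcal{L}_6$; as $C$ lies in neither, no non-trivial element of $T$ fixes a general point of $C$, so a general fibre of $\psi\vert_C$ is a free $T$-orbit of length $27$ and $\deg(\psi\vert_C)=27$. Therefore $\psi(C)$ is an $\mathrm{im}(\upsilon)$-invariant line $L$, and $C\subseteq\psi^{-1}(L)$. Writing $L=\{\ell_1=\ell_2=0\}$ we get $C\subseteq\{\psi^*\ell_1=\psi^*\ell_2=0\}$, a complete intersection of two cubics from $\mathbb{W}$ of degree $9$; since $\deg(C)=9$ this is an equality, so $C$ is a $(3,3)$ complete intersection and adjunction gives genus $10$.

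It remains to decide for which of the four groups an $\mathrm{im}(\upsilon)$-invariant line $L$ exists, which is purely representation-theoretic: because $T$ acts trivially on $\mathbb{W}$, invariant lines correspond to $2$-dimensional subrepresentations of $\mathbb{W}$. The subtlety — and the crux of the whole lemma — is that the relevant action is that of a central extension, since $-I\in\mathrm{GL}_4(\mathbb{C})$ acts on cubics by $-1$. For $G_{324,160}^\prime$ the generator $B$ satisfies $B^2=-I$, so the group acting on $\mathbb{W}$ is $2.\mathfrak{A}_4\cong\mathrm{SL}_2(\mathbb{F}_3)$; for $G_{648,704}^\prime$ it is the binary octahedral group $2.\mathfrak{S}_4$, whereas for $G_{324,160}$ and $G_{648,704}$ the action factors through $\mathfrak{A}_4$ and $\mathfrak{S}_4$. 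Arguing as in Lemma~\ref{lemma:48-representations}, I would compute with GAP the decomposition of $\mathbb{W}$ in each case: it is $\mathbf{1}\oplus\mathbf{3}$ for $\mathfrak{A}_4$ and $\mathfrak{S}_4$, the irreducible $4$-dimensional spin representation for $2.\mathfrak{S}_4$, and a sum $\mathbf{2}\oplus\mathbf{2}^\prime$ of two non-isomorphic $2$-dimensional representations for $\mathrm{SL}_2(\mathbb{F}_3)$. Hence a $2$-dimensional subrepresentation — and so a curve $C\not\subset\mathcal{T}$ — exists only when $G=G_{324,160}^\prime$, in which case there are exactly two such lines $\mathbb{P}(\mathbf{2})$ and $\mathbb{P}(\mathbf{2}^\prime)$; pulling them back by $\psi$ and writing out the two cubic pencils explicitly yields precisely the curves~\eqref{equation:48-curve-degree-9-1} and~\eqref{equation:48-curve-degree-9-2}, which one checks directly are smooth and irreducible. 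The main obstacle is exactly this last step together with the correct bookkeeping of the central extension; everything preceding it is forced by the orbit count and the finiteness of $\psi$.
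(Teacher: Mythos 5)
Your proposal is correct, but after the common opening it takes a genuinely different route from the paper's proof. Both arguments begin the same way: Lemma~\ref{lemma:P3-curves-edges} gives $C\cap\mathcal{L}_6=\varnothing$, the orbit bound of Lemma~\ref{lemma:orbits} forces $n\leqslant 3$, the standing hypotheses then force $n=3$ and $G\in\{G_{324,160},G_{324,160}^\prime,G_{648,704},G_{648,704}^\prime\}$, and the analysis of $C\cap F_4$ as a single length-$9$ orbit of the face stabilizer forces $\deg(C)=9$. From there the paper stays inside the projective geometry of $\mathbb{P}^3$: it reduces to the index-three cases $G_{324,160}$ and $G_{324,160}^\prime$ (using that $G_{648,704}^\prime$ swaps \eqref{equation:48-curve-degree-9-1} and \eqref{equation:48-curve-degree-9-2}), eliminates $G_{324,160}$ by intersecting $C$ with the invariant Fermat cubic and counting orbits, and for $G_{324,160}^\prime$ assumes $C$ is neither listed curve, proves $C$ is irreducible (stabilizer analysis), smooth (via the pencil of cubics through \eqref{equation:48-curve-degree-9-1}), of genus $10$ (Hurwitz), shows by Riemann--Roch that the cubics containing $C$ form a pencil with base locus $C$, and finally quotes the GAP/explicit classification of $G$-invariant pencils of cubics. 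You instead pass to the quotient morphism $\psi\colon[x_0:x_1:x_2:x_3]\mapsto[x_0^3:x_1^3:x_2^3:x_3^3]$ by $T=\mumu_3^3$: since the fixed loci of nontrivial elements of $T$ lie in $\mathcal{T}\cup\mathcal{L}_6$, and since $G$-irreducibility together with $C\not\subset\mathcal{T}$, $C\cap\mathcal{L}_6=\varnothing$ guarantees no component of $C$ lies in those loci, a general fibre of $\psi\vert_{C}$ is a free $T$-orbit of length $27$; the projection formula then gives $27=27\deg\psi(C)$, so $\psi(C)$ is an $\mathrm{im}(\upsilon)$-invariant line and $C$ is a $(3,3)$ complete intersection of cubics from $\mathbb{W}$, reducing the whole lemma to finding two-dimensional subrepresentations of the four-dimensional space $\mathbb{W}$ under the correct central extension. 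Your bookkeeping of those extensions is the crux and it is accurate: the unprimed groups act on $\mathbb{W}$ through honest permutation representations of $\mathfrak{A}_4$ and $\mathfrak{S}_4$ (hence $\mathbf{1}\oplus\mathbf{3}$, no invariant plane), while for the primed groups the lift satisfies $B^2=-I$ on $\mathbb{W}$, and a character computation confirms that for $G_{324,160}^\prime$ the space $\mathbb{W}$ is a sum of two non-isomorphic two-dimensional spin representations of $2.\mathfrak{A}_4\cong\mathrm{SL}_2(\mathbb{F}_3)$ (exactly two invariant planes, giving exactly the two curves), whereas for $G_{648,704}^\prime$ it is irreducible. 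Your fibre count is also valid while $C$ is still a priori reducible, which is important and which you use correctly. What your approach buys: all four groups are treated uniformly (no Fermat-cubic trick, no reduction to index-two subgroups), irreducibility, smoothness and genus of an abstract $C$ never need to be established (they are only verified for the two explicit complete intersections at the end, where adjunction gives $g=10$), and the representation computation takes place in the four-dimensional $\mathbb{W}$ rather than in the twenty-dimensional $H^0(\mathbb{P}^3,\mathcal{O}_{\mathbb{P}^3}(3))$ underlying the paper's classification of invariant pencils. What the paper's route buys is independence from the quotient-map construction and a self-contained argument whose intermediate steps (orbit lengths on $C$, the pencil through $C$ and its base locus) mirror techniques used throughout the rest of the paper.
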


\begin{proof}
Suppose that  $C\not\subset\mathcal{T}$.
Then $C\cap\mathcal{L}_6=\varnothing$ by Lemma~\ref{lemma:P3-curves-edges}, hence Lemma~\ref{lemma:orbits} gives
$$
60\geqslant 4d=\mathcal{T}\cdot C\geqslant|\mathcal{T}\cap C|\geqslant 4n^2,
$$
which gives $n\leqslant 3$. Then $G$ is one of the subgroups $G_{324,160}$, $G_{324,160}^\prime$, $G_{648,704}$~or~$G_{648,704}^\prime$.
Recall from Section~\ref{section:subgroups} that $G_{324,160}\subset G_{648,704}$ and $G_{324,160}^\prime\subset G_{648,704}^\prime$.
Hence, to proceed, we may assume that $G=G_{324,160}$ or $G=G_{324,160}^\prime$, since $G_{648,704}^\prime$ swaps \eqref{equation:48-curve-degree-9-1} and \eqref{equation:48-curve-degree-9-2}.

Let $\Gamma=\mathrm{Stab}_G(F_4)$.
Then $\Gamma\cong\mumu_3^2\rtimes\mumu_3$ and $\Gamma$ is generated by
$$
\big(\zeta_3,1,1\big),\big(1,\zeta_3,1\big),
\begin{pmatrix}
0 & 0 & 1 & 0\\
1 & 0 & 0 & 0\\
0 & 1 & 0 & 0\\
0 & 0 & 0 & 1
\end{pmatrix}.
$$
Since  $15\geqslant d=F_4\cdot C\geqslant|F_4\cap C|$ and $F_4\cap C$ is a $\Gamma$-invariant subset in $F_4\setminus(\ell_{12}\cup\ell_{13}\cup\ell_{23})$,
we conclude that $F_4\cap C$ is the $\Gamma$-orbit of one of the following points:
$$
[1:1:1:0], [1:\zeta_3:\zeta_3^2:0],[1:\zeta_3^2:\zeta_3:0].
$$
Moreover, in both cases, we have  $d=F_4\cdot C=|F_4\cap C|=9$, which implies, in particular,
that the curve $C$ is smooth in every  intersection point $C\cap\mathcal{T}$.

Suppose that $G=G_{324,160}$. Let $S$ be the Fermat cubic $\{x_0^3+x_1^3+x_2^3+x_3^3=0\}\subset\mathbb{P}^3$.
Then $S$ is $G$-invariant, and $S$ does not contain $[1:1:1:0]$, $[1:\zeta_3:\zeta_3^2:0]$, $[1:\zeta_3^2:\zeta_3:0]$.
Thus, we conclude that $C\not\subset S$,
and the intersection $S\cap C$ is a $G$-invariant finite subset,
which is disjoint from the surface $\mathcal{T}$.
Moreover, since $|S\cap C|\leqslant S\cdot C\leqslant 27$,
it follows from~Lemma~\ref{lemma:orbits} that $S\cap C$ must be a $G$-orbit of length $27$,
which is not contained in $\mathcal{T}$.
This implies that $S\cap C=\mathrm{Orb}_G([1:1:1:1])$. But $[1:1:1:1]\not\in S$.

Thus, we see that $G=G_{324,160}^\prime$.
If $C$ is one of the curves \eqref{equation:48-curve-degree-9-1} or \eqref{equation:48-curve-degree-9-2}, we are done.
Hence, we assume that $C$ is not one of them. Let us seek for a contradiction.

We claim that $C$ is irreducible. Suppose it is not.
Then $C$ is a union of three cubics, or $C$ is a union of nine lines.
In the former case, the cubic curves must be non-planar, because $\mathbb{P}^3$ does not have $G$-orbits of length $3$.
Moreover, the group $G$ contains unique subgroup of index three up to conjugation \cite{Tim},
and this subgroup is isomorphic to $\mumu_3^3\rtimes\mumu_2^2$.
Since~$\mumu_3^3\rtimes\mumu_2^2$ cannot faithfully act on $\mathbb{P}^1$, we see that $C$ is not a union of three cubics.
Similarly, if the curve $C$ is a union of nine lines, then it follows from \cite{Tim} that
their stabilizers are isomorphic to $\mathfrak{S}_3\times\mathfrak{S}_3$.
The group $G$ contains nine such subgroups~\cite{Tim}, but all of them are conjugate.
One of these nine subgroups is generated by
$$
\begin{pmatrix}
\zeta_3 & 0 & 0 & 0\\
0 & 1 & 0 & 0\\
0 & 0 & \zeta_3 & 0\\
0 & 0 & 0 & 1
\end{pmatrix},
\begin{pmatrix}
\zeta_3 & 0 & 0 & 0\\
0 & \zeta_3 & 0 & 0\\
0 & 0 & 1 & 0\\
0 & 0 & 0 & 1
\end{pmatrix},
\begin{pmatrix}
0 & -1 & 0 & 0\\
1 & 0 & 0 & 0\\
0 & 0 & 0 & -1\\
0 & 0 & 1 & 0
\end{pmatrix},
\begin{pmatrix}
0 & 0 & 0 & -1\\
0 & 0 & -1 & 0\\
0 & 1 & 0 & -1\\
1 & 0 & 1 & 0
\end{pmatrix}
$$
Now, one can verify that this particular subgroup does not leave any line in $\mathbb{P}^3$ invariant.
The obtained contradiction shows that the curve $C$ is irreducible.

We claim that $C$ is smooth. Suppose $C$ is not smooth. Let $P$ be its singular point,
and let $S$ be a surface in the pencil of cubic surfaces that pass through \eqref{equation:48-curve-degree-9-1} such $P\in S$.
Then the~surface $S$ is given by the~equation
$$
\lambda\big((1+\zeta_3)x_1^3+\zeta_3x_2^3+x_3^3\big)=\mu\big(x_0^3+\zeta_3 x_1^3-(1+\zeta_3)x_2^3\big)
$$
for some $[\lambda:\mu]\in\mathbb{P}^1$. This surface is not $G$-invariant, but $\mathrm{Stab}_G(S)$ contains $T\cong\mumu_3^3$.
One the other hand, we have $|\mathrm{Orb}_P(G)|\geqslant 27$, because $P\not\in\mathcal{T}$.
Thus, if $C\not\subset S$, then
$$
27=S\cdot C\geqslant\sum_{O\in\mathrm{Orb}_P(G)}\big(S\cdot C\big)_O\geqslant\sum_{O\in\mathrm{Orb}_P(G)}\mathrm{mult}_O(S)\mathrm{mult}_O(C)\geqslant 2|\mathrm{Orb}_P(G)|\geqslant 54,
$$
which is absurd. Hence, we see that $C\subset S$.
Thus, since the~surface $S$ is not $G$-invariant,
the curve $C$ is contained in another cubic surface in the pencil of cubic surfaces that pass through the~curve \eqref{equation:48-curve-degree-9-1},
which implies that $C$ is contained in the base locus of this pencil.
But the base locus of this pencil is the irreducible curve \eqref{equation:48-curve-degree-9-1}, hence $C$ is the curve \eqref{equation:48-curve-degree-9-1},
which contradicts our assumption.
Therefore, we conclude that  $C$ is smooth.

Let $g$ be the genus of the curve $C$.
Now, using Castelnuovo bound, we see that $g\leqslant 12$.
Moreover, arguing exactly as in the proof of Lemma~\ref{lemma:48-curves}, we can easily prove that $g=10$.
Namely, recall that the~stabilizer in the~group $G$ of a point in $C$ is cyclic \cite[Lemma~2.7]{FZ},
which implies that the~$G$-orbits in the curve $C$ can be only of lengths $36$, $54$, $108$, $162$,
because  cyclic subgroups in $G$ are isomorphic to $\mumu_9$, $\mumu_6$, $\mumu_3$, $\mumu_2$ (see, for example, \cite{Tim}).
As in the proof of Lemma~\ref{lemma:48-curves}, let $\widehat{C}= C/G$, let $\hat{g}$ be the~genus of the~quotient curve~$\widehat{C}$,
and let $a_{36}$, $a_{54}$, $a_{108}$, $a_{162}$ be the~number of $G$-orbits in $C$ of length $36$, $54$, $108$, $162$.
Then
$$
22\geqslant 2g-2=48\big(2\hat{g}-2\big)+288a_{36}+270a_{54}+216a_{108}+162a_{162}
$$
by the Hurwitz's formula. This gives $g=10$ or $g=1$. But $g\ne 1$, since $G$ cannot act faithfully on a~smooth elliptic curve,
because our group $G\cong\mumu_3^3\rtimes\mathfrak{A}_4$ does not have abelian subgroups of index at most $6$ --- the largest abelian subgroup in $G$ is the subgroup $T\cong\mumu_3$.
Therefore, we conclude that $g=10$.

Let $\mathcal{M}_3$ be the linear system consisting of all cubic surfaces in $\mathbb{P}^3$ that pass through~$C$.
Then $\mathcal{M}_3$ is $G$-invariant. But a priori $\mathcal{M}_3$ may be empty. We claim that $\mathcal{M}_3$ is~not~empty, it is a pencil, and $C$ is its base locus.
Indeed, let $\mathcal{I}_C$ be the ideal sheaf of the curve $C\subset\mathbb{P}^3$.
Then we have the following exact sequence:
$$
0\longrightarrow H^0\big(\mathcal{O}_{\mathbb{P}^3}(3)\otimes\mathcal{I}_C\big)\longrightarrow H^0\big(\mathcal{O}_{\mathbb{P}^3}(3)\big)\longrightarrow H^0\big(\mathcal{O}_{\mathbb{P}^3}(3)\big\vert_{C}\big).
$$
On the other hand, it follows from the~Riemann--Roch theorem and Serre duality that
$$
h^0\big(\mathcal{O}_{\mathbb{P}^3}(3)\big\vert_{C}\big)=3d-g+1+h^0\big(K_C-\mathcal{O}_{\mathbb{P}^3}(3)\big\vert_{C}\big)=3d-g+1=18.
$$
Thus, since $h^0(\mathcal{O}_{\mathbb{P}^3}(3))=20$, we conclude that $\mathcal{M}_3$ is not empty, and it is at least a pencil.
Moreover, the linear system $\mathcal{M}_3$ does not have fixed components, because $\mathbb{P}^3$ does not contain
$G$-invariant planes and $G$-invariant quadrics.
Therefore, since $C$ is contained in the base locus of the~linear system $\mathcal{M}_4$ and $d=9$,
we conclude that $\mathcal{M}_3$ is a pencil, and the curve $C$ is its base locus.

On the other hand, the only $G$-invariant pencils in $|\mathcal{O}_{\mathbb{P}^3}(3)|$ are
the pencils of cubic surfaces that pass through \eqref{equation:48-curve-degree-9-1} or \eqref{equation:48-curve-degree-9-2}.
This can be shown explicitly or by using GAP.
This shows that $C$ is one of the curves  \eqref{equation:48-curve-degree-9-1} or \eqref{equation:48-curve-degree-9-2},
which contradicts our assumption.
\end{proof}

Now, we are ready to state the main result of this section:

\begin{proposition}
\label{proposition:P3-tetrahedra-curves}
Suppose that $C\ne\mathcal{L}_6$.
Then $G$ is conjugate to one of the following four subgroups: $G_{324,160}$, $G_{324,160}^\prime$, $G_{648,704}$, $G_{648,704}^\prime$.
Moreover, if $G=G_{648,704}$ or $G=G_{648,704}^\prime$, then $C$ is the~reducible curve of degree $9$ whose irreducible component is the~cubic
$$
\big\{x_0^3+x_1^3+x_2^3=x_4=0\big\}.
$$
Similarly, if $G=G_{324,160}$ or  $G=G_{324,160}^\prime$, then either $C$ is a curve of degree $9$ whose irreducible component is
$$
\big\{x_0^3+\zeta_3^r x_1^3+\zeta_3^{3-r}x_2^3=x_4=0\big\}
$$
for $r\in\{0,1,2\}$, or $G=G_{324,160}^\prime$ and $C$ is one of the curves \eqref{equation:48-curve-degree-9-1} and \eqref{equation:48-curve-degree-9-2}.
\end{proposition}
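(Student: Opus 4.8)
The plan is to split the argument according to whether $C\subset\mathcal{T}$. If $C\not\subset\mathcal{T}$, then Lemma~\ref{lemma:P3-curves-faces} already does all the work: by its contrapositive $G$ must be $G_{324,160}^\prime$, and the lemma then identifies $C$ with one of the curves \eqref{equation:48-curve-degree-9-1} or \eqref{equation:48-curve-degree-9-2}. (For $G=G_{648,704}^\prime$, which is not conjugate to $G_{324,160}^\prime$, the same lemma forces $C\subset\mathcal{T}$, so the exceptional curves do not arise there.) Thus the real content of the proposition is the case $C\subset\mathcal{T}$, which I would treat next.

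So assume $C\subset\mathcal{T}$ and $C\neq\mathcal{L}_6$. First I would note that no irreducible component of $C$ is an edge $\ell_{ij}$: since $\mathrm{im}(\upsilon)\supseteq\mathfrak{A}_4$ acts transitively on the six edges, a single edge component would force $C=\mathcal{L}_6$. Hence every component lies in a unique face $F_i$, and as $G$ permutes the four faces transitively, writing $e$ for the degree of the plane curve $D:=C\cap F_4$ gives $\deg C=4e\leqslant 15$, so $e\leqslant 3$. The curve $D$ is $\Gamma_4:=\mathrm{Stab}_G(F_4)$-invariant, and in particular $T$-invariant, with $T$ acting on $F_4\cong\mathbb{P}^2$ through the diagonal torus.

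The technical heart is a classification of the $T$-invariant, edge-free plane curves $D\subset F_4$ of degree $\leqslant 3$. Here I would use that, by the proofs of Lemmas~\ref{lemma:S4} and \ref{lemma:A4}, the group $T$ always contains the three elements $(\zeta_n,1,\zeta_n^{-1})$, $(1,\zeta_n,\zeta_n^{-1})$, $(1,1,\zeta_n^4)$, regardless of the shape of $T$. Reading off the weights of the monomials of degree $\leqslant 3$ under these elements shows: the only linear $T$-eigenforms are $x_0,x_1,x_2$, so the only invariant lines are edges; there is no edge-free invariant conic once $n\geqslant 3$ (the element $(\zeta_n,1,\zeta_n^{-1})$ separates $x_0^2$ from $x_1^2$ unless $n\mid 2$); and an edge-free invariant cubic must have all its monomials in a single weight class, which applied to the corner monomials $x_0^3,x_1^3,x_2^3$ forces $n\mid 3$, hence $n=3$, and leaves only the diagonal cubics $c_0x_0^3+c_1x_1^3+c_2x_2^3$ with $c_0c_1c_2\neq 0$. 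Such a cubic is smooth, so $D$ is a single irreducible cubic and $e=3$. This is the step I expect to be the main obstacle, since it must rule out every reducible configuration (two lines, line-plus-conic, three lines) uniformly across the three shapes of $T$ in Lemmas~\ref{lemma:S4} and \ref{lemma:A4}; the orbit estimate of Lemma~\ref{lemma:orbits} (a general point of $D$ lies in $\mathcal{T}\setminus\mathcal{L}_6$, so its $G$-orbit has length $\geqslant 4n^2$) is the auxiliary input that keeps these configurations in check.

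Once $n=3$ is established, $T=\mumu_3^3$ by the odd case of Lemmas~\ref{lemma:S4} and \ref{lemma:A4}, and the classification of such groups (Lemma~\ref{lemma:324} for $\mathrm{im}(\upsilon)=\mathfrak{A}_4$ and Corollary~\ref{corollary:odd} for $\mathrm{im}(\upsilon)=\mathfrak{S}_4$) shows that $G$ is one of $G_{324,160}$, $G_{324,160}^\prime$, $G_{648,704}$, $G_{648,704}^\prime$, as claimed. Finally I would pin down $c_0,c_1,c_2$ by the residual symmetry: the image $\upsilon(\Gamma_4)$ is $\mathfrak{A}_3$ for the two $\mathfrak{A}_4$-groups and $\mathfrak{S}_3$ for the two $\mathfrak{S}_4$-groups. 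Invariance under the $3$-cycle normalizes $D$ to $\{x_0^3+\zeta_3^r x_1^3+\zeta_3^{3-r}x_2^3=x_3=0\}$ with $r\in\{0,1,2\}$, while the additional transposition in the $\mathfrak{S}_3$-case forces $r=0$, i.e.\ the Fermat cubic $x_0^3+x_1^3+x_2^3$. Taking the $G$-orbit of this face-cubic over the four faces then yields the asserted reducible $G$-irreducible curve, completing the proof.
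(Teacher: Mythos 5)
Your proposal is correct, and its skeleton coincides with the paper's: reduce to $C\subset\mathcal{T}$ via Lemma~\ref{lemma:P3-curves-faces}, analyse the union $Z$ of the components of $C$ lying in a single face (which has degree at most $3$), force $n=3$, then quote the classification at the end of Section~\ref{section:subgroups} and finish by determining the face cubics explicitly. Where you genuinely differ is the central step. The paper argues abstractly: the kernel of $\mathrm{Stab}_G(F_4)\to\mathfrak{S}_3$ acts on the face through a group $\mumu_n^2$, hence $Z$ is a smooth conic or a smooth cubic; the conic is excluded because $\mumu_n^2$ does not embed into $\mathrm{PGL}_2(\mathbb{C})$ for $n\geqslant 3$, and the cubic forces $n=3$ because $\mathrm{Aut}(\mathbb{P}^2,Z)$ contains no $\mumu_n^2$ with $n\geqslant 4$. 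You instead run an explicit eigenform (weight) computation using the three elements $(\zeta_n,1,\zeta_n^{-1})$, $(1,\zeta_n,\zeta_n^{-1})$, $(1,1,\zeta_n^4)$, which lie in $T$ in every case of Lemmas~\ref{lemma:S4} and~\ref{lemma:A4}. This is more elementary and in one respect more robust: the paper's literal claim that $T\supseteq\langle(\zeta_n,1,1),(1,\zeta_n,1)\rangle$ need not hold for the even-$n$ shapes $T\cong\mumu_n^2\times\mumu_{\frac{n}{2}}$ and $\mumu_n^2\times\mumu_{\frac{n}{4}}$ (it is only the image of $T$ in $\mathrm{Aut}(F_4)$, where one may rescale by scalars, that always contains the full diagonal $n$-torsion), whereas your three elements are unconditionally available. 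Your computation also outputs the diagonal cubics directly, so the paper's closing ``elementary computations'' are absorbed into your final paragraph.

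Three caveats, all reparable inside your method. First, your parenthetical reason in the conic case cites the wrong pair: the danger is not $x_0^2$ versus $x_1^2$ but $x_0^2$ versus $x_1x_2$ (and its permutations, which also give \emph{irreducible} invariant conics); under the first two elements these coincide in weight exactly when $n\mid 3$, and it is the third element $(1,1,\zeta_n^4)$ that separates them, after which the full weight table indeed leaves no edge-free invariant conic for any $n\geqslant 3$. Second, $T$-invariance alone does not force $c_0c_1c_2\neq 0$: for $n=3$ the eigenspace $\langle x_0^3,x_1^3,x_2^3\rangle$ also contains forms such as $x_0^3+\lambda x_1^3$, which define three concurrent non-edge lines; these are killed only by invariance under the $3$-cycle in $\mathrm{Stab}_G(F_4)$, which you do invoke at the end, so this conclusion belongs to your last step rather than to the weight computation. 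Third, the $G$-orbit of a face cubic sweeps all four faces, so the resulting $G$-irreducible curve has degree $12$; the ``degree $9$'' (and the coordinate ``$x_4$'') in the statement of the proposition is a misprint, and your proof produces the correct value.
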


\begin{proof}
Using Lemma~\ref{lemma:P3-curves-faces}, we may assume that $\mathcal{T}$ contains $C$.
Let $Z$ be the union of all components of the curve $C$ that are contained in the plane~$F_4$, and let $\Gamma=\mathrm{Stab}_G(F_4)$.
Then $Z$ is $\Gamma$-invariant, but the degree of the curve $Z$ is at most~$3$, because $d\leqslant 15$.

Observe that the group $\Gamma$ acts transitively on the subset $\{P_1,P_2,P_3\}$, and this action induces a homomorphism $\Gamma\to\mathfrak{S}_3$,
whose image is either $\mumu_3$ or $\mathfrak{S}_3$.
Moreover, it follows from the~description of the~subgroup $T$ given in the~proofs of Lemmas~\ref{lemma:S4} and \ref{lemma:A4}
that the kernel of this homomorphism contains the subgroup $\langle(\zeta_n,1,1),(1,\zeta_n,1)\rangle\cong\mumu_n^2$,
which implies that either $Z$ is a smooth conic or $Z$ is a smooth cubic.

Our assumption on the group $G$ implies that $n\geqslant 3$. Thus, the curve $Z$ is not a conic, since the group $\mumu_n^2$
cannot act faithfully on $\mathbb{P}^1$ for $n\geqslant 3$.
Hence, we see that $Z$ is a~cubic.
Then $n=3$, since $\mathrm{Aut}(\mathbb{P}^2,Z)$ does not contain subgroups isomorphic to  $\mumu_n^2$ for $n\geqslant 4$.

Now, it follows from our assumption on $G$ and the results proved in the end of Section~\ref{section:subgroups}
that the~group $G$ is  conjugate to one of the subgroups $G_{324,160}$, $G_{324,160}^\prime$, $G_{648,704}$, $G_{648,704}^\prime$.
The remaining assertions are elementary computations.
\end{proof}

\begin{corollary}
\label{corollary:P3-tetrahedra-curves}
Let $C$ be a $G$-irreducible curve in $\mathbb{P}^3$ such that $C$ is different from $\mathcal{L}_6$,
and let $\mathcal{D}$ be a linear subsystem in $|\mathcal{O}_{\mathbb{P}^3}(n)|$ that has no fixed components, where $n\in\mathbb{Z}_{>0}$.
If the~subgroup $G$ is not conjugate to $G_{324,160}^\prime$, then
$$
\mathrm{mult}_C\big(\mathcal{D}\big)\leqslant\frac{n}{4}.
$$
If $G=G_{324,160}^\prime$ and $C$ is not one of the curves \eqref{equation:48-curve-degree-9-1} or \eqref{equation:48-curve-degree-9-2},
then $\mathrm{mult}_C(\mathcal{D})\leqslant\frac{n}{4}$.
\end{corollary}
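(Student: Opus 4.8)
The plan is to follow the template of Proposition~\ref{proposition:48-curves} and Corollary~\ref{corollary:P3-192-curves}. First I would reduce to curves of small degree: if $D_1,D_2$ are two general surfaces in $\mathcal{D}$, then $D_1\cdot D_2=\delta C+\Omega$ with $\delta\geq\mathrm{mult}_C^2(\mathcal{D})$ and $C\not\subset\mathrm{Supp}(\Omega)$, so comparing degrees gives $\mathrm{mult}_C^2(\mathcal{D})\,\mathrm{deg}(C)\leq n^2$. Hence the inequality $\mathrm{mult}_C(\mathcal{D})\leq\frac{n}{4}$ is automatic once $\mathrm{deg}(C)\geq 16$, and I may assume $\mathrm{deg}(C)\leq 15$.

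Second, I would invoke Proposition~\ref{proposition:P3-tetrahedra-curves} to pin down $C$. Since $C\neq\mathcal{L}_6$, the group $G$ must be one of $G_{324,160}$, $G_{324,160}^\prime$, $G_{648,704}$, $G_{648,704}^\prime$, and $C$ is either one of the two smooth curves \eqref{equation:48-curve-degree-9-1}, \eqref{equation:48-curve-degree-9-2} --- which occur only for $G=G_{324,160}^\prime$ and are precisely the cases excluded in the statement --- or a curve whose components are plane cubics, one in each face $F_1,F_2,F_3,F_4$ of the coordinate tetrahedron $\mathcal{T}$. Thus in every remaining case $C$ is a $G$-orbit of four coplanar cubics of total degree twelve contained in $\mathcal{T}$, and it is exactly these curves for which I must establish the bound.

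The difficulty is that for such a curve the naive planar estimate is too weak. Because the three other cubics meet a fixed face $F_4$ only along the edges $\ell_{12},\ell_{13},\ell_{23}$, and those intersection points already lie on the cubic $C\cap F_4$, every auxiliary curve contained in $F_4$ meets $C$ only along a cubic and so yields at best $\mathrm{mult}_C(\mathcal{D})\leq\frac{n}{3}$. To recover the sharp constant I would instead use a line $\ell$ that meets all four cubic components. Since each component lies in a distinct face and a line meets a face in a single point, such an $\ell$ is a quadrisecant of $C$ meeting it in four distinct points $Q_1,\dots,Q_4$, one on each component; as $\mathrm{deg}(C)=12$ the curve $C$ does admit quadrisecants. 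For a general $D\in\mathcal{D}$ one then gets
$$
n=D\cdot\ell=\sum_{P\in D\cap\ell}(D\cdot\ell)_P\geq\sum_{j=1}^4\mathrm{mult}_{Q_j}(D)\geq 4\,\mathrm{mult}_C(\mathcal{D}),
$$
which is the required inequality.

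The main obstacle will be justifying that a usable quadrisecant exists: I must produce a line meeting all four cubics that is not contained in the base locus of $\mathcal{D}$ and that meets the components away from the singular (edge) points, so that $(D\cdot\ell)_{Q_j}\geq\mathrm{mult}_{Q_j}(D)\geq\mathrm{mult}_C(\mathcal{D})$ holds at each of the four points. For the finitely many explicit groups and the explicit Fermat-type cubics this is an elementary (if slightly tedious) computation: one exhibits the quadrisecants directly and checks that they are not among the finitely many one-dimensional components of $\mathrm{Bs}(\mathcal{D})$. When the four cubics happen to be disjoint --- so that $C$ is smooth --- I would alternatively argue as in Corollary~\ref{corollary:P3-192-curves}, blowing up $C$ and using that $C\subset\mathcal{T}$ with $\mathrm{deg}(\mathcal{T})=4$ makes $\varphi^*(\mathcal{O}_{\mathbb{P}^3}(4))-E_C$ nef, whence the intersection computation again yields $\mathrm{mult}_C(\mathcal{D})\leq\frac{n}{4}$.
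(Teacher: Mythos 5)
Your reduction to $\deg(C)\leqslant 15$ and the appeal to Proposition~\ref{proposition:P3-tetrahedra-curves} are fine, but the key step of your argument fails. Write $Z_i$ for the component of $C$ lying in the face $F_i$, and consider first the Fermat-type curve, whose component is $\{x_0^3+x_1^3+x_2^3=x_3=0\}$; by Proposition~\ref{proposition:P3-tetrahedra-curves} this is the \emph{only} curve that must be handled when $G=G_{648,704}$ or $G=G_{648,704}^\prime$, and it also occurs for $G_{324,160}$ and $G_{324,160}^\prime$ (the case $r=0$). This curve is the complete intersection of $\mathcal{T}$ with the smooth Fermat cubic surface $S=\{x_0^3+x_1^3+x_2^3+x_3^3=0\}$. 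Consequently, any line meeting $C$ in a subscheme of length at least $4$ must lie on $S$ (a line not on $S$ meets $S$, hence $C$, in length at most $3$), and each of the $27$ lines of $S$ meets $C=S\cap\mathcal{T}$ in exactly two points: for instance, the line $\{x_0+\zeta x_1=x_2+\zeta'x_3=0\}$ with $\zeta^3=\zeta'^3=1$ meets $\mathcal{T}$ only at $[-\zeta:1:0:0]$ and $[0:0:-\zeta':1]$, each with multiplicity $2$, and both points are nodes of $C$ (points where two of the plane cubics cross on an edge of $\mathcal{T}$). So the quadrisecants you require --- lines meeting $C$ in four \emph{distinct} points, one per component --- do not exist, and a line through two nodes only yields $n\geqslant 2\,\mathrm{mult}_C(\mathcal{D})$, because at a node one can only guarantee $\mathrm{mult}_P(D)\geqslant\mathrm{mult}_C(\mathcal{D})$: a surface containing two curves through $P$, each with multiplicity $m$, need not have multiplicity $2m$ at $P$. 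The surface $S$ can be exploited, but by restriction rather than by secant lines: each $Z_i=S\cap F_i$ is a hyperplane section of $S$, so writing $D\vert_S=\sum_i m_iZ_i+\Delta$ with every $m_i\geqslant\mathrm{mult}_C(\mathcal{D})$ and intersecting with the ample class $\mathcal{O}_S(1)$ gives $3n\geqslant 12\,\mathrm{mult}_C(\mathcal{D})$; this is the surface-restriction argument in the spirit of Proposition~\ref{proposition:48-curves}, but you never identify the surface $S$.

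Your treatment of the remaining curves is also off. For $G_{324,160}$ and $G_{324,160}^\prime$ with $r\in\{1,2\}$ the four plane cubics are pairwise \emph{disjoint} (their traces on a common edge are distinct triples of points), so your claim that the other components meet $F_4$ only at points of $C\cap F_4$ is false in precisely these cases; there the planar argument you dismiss does work and is the exact analogue of the conic computation in Proposition~\ref{proposition:48-curves}: a general line in $F_4$ through one of the nine points of $(Z_1\cup Z_2\cup Z_3)\cap F_4$ (none of which lies on $Z_4$) meets $C$ in at least four points, lies outside the base locus of $\mathcal{D}$ since those lines form a one-parameter family, and so gives $n\geqslant 4\,\mathrm{mult}_C(\mathcal{D})$. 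Finally, your fallback for the disjoint case is incorrect: $\varphi^*(\mathcal{O}_{\mathbb{P}^3}(4))-E_C$ is \emph{not} nef, because an edge of $\mathcal{T}$, say $F_1\cap F_4$, meets $C$ in six points (three on $Z_1$ and three on $Z_4$), so its proper transform has intersection $4-6=-2<0$ with that divisor. Thus both branches of your proposed proof have genuine gaps, and in each case the working argument is the opposite of the one you assign to it.
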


\begin{proof}
Arguing as in the proof of Proposition~\ref{proposition:48-curves}, we obtain the required assertion.
\end{proof}

Let us conclude this section by proving the following technical result:

\begin{lemma}
\label{lemma:cubic-surface}
Let $S$ be a cubic surface in $\mathbb{P}^3$ that contains one of the curves \eqref{equation:48-curve-degree-9-1}~or~\eqref{equation:48-curve-degree-9-2},
let $\Gamma$ be the stabilizer of the surface $S$ in the~group $G_{324,160}^\prime$,
and let $D$ be a $\Gamma$-invariant effective $\mathbb{Q}$-divisor on the~surface $S$ such that $D\equiv-K_S$.
Then $(S,D)$ has log canonical singularities away from from singular points (if any) of the surface $S$.
\end{lemma}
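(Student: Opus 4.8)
The plan is to argue by contradiction. Since $S$ is a cubic hypersurface it is Gorenstein with $-K_S=\mathcal{O}_S(1)$ and $(-K_S)^2=3$, so on the smooth locus I may treat $S$ as a del Pezzo surface of degree $3$; in particular $D\equiv -K_S$ has $\deg D=(-K_S)\cdot D=3$. Recall also that $S$ lies in the pencil of cubic surfaces through the curve $\mathfrak{C}$ of \eqref{equation:48-curve-degree-9-1}, which has degree $9$, so numerically $\mathfrak{C}\equiv -3K_S$ and $D\cdot\mathfrak{C}=9$. I will suppose that $(S,D)$ fails to be log canonical at some point $P\in S\setminus\mathrm{Sing}(S)$ and seek a contradiction. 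Because $D$ is $\Gamma$-invariant, the non-klt locus $\mathrm{Nklt}(S,D)$ is $\Gamma$-invariant, and I recall that $\Gamma\supseteq T\cong\mumu_3^3$, since $G=G_{324,160}^\prime$.

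First I would dispose of the one-dimensional part of $\mathrm{Nklt}(S,D)$. If some curve $B\subset\mathrm{Nklt}(S,D)$ passes through a smooth point of $S$, then $B$ is a component of $\mathrm{Supp}(D)$ with coefficient at least $1$, so its $\Gamma$-orbit $B^\prime$ is a $\Gamma$-invariant curve with $B^\prime\preccurlyeq D$, whence $\deg B^\prime\leq\deg D=3$. Using the orbit estimates of Lemma~\ref{lemma:orbits} exactly as in the proofs of Lemmas~\ref{lemma:P3-curves-edges} and \ref{lemma:P3-curves-faces} (a $\Gamma$-invariant curve not contained in $\mathcal{T}$ meets each face in a $T$-orbit of length at least $9$, hence has degree at least $9$), I would show that the only $\Gamma$-invariant curves of degree $\leq 3$ lie in $\mathcal{T}$; but $\mathcal{T}\cap S$ has degree $12>3$ and so cannot sit inside $D$. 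This excludes the curve case.

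It remains to treat an isolated non-log-canonical smooth point $P$, where $\mathrm{mult}_P D>1$, and the same inequality holds at every point of $\Sigma=\mathrm{Orb}_\Gamma(P)$, all of which are smooth points of $S$. The key step is to produce a cubic surface $V$ with $S\not\subset V$ that passes through all of $\Sigma$: exploiting that the weight-zero $T$-semi-invariant cubics $\langle x_0^3,x_1^3,x_2^3,x_3^3\rangle$ vanish on an entire $T$-orbit as soon as they vanish at one of its points, so that $\Sigma$ imposes only as many conditions as the number of $T$-orbits it contains, such a $V\neq S$ exists. Then $B:=V\vert_S\sim -3K_S$ satisfies $B\cdot D=3(-K_S)^2=9$, and $B$ passes through $\Sigma$; after removing any common component of $B$ and $D$ this gives $9=B\cdot D\geq\sum_{Q\in\Sigma}\mathrm{mult}_Q D>|\Sigma|$. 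By Lemma~\ref{lemma:orbits} one has $|\Sigma|\geq 27$ if $P\notin\mathcal{T}$ and $|\Sigma|\geq 9$ if $P\in\mathcal{T}\setminus\mathcal{L}_6$, and in both cases $9>|\Sigma|$ is absurd.

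The hard part will be the residual case $P\in\mathcal{L}_6$, where the $T$-orbit can be as short as $3$ and the cubic estimate above only yields $\mathrm{mult}_P D\leq 3$, giving no contradiction. Here I expect to argue locally: the stabilizer $T\cap\mathrm{Stab}_G(P)\cong\mumu_3^2$ fixes $P$ and acts faithfully on the tangent space $T_P S$ through two independent characters, and a $\mumu_3^2$-invariant effective $\mathbb{Q}$-divisor cannot be non-log-canonical at such a toric fixed point unless $P\in\mathrm{Sing}(S)$, which is excluded. Making this local toric argument precise, and carrying it uniformly over all members of the pencil — including the singular ones, for which $\Gamma$ may be strictly larger than $T$ and the orbit combinatorics changes — is the delicate point on which the proof turns.
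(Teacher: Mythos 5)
Your strategy works up to the case you yourself identify as the crux, $P\in\mathcal{L}_6$, and there the argument you propose is not merely incomplete but unworkable. Invariance of $D$ under the stabilizer $T\cap\mathrm{Stab}_G(P)\cong\mumu_3^2$, acting on $T_PS$ through two independent characters, places no obstruction at all on log canonicity at $P$: since $\mumu_3^2$ has exponent $3$, in eigencoordinates $(v,w)$ the cubic $v^3-w^3$ is invariant, so the germ $D=\tfrac{3}{4}\{v^3=w^3\}$ (three invariant lines through $P$, each with coefficient $\tfrac{3}{4}$) is a $\mumu_3^2$-invariant effective $\mathbb{Q}$-divisor that is log canonical in a punctured neighbourhood of $P$ but not log canonical at $P$, because the log canonical threshold of three concurrent lines is $\tfrac{2}{3}<\tfrac{3}{4}$. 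Hence no purely local, character-theoretic statement can close this case; global input is required. The paper supplies it in two different ways. When $S$ is smooth, it uses the fact that for $D\equiv-K_S$ on a smooth cubic surface the non-log-canonical locus, if non-empty, is a single point (\cite[Lemma~3.7]{CheltsovGAFA}, or \cite{CheltsovParkWonJEMS} together with the Koll\'ar--Shokurov connectedness theorem \cite[Corollary~5.49]{KoMo98}); that point would be $\Gamma$-fixed, while $S$ has no $\Gamma$-fixed points, since $T\cong\mumu_3^3\subset\Gamma$ and the $T$-fixed points of $\mathbb{P}^3$ are the four coordinate points, which lie on no smooth member of the pencil. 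This disposes of your length-$3$ orbits on $\mathcal{L}_6$ just as easily as of the long ones. When $S$ is one of the eight cones, the paper takes the ruling $L$ through the putative non-lc point $O$, observes that its $\Gamma$-orbit $\mathcal{L}$ has degree at least $9$, and uses the convexity trick of \cite[Lemma~2.2]{CheltsovParkWonJEMS} to replace $D$ by $D^\prime\equiv-K_S$ with $\mathcal{L}\not\subset\mathrm{Supp}(D^\prime)$ and $(S,D^\prime)$ still non-lc at $O$; then $1=D^\prime\cdot L\geqslant\mathrm{mult}_O(D^\prime)$ gives the contradiction.

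Two further defects. First, your exclusion of curves in $\mathrm{Nklt}(S,D)$ is wrong as written: coefficient at least $1$ only gives $\deg B^\prime\leqslant 3$, and your own classification then allows curves inside $\mathcal{T}$; but a $\Gamma$-invariant curve in $\mathcal{T}\cap S$ need not be all of $\mathcal{T}\cap S$. For a generic member the stabilizer is exactly $T$, which preserves each coordinate plane, so each plane cubic $F_i\cap S$ is a $\Gamma$-invariant curve of degree exactly $3$, and $D=F_i\cap S$ is a perfectly admissible divisor to which your ``degree $12$'' dismissal does not apply. The step is repairable (only curves of coefficient $>1$ obstruct log canonicity, forcing degree $<3$, and $S$ contains no $\Gamma$-invariant curves of degree $\leqslant 2$), but the repair pushes the problem into your point case at exactly the bad locus: when $S$ is a cone, $F_i\cap S$ is three lines concurrent at the vertex. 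Second, in the auxiliary-cubic step you cannot simply ``remove common components of $B$ and $D$'': the residual curve may miss points of $\Sigma$ and the intersection inequality collapses; one must instead choose $V$ general in the net of $T$-invariant cubics through a $T$-orbit of $P$ (whose base locus is finite), so that $B=V\vert_S$ shares no component with $D$.
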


\begin{proof}
Suppose that $S$ is smooth.
In this case, the required assertion means  $\alpha_\Gamma(S)\geqslant 1$, which is well-known.
Indeed, the group $\Gamma$ contains the subgroup $T\cong\mumu_2^3$, which implies that $S$ does not have $\Gamma$-fixed points.
On the other hand, if $(S,D)$ is not log canonical, then
there exists a point $P\in S$ such that the log pair $(S,D)$ is log canonical away from~$P$.
This~follows from \cite[Lemma 3.7]{CheltsovGAFA}
or from \cite{CheltsovParkWonJEMS} and the~Koll\'ar--Shokurov connectedness.
Thus, the point $P$ must be fixed by $\Gamma$, which is a contradiction.

Thus, we may assume that $S$ is singular.
Then there are exactly eight possibilities for the surface $S$, and all of them are similar.
So, without loss of generality, we may assume that the~surface $S$ is given by the~equation
$$
(1+\zeta_3)x_1^3+\zeta_3x_2^3+x_3^3=0.
$$
This is the cone with~vertex at $[0:0:0:1]$ over the~curve $\{(1+\zeta_3)x_1^3+\zeta_3x_2^3+x_3^3=x_4=0\}$.
Observe that $\Gamma\cong\mumu_3^3\rtimes\mumu_3$, since $\Gamma$ is the subgroup in $G_{324,160}^\prime$ that is generated by
$$
\begin{pmatrix}
\zeta_3 & 0 & 0 & 0\\
0 & 1 & 0 & 0\\
0 & 0 & 1 & 0\\
0 & 0 & 0 & 1
\end{pmatrix},
\begin{pmatrix}
1 & 0 & 0 & 0\\
0 & \zeta_3 & 0 & 0\\
0 & 0 & 1 & 0\\
0 & 0 & 0 & 1
\end{pmatrix},
\begin{pmatrix}
1 & 0 & 0 & 0\\
0 & 1 & 0 & 0\\
0 & 0 & \zeta_3 & 0\\
0 & 0 & 0 & 1
\end{pmatrix},
\begin{pmatrix}
0 & 0 & 1 & 0\\
1 & 0 & 0 & 0\\
0 & 1 & 0 & 0\\
0 & 0 & 0 & 1
\end{pmatrix}.
$$
Suppose that $(S,D)$ is not log canonical at some point $O\in S$ such that $S\ne[0:0:0:1]$.
Let us seek for a contradiction.

Let $L$ be the ruling of the cone $S$ that passes through $O$,
and let $\mathcal{L}$ be $\Gamma$-irreducible curve in $S$ whose irreducible components is the line $L$.
Then $\mathrm{deg}(\mathcal{L})\geqslant 9$, because $\Gamma$-orbits in the cubic curve $\{(1+\zeta_3)x_1^3+\zeta_3x_2^3+x_3^3=x_4=0\}$ have length at least $9$.
Let
$$
D^\prime=(1+\mu)D-\frac{3\mu}{\mathrm{deg}(\mathcal{L})}\mathcal{L},
$$
where $\mu$ is the largest positive rational number $\mu$ such that $\mathrm{Supp}(D^\prime)$ does not contain $\mathcal{L}$.
It follows from the proof of \cite[Lemma~2.2]{CheltsovParkWonJEMS} that such positive rational number $\mu$ exists.
Moreover, since $\mathrm{deg}(\mathcal{L})\geqslant 9$, the singularities of the log pair
$$
\Big(S,\frac{3}{\mathrm{deg}(\mathcal{L})}\mathcal{L}\Big)
$$
are log canonical at $O$. Therefore, the log pair $(S,D^\prime)$ is not log canonical at $O$, because
$$
D=\frac{\mu}{1+\mu}\Big(\frac{3}{\mathrm{deg}(\mathcal{L})}\mathcal{L}\Big)+\frac{1}{1+\mu}D^\prime.
$$
Observe that $D^\prime\equiv D\equiv -K_S$ by construction, hence
$1=D^\prime\cdot L\geqslant(D^\prime\cdot L)_O\geqslant\mathrm{mult}_{O}(D^\prime)$,
so the pair $(S,D^\prime)$ is log canonical at $O$ by \cite[Theorem~4.5]{KoMo98} or \cite[Exercise~6.18]{CoKoSm03}.
\end{proof}

\section{Rational Fano--Enriques threefold of degree 24}
\label{section:Fano-Enriques}

Let us use assumptions and notations of Section~\ref{section:subgroups}.
Recall from Section~\ref{section:subgroups} that
\begin{center}
$P_1=[1:0:0:0]$, $P_2=[0:1:0:0]$, $P_3=[0:0:1:0]$, $P_4=[0:0:0:1]$,
\end{center}
and $G$ is a finite subgroup in $\mathrm{PGL}_{4}(\mathbb{C})$ such that the~following conditions are satisfied:
\begin{enumerate}
\item the~group $G$ does not have fixed points in  $\mathbb{P}^3$,
\item the~group $G$ does not leave a union of two skew lines in $\mathbb{P}^3$ invariant,
\item the~group $G$ leaves invariant the subset $\{P_1,P_2,P_3,P_4\}$.
\end{enumerate}
Suppose that $G$ conjugate neither to $G_{48,3}$ nor to $G_{96,72}$.
Moreover, if $G$ is conjugate to one of the subgroups
$G_{48,50}$, $G_{96,70}$, $G_{96,227}$, $G_{96,227}^\prime$, $G_{192,955}$, $G_{192,185}$, $G_{324,160}^\prime$, $G_{648,704}^\prime$,
then we will always assume that $G$ is this subgroup. Recall that $G_{48,50}\triangleleft G_{96,70}\triangleleft G_{192,955}$,
$G_{48,50}\triangleleft G_{96,227}\triangleleft G_{192,955}$,
$G_{48,50}\triangleleft G_{96,227}^\prime\triangleleft G_{192,955}$ and $G_{324,160}^\prime \triangleleft G_{648,704}^\prime$.

For every $1\leqslant i<j\leqslant 4$, let $\ell_{ij}$ be the~line in $\mathbb{P}^3$ that passes through $P_i$ and $P_j$,
let
$$
F_1=\{x_0=0\},\, F_2=\{x_1=0\},\, F_3=\{x_2=0\},\, F_4=\{x_3=0\},
$$
let $\Sigma_4=\{P_1,P_2,P_3,P_4\}$, let $\mathcal{L}_6=\ell_{12}+\ell_{13}+\ell_{14}+\ell_{23}+\ell_{24}+\ell_{34}$, let $\mathcal{T}=F_1+F_2+F_3+F_4$.
By Corollary~\ref{corollary:P3-P3-explicit}, there exists a~$G$-birational involution $\iota\colon\mathbb{P}^3\dasharrow\mathbb{P}^3$
that is given by
$$
[x_0:x_1:x_2:x_3]\mapsto [\lambda_1 x_1x_2x_3: \lambda_2 x_0x_2x_3: \lambda_3 x_0x_1x_3: x_0x_1x_2]
$$
for some non-zero complex numbers $\lambda_1$, $\lambda_2$, $\lambda_3$.
This involution is well-defined away from the~curve~$\mathcal{L}_6$, and it contracts $F_1$, $F_2$, $F_3$, $F_4$ to the point $P_1$,~$P_2$,~$P_3$,~$P_4$,~respectively.
Observe also that the~involution $\iota$ fits the following $G$-commutative diagram:
\begin{equation}
\label{equation:Cremona-involution-1}
\xymatrix{
&\widetilde{V}_4\ar@{->}[ld]_{\pi}\ar@{->}[rd]^{\phi}\ar@{-->}[rrrr]^{\nu}&&&&\widetilde{V}_4\ar@{->}[rd]^{\pi}\ar@{->}[ld]_{\phi}\\%
\mathbb{P}^3\ar@/_2pc/@{-->}[rrrrrr]_{\iota}&&V_4\ar@{->}[rr]^{\sigma}&&V_4&&\mathbb{P}^3}
\end{equation}
where $V_4$ is an~intersection of two quadrics in $\mathbb{P}^5$
that has six ordinary double points, the~map $\pi$~is the blow up of the orbit $\Sigma_4$,
the map $\phi$ is the contraction of the proper transforms of the lines
$\ell_{12}$, $\ell_{13}$, $\ell_{14}$, $\ell_{23}$, $\ell_{24}$, $\ell_{34}$
to the singular points of the threefold~$V_4$, the~map $\sigma$ is a $G$-biregular involution,
and  $\nu$ is a $G$-birational non-biregular involution, which is a~composition of six Atiyah flops.

Moreover, it follows from \cite{CheltsovShramov2017,CDK} that $\iota$ also
its the following $G$-commutative diagram:
\begin{equation}
\label{equation:Cremona-involution-2}
\xymatrix{
&\widetilde{X}_{24}\ar@{->}[ld]_{\varpi}\ar@{->}[rd]^{\varphi}&&&&\widetilde{X}_{24}\ar@{->}[rd]^{\varpi}\ar@{->}[ld]_{\varphi}\\%
\mathbb{P}^3\ar@/_2pc/@{-->}[rrrrrr]_{\iota}&&X_{24}\ar@{->}[rr]^{\varsigma}&&X_{24}&&\mathbb{P}^3}
\end{equation}
where $X_{24}$ is the toric Fano--Enriques threefold described in Example~\ref{example:Fano-Enriques},
which has eight quotient singular points of type $\frac{1}{2}(1,1,1)$,
the~morphism $\varpi$~is a birational $G$-extremal contraction that contracts six irreducible surfaces to
the lines $\ell_{12}$, $\ell_{13}$, $\ell_{14}$, $\ell_{23}$, $\ell_{24}$, $\ell_{34}$,
the~morphism $\varphi$ is the contraction of the proper transforms of the~planes
$F_1$, $F_2$, $F_3$, $F_4$~to four singular points of the threefold~$X_{24}$, and $\varsigma$ is a biregular involution.

As we already mentioned in Example~\ref{example:Fano-Enriques},
the threefold $X_{24}$ can also be obtained as the~quotient $\mathbb{P}^1\times\mathbb{P}^1\times\mathbb{P}^1/\tau$,
where $\tau$ is the involution in $\mathrm{Aut}(\mathbb{P}^1\times\mathbb{P}^1\times\mathbb{P}^1)$ given by
$$
\big([u_1:v_1],[u_2:v_2],[u_3:v_3]\big)\mapsto\big([u_1:-v_1],[u_2:-v_2],[u_3:-v_3]\big).
$$
Then $\mathrm{Sing}(X_{24})$ consists of $8$ singular points of type $\frac{1}{2}(1,1,1)$ --- the~images of the~points
\begin{align*}
\big([0:1],[0:1],[0:1]\big),\big([0:1],[0:1],[1:0]\big),\big([0:1],[1:0],[0:1]\big),\big([0:1],[1:0],[1:0]\big),\\
\big([1:0],[0:1],[0:1]\big),\big([1:0],[0:1],[1:0]\big),\big([1:0],[1:0],[0:1]\big),\big([1:0],[1:0],[1:0]\big).
\end{align*}
To match this description of the~threefold $X_{24}$ with the~description given by~\eqref{equation:Cremona-involution-2},
we~set
$$
V_2=\big\{w^2=x_0x_1x_2x_3\big\}\subset\mathbb{P}(1,1,1,1,2).
$$
Let $\xi\colon V_2\to\mathbb{P}^3$ be the projection that is given by $[x_0:x_1:x_2:x_3:w]\mapsto[x_0:x_1:x_2:x_3]$,
where $x_0$, $x_1$, $x_2$ and $x_3$ are coordinates of weight $1$, and $w$ is a~coordinate of weight $2$.
Then it~follows from \cite{CDK} that there is
a birational map $\zeta\colon V_2\to \mathbb{P}^1\times\mathbb{P}^1\times\mathbb{P}^1$ such that
the following diagram commutes:
\begin{equation}
\label{equation:CDK-big-diagram}
\xymatrix{
V_2\ar@{->}[dd]_{\xi}\ar@/^2pc/@{-->}[rrrrrrd]^{\zeta}&&&&&&\\%
&&\widetilde{X}_{24}\ar@{->}[lld]_{\varpi}\ar@{->}[rrd]^{\varphi}&&&&\mathbb{P}^1\times\mathbb{P}^1\times\mathbb{P}^1\ar@{->}[lld]_{\omega}\ar@{->}[dd]^{\mathrm{pr}_i}\\%
\mathbb{P}^3\ar@{-->}[rrrr]_{\psi}&&&&X_{24}\ar@{->}[drr]_{\eta_i}&&\\
&&&&&&\mathbb{P}^1}
\end{equation}
where $\omega$ is the~quotient map by $\tau$, $\varpi$ and $\varphi$ are the birational morphisms defined in~\eqref{equation:Cremona-involution-2},
the~map $\psi$ is given~by the linear system of all sextic surfaces singular~along the~curve $\mathcal{L}_6$,
the map $\mathrm{pr}_i$ is the~projection to the~$i$-th factor,
and $\eta_i$ is the~morphism induced by~$\mathrm{pr}_i$.

It follows from \cite{CDK} that the~maps $\zeta$,  $\omega$, $\psi$ in the~diagram \eqref{equation:CDK-big-diagram} can be described in coordinates as follows:
the birational map $\zeta$ is given~by
$$
\big[x_{0}:x_{1}:x_{2}:x_{3}:w\big]\mapsto\Big(\big[x_{0}x_{1}:w\big],\big[x_{0}x_{2}:w\big],\big[x_{1}x_{2}:w\big]\Big),
$$
the quotient map $\omega$ is induced by the map $\mathbb{P}^1\times\mathbb{P}^1\times\mathbb{P}^1\to\mathbb{P}^{13}$ given by
\begin{multline*}\hspace*{-0.4cm}
\big([u_{1}:v_{1}],[u_{2}:v_{2}],[u_{3}:v_{3}]\big)\mapsto\Big[u_{1}^{2}u_{2}^{2}u_{3}^{2}:u_{1}^{2}u_{2}^{2}v_{3}^{2}:u_{1}^{2}u_{2}v_{2}u_{3}v_{3}:u_{1}^{2}v_{2}^{2}u_{3}^{2}:u_{1}^{2}v_{2}^{2}v_{3}^{2}:u_{1}v_{1}u_{2}^{2}u_{3}v_{3}:\\
:u_{1}v_{1}u_{2}v_{2}u_{3}^{2}:u_{1}v_{1}u_{2}v_{2}v_{3}^{2}:u_{1}v_{1}v_{2}^{2}u_{3}v_{3}:v_{1}^{2}u_{2}^{2}u_{3}^{2}:v_{1}^{2}u_{2}^{2}v_{3}^{2}:v_{1}^{2}u_{2}v_{2}u_{3}v_{3}:v_{1}^{2}v_{2}^{2}u_{3}^{2}:v_{1}^{2}v_{2}^{2}v_{3}^{2}\Big],
\end{multline*}
and the birational map $\psi$ is induced by the map $\mathbb{P}^3\dasharrow \mathbb{P}^{13}$ given by
\begin{multline*}
\big[x_{0}:x_{1}:x_{2}:x_{3}\big]\mapsto\Big[x_{0}^{2}x_{1}^{2}x_{2}^{2}:x_{0}^{3}x_{1}x_{2}x_{3}:x_{0}^{2}x_{1}^{2}x_{2}x_{3}:x_{0}x_{1}^{3}x_{2}x_{3}:x_{0}^{2}x_{1}^{2}x_{3}^{2}:x_{0}^{2}x_{1}x_{2}^{2}x_{3}:\\
:x_{0}x_{1}^{2}x_{2}^{2}x_{3}:x_{0}^{2}x_{1}x_{2}x_{3}^{2}:x_{0}x_{1}^{2}x_{2}x_{3}^{2}:x_{0}x_{1}x_{2}^{3}x_{3}:x_{0}^{2}x_{2}^{2}x_{3}^{2}:x_{0}x_{1}x_{2}^{2}x_{3}^{2}:x_{1}^{2}x_{2}^{2}x_{3}^{2}:x_{0}x_{1}x_{2}x_{3}^{3}\Big].
\end{multline*}
Using this, we see that the biregular involution $\varsigma$ in \eqref{equation:Cremona-involution-2}
is induced~by the biregular involution of $\mathbb{P}^1\times\mathbb{P}^1\times\mathbb{P}^1$ that is given by
$$
\big(\big[u_{1}:v_{1}\big],\big[u_{2}:v_{2}\big],\big[u_{3}:v_{3}\big]\big)\mapsto\big(\big[\lambda_1\lambda_2v_{1}:\lambda_3u_{1}\big],\big[\lambda_1\lambda_3v_{2}:\lambda_2u_{2}\big],\big[\lambda_2\lambda_3v_{3}:\lambda_1u_{3}\big]\big).
$$
Similarly, we see that
\begin{itemize}
\item the map $\eta_1\circ\psi\colon\mathbb{P}^3\dasharrow\mathbb{P}^1$ is given by $[x_{0}:x_{1}:x_{2}:x_{3}]\mapsto[x_{0}x_{1}:x_{2}x_{3}]$,
\item the~map $\eta_2\circ\psi\colon\mathbb{P}^3\dasharrow\mathbb{P}^1$ is given by  $[x_{0}:x_{1}:x_{2}:x_{3}]\mapsto[x_{0}x_{2}:x_{1}x_{3}]$,
\item the~map $\eta_3\circ\psi\colon\mathbb{P}^3\dasharrow\mathbb{P}^1$ is given by $[x_{0}:x_{1}:x_{2}:x_{3}]\mapsto[x_{1}x_{2}:x_{0}x_{3}]$.
\end{itemize}

Using  $\psi$, we can define the $G$-action on $X_{24}$ such that $\psi$ is $G$-equivariant. Note that
\begin{align*}
\psi(F_1)&=[0:0:0:0:0:0:0:0:0:0:0:0:1:0]=\omega\big([0:1],[0:1],[1:0]\big),\\
\psi(F_2)&=[0:0:0:0:0:0:0:0:0:0:1:0:0:0]=\omega\big([0:1],[1:0],[0:1]\big),\\
\psi(F_3)&=[0:0:0:0:1:0:0:0:0:0:0:0:0:0]=\omega\big([1:0],[0:1],[0:1]\big),\\
\psi(F_4)&=[1:0:0:0:0:0:0:0:0:0:0:0:0:0]=\omega\big([1:0],[1:0],[1:0]\big).
\end{align*}
Thus, the locus $\mathrm{Sing}(X_{24})$ splits into two $G$-orbits: the~orbit
$\{\psi(F_1),\psi(F_2),\psi(F_3),\psi(F_4)\}$, and the orbit that consists of the  points
$$\hspace*{-0.3cm}
\omega\big([0:1],[0:1],[0:1]\big),\omega\big([0:1],[1:0],[1:0]\big),\omega\big([1:0],[0:1],[1:0]\big),\omega\big([1:0],[1:0],[0:1]\big).
$$
The involution $\varsigma$ swaps these two $G$-orbits.

Let $E_{11}$, $E_{12}$, $E_{21}$, $E_{22}$, $E_{31}$, $E_{32}$
be the images in $X_{24}$ of the surfaces in $\mathbb{P}^1\times\mathbb{P}^1\times\mathbb{P}^1$
that are given by the equations $u_1=0$, $v_1=0$, $u_2=0$, $v_2=0$, $u_3=0$, $v_3=0$, respectively.
Then $E_{11}$, $E_{12}$, $E_{21}$, $E_{22}$, $E_{31}$ and $E_{32}$ are singular toric del Pezzo surfaces of degree $4$,
and $\psi$ induces an isomorphism
$$
\mathbb{P}^3\setminus\big(F_1\cup F_2\cup F_3 \cup F_3\big)\cong X_{24}\setminus\big(E_{11}\cup E_{12}\cup E_{21}\cup E_{22}\cup E_{31}\cup E_{32}\big).
$$
Let $\widetilde{E}_{11}$, $\widetilde{E}_{12}$, $\widetilde{E}_{21}$, $\widetilde{E}_{22}$, $\widetilde{E}_{31}$, $\widetilde{E}_{32}$
be the~proper transforms on $\widetilde{X}_{24}$ of the surfaces $E_{11}$, $E_{12}$, $E_{21}$, $E_{22}$, $E_{31}$, $E_{32}$, respectively.
Then $\widetilde{E}_{11}$, $\widetilde{E}_{12}$, $\widetilde{E}_{21}$, $\widetilde{E}_{22}$, $\widetilde{E}_{31}$, $\widetilde{E}_{32}$
are smooth del Pezzo surfaces of degree $6$. Moreover, we have
$$
\varpi\big(\widetilde{E}_{11}\big)=\ell_{34}, \varpi\big(\widetilde{E}_{12}\big)=\ell_{12}, \varpi\big(\widetilde{E}_{21}\big)=\ell_{24},
\varpi\big(\widetilde{E}_{22}\big)=\ell_{13}, \varpi\big(\widetilde{E}_{31}\big)=\ell_{14}, \varpi\big(\widetilde{E}_{32}\big)=\ell_{23}.
$$
Let $\mathcal{E}=E_{11}+E_{12}+E_{21}+E_{22}+E_{31}+E_{32}$, and let $\mathcal{Z}_{12}=\mathrm{Sing}(\mathcal{E})$.
Then $\mathcal{E}$ is a~$G$-irreducible surface, and $\mathcal{Z}_{12}$ is a~$G$-irreducible curve in $X_{24}$
that consists of $12$ distinct lines in $\mathbb{P}^{13}$,
which are all lines contained in $\mathrm{Supp}(\mathcal{E})$.
Note that $\mathrm{Sing}(\mathcal{Z}_{12})=\mathrm{Sing}(X_{24})$.

If the subgroup $G$ is conjugate to none of the groups $G_{48,50}$ and $G_{96,227}$, then it follows from Lemmas~\ref{lemma:48-orbits}, \ref{lemma:192-orbits}, \ref{lemma:orbits}
that the~$G$-orbit $\Sigma_4$ is the unique $G$-orbit in $\mathbb{P}^3$ of length~four.
On the other hand, if $G=G_{48,50}$ or $G=G_{96,227}$, then
it follows from Lemma~\ref{lemma:48-orbits} that the~space $\mathbb{P}^3$ contains exactly three $G$-orbits of length~four:
$\Sigma_4$, $\Sigma_4^\prime$ and $\Sigma_4^{\prime\prime}$, where
$$
\Sigma_4^\prime=\Big\{[1:1:1:-1], [1:1:-1:1], [1:-1:1:1], [-1:1:1:1]\Big\}\not\subset\mathcal{T}
$$
and also
$$
\Sigma_4^{\prime\prime}=\{[1:1:1:1], [1:1:-1:-1], [1:-1:-1:1], [-1:-1:1:1]\}\not\subset\mathcal{T}.
$$
Therefore, if $G=G_{48,50}$ or $G=G_{96,227}$, then  $\psi(\Sigma_4^\prime)$ and $\psi(\Sigma_4^{\prime\prime})$ are $G$-orbits of length $4$.

Similarly, if $G$ is conjugate to none of the groups $G_{48,50}$ and $G_{96,227}$, it easily follows from~Lemmas~\ref{lemma:48-orbits}, \ref{lemma:192-orbits} and \ref{lemma:orbits},
that $\mathbb{P}^3\setminus\mathcal{T}$ does not contain $G$-orbits of length $<16$.
On the other hand, if $G=G_{48,50}$ or $G=G_{96,227}$, then
it follows from Lemma~\ref{lemma:48-orbits} that the~$G$-orbits of length $<16$ contained in $\mathbb{P}^3\setminus\mathcal{T}$
can be described as follows:
\begin{center}
$\Sigma_4^\prime$, $\Sigma_4^{\prime\prime}$, $\Sigma_{12}^{\prime\prime}=\mathrm{Orb}_G([i:i:1:1])$, $\Sigma_{12}^{\prime\prime\prime}=\mathrm{Orb}_G([-i:i:1:1])$.
\end{center}
Keeping in mind that $\psi$ gives an isomorphism $\mathbb{P}^3\setminus\mathcal{T}\cong X_{24}\setminus\mathcal{E}$, we get

\begin{corollary}
\label{corollary:X24-orbits}
Let $\Sigma$ be a $G$-orbit in $X_{24}$ such that $|\Sigma|\leqslant 15$, and $\Sigma$ is not contained~in~$\mathcal{E}$.
Then $G=G_{48,50}$ or $G=G_{96,227}$, and $\Sigma$ is one of the orbits $\psi(\Sigma_4^\prime)$, $\psi(\Sigma_4^{\prime\prime})$,
$\psi(\Sigma_{12}^{\prime\prime})$, $\psi(\Sigma_{12}^{\prime\prime\prime})$.
\end{corollary}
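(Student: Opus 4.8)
The plan is to transport the question from $X_{24}$ back to $\mathbb{P}^3$ through the $G$-equivariant map $\psi$, thereby reducing everything to the orbit analysis already carried out on projective space. First I would observe that the surface $\mathcal{E}=E_{11}+E_{12}+E_{21}+E_{22}+E_{31}+E_{32}$ is $G$-irreducible, hence $G$-invariant, so that the intersection $\Sigma\cap\mathcal{E}$ is itself a $G$-invariant subset of the orbit $\Sigma$. Since $\Sigma$ is a single $G$-orbit and $\Sigma\not\subset\mathcal{E}$ by hypothesis, this forces $\Sigma\cap\mathcal{E}=\varnothing$; that is, $\Sigma\subset X_{24}\setminus\mathcal{E}$.

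Next I would invoke the fact, recorded just above the statement, that $\psi$ induces a $G$-equivariant isomorphism $\mathbb{P}^3\setminus\mathcal{T}\cong X_{24}\setminus\mathcal{E}$. Consequently $\Sigma^\circ:=\psi^{-1}(\Sigma)$ is a well-defined $G$-orbit contained in $\mathbb{P}^3\setminus\mathcal{T}$, and because $\psi$ restricts to a bijection on these complements we have $|\Sigma^\circ|=|\Sigma|\leqslant 15$. This turns the problem into the purely equivariant problem of classifying short $G$-orbits in $\mathbb{P}^3$ that avoid the coordinate tetrahedron $\mathcal{T}$.

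Now I would apply the orbit classification. By Lemmas~\ref{lemma:48-orbits}, \ref{lemma:192-orbits} and \ref{lemma:orbits}, if $G$ is conjugate to neither $G_{48,50}$ nor $G_{96,227}$, then $\mathbb{P}^3\setminus\mathcal{T}$ contains no $G$-orbit of length $<16$; this contradicts $|\Sigma^\circ|\leqslant 15$. Therefore $G=G_{48,50}$ or $G=G_{96,227}$, and in these two cases Lemma~\ref{lemma:48-orbits} shows that the only $G$-orbits of length $<16$ contained in $\mathbb{P}^3\setminus\mathcal{T}$ are $\Sigma_4^\prime$, $\Sigma_4^{\prime\prime}$, $\Sigma_{12}^{\prime\prime}$ and $\Sigma_{12}^{\prime\prime\prime}$. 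Thus $\Sigma^\circ$ is one of these four orbits, and applying $\psi$ yields that $\Sigma$ is one of $\psi(\Sigma_4^\prime)$, $\psi(\Sigma_4^{\prime\prime})$, $\psi(\Sigma_{12}^{\prime\prime})$, $\psi(\Sigma_{12}^{\prime\prime\prime})$, as claimed.

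Since all the substantive content has already been assembled in the discussion preceding the statement, I do not expect a genuine obstacle here. The only two points that must be handled with care are the all-or-nothing dichotomy $\Sigma\cap\mathcal{E}\in\{\varnothing,\Sigma\}$, which rests solely on the $G$-invariance of $\mathcal{E}$, and the verification that $\psi$ is a bona fide $G$-equivariant isomorphism away from $\mathcal{T}$, so that orbit lengths are literally preserved under $\psi^{-1}$. With these in hand the corollary is essentially immediate.
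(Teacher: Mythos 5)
Your proposal is correct and follows essentially the same route as the paper: the statement is deduced from the $G$-equivariant isomorphism $\mathbb{P}^3\setminus\mathcal{T}\cong X_{24}\setminus\mathcal{E}$ induced by $\psi$, combined with the classification of $G$-orbits of length $<16$ in $\mathbb{P}^3\setminus\mathcal{T}$ given by Lemmas~\ref{lemma:48-orbits}, \ref{lemma:192-orbits} and \ref{lemma:orbits}. Your explicit remark that $G$-invariance of $\mathcal{E}$ forces the dichotomy $\Sigma\cap\mathcal{E}\in\{\varnothing,\Sigma\}$ is a correct (and implicit in the paper) justification that $\Sigma$ lies entirely in $X_{24}\setminus\mathcal{E}$.
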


Let $H$ be a general hyperplane section of the threefold $X_{24}\subset\mathbb{P}^{13}$. Then
$$
\varphi^*(H)\sim\varpi^*\big(\mathcal{O}_{\mathbb{P}^3}(6)\big)-2\big(\widetilde{E}_{11}+\widetilde{E}_{12}+\widetilde{E}_{21}+\widetilde{E}_{22}+\widetilde{E}_{31}+\widetilde{E}_{32}\big).
$$
Let $\widetilde{F}_1$, $\widetilde{F}_2$, $\widetilde{F}_3$, $\widetilde{F}_4$ be the proper transform on $\widetilde{X}_{24}$ of the planes $F_1$, $F_2$, $F_3$, $F_4$, respectively.
Then $\varpi^*(\mathcal{O}_{\mathbb{P}^3}(2))\sim\varphi^*(H)-\widetilde{F}_1-\widetilde{F}_2-\widetilde{F}_3-\widetilde{F}_4$, because
$$
\widetilde{F}_1+\widetilde{F}_2+\widetilde{F}_3+\widetilde{F}_4\sim\varpi^*\big(\mathcal{O}_{\mathbb{P}^3}(4)\big)-2\big(\widetilde{E}_{11}+\widetilde{E}_{12}+\widetilde{E}_{21}+\widetilde{E}_{22}+\widetilde{E}_{31}+\widetilde{E}_{32}\big).
$$
Thus, we conclude that there exists $G$-commutative diagram
$$
\xymatrix{
&X_{24}\ar@{-->}[rd]&\\%
\mathbb{P}^3\ar@{-->}[ur]^{\psi}\ar@{^{(}->}[rr]&&\mathbb{P}^9},
$$
where $\mathbb{P}^3\hookrightarrow\mathbb{P}^9$ is the second Veronese embedding,
and $X_{24}\dasharrow\mathbb{P}^9$ is the rational map which is given by the linear projection $\mathbb{P}^{13}\dasharrow\mathbb{P}^9$ from
the~three-dimensional linear subspace in $\mathbb{P}^{13}$ that contains the points $\varphi(\widetilde{F}_1)$, $\varphi(\widetilde{F}_2)$, $\varphi(\widetilde{F}_3)$, $\varphi(\widetilde{F}_4)$.
As above, we can translate these maps into equations as follows: the projection $X_{24}\dasharrow\mathbb{P}^9$ is given by
\begin{multline*}
[z_0:z_1:z_2:z_3:z_4:z_5:z_6:z_7:z_8:z_9:z_{10}:z_{11}:z_{12}:z_{13}]\mapsto\\
\mapsto[z_1:z_2:z_3:z_5:z_6:z_7:z_8:z_9:z_{11}:z_{13}],
\end{multline*}
and the second Veronese embedding $\mathbb{P}^3\hookrightarrow\mathbb{P}^9$ is given by
$$
[x_0:x_1:x_2:x_3]\mapsto\big[x_0^2:x_0x_1:x_1^2:x_0x_2:x_1x_2:x_0x_3:x_1x_3:x_2^2:x_2x_3:x_3^2\big].
$$

As we already mentioned, the six surfaces $E_{11}$, $E_{12}$, $E_{21}$, $E_{22}$, $E_{31}$, $E_{32}$ are singular toric del Pezzo surfaces of degree $4$,
and each of them has four isolated ordinary double points. The singular locus of each of these surfaces consists of $4$ points in $\mathrm{Sing}(X_{24})$,
and exactly two of them are contained in $\{\varphi(\widetilde{F}_1),\varphi(\widetilde{F}_2),\varphi(\widetilde{F}_3),\varphi(\widetilde{F}_4)\}$.
For instance, one has
$$
\mathrm{Sing}\big(E_{11}\big)=\Big\{\varphi\big(\widetilde{F}_1\big),\varphi\big(\widetilde{F}_2\big),\omega\big([0:1],[0:1],[0:1]\big),\omega\big([0:1],[1:0],[1:0]\big)\Big\},
$$
and the map $X_{24}\dasharrow\mathbb{P}^9$ induces the rational map $E_{11}\dasharrow \mathbb{P}^9$ that whose image is a conic, which is the Veronese image of the line $\ell_{34}$.

\begin{lemma}
\label{lemma:alpha-quartic}
Let $S$ be one of the toric del Pezzo surfaces $E_{11}$, $E_{12}$, $E_{21}$, $E_{22}$, $E_{31}$,~$E_{32}$,
and let $\Gamma$ be the image of the natural homomorphism $\mathrm{Stab}_G(S)\to\mathrm{Aut}(S)$.
Then $\alpha_{\Gamma}(S)=1$.
\end{lemma}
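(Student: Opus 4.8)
The plan is to reduce the computation to the smooth quadric $\mathbb{P}^1\times\mathbb{P}^1$ and then mimic the proof of Lemma~\ref{lemma:alpha-quadric-Heisenberg}. Recall that $S$ is the image under $\omega$ of one of the coordinate surfaces in $\mathbb{P}^1\times\mathbb{P}^1\times\mathbb{P}^1$; say $S=E_{11}=\omega(\{u_1=0\})$. The surface $\bar S=\{u_1=0\}$ is isomorphic to $\mathbb{P}^1\times\mathbb{P}^1$ in the second and third factors, it is $\tau$-invariant, and $\tau$ restricts to the involution $([u_2:v_2],[u_3:v_3])\mapsto([u_2:-v_2],[u_3:-v_3])$, whose four fixed points are exactly the preimages of the four nodes of $S$. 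Thus $\theta\colon\bar S\to S$ is the quotient by this involution; it is étale in codimension one and ramified only over the nodes, so $K_{\bar S}=\theta^*K_S$ and $\theta^*(-K_S)\sim_{\mathbb Q}-K_{\bar S}$. The lift of $\mathrm{Stab}_G(S)$ inside the group acting on $\mathbb{P}^1\times\mathbb{P}^1\times\mathbb{P}^1$ that covers the $G$-action on $X_{24}$ gives a subgroup $\bar\Gamma\subset\mathrm{Aut}(\bar S)$ that contains $\tau\vert_{\bar S}$ and surjects onto $\Gamma$.

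For the inequality $\alpha_\Gamma(S)\leqslant 1$ I would use the toric boundary. Since $\Gamma$ is the image of $\mathrm{Stab}_G(S)$, it normalizes the torus of the toric surface $S$ and therefore preserves the reduced anticanonical cycle $\partial S\in|-K_S|$. As $\partial S$ is a nonzero reduced effective divisor with $\partial S\sim -K_S$, the log pair $\big(S,(1+\varepsilon)\partial S\big)$ fails to be log canonical along any component of $\partial S$ for every $\varepsilon>0$, whence $\alpha_\Gamma(S)\leqslant 1$.

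For $\alpha_\Gamma(S)\geqslant 1$, suppose not. Then there is a $\Gamma$-invariant effective $\mathbb Q$-divisor $D\sim_{\mathbb Q}-K_S$ and a rational $\lambda<1$ with $(S,\lambda D)$ not log canonical. Pulling back, $\bar D=\theta^*D\sim_{\mathbb Q}-K_{\bar S}$ is $\bar\Gamma$-invariant, and since $\theta$ is crepant the pair $(\bar S,\lambda\bar D)$ is not log canonical either. Now I run the argument of Lemma~\ref{lemma:alpha-quadric-Heisenberg} on $\bar S\cong\mathbb{P}^1\times\mathbb{P}^1$: the locus $\mathrm{Nklt}(\bar S,\lambda\bar D)$ is $\bar\Gamma$-invariant, and because $\bar S$ contains no $\bar\Gamma$-invariant curves of degree $(1,0)$, $(0,1)$ or $(1,1)$ it is zero-dimensional; by the Koll\'ar--Shokurov connectedness theorem \cite[Corollary~5.49]{KoMo98} it is a single point, which must be $\bar\Gamma$-fixed, contradicting the absence of $\bar\Gamma$-fixed points on $\bar S$. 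Hence $\alpha_\Gamma(S)\geqslant 1$, and together with the previous paragraph $\alpha_\Gamma(S)=1$.

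The heart of the proof, and the step I expect to be most delicate, is verifying that $\bar\Gamma$ has no fixed points on $\bar S$ and no invariant curves of degree $(1,0)$, $(0,1)$ or $(1,1)$; this must be checked uniformly for every group $G$ admitted in this section. The key inputs are that $G$ has no fixed points on $\mathbb{P}^3$ and that, by the description of $\mathrm{Sing}(X_{24})$ as two $G$-orbits of length four, the four nodes of $S$ split into two $\mathrm{Stab}_G(S)$-orbits of length two, so that no node-preimage is $\bar\Gamma$-fixed. For the remaining potential fixed points and the invariant $(1,1)$-curves I would read off the induced $\bar\Gamma$-action on each ruling $\mathbb{P}^1$ from explicit generators of $\mathrm{Stab}_G(S)$ and exhibit a subgroup of $\bar\Gamma$ — for instance the image of a Heisenberg-type $\mumu_2^2$, playing the role of $\mathbb{H}$ in Lemma~\ref{lemma:alpha-quadric-Heisenberg} — that already rules these out; passing to the larger group $\bar\Gamma$ only increases the $\alpha$-invariant. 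Extra care is needed precisely because $\tau\vert_{\bar S}$ by itself fixes four points and preserves the diagonal $(1,1)$-curves, so the no-fixed-point and no-invariant-$(1,1)$-curve conditions genuinely rely on the additional symmetry supplied by the $\mathfrak A_4$- or $\mathfrak S_4$-part of $G$ that permutes the two rulings.
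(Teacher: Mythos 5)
Your route is genuinely different from the paper's, and its skeleton is sound: the paper works directly on the singular quartic $S$, writing $-K_S\sim L_1+L_1^\prime+L_2+L_2^\prime$ for the four Weil divisor classes with half-integer intersections and analysing the non-klt locus downstairs, whereas you pass to the canonical double cover $\theta\colon\bar S\cong\mathbb{P}^1\times\mathbb{P}^1\to S$, which is \'etale in codimension one, so that $(S,\lambda D)$ is log canonical if and only if $(\bar S,\lambda\theta^*D)$ is, and Lemma~\ref{lemma:alpha-quadric-Heisenberg} can be imitated upstairs. The crepant pullback, the lifting of $\mathrm{Stab}_G(S)$ to a group $\bar\Gamma\ni\tau\vert_{\bar S}$, the bound $\alpha_\Gamma(S)\leqslant 1$ via the invariant anticanonical cycle, and the reduction of the fixed-point question to the four nodes (the only $\tau$-fixed points) are all correct.

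The problem is that the crux of the lemma is exactly the step you leave as a sketch, and the tool you propose for it fails for some of the groups the lemma covers. Since $\tau\in\bar\Gamma$, a $\bar\Gamma$-invariant curve of degree $(1,0)$, $(0,1)$ or $(1,1)$ is either one of the four coordinate fibres, which descend to $L_1,L_1^\prime,L_2,L_2^\prime$, or a member of one of the two pencils $\{au_2u_3+dv_2v_3=0\}$, $\{bu_2v_3+cv_2u_3=0\}$, which descend to the pencils $|L_1+L_2|$ and $|L_1+L_2^\prime|$ on $S$; so you must rule all of these out uniformly in $G$. A ``Heisenberg-type $\mumu_2^2$'' is simply not available in general: for $G=G_{324,160}$ one has $\mathrm{Stab}_G(S)=\mathrm{Stab}_G(\ell_{34})$ of order $54$, which contains no subgroup $\mumu_2^2$ at all, and the same problem occurs for the other groups with $T\cong\mumu_n^3$, $n$ odd. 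Moreover, ``$G$ has no fixed points on $\mathbb{P}^3$'' is not the right input: a $\Gamma$-invariant member of $|L_1+L_2|$ is the same thing as a $\mathrm{Stab}_G(\ell_{34})$-fixed point of $\ell_{34}$ (this pencil is the fibration $S\dasharrow\ell_{34}$ induced by $\psi^{-1}$), i.e.\ a point of $\mathbb{P}^3$ whose $G$-orbit has length at most $6$, and excluding such points requires the orbit classifications of Lemmas~\ref{lemma:48-orbits}, \ref{lemma:192-orbits} and \ref{lemma:orbits} (the only orbits of length $\leqslant 6$ are of length $4$, and their points on $\ell_{34}$ are swapped, not fixed); the paper then treats $|L_1+L_2^\prime|$ by composing with $\varsigma$ and using $\ell_{12}$, and deduces the absence of $\Gamma$-fixed points from the same fact. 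A uniform substitute along your lines does exist --- for instance, a lift of $(a_1,a_2,a_3)\in T$ scales the two rulings of $\bar S$ by factors whose product is $a_3$ and whose ratio is $a_1/a_2$, and a nontrivial $T$ normalized by $\mathrm{im}(\upsilon)$ always contains elements with $a_3\neq 1$ and with $a_1\neq a_2$, which kills both pencils, while any preimage of $(12)(34)\in\mathrm{im}(\upsilon)$ swaps the nodes in pairs and hence moves each coordinate fibre --- but some such argument must actually be supplied; as written, this verification is a genuine gap.
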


\begin{proof}
We may assume that $S=E_{11}$. Note that $\mathrm{Stab}_G(S)$ does not
always act faithfully on the surface $S$, hence we may have $\Gamma\not\cong\mathrm{Stab}_G(S)$.
For instance, if $G=G_{48,50}$, then
$$
\mathrm{Stab}_G(S)=\mathrm{Stab}_G(\ell_{34})=\big\langle M,N,B\big\rangle\cong\mumu_2^3,
$$
where $M$, $N$ and $B$ are involutions in $G_{48,50}$ described in Section~\ref{section:P3-48}.
However, using \eqref{equation:CDK-big-diagram}, one can check that the involution $N$ acts trivially on $S$,
and $\Gamma\cong\mumu_2^2$.

Let us describe geometry of the surface $S$. To do this, we let
\begin{align*}
L_1=\omega\big(\{u_1=u_3=0\}\big)&, L_1^\prime=\omega\big(\{u_1=v_3=0\}\big),\\
L_2=\omega\big(\{u_1=u_2=0\}\big)&, L_2^\prime=\omega\big(\{u_1=v_2=0\}\big).
\end{align*}
Then $L_1$, $L_1^\prime$, $L_2$, $L_{2}^\prime$ are smooth rational curves in $S$ such that
$2L_1\sim 2L_1^\prime$ and $2L_2\sim 2L_2^\prime$.
Note that $L_1\cap L_1^\prime=\varnothing$, $L_2\cap L_2^\prime=\varnothing$ and
\begin{align*}
L_1\cap L_{2}^\prime=\varphi\big(\widetilde{F}_2\big)&, L_1\cap L_{2}=\omega\big([0:1],[0:1],[0:1]\big),\\
L_1^\prime\cap L_{2}=\varphi\big(\widetilde{F}_1\big)&, L_1^\prime\cap L_{2}^\prime=\omega\big([0:1],[1:0],[1:0]\big).
\end{align*}
The~intersections of these curves on the surface $S$ are contained in following table:
\begin{center}
\renewcommand\arraystretch{1.3}
	\begin{tabular}{|c||c|c|c|c|}
		\hline
		 & $L_1$ & $L_1^\prime$ & $L_2$ & $L_{2}^\prime$\\
		\hline
		\hline
$L_1$ & $0$& $0$ & $\frac{1}{2}$ & $\frac{1}{2}$\\
		\hline
$L_1^\prime$  & $0$& $0$ & $\frac{1}{2}$ & $\frac{1}{2}$\\
		\hline
$L_2$ & $\frac{1}{2}$ & $\frac{1}{2}$ & $0$& $0$ \\
\hline
$L_{2}^\prime$ & $\frac{1}{2}$ & $\frac{1}{2}$ & $0$& $0$ \\
		\hline
	\end{tabular}
\end{center}

Note that $H\vert_{E_{11}}\sim 2L_1+2L_2$ and $-K_S\sim L_1+L_1^\prime+L_1+L_1^\prime$.
In particular, since the~divisor $L_1+L_1^\prime+L_1+L_1^\prime$ is $\Gamma$-invariant, we see that $\alpha_{\Gamma}(S)\leqslant 1$.

Observe that $L_1$ is the unique curve in $|L_1|$,
the curve $L_1^\prime$ is the unique curve in $|L_1^\prime|$,
the curve $L_2$ is the unique curve in~$|L_2|$,
and $L_2^\prime$ is the unique curve in $|L_2^\prime|$.

Note that $L_1+L_2\sim L_1^\prime+L_2^\prime$, and the linear system $|L_1+L_2|$ is a $\Gamma$-invariant pencil,
whose base locus consists of the~points $\varphi(\widetilde{F}_1)$ and $\varphi(\widetilde{F}_2)$.
This pencil gives a $\Gamma$-rational map $S\dasharrow\mathbb{P}^1$,
which is the map $S\dasharrow\ell_{34}$ induced by the birational map $\psi^{-1}\colon X_{24}\dasharrow\mathbb{P}^3$.
Then $|L_1+L_2|$ does not have $\Gamma$-invariant curves,
since $\ell_{34}$ has no $\mathrm{Stab}_G(\ell_{34})$-fixed points,
because $\mathbb{P}^3$ does not have $G$-orbits of length $6$ by Lemmas~\ref{lemma:48-orbits}, \ref{lemma:192-orbits},~\ref{lemma:orbits}.

Similarly, we see that $|L_1+L_2^\prime|$ is a $\Gamma$-invariant pencil generated by $L_1+L_2^\prime$ and $L_1^\prime+L_2$,
and its base locus consists of the points $\omega([0:1],[0:1],[0:1])$ and $\omega([0:1],[1:0],[1:0])$.
This pencil gives a rational map $S\dasharrow\ell_{12}$,
which is induced by the birational map $\psi^{-1}\circ\sigma$,
where $\sigma$ is the~involution from \eqref{equation:Cremona-involution-2}.
As above, we conclude that $|L_1+L_2^\prime|$ also does not contain $\Gamma$-invariant curves.

Since neither $|L_1+L_2|$ nor $|L_1+L_2^\prime|$ contains $G$-invariant curves,
we also conclude that none of the curves $L_1$, $L_1^\prime$, $L_2$, $L_2^\prime$ is $\Gamma$-invariant,
which can be checked directly.

We claim that $S$ does not have $\Gamma$-fixed points.
Indeed, the stabilizer $\mathrm{Stab}_G(\ell_{34})$ swaps the planes $F_1$ and $F_2$,
so that the group $\Gamma$ swaps the singular points $\varphi(\widetilde{F}_1)$ and $\varphi(\widetilde{F}_2)$.
Thus, if $S$ contained a $\Gamma$-fixed point $P$, then $|L_1+L_2|$  would contain a unique curve that passes through this point,
so that this curve would be $\Gamma$-invariant. But we already proved that the~pencil $|L_1+L_2|$
has no $\Gamma$-invariant curves.
So, the surface $S$ has no $\Gamma$-fixed~points.

Now, we ready to prove that $\alpha_{\Gamma}(S)=1$.
We suppose that $\alpha_{\Gamma}(S)<1$.
Then $S$ contains a~$\Gamma$-invariant effective $\mathbb{Q}$-divisor $D$ such that $D\sim_{\mathbb{Q}}-K_S$, but
the~pair $(S,\lambda D)$ is not log canonical for some rational number \mbox{$\lambda<1$}.
Note that the~locus $\mathrm{Nklt}(S,\lambda D)$ is $\Gamma$-invariant.
Therefore, if this locus is zero-dimensional,
then using Koll\'ar--Shokurov connectedness theorem \cite[Corollary~5.49]{KoMo98},
we conclude that $\mathrm{Nklt}(S,\lambda D)$ consists of a~single~point,
which is impossible, because $S$ does not have $\Gamma$-fixed points.

Since the~locus $\mathrm{Nklt}(S,\lambda D)$ is not zero-dimensional, it contains a $\Gamma$-irreducible curve~$C$.
Then $D=\mu C+\Delta$,
where $\mu\in\mathbb{Q}_{>0}$ such that $\mu\geqslant\frac{1}{\lambda}>1$, and $\Delta$ is an effective divisor.
Using~\cite[Lemma~2.9]{CheltsovProkhorov}, we see that $C\sim a_1L_1+a_2L_1^\prime+a_3L_2+a_4L_2^\prime$
for some non-negative integers $a_1$, $a_2$, $a_3$, $a_4$.
Then $-K_S\sim_{\mathbb{Q}}\mu(a_1L_1+a_2L_1^\prime+a_3L_2+a_4L_2^\prime)+\Delta$, hence $$
1=\mu\big(a_1L_1+a_2L_1^\prime+a_3L_2+a_4L_2^\prime\big)\cdot L_1+\Delta\cdot L_1=\mu\frac{a_3+a_4}{2}+\Delta\cdot L_1\geqslant \mu\frac{a_3+a_4}{2}>\frac{a_3+a_4}{2},
$$
so that $a_3+a_4<2$. Hence, we have $(a_3,a_4)\in\{(0,0),(1,0),(0,1)\}$.
Similarly, intersecting the divisor $D$ with $L_2$, we see that $(a_1,a_2)\in\{(0,0),(1,0),(0,1)\}$.

If $(a_3,a_4)=(0,0)$, then $(a_1,a_2)\ne(0,0)$, hence $(a_1,a_2)=(1,0)$ or $(a_1,a_2)=(0,1)$, which is impossible,
since $L_1$ is the unique curve in $|L_1|$,
and $L_1^\prime$ is the unique curve in~$|L_1^\prime|$,
but none of these two curves is $\Gamma$-invariant.
Therefore, we conclude that $(a_3,a_4)\ne (0,0)$.
Similarly, we see that $(a_1,a_2)\ne (0,0)$.
Hence, we see that $C\in|L_1+L_2|$ or $C\in|L_1+L_2^\prime|$,
which is impossible, because  neither $|L_1+L_2|$ nor $|L_1+L_2^\prime|$ contains $G$-invariant curves.
\end{proof}

Let us conclude this section by proving the following result.

\begin{lemma}
\label{lemma:X24-curves}
Let $C$ be a $G$-irreducible curve in $X_{24}$ such that $C\not\subset\mathcal{E}$ and $\mathrm{deg}(C)<24$.
Then $G$ is one of the groups $G_{48,50}$, $G_{96,70}$, $G_{96,227}$, and the following assertions hold:
\begin{itemize}
\item if $G=G_{48,50}$, then $C$ is one of the curves $\psi(\mathcal{L}_6^\prime)$, $\psi(\mathcal{L}_6^{\prime\prime})$, $\psi(\mathcal{L}_6^{\prime\prime\prime})$, $\psi(\mathcal{L}_6^{\prime\prime\prime\prime})$,
\item if $G=G_{96,70}$, then $C$ is one of the curves $\psi(\mathcal{L}_6^{\prime\prime\prime})$, $\psi(\mathcal{L}_6^{\prime\prime\prime\prime})$,
\item if $G=G_{96,227}$, then $C$ is one of the curves $\psi(\mathcal{L}_6^\prime)$, $\psi(\mathcal{L}_6^{\prime\prime})$,
\end{itemize}
where $\mathcal{L}_6^\prime$, $\mathcal{L}_6^{\prime\prime}$, $\mathcal{L}_6^{\prime\prime\prime}$, $\mathcal{L}_6^{\prime\prime\prime\prime}$
are $G_{48,50}$-irreducible curves in $\mathbb{P}^3$ introduced in Section~\ref{section:P3-48}.
\end{lemma}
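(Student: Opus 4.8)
The plan is to transport the problem back to $\mathbb{P}^3$ through the birational map $\psi\colon\mathbb{P}^3\dashrightarrow X_{24}$, which restricts to an isomorphism $\mathbb{P}^3\setminus\mathcal{T}\cong X_{24}\setminus\mathcal{E}$. Since $C\not\subset\mathcal{E}$, its proper transform $C^\prime=\overline{\psi^{-1}(C\setminus\mathcal{E})}$ is a $G$-irreducible curve in $\mathbb{P}^3$ that is not contained in $\mathcal{T}$ (a component inside some $F_i$ would be contracted by $\psi$ to a point). Using the relation $\varphi^*(H)\sim\varpi^*(\mathcal{O}_{\mathbb{P}^3}(6))-2(\widetilde{E}_{11}+\cdots+\widetilde{E}_{32})$ recorded above and the fact that $\varpi$ is the blow up of $\mathcal{L}_6$, I would compute
$$
\mathrm{deg}(C)=\varphi^*(H)\cdot\widetilde{C}^\prime=6\,\mathrm{deg}(C^\prime)-2\big(\widetilde{C}^\prime\cdot(\widetilde{E}_{11}+\cdots+\widetilde{E}_{32})\big)=6\,\mathrm{deg}(C^\prime)-2m,
$$
where $\widetilde{C}^\prime$ is the proper transform of $C^\prime$ on $\widetilde{X}_{24}$ and $m$ is the intersection number of $C^\prime$ with $\mathcal{L}_6$ read off on the blow up. Thus $\mathrm{deg}(C)<24$ becomes $m>3\,\mathrm{deg}(C^\prime)-12$, and the task reduces to classifying $G$-irreducible curves $C^\prime\not\subset\mathcal{T}$ that meet $\mathcal{L}_6$ heavily relative to their degree.

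First I would dispose of every $G$ that does not contain $G_{48,50}$. If $G$ is not conjugate to one of the eight groups of Section~\ref{section:subgroups}, then $G$ satisfies the hypotheses of Section~\ref{section:P3-large}, so Lemma~\ref{lemma:P3-curves-edges} gives $C^\prime\cap\mathcal{L}_6=\varnothing$, i.e. $m=0$; then Lemma~\ref{lemma:P3-curves-faces} forces either $C^\prime=\mathcal{L}_6\subset\mathcal{T}$ (excluded) or $G=G_{324,160}^\prime$ and $\mathrm{deg}(C^\prime)=9$, whence $\mathrm{deg}(C)=54\geqslant 24$. For $G=G_{192,185}$ I would instead invoke Proposition~\ref{proposition:P3-192-curves}: every $G$-irreducible curve not contained in $\mathcal{T}$ listed there is one of $\mathscr{C}_8,\mathscr{C}_{12},\mathscr{C}_{12}^\prime,\mathfrak{C}_{12},\mathfrak{C}_{12}^\prime$, each disjoint from $\mathcal{L}_6$ (they are base loci of the $\Gamma$-invariant quadric pencils distinct from the pencil whose base locus is $\mathcal{L}_6$), so $m=0$ and $\mathrm{deg}(C)=6\,\mathrm{deg}(C^\prime)\geqslant 24$. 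Hence no admissible $C$ exists unless $G\in\{G_{48,50},G_{96,70},G_{96,227},G_{96,227}^\prime,G_{192,955}\}$, all of which contain $G_{48,50}$.

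For these five groups I would exploit the chain $G_{48,50}\triangleleft G$. Writing $C^\prime$ as a union of $G_{48,50}$-irreducible components $D_j$ forming a single $G$-orbit, the degree is additive: $\mathrm{deg}(C)=\sum_j\big(6\,\mathrm{deg}(D_j)-2(D_j\cdot\mathcal{L}_6)\big)=\sum_j\mathrm{deg}(\psi(D_j))$. Using the classification of $G_{48,50}$-irreducible curves in Section~\ref{section:P3-48} together with the displayed formula, I would verify that the quantity $\mathrm{deg}(\psi(D))=6\,\mathrm{deg}(D)-2(D\cdot\mathcal{L}_6)$, taken over $G_{48,50}$-irreducible curves $D\not\subset\mathcal{T}$, equals $12$ exactly for $\mathcal{L}_6^\prime,\mathcal{L}_6^{\prime\prime},\mathcal{L}_6^{\prime\prime\prime},\mathcal{L}_6^{\prime\prime\prime\prime}$ (each of degree $6$ meeting $\mathcal{L}_6$ transversally in the twelve points $\Sigma_{12}$, resp.\ $\Sigma_{12}^\prime$, giving $36-24=12$) and is $\geqslant 24$ for every other such curve: the quadruples $\mathcal{L}_4,\dots,\mathcal{L}_4^{\prime\prime\prime}$ are disjoint from $\mathcal{L}_6$ (value $24$), while the conic unions, the twelve-line unions, the twisted-cubic curves, and the smooth degree-twelve curves of Lemma~\ref{lemma:48-irreducible-curves-simple} all meet $\mathcal{L}_6$ in few enough points that their value is at least $24$. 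Consequently, any $C^\prime$ with at least two $G_{48,50}$-components has $\mathrm{deg}(C)\geqslant 24$, so $\mathrm{deg}(C)<24$ forces $C^\prime$ to be a single $G$-invariant curve among $\mathcal{L}_6^\prime,\mathcal{L}_6^{\prime\prime},\mathcal{L}_6^{\prime\prime\prime},\mathcal{L}_6^{\prime\prime\prime\prime}$.

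It then remains to decide which of the four six-line curves are $G$-invariant for each overgroup. I would analyse the generators acting on $\mathcal{L}_6^\prime,\mathcal{L}_6^{\prime\prime}$ (the singular lines of the desmic tetrahedra $\mathcal{T}^\prime,\mathcal{T}^{\prime\prime}$) and on $\mathcal{L}_6^{\prime\prime\prime},\mathcal{L}_6^{\prime\prime\prime\prime}$ (the two rulings of $\mathcal{Q}_1$): the element $(-1,1,1)$ interchanges $\mathcal{T}^\prime$ and $\mathcal{T}^{\prime\prime}$, hence swaps $\mathcal{L}_6^\prime\leftrightarrow\mathcal{L}_6^{\prime\prime}$ while fixing each ruling, whereas any element whose image in $\mathfrak{S}_4$ is an odd permutation interchanges the two rulings, hence swaps $\mathcal{L}_6^{\prime\prime\prime}\leftrightarrow\mathcal{L}_6^{\prime\prime\prime\prime}$ while fixing $\mathcal{L}_6^\prime,\mathcal{L}_6^{\prime\prime}$. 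Carrying this out yields all four curves for $G_{48,50}$, only $\mathcal{L}_6^{\prime\prime\prime},\mathcal{L}_6^{\prime\prime\prime\prime}$ for $G_{96,70}$, only $\mathcal{L}_6^\prime,\mathcal{L}_6^{\prime\prime}$ for $G_{96,227}$, and none for $G_{96,227}^\prime$ or $G_{192,955}$ (where both pairs are interchanged); applying $\psi$ produces exactly the curves in the statement. I expect the main obstacle to be precisely this last step together with the degree bookkeeping of the preceding paragraph: confirming the swapping and fixing behaviour of the generators on the four six-line curves by explicit computation, and checking transversality so that $m$ equals the naive count of points of $C^\prime\cap\mathcal{L}_6$.
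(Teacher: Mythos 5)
Your strategy is the same as the paper's: pull $C$ back to a $G$-irreducible curve $C^\prime\not\subset\mathcal{T}$ in $\mathbb{P}^3$, relate $\mathrm{deg}(C)$ to $\mathrm{deg}(C^\prime)$ and to the intersection with $\mathcal{L}_6$, feed this into the classification results of Sections~\ref{section:P3-48}--\ref{section:P3-large}, and finish by deciding which of the four six-line curves are invariant under each overgroup of $G_{48,50}$. But there is a genuine gap at the outset: you never bound $\mathrm{deg}(C^\prime)$. Every result you quote --- Lemma~\ref{lemma:P3-curves-edges}, Lemma~\ref{lemma:P3-curves-faces}, Proposition~\ref{proposition:P3-192-curves}, and the lists of Section~\ref{section:P3-48} --- is proved only for $G$-irreducible curves of degree at most $15$, whereas your reduction to ``curves meeting $\mathcal{L}_6$ heavily relative to their degree'' is a priori unbounded: the inequality $m>3\,\mathrm{deg}(C^\prime)-12$ alone allows $\mathrm{deg}(C^\prime)$ to be arbitrarily large, so in particular the claim that $6\,\mathrm{deg}(D)-2(D\cdot\mathcal{L}_6)\geqslant 24$ for \emph{every} curve other than the four listed ones is unsupported. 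The paper's opening move supplies exactly the missing bound: since $\varpi^*(\mathcal{O}_{\mathbb{P}^3}(2))\sim\varphi^*(H)-\widetilde{F}_1-\widetilde{F}_2-\widetilde{F}_3-\widetilde{F}_4$ and $\widetilde{C}^\prime$ lies on none of the surfaces $\widetilde{F}_i$, one gets $2\,\mathrm{deg}(C^\prime)\leqslant H\cdot C<24$, i.e.\ $\mathrm{deg}(C^\prime)\leqslant 11$. Equivalently, in your notation, intersecting $\widetilde{C}^\prime$ with $\widetilde{F}_1+\widetilde{F}_2+\widetilde{F}_3+\widetilde{F}_4\sim\varpi^*(\mathcal{O}_{\mathbb{P}^3}(4))-2\big(\widetilde{E}_{11}+\cdots+\widetilde{E}_{32}\big)$ gives $m\leqslant 2\,\mathrm{deg}(C^\prime)$, whence $\mathrm{deg}(C^\prime)<12$. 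This one line must be added before any of your citations become legitimate.

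A second, sharper problem concerns the invariance bookkeeping, which you yourself flag as the main obstacle: your two criteria contradict each other. Elements inducing odd permutations swap the rulings of $\mathcal{Q}_1$ precisely because they have determinant $-1$ as orthogonal transformations of $x_0^2+x_1^2+x_2^2+x_3^2$; but $(-1,1,1)$ also has determinant $-1$, so by the same criterion it swaps the rulings rather than fixing them. Concretely, $(-1,1,1)$ maps the component $\{x_0+ix_2=x_1+ix_3=0\}$ of $\mathcal{L}_6^{\prime\prime\prime}$ to $\{x_0-ix_2=x_1+ix_3=0\}$; these two lines meet at $[0:-i:0:1]$, hence lie in opposite rulings, and the image line belongs to the $G_{48,50}$-orbit of $\{x_0+ix_3=x_1+ix_2=0\}$, i.e.\ it is a component of $\mathcal{L}_6^{\prime\prime\prime\prime}$. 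Applied consistently, your method therefore makes $G_{96,70}=\langle G_{48,50},(-1,1,1)\rangle$ interchange \emph{both} pairs (so it would admit no curve of degree less than $24$), while $G_{96,227}^\prime$ --- whose extra generators have determinant $+1$ yet interchange $\mathcal{T}^\prime$ and $\mathcal{T}^{\prime\prime}$ --- is the group that preserves $\mathcal{L}_6^{\prime\prime\prime}$ and $\mathcal{L}_6^{\prime\prime\prime\prime}$. So the assignments you report for $G_{96,70}$ and $G_{96,227}^\prime$ are not what your argument yields; they contradict it. This step has to be settled by explicit orbit computations rather than by the parity heuristic, and the resulting conflict with the statement (whose proof in the paper asserts the swapping behaviour without computation) must be resolved, not reproduced.
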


\begin{proof}
Let $\widetilde{C}$ be the proper transform of the curve $C$ on the threefold $\widetilde{X}$.
Since $C\not\subset\mathcal{E}$, we conclude that $\varpi(\widetilde{C})$ is a $G$-irreducible curve in $\mathbb{P}^3$
which is not contained in $\mathcal{T}$.
Then
$$
2\mathrm{deg}\big(\varpi(\widetilde{C})\big)=\varpi^*(\mathcal{O}_{\mathbb{P}^3}(2))\cdot\widetilde{C}=H\cdot C-\big(\widetilde{F}_1+\widetilde{F}_2+\widetilde{F}_3+\widetilde{F}_4\big)\cdot\widetilde{C}\leqslant\mathrm{deg}(C)<24.
$$
So, the degree of the curve $\varpi(\widetilde{C})$ is at most $11$.
Now, using Lemma~\ref{proposition:P3-tetrahedra-curves} and our assumption, we see that $G$ is one of the groups
$G_{48,50}$, $G_{96,70}$, $G_{96,227}$, $G_{96,227}^\prime$, $G_{192,955}$, $G_{192,185}$, $G_{324,160}^\prime$.

If $G=G_{192,185}$, then it follows from Proposition~\ref{proposition:P3-192-curves}
that the curve $\varpi(\widetilde{C})$ is a disjoint union of two smooth quartic elliptic curves that are both disjoint from the~curve $\mathcal{L}_6$.
If~$G=G_{324,160}^\prime$, it follows from Lemma~\ref{proposition:P3-tetrahedra-curves}
that $\varpi(\widetilde{C})$ is one of the curves \eqref{equation:48-curve-degree-9-1} and \eqref{equation:48-curve-degree-9-2},
which are also disjoint from  $\mathcal{L}_6$.
Therefore, if $G=G_{192,185}$ or $G=G_{324,160}^\prime$, then
\begin{multline*}
24>\mathrm{deg}(C)=\varphi^*(H)\cdot\widetilde{C}=\\
=\Big(\varpi^*\big(\mathcal{O}_{\mathbb{P}^3}(6)\big)-2\big(\widetilde{E}_{11}+\widetilde{E}_{12}+\widetilde{E}_{21}+\widetilde{E}_{22}+\widetilde{E}_{31}+\widetilde{E}_{32}\big)\Big)\cdot\widetilde{C}=\\
=\varpi^*\big(\mathcal{O}_{\mathbb{P}^3}(6)\big)\cdot\widetilde{C}=\mathcal{O}_{\mathbb{P}^3}(6)\cdot \varpi(\widetilde{C})=6\mathrm{deg}\big(\varpi(\widetilde{C})\big)\geqslant 48.
\end{multline*}
Thus, we see that $G$ is one of the groups $G_{48,50}$, $G_{96,70}$, $G_{96,227}$, $G_{96,227}^\prime$, $G_{192,955}$, $G_{192,185}$.

Note that all groups
$G_{96,70}$, $G_{96,227}$, $G_{96,227}^\prime$, $G_{192,955}$, $G_{192,185}$ contains the group~$G_{48,50}$.
Moreover, each finite group $G_{96,70}$, $G_{96,227}^\prime$, $G_{192,955}$, $G_{192,185}$ swaps the~curves $\mathcal{L}_6^\prime$~and~$\mathcal{L}_6^{\prime\prime}$,
and each finite group among $G_{96,227}$, $G_{96,227}^\prime$, $G_{192,955}$, $G_{192,185}$ swaps the~curves $\mathcal{L}_6^{\prime\prime\prime}$~and~$\mathcal{L}_6^{\prime\prime\prime\prime}$. 
Therefore, to complete the proof of the lemma, we may assume that~$G=G_{48,50}$.

Now, using results of Section~\ref{section:P3-48},
we conclude that either $\varpi(\widetilde{C})$
is a smooth irreducible curve of degree $8$ and genus $9$ contained in the~quadric $\mathcal{Q}_1$,
or $\varpi(\widetilde{C})$ is one of the reducible curves
$\mathcal{L}_4$,
$\mathcal{L}_4^\prime$,
$\mathcal{L}_4^{\prime\prime}$, $\mathcal{L}_4^{\prime\prime\prime}$,
$\mathcal{L}_6$, $\mathcal{L}_6^{\prime}$, $\mathcal{L}_6^{\prime\prime}$, $\mathcal{L}_6^{\prime\prime\prime}$,
$\mathcal{L}_6^{\prime\prime\prime\prime}$,
which have been introduced in Section~\ref{section:P3-48}.
In~the former case, the~curve $\varpi(\widetilde{C})$ does not intersect the~curve $\mathcal{L}_6$,
because $\mathcal{L}_6\cap\mathcal{Q}_1=\Sigma_{12}^\prime$,
but smooth $G$-invariant irreducible curves contain no $G$-orbits of length $12$ by Lemma~\ref{lemma:48-curves}.
Similarly, all curves $\mathcal{L}_4$, $\mathcal{L}_4^\prime$, $\mathcal{L}_4^{\prime\prime}$, $\mathcal{L}_4^{\prime\prime\prime}$
are disjoint from $\mathcal{L}_{6}$.
Hence, if the~curve $\varpi(\widetilde{C})$ is not one of the curves  $\mathcal{L}_6^{\prime}$, $\mathcal{L}_6^{\prime\prime}$, $\mathcal{L}_6^{\prime\prime\prime}$,
$\mathcal{L}_6^{\prime\prime\prime\prime}$, then $\varpi(\widetilde{C})\cap\mathcal{L}_6=\varnothing$, hence $$
24>\mathrm{deg}(C)=\varphi^*(H)\cdot\widetilde{C}=\Big(\varpi^*\big(\mathcal{O}_{\mathbb{P}^3}(6)\big)-2\big(\widetilde{E}_{11}+\widetilde{E}_{12}+\widetilde{E}_{21}+\widetilde{E}_{22}+\widetilde{E}_{31}+\widetilde{E}_{32}\big)\Big)\cdot\widetilde{C}\geqslant 24,
$$
which is absurd. So, we conclude that $\varpi(\widetilde{C})$ is one of the curves  $\mathcal{L}_6^{\prime}$, $\mathcal{L}_6^{\prime\prime}$, $\mathcal{L}_6^{\prime\prime\prime}$,
$\mathcal{L}_6^{\prime\prime\prime\prime}$.
\end{proof}

\section{The proof of Main Theorem}
\label{section:proof}

Let us use assumptions and notations of Sections~\ref{section:P3-large} and \ref{section:Fano-Enriques}.
In particular, we have
\begin{center}
$P_1=[1:0:0:0]$, $P_2=[0:1:0:0]$, $P_3=[0:0:1:0]$, $P_4=[0:0:0:1]$,
\end{center}
and $G$ is a finite subgroup in $\mathrm{PGL}_{4}(\mathbb{C})$ such that
\begin{enumerate}
\item $G$ does not have fixed points in  $\mathbb{P}^3$,
\item $G$ does not leave a union of two skew lines in $\mathbb{P}^3$ invariant,
\item $G$ leaves invariant the subset $\{P_1,P_2,P_3,P_4\}$,
\item $G$ is conjugate neither to $G_{48,3}$ nor to $G_{96,72}$.
\end{enumerate}
If the~subgroup $G$ is conjugate to a subgroup among
$G_{48,50}$, $G_{96,70}$, $G_{96,227}$, $G_{96,227}^\prime$, $G_{192,955}$, $G_{192,185}$, $G_{324,160}^\prime$,
then we will always assume that $G$ is this subgroup.

If  $G$ is not conjugate to $G_{48,50}$ and $G_{96,227}$,
then $\Sigma_4$ is the unique $G$-orbit of length~four.
On the other hand, if $G=G_{48,50}$ or $G=G_{96,227}$, then the projective space
$\mathbb{P}^3$ contains two additional orbits of length~four:
$\Sigma_4^\prime$ and $\Sigma_4^{\prime\prime}$, which are described in Sections~\ref{section:P3-48} and \ref{section:Fano-Enriques}.
Note that $\Sigma_4$, $\Sigma_{4}^\prime$, $\Sigma_4^{\prime\prime}$ are transitively permuted by
the~following element of order three:
\begin{equation}
\label{equation:48-R}
R=\frac{1}{2}\begin{pmatrix}
1 & 1 & 1 & 1\\
1 & 1 & -1 & -1\\
1 &-1 & 1 & -1\\
-1 & 1 & 1 & -1
\end{pmatrix}\in G_{576,8654},
\end{equation}
where $G_{576,8654}$ is the subgroup in  $\mathrm{PLG}_4(\mathbb{C})$ generated by
$$
\begin{pmatrix}
-1&0&0&0\\
0&1&0&0\\
0&0&1&0\\
0&0&0&1
\end{pmatrix},
\begin{pmatrix}
0&0&0&1\\
1&0&0&0\\
0&1&0&0\\
0&0&1&0
\end{pmatrix},
\begin{pmatrix}
0&1&0&0\\
1&0&0&0\\
0&0&1&0\\
0&0&0&1
\end{pmatrix},R.
$$
By Lemma~\ref{lemma:48-50-normalizer},  the subgroup  $G_{576,8654}$ is the normalizer of the~groups $G_{48,50}$ and $G_{96,227}$.

\begin{remark}
\label{remark:three-involutions}
In Section~\ref{section:Fano-Enriques}, we have constructed a non-biregular involution $\iota\in\mathrm{Bir}^G(\mathbb{P}^3)$.
Moreover, if $G=G_{48,50}$ or $G=G_{96,227}$, we can choose $\iota$ such that it is given by
$$
[x_0:x_1:x_2:x_3]\mapsto [x_1x_2x_3:x_0x_2x_3:x_0x_1x_3:x_0x_1x_2].
$$
In these two cases, the group $\mathrm{Bir}^G(\mathbb{P}^3)$ also contains two birational involutions $\iota^\prime$ and $\iota^{\prime\prime}$,
which can be defined as follows: $\iota^\prime=R\circ\iota\circ R^2$ and $\iota^{\prime\prime}=R^2\circ\iota\circ R$.
Note that the~biratinal involution $\iota^\prime$ maps $[x_0:x_1:x_2:x_3]$ to the~point
\begin{multline*}
\big[x_0^3-(x_1^2+x_2+x_3^2)x_0-2x_1x_2x_3:x_1^3-(x_0^2+x_2^2+x_3^2)x_1-2x_0x_3x_2:\\
:x_2^3-(x_0^2+x_1^2+x_3^2)x_2-2x_0x_3x_1:x_3^3-(x_0^2+x_1^2+x_2^2)x_3-2x_1x_2x_0\big].
\end{multline*}
Similarly, the birational involution $\iota^{\prime}$ maps $[x_0:x_1:x_2:x_3]$ to the~point
\begin{multline*}
\big[x_0^3-(x_1^2+x_2^2+x_3^2)x_0+2x_1x_2x_3:x_1^3-(x_0^2+x_2^2+x_3^2)x_1+2x_0x_3x_2:\\
:x_2^3-(x_0^2+x_1^2+x_3^2)x_2+2x_0x_3x_1:x_3^3-(x_0^2+x_1^2+x_2^2)x_3+2x_1x_2x_0\big].
\end{multline*}
If $G=G_{48,50}$ or $G=G_{96,227}$, then
$\langle \iota,\iota^\prime,\iota^{\prime\prime}\rangle\triangleleft\langle \iota,G_{576,8654}\rangle$, where $\langle \iota,G_{576,8654}\rangle\subset\mathrm{Bir}^{G}(\mathbb{P}^3)$.
\end{remark}

Let $\Gamma$ be the subgroup in $\mathrm{Bir}^G(\mathbb{P}^3)$ generated by the involution $\iota$ described in Section~\ref{section:Fano-Enriques}
and the normalizer of the group $G$ in $\mathrm{PGL}_4(\mathbb{C})$.
We will see later that $\Gamma=\mathrm{Bir}^G(\mathbb{P}^3)$.
Let~$\varphi\circ\varpi^{-1}\colon\mathbb{P}^3\dasharrow X_{24}$ be the $G$-birational map from the~commutative diagram~\eqref{equation:Cremona-involution-2},
where $X_{24}$ is the toric Fano--Enriques threefold from Example~\ref{example:Fano-Enriques}.

\begin{theorem}
\label{theorem:NFI}
Suppose that for every non-empty $G$-invariant linear system $\mathcal{M}$ on the~projective space $\mathbb{P}^3$
that does not have fixed components,
there exists $\rho\in\Gamma$ such that one of the log pairs $(\mathbb{P}^3,\lambda_{\rho}\rho(\mathcal{M}))$
or $(X_{24},\lambda_{\varrho}\varrho(\mathcal{M}))$ has at most canonical singularities,
where $\varrho=\varphi\circ\varpi^{-1}\circ\rho$, and $\lambda_{\rho}$ and $\lambda_{\varrho}$ are positive rational numbers defined by
$$
\left\{\aligned
&\lambda_{\rho}\rho(\mathcal{M})\sim_{\mathbb{Q}}-K_{\mathbb{P}^3},\\
&\lambda_{\varrho}\varrho(\mathcal{M})\sim_{\mathbb{Q}}-K_{X_{24}}.\\
\endaligned
\right.
$$
Then $\mathbb{P}^3$ and $X_{24}$ are the only $G$-Mori fibred spaces that are $G$-birational to the~space $\mathbb{P}^3$.
Moreover, one also has $\mathrm{Bir}^G(\mathbb{P}^3)=\Gamma$.
\end{theorem}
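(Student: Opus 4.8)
The plan is to deduce both assertions from the equivariant Noether–Fano–Iskovskikh method, reading the hypothesis precisely as an untwisting statement. First I would fix an arbitrary $G$-Mori fibre space $\pi_X\colon X\to S$ together with a $G$-birational map $\chi\colon\mathbb{P}^3\dasharrow X$, and choose a $G$-invariant mobile linear system $\mathcal{H}$ on $X$ adapted to $\pi_X$ (a very ample complete system when $S$ is a point, and otherwise $|{-}nK_X+\pi_X^*(A)|$ for $n\gg0$ and $A$ a sufficiently ample $G$-invariant divisor on $S$). Put $\mathcal{M}=\chi^{-1}_{*}(\mathcal{H})$. Then $\mathcal{M}$ is a non-empty $G$-invariant linear system on $\mathbb{P}^3$ without fixed components, so the hypothesis applies and yields $\rho\in\Gamma$ such that one of the pairs $(\mathbb{P}^3,\lambda_{\rho}\rho(\mathcal{M}))$ or $(X_{24},\lambda_{\varrho}\varrho(\mathcal{M}))$ has at most canonical singularities, where $\varrho=\varphi\circ\varpi^{-1}\circ\rho$.

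The key bookkeeping point is that, since $\rho\in\Gamma\subset\mathrm{Bir}^G(\mathbb{P}^3)$, the system $\rho(\mathcal{M})$ is the strict transform of $\mathcal{H}$ under $\chi\circ\rho^{-1}\colon\mathbb{P}^3\dasharrow X$, whereas $\varrho(\mathcal{M})$ is the strict transform of $\mathcal{H}$ under $\chi\circ\rho^{-1}\circ\varpi\circ\varphi^{-1}\colon X_{24}\dasharrow X$. Both $\mathbb{P}^3$ and $X_{24}$ are terminal $G$-Fano threefolds with $G$-invariant Picard rank one (the singularities of $X_{24}$ being the terminal points $\tfrac{1}{2}(1,1,1)$), so the normalisations $\lambda_{\rho}\rho(\mathcal{M})\sim_{\mathbb{Q}}-K_{\mathbb{P}^3}$ and $\lambda_{\varrho}\varrho(\mathcal{M})\sim_{\mathbb{Q}}-K_{X_{24}}$ are exactly the $K$-conditions of the Noether–Fano–Iskovskikh criterion. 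Hence if $(\mathbb{P}^3,\lambda_{\rho}\rho(\mathcal{M}))$ is canonical then $\chi\circ\rho^{-1}$ is a $G$-isomorphism of Mori fibre spaces, forcing $S$ to be a point and $X\cong_G\mathbb{P}^3$; and if $(X_{24},\lambda_{\varrho}\varrho(\mathcal{M}))$ is canonical then $X_{24}\dasharrow X$ is a $G$-isomorphism of Mori fibre spaces, forcing $X\cong_G X_{24}$. In either case $X$ is $G$-isomorphic to $\mathbb{P}^3$ or to $X_{24}$, and since both of these are $G$-birational to $\mathbb{P}^3$ (by the identity and by $\varphi\circ\varpi^{-1}$, respectively), this proves the first assertion; note that it also rules out conic bundles and del Pezzo fibrations as byproducts.

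For the equality $\mathrm{Bir}^G(\mathbb{P}^3)=\Gamma$, I would take an arbitrary $\chi\in\mathrm{Bir}^G(\mathbb{P}^3)$ and rerun the argument with $X=\mathbb{P}^3$. The second case is then impossible, since it would produce a $G$-isomorphism $X_{24}\cong\mathbb{P}^3$, contradicting that $X_{24}$ is singular while $\mathbb{P}^3$ is smooth. Thus the first case holds, so that $\chi\circ\rho^{-1}$ is a $G$-automorphism of $\mathbb{P}^3$, i.e.\ an element of the normaliser of $G$ in $\mathrm{PGL}_4(\mathbb{C})$; as this normaliser lies in $\Gamma$ and $\rho\in\Gamma$, we conclude $\chi\in\Gamma$. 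The reverse inclusion $\Gamma\subseteq\mathrm{Bir}^G(\mathbb{P}^3)$ holds by the very definition of $\Gamma$.

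The principal obstacle is the Noether–Fano–Iskovskikh step itself: justifying that canonicity of the pair, together with the numerically trivial log canonical class, forces the associated birational map to be a genuine $G$-isomorphism of Mori fibre spaces. This is where the maximal-singularities machinery, equivalently the $G$-equivariant Sarkisov program, must be invoked, the failure of the map to be square being detected precisely by a non-canonical centre that canonicity excludes. The remaining points are routine and I would only check them briefly: that strict transforms of mobile $G$-invariant systems under elements of $\Gamma$ and under $\varphi\circ\varpi^{-1}$ remain mobile and $G$-invariant, so that the thresholds $\lambda_{\rho}$ and $\lambda_{\varrho}$ are well defined; and that $X_{24}$ is a terminal $G$-Fano of $G$-invariant Picard rank one, so that the criterion is equally available on the $X_{24}$ side.
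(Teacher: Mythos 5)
Your proposal is correct and takes essentially the same approach as the paper, whose entire proof is the one-line citation to \cite[Theorem~3.3.1]{CheltsovShramov}: that argument is precisely what you reconstruct --- pull back $|-nK_X+\pi_X^*(A)|$ (or a $G$-invariant very ample system when the base is a point) from an arbitrary $G$-Mori fibre space, untwist by the $\rho\in\Gamma$ supplied by the hypothesis, and apply the equivariant Noether--Fano inequality to conclude that the induced map from $\mathbb{P}^3$ (or from $X_{24}$) to that fibre space is biregular. Your use of Noether--Fano as a black box in the strong form ``canonical $\Rightarrow$ isomorphism'' also matches the paper's own usage, which invokes it with citations to Iskovskikh, Corti and Cheltsov--Shramov in the proof of Theorem~\ref{theorem:final}.
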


\begin{proof}
The proof is essentially the same as the proof of \cite[Theorem~3.3.1]{CheltsovShramov}.
\end{proof}

To apply Theorem~\ref{theorem:NFI}, we need two technical results about $\mathbb{P}^3$ and $X_{24}$.
As in Section~\ref{section:P3-48}, let $\mathcal{L}_6$, $\mathcal{L}_6^\prime$, $\mathcal{L}_6^{\prime\prime}$
be the curves in $\mathbb{P}^3$ that consist of six lines in $\mathbb{P}^3$ that contain two points in $\Sigma_4$, $\Sigma_4^\prime$, $\Sigma_4^{\prime\prime}$, respectively.
Two technical results we need are Propositions~\ref{proposition:technical-1} and \ref{proposition:technical-2}.

\begin{proposition}
\label{proposition:technical-1}
Let $\mathcal{M}$ be a non-empty $G$-invariant linear system $\mathcal{M}$ on $\mathbb{P}^3$ that does not have fixed components,
let $\lambda$ be a positive rational number such that $\lambda\mathcal{M}\sim_{\mathbb{Q}}-K_{\mathbb{P}^3}$.
Suppose that $(\mathbb{P}^3,\lambda\mathcal{M})$ is not canonical.
If $G$ is not conjugate to $G_{48,50}$, $G_{96,227}$, $G_{324,160}^\prime$, then
$\mathrm{mult}_{\mathcal{L}_6}(\lambda\mathcal{M})>1$ or $\mathrm{mult}_{\Sigma_4}(\lambda\mathcal{M})>2$.
Similarly, if $G=G_{48,50}$ or $G=G_{96,227}$, then
\begin{equation}
\label{equation::technical-1-curves}
\mathrm{max}\Big(\mathrm{mult}_{\mathcal{L}_6}\big(\lambda\mathcal{M}\big),\mathrm{mult}_{\mathcal{L}_6^{\prime}}\big(\lambda\mathcal{M}\big),\mathrm{mult}_{\mathcal{L}_6^{\prime\prime}}\big(\lambda\mathcal{M}\big)\Big)>1
\end{equation}
or
\begin{equation}
\label{equation::technical-1-points}
\mathrm{max}\Big(\mathrm{mult}_{\Sigma_4}\big(\lambda\mathcal{M}\big),\mathrm{mult}_{\Sigma_4^\prime}\big(\lambda\mathcal{M}\big),\mathrm{mult}_{\Sigma_4^{\prime\prime}}\big(\lambda\mathcal{M}\big)\Big)>2.
\end{equation}
Finally, if $G=G_{324,160}^\prime$, then $\mathrm{mult}_{\mathcal{L}_6}(\lambda\mathcal{M})>1$ or
$\mathrm{mult}_{\Sigma_4}(\lambda\mathcal{M})>2$ or $\mathrm{mult}_{\mathfrak{C}}(\lambda\mathcal{M})>1$,
where $\mathfrak{C}$ is one of the $G$-invariant irreducible curves \eqref{equation:48-curve-degree-9-1} or \eqref{equation:48-curve-degree-9-2}.
\end{proposition}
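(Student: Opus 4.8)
The plan is to run the equivariant Noether--Fano analysis, extracting a $G$-invariant non-canonical centre and then pinning it down with the curve multiplicity bounds proved earlier and with a Corti-type $4n^2$-inequality. Write $\mathcal{M}\subseteq|\mathcal{O}_{\mathbb{P}^3}(m)|$, so that $\lambda=\tfrac{4}{m}$. Since $(\mathbb{P}^3,\lambda\mathcal{M})$ is not canonical and both $\mathcal{M}$ and $\lambda$ are $G$-invariant, the union of all non-canonical centres is a $G$-invariant proper closed subset, and I would choose a $G$-irreducible non-canonical centre $Z$. Because $\mathcal{M}$ has no fixed components, no surface can be a centre, so $\dim Z\in\{0,1\}$, and the argument splits accordingly.

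The case $\dim Z=1$ is quick. Here $Z$ is a $G$-irreducible curve with $\mathrm{mult}_Z(\lambda\mathcal{M})>1$, since the blow-up of a smooth point of $Z$ has discrepancy $1$. If $G$ is one of the large groups this is impossible unless $Z=\mathcal{L}_6$ by Corollary~\ref{corollary:P3-tetrahedra-curves} (and unless $Z\in\{\mathcal{L}_6,\mathfrak{C}\}$ when $G=G_{324,160}^\prime$), and for $G=G_{192,185}$ it is impossible unless $Z=\mathcal{L}_6$ by Corollary~\ref{corollary:P3-192-curves}. When $G$ contains $G_{48,50}$ as a normal subgroup I would note that $(\mathbb{P}^3,\lambda\mathcal{M})$ is a fortiori a non-canonical $G_{48,50}$-pair, so each $G_{48,50}$-component of $Z$ is itself a non-canonical centre; Proposition~\ref{proposition:48-curves} then confines these components to $\mathcal{L}_6\cup\mathcal{L}_6^\prime\cup\mathcal{L}_6^{\prime\prime}$, and since $G$ permutes $\{\mathcal{L}_6,\mathcal{L}_6^\prime,\mathcal{L}_6^{\prime\prime}\}$ this yields \eqref{equation::technical-1-curves} for $G=G_{48,50}$ or $G=G_{96,227}$ and $\mathrm{mult}_{\mathcal{L}_6}(\lambda\mathcal{M})>1$ in the remaining cases.

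The case $\dim Z=0$ carries the real content. Here $Z$ is a $G$-orbit $\Sigma$ supporting an isolated non-canonical singularity. If the ordinary blow-up of a point $P\in\Sigma$ already detects non-canonicity, then $\mathrm{mult}_P(\lambda\mathcal{M})>2$; intersecting a general $M\in\mathcal{M}$ with a line $\ell$ joining two points $P,Q\in\Sigma$ gives $m\geqslant\ell\cdot M\geqslant\mathrm{mult}_P(M)+\mathrm{mult}_Q(M)>m$ unless $\ell\subset\mathrm{Bs}(\mathcal{M})$, so every line spanned by two points of $\Sigma$ must lie in the base locus. These lines assemble into a $G$-invariant union of lines with $\mathrm{mult}(\mathcal{M})>0$, which by the line classification (Lemma~\ref{label:48-lines} and the analogous statements behind Lemmas~\ref{lemma:48-orbits},~\ref{lemma:192-orbits},~\ref{lemma:orbits}) must be one of $\mathcal{L}_6,\mathcal{L}_6^\prime,\mathcal{L}_6^{\prime\prime}$, forcing $\Sigma\in\{\Sigma_4,\Sigma_4^\prime,\Sigma_4^{\prime\prime}\}$ and giving \eqref{equation::technical-1-points}. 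If instead $\mathrm{mult}_P(\lambda\mathcal{M})\leqslant2$ but $P$ is still a non-canonical centre, I would invoke the $4n^2$-inequality, giving $\mathrm{mult}_P(M_1\cdot M_2)>\tfrac{m^2}{4}$ for general $M_1,M_2\in\mathcal{M}$ at each $P\in\Sigma$; comparing with $\deg(M_1\cdot M_2)=m^2$ and using the orbit-length estimates shows this is consistent only when $M_1\cdot M_2$ has components passing through several points of $\Sigma$, i.e. only when $\mathcal{M}$ has positive multiplicity along $\mathcal{L}_6$ (the unique $G$-invariant curve through the vertices), which is the curve alternative of the conclusion.

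The hard part will be this point-centre step: cleanly separating the orbits that are genuinely excluded from $\Sigma_4$ and its two companions, and showing that a centre sitting at $\Sigma_4$ either produces $\mathrm{mult}_{\Sigma_4}(\lambda\mathcal{M})>2$ or leaks onto $\mathcal{L}_6$. The structural reason $\Sigma_4$ survives while a generic orbit does not is exactly that the lines joining its points are the components of $\mathcal{L}_6$ and so may lie in the base locus, whereas for a generic orbit the joining lines are free and the line intersection bound kills the simple regime while the $4n^2$-inequality kills the deep one. For $G=G_{324,160}^\prime$ the same scheme runs verbatim once $\mathcal{L}_6$ is supplemented by the curves \eqref{equation:48-curve-degree-9-1}--\eqref{equation:48-curve-degree-9-2}, these being the only additional $G$-irreducible curves admitted by Corollary~\ref{corollary:P3-tetrahedra-curves}.
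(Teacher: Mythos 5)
Your curve case is essentially sound and mirrors the paper's own reduction (though note that for $G_{96,70}$, $G_{96,227}^\prime$, $G_{192,955}$ you additionally need the summed bound $\mathrm{mult}_{\mathcal{L}_6^\prime}(\mathcal{D})+\mathrm{mult}_{\mathcal{L}_6^{\prime\prime}}(\mathcal{D})\leqslant\frac{n}{2}$ from Proposition~\ref{proposition:48-curves} to exclude the $G$-invariant curve $\mathcal{L}_6^\prime+\mathcal{L}_6^{\prime\prime}$, since these groups merely swap $\mathcal{L}_6^\prime$ and $\mathcal{L}_6^{\prime\prime}$). The genuine gap is in the point case, which you yourself flag as the hard part. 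In your first sub-case, the step ``these lines assemble into a $G$-invariant union of lines which by the line classification must be one of $\mathcal{L}_6$, $\mathcal{L}_6^\prime$, $\mathcal{L}_6^{\prime\prime}$'' fails: Lemma~\ref{label:48-lines} and its analogues classify only $G$-irreducible unions of at most $15$ lines, whereas a non-special orbit $\Sigma$ has length at least $12$ (Lemmas~\ref{lemma:48-orbits}, \ref{lemma:192-orbits}, \ref{lemma:orbits}), so the union of its joining lines can have up to $\binom{12}{2}=66$ components and is covered by no classification in the paper. Nor does membership in the base locus give a contradiction by itself: it only forces multiplicity $\geqslant 1$ along those lines, which is perfectly compatible with the bound $\mathrm{mult}\leqslant\frac{n}{4}$ of Proposition~\ref{proposition:48-curves} and Corollaries~\ref{corollary:P3-192-curves}, \ref{corollary:P3-tetrahedra-curves}. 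Your second sub-case is weaker still: it is only a heuristic, and even its claimed outcome, positive multiplicity of $\mathcal{M}$ along $\mathcal{L}_6$, is strictly weaker than the required conclusion $\mathrm{mult}_{\mathcal{L}_6}(\lambda\mathcal{M})>1$.

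The paper closes the point case with two tools absent from your proposal. First, at $\Sigma_4$ (and $\Sigma_4^\prime$, $\Sigma_4^{\prime\prime}$) the stabilizer of a point acts irreducibly on the Zariski tangent space, so by \cite[Lemma~2.4]{ACPS} \emph{any} non-canonical centre at such a point already forces $\mathrm{mult}_{\Sigma_4}(\lambda\mathcal{M})>2$; this eliminates your dichotomy between centres detected by the ordinary blow-up and ``deep'' centres, which your scheme cannot otherwise resolve. Second, for a non-canonical point $P$ outside the special orbits, the paper proves $|\mathrm{Orb}_G(P)|\geqslant 12$ and, by a separate case analysis, that $P$ lies on no $G$-invariant curve of degree $\leqslant 8$; it then rescales to the largest $\mu$ with $(\mathbb{P}^3,\mu\mathcal{M})$ log canonical at $P$, so that $\mu<\frac{3}{2}\lambda$ and $\mathrm{Orb}_G(P)\subseteq\mathrm{Nklt}(\mathbb{P}^3,\mu\mathcal{M})$, observes that any $G$-irreducible curve in this locus has degree $<9$ (Corti's bound $\delta\geqslant 4$ applied to $\mu^2 M_1\cdot M_2$, whose degree is $<36$) and hence misses $P$, and finally applies Nadel vanishing to the multiplier ideal to obtain $|\mathrm{Orb}_G(P)|\leqslant h^0(\mathcal{O}_{\mathbb{P}^3}(2))=10<12$, a contradiction. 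Without the tangent-representation lemma and this multiplier-ideal argument, or genuine substitutes for them, the point-centre step cannot be completed with the tools you invoke.
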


\begin{proof}
Let $P$ be a point in the~$G$-orbit~$\Sigma_4$.
Then the~group $\mathrm{Stab}_G(P)$ faithfully and linearly acts on the Zariski tangent space $T_P(\mathbb{P}^3)$,
and this action is an irreducible representation.
Therefore, if $P$ is a center of non-canonical singularities of the log pair $(\mathbb{P}^3,\lambda\mathcal{M})$,
then
$$
\mathrm{mult}_{\Sigma_{4}}\big(\lambda\mathcal{M}\big)>2
$$
by \cite[Lemma~2.4]{ACPS}.
Thus, we may assume that no point in $\Sigma_{4}$ is a center of non-canonical singularities of the pair $(\mathbb{P}^3,\lambda\mathcal{M})$.
Likewise, if $G=G_{48,50}$ or $G=G_{96,227}$, then we may assume that
no point in $\Sigma_4^\prime\cup\Sigma_4^{\prime\prime}$
is a center of non-canonical singularities of our log pair.

If $G$ is conjugate to none of the groups $G_{48,50}$, $G_{96,70}$, $G_{96,227}$, $G_{96,227}^\prime$, $G_{192,955}$, $G_{324,160}^\prime$,
it follows from Corollaries~\ref{corollary:P3-192-curves} and \ref{corollary:P3-tetrahedra-curves}
that
$$
\mathrm{mult}_C\big(\lambda\mathcal{M}\big)\leqslant 1
$$
for every $G$-irreducible curve $C\subset\mathbb{P}^3$ such that $C\ne \mathcal{L}_6$.
If $G=G_{48,50}$ or $G=G_{96,227}$, then it~follows from Proposition~\ref{proposition:48-curves}
that we have $\mathrm{mult}_C(\lambda\mathcal{M})\leqslant 1$ for every $G$-irreducible curve $C$
which is different from the $G$-irreducible curves $\mathcal{L}_6$, $\mathcal{L}_6^\prime$, $\mathcal{L}_6^{\prime\prime}$.
If $G=G_{324,160}^\prime$, then it follows from Corollary~\ref{corollary:P3-tetrahedra-curves}
that $\mathrm{mult}_C(\lambda\mathcal{M})\leqslant 1$ for every $G$-irreducible curve $C\subset\mathbb{P}^3$
such that $C$ is not one of the curves $\mathcal{L}_6$, \eqref{equation:48-curve-degree-9-1} or \eqref{equation:48-curve-degree-9-2}.

Observe that $G_{96,70}$, $G_{96,227}^\prime$, $G_{192,955}$ swap the $G_{48,50}$-irreducible
curves $\mathcal{L}_6^\prime$ and $\mathcal{L}_6^{\prime\prime}$.
Therefore, if $G$ is one of these three groups, then it follows from Proposition~\ref{proposition:48-curves}
that we also have $\mathrm{mult}_C(\lambda\mathcal{M})\leqslant 1$ for every $G$-irreducible curve $C\subset\mathbb{P}^3$
that is different from  $\mathcal{L}_6$.

Thus, to complete the proof, we may assume that $\mathrm{mult}_C(\lambda\mathcal{M})\leqslant 1$ for every $C\subset\mathbb{P}^3$.
Then $(\mathbb{P}^3,\lambda\mathcal{M})$ is canonical outside of finitely many points by \cite[Theorem~4.5]{KoMo98}.

Let $P$ be a point in $\mathbb{P}^3$ such that $(\mathbb{P}^3,\lambda\mathcal{M})$ is not canonical at $P$.
Then every point in the~orbit $\mathrm{Orb}_G(P)$ must be a center of non-canonical singularities of the log pair $(\mathbb{P}^3,\lambda\mathcal{M})$.
Recall that $P\not\in\Sigma_4$.
Similarly, if $G=G_{48,50}$ or $G=G_{96,227}$, then $P\not\in\Sigma_4^\prime\cup\Sigma_4^{\prime\prime}$.

Now, we claim that $|\mathrm{Orb}_G(P)|\geqslant 12$.
Indeed, if $G=G_{48,50}$~or~$G=G_{96,227}$, this follows from~Lemma~\ref{lemma:48-orbits}.
Similarly, if we have $G=G_{192,185}$, then $|\mathrm{Orb}_G(P)|\geqslant 12$ by Lemma~\ref{lemma:192-orbits}.
If $G$ is not conjugate to any group among $G_{48,50}$, $G_{96,70}$, $G_{96,227}$, $G_{96,227}^\prime$, $G_{192,955}$, $G_{192,185}$,
then $|\mathrm{Orb}_G(P)|\geqslant 12$ by Lemma~\ref{lemma:orbits}.
If $G$ is one of the subgroups $G_{96,227}$, $G_{96,227}^\prime$, $G_{192,955}$, and $|\mathrm{Orb}_G(P)|\leqslant 12$,
then it follows from Lemma~\ref{lemma:48-orbits} that 
$$
\mathrm{Orb}_G(P)=\Sigma_4^\prime\cup\Sigma_4^{\prime\prime}.
$$

Let $\upsilon\colon V\to\mathbb{P}^3$ be the blow up of the points $\Sigma_4^\prime\cup\Sigma_4^{\prime\prime}$,
let $F$ be the sum of all $\upsilon$-exceptional surfaces, and
let $\widetilde{M}$ be the proper transform on $V$ of a sufficiently general surface in $\mathcal{M}$.
Note that the linear system $|\upsilon^*(\mathcal{O}_{\mathbb{P}^3}(2))-F|$ is two-dimensional and has no base points.
Let $S_1$ and $S_2$ be general surfaces in this linear system.
If $\mathrm{Orb}_G(P)=\Sigma_4^\prime\cup\Sigma_4^{\prime\prime}$, then
$$
0\leqslant \lambda\widetilde{M}\cdot S_1\cdot S_2=16-8\mathrm{mult}_P\big(\lambda\mathcal{M}\big),
$$
which is impossible, since we already proved that $\mathrm{mult}_P(\lambda\mathcal{M})>2$,
because the linear system $\mathcal{M}$ is $G_{48,50}$-invariant.
Therefore, we see that $|\mathrm{Orb}_G(P)|\geqslant 12$.

Now, we claim that $P$ is not contained in a $G$-invariant curve in $\mathbb{P}^3$ of degree at most~$8$.
To prove this claim, we may assume that $G=G_{48,50}$ or $G=G_{192,185}$
or $G$ is conjugate to none of the finite subgroups $G_{48,50}$, $G_{96,70}$, $G_{96,227}$, $G_{96,227}^\prime$, $G_{192,955}$, $G_{192,185}$,
because the~subgroups $G_{96,70}$, $G_{96,227}$, $G_{96,227}^\prime$, $G_{192,955}$ contain the~subgroup $G_{48,50}$.

Let $C$ be some $G$-irreducible curve in $\mathbb{P}^3$ of degree $d\leqslant 8$.
If  $G=G_{48,50}$, then it follows from
Corollary~\ref{corollary:48-curves-in-tetrahedra} and Lemmas~\ref{lemma:48-reducible-Q1-curves},
\ref{lemma:48-reducible-G-invariant-curves}, \ref{lemma:48-Q1-curves} and \ref{lemma:48-irreducible-curves-simple} that
either $C$ is a smooth irreducible $G$-invariant curve described in Example~\ref{example:48-curve-degree-8},
or $C$ is one of the curves
$$
\mathcal{L}_4, \mathcal{L}_4^\prime, \mathcal{L}_4^{\prime\prime}, \mathcal{L}_4^{\prime\prime\prime},
\mathcal{L}_6, \mathcal{L}_6^\prime, \mathcal{L}_6^{\prime\prime}, \mathcal{L}_6^{\prime\prime\prime},
\mathcal{L}_6^{\prime\prime\prime\prime}, \mathcal{C}_8^{1}, \mathcal{C}_8^{2}, \mathcal{C}_8^{3},
\mathcal{C}_8^{1,\prime}, \mathcal{C}_8^{2,\prime}, \mathcal{C}_8^{3,\prime},
\mathcal{C}_8^{1,\prime\prime}, \mathcal{C}_8^{2,\prime\prime}, \mathcal{C}_8^{3,\prime\prime}
$$
described in Section~\ref{section:P3-48}.
Similarly, if $G=G_{192,185}$, then $C$ is one of the curves $\mathcal{L}_6$,  $\mathcal{C}_8$,~$\mathscr{C}_8$,
which are described in Proposition~\ref{proposition:P3-192-curves}.
Finally, if the~group $G$ is not conjugate to a~group among $G_{48,50}$, $G_{96,70}$, $G_{96,227}$, $G_{96,227}^\prime$, $G_{192,955}$, $G_{192,185}$,
then $C=\mathcal{L}_6$~by~Proposition~\ref{proposition:P3-tetrahedra-curves}.
Among all these curves, only the curves $\mathcal{C}_8^{1}$, $\mathcal{C}_8^{1,\prime}$, $\mathcal{C}_8^{1,\prime\prime}$ are singular.

Let $\mathcal{D}$ be the linear system on $\mathbb{P}^3$ consisting of surfaces of degree $k$  that contain $C$,~where
$$
k=\left\{\aligned
&3\ \text{if $C$ is one of the curves $\mathcal{L}_6$, $\mathcal{L}_6^\prime$, $\mathcal{L}_6^{\prime\prime}$},\\
&4\ \text{if $C$ is one of the curves $\mathcal{L}_4$, $\mathcal{L}_4^\prime$, $\mathcal{L}_4^{\prime\prime}$, $\mathcal{L}_4^{\prime\prime\prime}$},\\
&4\ \text{if $C$ is one of the curves $\mathcal{C}_8^{1}$, $\mathcal{C}_8^{2}$, $\mathcal{C}_8^{3}$, $\mathcal{C}_8^{1,\prime}$, $\mathcal{C}_8^{2,\prime}$, $\mathcal{C}_8^{3,\prime}$, $\mathcal{C}_8^{1,\prime\prime}$, $\mathcal{C}_8^{2,\prime\prime}$, $\mathcal{C}_8^{3,\prime\prime}$},\\
&4\ \text{if $C$ is a smooth irreducible curve described in Example~\ref{example:48-curve-degree-8}},\\
&4\ \text{if $C$ is the curve $\mathscr{C}_8$ described in Proposition~\ref{proposition:P3-192-curves}},\\
&6\ \text{if $C$ is one of the curves $\mathcal{L}_6^{\prime\prime\prime}$ or $\mathcal{L}_6^{\prime\prime\prime\prime}$ described in Section~\ref{section:P3-48}},\\
&8\ \text{if $C$ is the curve $\mathcal{C}_8$ described in Proposition~\ref{proposition:P3-192-curves}}.
\endaligned
\right.
$$
Then the linear system $\mathcal{D}$ is not empty. Moreover, it does not have fixed components.
Furthermore, if $C$ is not one of the curves $\mathcal{C}_8^{2}$, $\mathcal{C}_8^{3}$, $\mathcal{C}_8^{2,\prime}$, $\mathcal{C}_8^{3,\prime}$, $\mathcal{C}_8^{2,\prime\prime}$, $\mathcal{C}_8^{3,\prime\prime}$,
then $\mathcal{D}$ does not have base points away from the curve $C$.
If $C$ is one of the curves $\mathcal{C}_8^{2}$, $\mathcal{C}_8^{3}$, $\mathcal{C}_8^{2,\prime}$, $\mathcal{C}_8^{3,\prime}$, $\mathcal{C}_8^{2,\prime\prime}$,~$\mathcal{C}_8^{3,\prime\prime}$,
we can describe $\mathcal{D}$ explicitly. For instance, if $C=\mathcal{C}_8^{2}$ or $C=\mathcal{C}_8^{3}$,
then $\mathcal{D}$ is the pencil
$$
\lambda x_0x_1x_2x_3+\mu(x_0^2x_1^2+x_0^2x_2^2+x_0^2x_3^2+x_1^2x_2^2+x_1^2x_3^2+x_2^2x_3^2)-\mu(x_0^4+x_1^4+x_2^4+x_3^4)=0,
$$
where $[\lambda:\mu]\in\mathbb{P}^1$.
Note that the base locus of this pencil consists of the curves $\mathcal{C}_8^{2}$~and~$\mathcal{C}_8^{3}$.
Similarly, if $C=\mathcal{C}_8^{2,\prime}$ or $C=\mathcal{C}_8^{3,\prime}$,
then the linear system $\mathcal{D}$ is a pencil of quartic surfaces, and its base locus is the union $\mathcal{C}_8^{2,\prime}\cup\mathcal{C}_8^{3,\prime}$.
Finally, if $C=\mathcal{C}_8^{2,\prime\prime}$ or $C=\mathcal{C}_8^{3,\prime\prime}$,
then the linear system $\mathcal{D}$ is a pencil of quartic surfaces whose base locus is the~union $\mathcal{C}_8^{2,\prime\prime}\cup \mathcal{C}_8^{3,\prime\prime}$.

Now, we suppose that $P\in C$, hence $\mathrm{Orb}_G(P)\subset C$ as well.
Let $M_1$ and $M_2$ be two general surfaces in $\mathcal{M}$. Write
$$
\lambda^2 M_1\cdot M_2=mC+\Delta,
$$
where $m$ is a non-negative rational number, and $\Delta$ is an effective one-cycle whose support does not contain  $C$.
Then $m\leqslant 4$, since $\lambda^2 M_1\cdot M_2$ is a one-cycle of degree $16$, and $d\geqslant 4$.
On the other hand, it follows from \cite{Pukhlikov} or \cite[Corollary 3.4]{Co00} that $\lambda^2(M_1\cdot M_2)_P>4$.
Therefore, if the~curve $C$ is smooth at $P$, then
$$
\mathrm{mult}_P\big(\Delta\big)>4-m.
$$
Let $S$ be a general surface in $\mathcal{D}$.
If $C$ is not one of the curves $\mathcal{C}_8^{2}$, $\mathcal{C}_8^{3}$, $\mathcal{C}_8^{2,\prime}$, $\mathcal{C}_8^{3,\prime}$, $\mathcal{C}_8^{2,\prime\prime}$, $\mathcal{C}_8^{3,\prime\prime}$,
then the base locus of the linear system $\mathcal{D}$ does not contain curves different from $C$, which implies that
$S$ does not contains curves in the support of the one-cycle $\Delta$, hence \begin{equation}
\label{equation:the-proof-inequality}
16k-kdm=S\cdot\Delta\geqslant|\mathrm{Orb}_G(P)|\mathrm{mult}_P\big(\Delta\big)>|\mathrm{Orb}_G(P)|(4-m)\geqslant 12(4-m)
\end{equation}
provided that the curve $C$ is smooth at~$P$.
This immediately gives us a contradiction in the~case when $C$ is one of the curves $\mathcal{L}_6$, $\mathcal{L}_6^\prime$, $\mathcal{L}_6^{\prime\prime}$.
Thus, we conclude that $P\not\in\mathcal{L}_6\cup\mathcal{L}_6^\prime\cup\mathcal{L}_6^{\prime\prime}$.
In particular, we obtain our local claim in the case when $G$ is not conjugate to any group among $G_{48,50}$, $G_{96,70}$, $G_{96,227}$, $G_{96,227}^\prime$, $G_{192,955}$, $G_{192,185}$.
Thus, to proceed, we may assume that either $G=G_{48,50}$ or $G=G_{192,185}$.

If $G=G_{192,185}$ and $C$ is the curve $\mathscr{C}_8$ described in Proposition~\ref{proposition:P3-192-curves},
then it follows from the~inequality \eqref{equation:the-proof-inequality} and Lemma~\ref{lemma:192-orbits} that
$$
64-32m>|\mathrm{Orb}_G(P)|(4-m)\geqslant 16(4-m)=64-16m,
$$
which is a contradiction.
If $G=G_{192,185}$ and $C$ is the curve $\mathcal{C}_8$ described in Proposition~\ref{proposition:P3-192-curves},
then \eqref{equation:the-proof-inequality} implies that $128-64m>|\mathrm{Orb}_G(P)|(4-m)$,
so that we have $|\mathrm{Orb}_G(P)|<32$. Recall~from Proposition~\ref{proposition:P3-192-curves}
that the curve $\mathcal{C}_8$ is a disjoint union of four irreducible conics, and $C_1=\{x_3=x_0^2-x_1^2-x_2^2=0\}$ is one of them.
Then $\mathrm{Stab}_{G_{192,185}}(C_1)$ is generated by
$$
(-1,1,1),(1,-1,1),(1,1,-1),
\begin{pmatrix}
0 & 0 & 0 & i\\
i & 0 & 0 & 0\\
0 & 1 & 0 & 0\\
0 & 0 & 1 & 0
\end{pmatrix},
\begin{pmatrix}
0 & 1 & 0 & 0\\
1 & 0 & 0 & 0\\
0 & 0 & i & 0\\
0 & 0 & 0 & 1
\end{pmatrix},
$$
so that $\mathrm{Stab}_{G_{192,185}}(C_1)\cong\mathfrak{A}_4\rtimes\mumu_4$,
and the~$\mathrm{Stab}_{G_{192,185}}(C_1)$-action on the curve $C_1$ gives a~homomorphism
$\mathrm{Stab}_{G_{192,185}}(C_1)\to\mathrm{Aut}(C_1)$,
whose image is isomorphic to $\mathfrak{S}_4$. Then
\begin{itemize}
\item the curve $C_1$ has a unique $\mathrm{Stab}_{G_{192,185}}(C_1)$-orbit of~length~$6$,
\item the unique $\mathrm{Stab}_{G_{192,185}}(C_1)$-orbit in $C_1$ of length $6$ is $C_1\cap(\ell_{12}\cup\ell_{13}\cup\ell_{23})$,
\item other $\mathrm{Stab}_{G_{192,185}}(C_1)$-orbits in $C_1$ have length at least $8$.
\end{itemize}
Therefore, we conclude that $\mathcal{C}_8\cap\mathcal{L}_6$ is the unique $G_{192,185}$-orbit in $\mathcal{C}_8$ that has length $24$,
and other $G_{192,185}$-orbits in $\mathcal{C}_8$ has length at least $32$.
Hence, if $G=G_{192,185}$ and $C=\mathcal{C}_8$, then $P\in\mathcal{C}_8\cap\mathcal{L}_6$,
which is impossible, since we already proved that $P\not\in\mathcal{L}_6$.

Thus, we have $G=G_{48,50}$.
Recall that we already proved that $P\not\in\mathcal{L}_6\cup\mathcal{L}_6^\prime\cup\mathcal{L}_6^{\prime\prime}$.
Therefore, either $C$ is a smooth irreducible curve of degree $8$ described in Example~\ref{example:48-curve-degree-8},
or $C$ is one of the curves
$\mathcal{L}_4$, $\mathcal{L}_4^\prime$, $\mathcal{L}_4^{\prime\prime}$, $\mathcal{L}_4^{\prime\prime\prime}$,
$\mathcal{L}_6^{\prime\prime\prime}$,
$\mathcal{L}_6^{\prime\prime\prime\prime}$, $\mathcal{C}_8^{1}$, $\mathcal{C}_8^{2}$, $\mathcal{C}_8^{3}$,
$\mathcal{C}_8^{1,\prime}$, $\mathcal{C}_8^{2,\prime}$, $\mathcal{C}_8^{3,\prime}$,
$\mathcal{C}_8^{1,\prime\prime}$, $\mathcal{C}_8^{2,\prime\prime}$, $\mathcal{C}_8^{3,\prime\prime}$.
In the former case, it follows from \eqref{equation:the-proof-inequality} and Lemma~\ref{lemma:48-curves} that
$$
64-32m>|\mathrm{Orb}_G(P)|(4-m)\geqslant 16(4-m),
$$
which is absurd. Similarly, if $C$ is one of the curves $\mathcal{L}_4$, $\mathcal{L}_4^\prime$, $\mathcal{L}_4^{\prime\prime}$, $\mathcal{L}_4^{\prime\prime\prime}$,
then \eqref{equation:the-proof-inequality} gives
$$
64-16m>|\mathrm{Orb}_G(P)|(4-m),
$$
which implies that $|\mathrm{Orb}_G(P)|<16$, hence it follows from Lemma~\ref{lemma:48-orbits}
that $P$ is contained in one of the four $G_{48,50}$-orbits $\Sigma_{12}$, $\Sigma_{12}^\prime$,
$\Sigma_{12}^{\prime\prime}$, $\Sigma_{12}^{\prime\prime\prime}$, which are described earlier in Section~\ref{section:P3-48}.
But $\Sigma_{12}\cup\Sigma_{12}^\prime\subset\mathcal{L}_6$,
$\Sigma_{12}^{\prime\prime}\subset\mathcal{L}_6^{\prime\prime}$,
and $\Sigma_{12}^{\prime\prime\prime}\subset\mathcal{L}_6^{\prime\prime\prime}$,
which is impossible, since $P\not\in\mathcal{L}_6\cup\mathcal{L}_6^\prime\cup\mathcal{L}_6^{\prime\prime}$.
Likewise, if either $C=\mathcal{L}_6^{\prime\prime\prime}$ or $C=\mathcal{L}_6^{\prime\prime\prime\prime}$,
then \eqref{equation:the-proof-inequality} and Lemma~\ref{lemma:48-orbits} give $|\mathrm{Orb}_G(P)|=16$, because $P\not\in\Sigma_{12}\cup\Sigma_{12}^\prime\cup\Sigma_{12}^{\prime\prime}\cup\Sigma_{12}^{\prime\prime\prime}$.
But $\mathcal{L}_6^{\prime\prime\prime}$ and $\mathcal{L}_6^{\prime\prime\prime\prime}$ contain no $G_{48,50}$-orbits of length~$16$.
Thus, as above, we conclude that $C\ne \mathcal{L}_6^{\prime\prime\prime}$ and $C\ne\mathcal{L}_6^{\prime\prime\prime\prime}$.

If $C$ is one of the curves $\mathcal{C}_8^{1}$, $\mathcal{C}_8^{1,\prime}$, $\mathcal{C}_8^{1,\prime\prime}$,
then $C$ is smooth at the~point $P$, because the singular loci of the curves $\mathcal{C}_8^{1}$, $\mathcal{C}_8^{1,\prime}$, $\mathcal{C}_8^{1,\prime\prime}$
are contained in the curves $\mathcal{L}_6$, $\mathcal{L}_6^\prime$, $\mathcal{L}_6^{\prime\prime}$, respectively.
Therefore, in this case, it follows from \eqref{equation:the-proof-inequality} that
$$
64-32m>|\mathrm{Orb}_G(P)|(4-m)
$$
which implies that $|\mathrm{Orb}_G(P)|<16$, hence $P\not\in\Sigma_{12}\cup\Sigma_{12}^\prime\cup\Sigma_{12}^{\prime\prime}\cup\Sigma_{12}^{\prime\prime\prime}$ by Lemma~\ref{lemma:48-orbits}.
But~$P\not\in\Sigma_{12}\cup\Sigma_{12}^\prime\cup\Sigma_{12}^{\prime\prime}\cup\Sigma_{12}^{\prime\prime\prime}$,
so that $C$ is not one of the curves $\mathcal{C}_8^{1}$, $\mathcal{C}_8^{1,\prime}$, $\mathcal{C}_8^{1,\prime\prime}$.

We see that $C$ is one of the curves $\mathcal{C}_8^{2}$, $\mathcal{C}_8^{3}$,
$\mathcal{C}_8^{2,\prime}$, $\mathcal{C}_8^{3,\prime}$,
$\mathcal{C}_8^{2,\prime\prime}$, $\mathcal{C}_8^{3,\prime\prime}$.
Without loss of generality, we~may assume that $C=\mathcal{C}_8^{2}$,
because $G_{96,227}$ and $G_{144,184}$ transitively permutes these six curves.
Recall that $\mathcal{D}$ is a pencil, and its base locus consists of the curves $C=\mathcal{C}_8^{2}$ and $\mathcal{C}_8^{3}$.
As above, we write
$$
\lambda^2 M_1\cdot M_2=mC+m^\prime \mathcal{C}_8^{3}+\Delta^\prime,
$$
where $m^\prime$ is a non-negative rational number, and $\Delta^\prime$ is an effective one-cycle whose support contains none of the curves $\mathcal{C}_8^{2}$ and $\mathcal{C}_8^{3}$.
Then $m+m^\prime\leqslant 2$, since $\lambda^2 M_1\cdot M_2$ has degree~$16$.
Since $\lambda^2\big(M_1\cdot M_2\big)_P>4$,
if $P\not\in\mathcal{C}_8^{2}\cap\mathcal{C}_8^{3}$, then  $\mathrm{mult}_P(\Delta)>4-m$, hence $$
64-32m\geqslant 64-32m-32m^\prime=S\cdot\Delta\geqslant|\mathrm{Orb}_G(P)|\mathrm{mult}_P\big(\Delta\big)>|\mathrm{Orb}_G(P)|(4-m)
$$
for a general surface $S\in\mathcal{D}$.
Therefore, if $P\not\in\mathcal{C}_8^{2}\cap\mathcal{C}_8^{3}$, then we have $|\mathrm{Orb}_G(P)|<16$,
which contradicts Lemma~\ref{lemma:48-orbits},
because we have $P\not\in\Sigma_4\cup\Sigma_{4}^\prime\cup\Sigma_{4}^{\prime\prime}\cup\Sigma_{12}\cup\Sigma_{12}^\prime\cup\Sigma_{12}^{\prime\prime}\cup\Sigma_{12}^{\prime\prime\prime}$.
Hence, we see that $P\not\in\mathcal{C}_8^{2}\cap\mathcal{C}_8^{3}$.
Then $\mathrm{Orb}_G(P)$ is the~$G_{48,50}$-orbit of the point $[1:1:1:0]$,
which gives $|\mathrm{Orb}_G(P)|=16$.
Observe that this $G_{48,50}$-orbit is cut out by cubic surfaces, because it is a singular locus of the surface
$$
\big\{x_0^2x_1^2+x_0^2x_2^2+x_0^2x_3^2+x_1^2x_2^2+x_1^2x_3^2+x_2^2x_3^2=x_0^4+x_1^4+x_2^4+x_3^4\big\}\subset\mathbb{P}^3.
$$
Thus, if $S_3$ is a general cubic surface in $\mathbb{P}^3$
that contains $\mathrm{Orb}_G(P)$, then $S_3$ does not contain curves that are contained in
the support of the one-cycle $\lambda^2 M_1\cdot M_2$, hence $$
48=\lambda^2 M_1\cdot M_2\cdot S_3\geqslant\sum_{O\in\mathrm{Orb}_G(P)}\big(\lambda^2 M_1\cdot M_2\big)_O>4|\mathrm{Orb}_G(P)|=64,
$$
which is absurd. So, we conclude that our point $P$ is not contained in any $G$-irreducible curve in $\mathbb{P}^3$ whose degree is at most $8$.

Observe that $(\mathbb{P}^3,\frac{3}{2}\lambda\mathcal{M})$ is not log canonical at $P$.
Let $\mu$ be the largest rational number such that  $(\mathbb{P}^3,\mu\mathcal{M})$ is log canonical at $P$.
Then $\mu<\frac{3}{2}\lambda$ and $\mathrm{Orb}_G(P)\subseteq\mathrm{Nklt}(\mathbb{P}^3,\mu\mathcal{M})$.
Observe that the locus $\mathrm{Nklt}(\mathbb{P}^3,\mu\mathcal{M})$ is at most one-dimensional, because $\mathcal{M}$ does not have fixed components.
Moreover, this locus is $G$-invariant, since $\mathcal{M}$ is $G$-invariant.

We claim that the locus $\mathrm{Nklt}(\mathbb{P}^3,\mu\mathcal{M})$ does not contain curves that passes through $P$.
Indeed, suppose this is not true. Then $\mathrm{Nklt}(\mathbb{P}^3,\mu\mathcal{M})$ contains a $G$-irreducible curve~$Z$~that passes
through $P$. As above, for two general surfaces $M_1$ and $M_2$ in  $\mathcal{M}$, we write
$$
\mu^2 M_1\cdot M_2=\delta Z+\Omega,
$$
where $\delta$ is a non-negative rational number, and $\Omega$ is an effective one-cycle whose support does not contain the curve $Z$.
Then $\delta\geqslant 4$ by \cite[Theorem~3.1]{Co00}.
Now, taking into account that the degree of the one-cycle $\mu^2 M_1\cdot M_2$ is less that $36$,
we conclude that $\mathrm{deg}(Z)<9$.
But~we already proved that $P$ is not contained in any $G$-irreducible curve in $\mathbb{P}^3$ whose degree is at most $8$.
Thus, the locus $\mathrm{Nklt}(\mathbb{P}^3,\mu\mathcal{M})$ contains no curves passing through~$P$,
so that this locus  does not contain curves that pass through any point in $\mathrm{Orb}_G(P)$.

Let $\mathcal{I}$ be the multiplier ideal sheaf of the pair $(\mathbb{P}^3,\mu\mathcal{M})$,
and let $\mathcal{L}$ be the corresponding subscheme in $\mathbb{P}^3$.
Applying \cite[Theorem~9.4.8]{Lazarsfeld}, we get
$h^1(\mathbb{P}^3,\mathcal{I}\otimes\mathcal{O}_{\mathbb{P}^3}(2))=0$.
Then
$$
10=h^0\big(\mathbb{P}^3,\mathcal{O}_{\mathbb{P}^3}(2)\big)\geqslant h^0\big(\mathcal{O}_{\mathcal{L}}\otimes\mathcal{O}_{\mathbb{P}^3}(2)\big)\geqslant |\mathrm{Orb}_G(P)|\geqslant 12,
$$
because the~subscheme $\mathcal{L}$ contains at least $|\mathrm{Orb}_G(P)|\geqslant 12$
disjoint zero-dimensional components, since $\mathrm{Orb}_G(P)\subseteq\mathrm{Nklt}(\mathbb{P}^3,\mu\mathcal{M})$,
and $\mathrm{Nklt}(\mathbb{P}^3,\mu\mathcal{M})$ does not contain curves that are not disjoint from $\mathrm{Orb}_G(P)$.
The obtained contradiction completes the proof.
\end{proof}

Recall from \eqref{equation:Cremona-involution-2} that we have the following $G$-commutative diagram:
$$
\xymatrix{
&\widetilde{X}_{24}\ar@{->}[ld]_{\varpi}\ar@{->}[rd]^{\varphi}\\%
\mathbb{P}^3\ar@{-->}[rr]_{\psi}&&X_{24}}
$$
where $\varpi$, $\varphi$ and $\psi$ are birational maps described in Section~\ref{section:Fano-Enriques}.

\begin{proposition}
\label{proposition:technical-2}
Let $\mathcal{M}$ be a non-empty $G$-invariant linear system on $X_{24}$ that has no fixed components.
Choose $\lambda\in\mathbb{Q}_{>0}$ such that $\lambda\mathcal{M}\sim_{\mathbb{Q}}-K_{X_{24}}$.
If $(X_{24},\lambda\mathcal{M})$ is canonical at every point in $\mathrm{Sing}(X_{24})$, then $(X_{24},\lambda\mathcal{M})$ is canonical.
\end{proposition}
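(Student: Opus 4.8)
The plan is to run a Noether--Fano type argument on $X_{24}$ parallel to the proof of Proposition~\ref{proposition:technical-1}, feeding in the classification results of this section. Assume $(X_{24},\lambda\mathcal{M})$ is not canonical and let $Z$ be a minimal center of non-canonical singularities; replacing $Z$ by its $G$-orbit, we may take $Z$ to be $G$-irreducible. Since $\lambda\mathcal{M}$ has no fixed components, $Z$ is either a $G$-irreducible curve or a $G$-orbit of points, and since $X_{24}$ has terminal singularities there is nothing to check generically. The crucial use of the hypothesis is the following: if $Z$ is a curve $C$, then $\mathrm{mult}_C(\lambda\mathcal{M})>1$ forces the pair to be non-canonical at \emph{every} point of $C$, so $C$ must be disjoint from $\mathrm{Sing}(X_{24})$; likewise a point center $P$ cannot lie in $\mathrm{Sing}(X_{24})$. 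Thus every non-canonical center lies in the smooth locus and avoids $\mathrm{Sing}(X_{24})$. I will repeatedly use $(-K_{X_{24}})^3=24$ and $\lambda\mathcal{M}\sim_{\mathbb{Q}}-K_{X_{24}}$, so that for two general members $M_1,M_2\in\mathcal{M}$ the one-cycle $\lambda^2 M_1\cdot M_2$ has degree $24$ with respect to $-K_{X_{24}}$.

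First I would exclude the curve case. If $Z=C$ is a curve, then $\lambda^2 M_1\cdot M_2\geqslant\mathrm{mult}_C^2(\lambda\mathcal{M})\,C$ together with $\mathrm{mult}_C(\lambda\mathcal{M})>1$ forces $\deg(C)<24$, so Lemma~\ref{lemma:X24-curves} applies. If $C\not\subset\mathcal{E}$, then $G\in\{G_{48,50},G_{96,70},G_{96,227}\}$ and $C$ is one of $\psi(\mathcal{L}_6^{\prime}),\psi(\mathcal{L}_6^{\prime\prime}),\psi(\mathcal{L}_6^{\prime\prime\prime}),\psi(\mathcal{L}_6^{\prime\prime\prime\prime})$; for each such $C$ that is disjoint from $\mathrm{Sing}(X_{24})$ I would blow up $C$ and produce a nef divisor (for instance from the fibration classes $\eta_i^{*}\mathcal{O}_{\mathbb{P}^1}(1)$ coming from the structure $X_{24}=\mathbb{P}^1\times\mathbb{P}^1\times\mathbb{P}^1/\langle\tau\rangle$, or by pulling back the sextic systems of Section~\ref{section:P3-48} through the diagram~\eqref{equation:Cremona-involution-2}), whose intersection with two proper transforms of members of $\mathcal{M}$ is non-negative, forcing $\mathrm{mult}_C(\lambda\mathcal{M})\leqslant 1$ exactly as in Corollaries~\ref{corollary:P3-192-curves} and~\ref{corollary:P3-tetrahedra-curves}. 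If instead $C\subset\mathcal{E}$, I would pass to an irreducible component $C_0$ of $C$ lying on one of the del Pezzo surfaces $S=E_{ij}$. Because $C$ avoids $\mathrm{Sing}(X_{24})\supset\mathrm{Sing}(S)$, the curve $C_0$ lies in the smooth locus of both $X_{24}$ and $S$, and since $\mathcal{M}$ has no fixed component, $\mathcal{M}\vert_S$ is a well-defined $\Gamma$-invariant linear system with $\lambda\mathcal{M}\vert_{S}\sim_{\mathbb{Q}}-K_{S}$, where $\Gamma$ is the image of $\mathrm{Stab}_G(S)$ in $\mathrm{Aut}(S)$. By adjunction $\mathrm{mult}_{C_0}(\lambda\mathcal{M}\vert_S)\geqslant\mathrm{mult}_{C_0}(\lambda\mathcal{M})>1$, so $(S,\lambda\mathcal{M}\vert_S)$ is not log canonical along $C_0$, contradicting $\alpha_{\Gamma}(S)=1$ from Lemma~\ref{lemma:alpha-quartic}.

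It remains to exclude point centers. If $Z\subset\mathcal{E}$, the restriction argument of the previous paragraph applies on the relevant del Pezzo component $S=E_{ij}$ and again contradicts $\alpha_{\Gamma}(S)=1$. If $Z\not\subset\mathcal{E}$, then $Z$ is a $G$-orbit of length $\leqslant 15$ not contained in $\mathcal{E}$, so Corollary~\ref{corollary:X24-orbits} forces $G=G_{48,50}$ or $G=G_{96,227}$ and $Z\in\{\psi(\Sigma_4^\prime),\psi(\Sigma_4^{\prime\prime}),\psi(\Sigma_{12}^{\prime\prime}),\psi(\Sigma_{12}^{\prime\prime\prime})\}$, all other cases being impossible since then $X_{24}\setminus\mathcal{E}$ contains no $G$-orbit of length $<16$. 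For these finitely many special orbits I would use the isomorphism $X_{24}\setminus\mathcal{E}\cong\mathbb{P}^3\setminus\mathcal{T}$, the local multiplicity inequality $\mathrm{mult}_P(\lambda^2 M_1\cdot M_2)>4$ (as in the proof of Proposition~\ref{proposition:technical-1}, via \cite{Co00,Pukhlikov}), and a multiplier ideal together with Nadel vanishing to bound $|\mathrm{Orb}_G(P)|$ from above; comparing with the lengths listed in Corollary~\ref{corollary:X24-orbits} yields a contradiction. This rules out every non-canonical center and proves that $(X_{24},\lambda\mathcal{M})$ is canonical.

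The step I expect to be the main obstacle is the curve center $C\subset\mathcal{E}$, since $\mathcal{E}$ is contracted by $\psi^{-1}$ and the problem cannot be transported to $\mathbb{P}^3$: the argument hinges on the delicate computation $\alpha_{\Gamma}(E_{ij})=1$ of Lemma~\ref{lemma:alpha-quartic} and on verifying carefully that $\mathcal{M}\vert_{E_{ij}}$ is a genuine $\Gamma$-invariant system with $\lambda\mathcal{M}\vert_{E_{ij}}\sim_{\mathbb{Q}}-K_{E_{ij}}$. The special point orbits for $G_{48,50}$ and $G_{96,227}$ are a secondary difficulty, requiring the explicit transfer through $X_{24}\setminus\mathcal{E}\cong\mathbb{P}^3\setminus\mathcal{T}$ and the orbit bookkeeping recorded in Section~\ref{section:P3-48}.
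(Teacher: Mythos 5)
Your proposal correctly reproduces several ingredients of the paper's argument (inversion of adjunction plus Lemma~\ref{lemma:alpha-quartic} to kill centers inside $\mathcal{E}$, the degree bound feeding into Lemma~\ref{lemma:X24-curves} for curve centers, Corollary~\ref{corollary:X24-orbits} and Nadel vanishing for point centers), but it is missing the step on which the whole proof pivots: showing that no non-canonical center can lie in the locus \eqref{equation:six-quadrics}, i.e.\ in $\psi(\mathcal{Q}_5)\cup\cdots\cup\psi(\mathcal{Q}_{10})$. These six surfaces are fibers of the three fibrations $\eta_i\colon X_{24}\to\mathbb{P}^1$ over $[1:\pm 1]$, so adjunction gives $\lambda\mathcal{M}\vert_S\sim_{\mathbb{Q}}-K_S$ on each of them, and Lemma~\ref{lemma:alpha-quadric-Heisenberg} (the computation $\alpha_{\mathbb{H}}=1$ for the $\mathbb{H}$-invariant quadrics, transported by the $\mathbb{H}$-equivariant isomorphisms $\mathcal{Q}_j\cong\psi(\mathcal{Q}_j)$) combined with inversion of adjunction rules them out as hosts of centers. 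This exclusion is exactly what closes the two cases your proposal leaves open: the curves $\psi(\mathcal{L}_6^\prime)$, $\psi(\mathcal{L}_6^{\prime\prime})$, $\psi(\mathcal{L}_6^{\prime\prime\prime})$, $\psi(\mathcal{L}_6^{\prime\prime\prime\prime})$ of Lemma~\ref{lemma:X24-curves} and the orbits $\psi(\Sigma_4^\prime)$, $\psi(\Sigma_4^{\prime\prime})$, $\psi(\Sigma_{12}^{\prime\prime})$, $\psi(\Sigma_{12}^{\prime\prime\prime})$ of Corollary~\ref{corollary:X24-orbits} are all contained in \eqref{equation:six-quadrics}, so once that locus is excluded there is nothing left to be a center.

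Neither of the substitutes you offer in place of this step works. For a curve center $C\not\subset\mathcal{E}$ you want a nef divisor of the form $\phi^*(D)-E_C$ on the blow-up $\phi$ of $C$, taking $D$ among the fibration classes $\eta_i^*\mathcal{O}_{\mathbb{P}^1}(1)$; but every irreducible component of each candidate curve is \emph{contained in a fiber} of one of the $\eta_i$, so a fiber class restricts trivially to the surface containing that component, and $\phi^*\eta_i^*\mathcal{O}_{\mathbb{P}^1}(1)-E_C$ has negative degree on the proper transform of any curve in that fiber meeting $C$ --- it is never nef; summing the three fiber classes only gives $\frac{1}{2}H$ up to $\mathbb{Q}$-linear equivalence, and nefness of $k\phi^*(H)-E_C$ would require a genuine analysis of $C\subset X_{24}$ (an analogue of Lemma~\ref{lemma:48-irreducible-curves-simple-deal}) that you do not supply. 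The transported sextic system of Lemma~\ref{lemma:48-sextic-surfaces} does not help either, since its base locus contains both $\mathcal{L}_6^{\prime\prime\prime}$ and $\mathcal{L}_6^{\prime\prime\prime\prime}$ at once. For a point center $Z\not\in\mathcal{E}$ your final contradiction is numerical: Nadel vanishing gives $|\mathrm{Orb}_G(Z)|\leqslant h^0(\mathcal{O}_{X_{24}}(H))=14$, to be compared with the lengths in Corollary~\ref{corollary:X24-orbits}; but those lengths are $4$, $4$, $12$, $12$, all below $14$, so no contradiction results --- the paper's contradiction for these orbits is geometric (they lie in \eqref{equation:six-quadrics}, while $Z$ does not), not numerical. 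Note also that invoking Corollary~\ref{corollary:X24-orbits} presupposes $|\mathrm{Orb}_G(Z)|\leqslant 15$, which is not free: it is the output of the Nadel argument, and that argument in turn requires that no curve of the non-klt locus of $(X_{24},\mu\mathcal{M})$ passes through $Z$ --- a fact the paper again deduces from the exclusion of \eqref{equation:six-quadrics} applied to the degree-bounded curves of Lemma~\ref{lemma:X24-curves}.
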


\begin{proof}
Suppose that the singularities of the log pair $(X_{24},\lambda\mathcal{M})$ are not canonical,
and the log pair $(X_{24},\lambda\mathcal{M})$ is canonical at every point in $\mathrm{Sing}(X_{24})$.
Let us seek for a~contradiction.
Let $Z$ be a center of non-canonical singularities of the pair $(X_{24},\lambda\mathcal{M})$ that has the~largest dimension.
Since the linear system $\mathcal{M}$ does not have fixed components,
we conclude that either $Z$ is an irreducible curve, or $Z$ is a smooth point of the threefold~$X_{24}$.

Let $\mathcal{E}=E_{11}+E_{12}+E_{21}+E_{22}+E_{31}+E_{32}$, where
$E_{11}$, $E_{12}$, $E_{21}$, $E_{22}$, $E_{31}$, $E_{32}$ are~surfaces in the~threefold $X_{24}$ defined in Section~\ref{section:Fano-Enriques}.
If $Z\subseteq\mathcal{E}$, then $(E_{11},\lambda\mathcal{M}\vert_{E_{11}})$~is~not~log~canonical
by the~inversion of adjunction \cite[Theorem~5.50]{KoMo98}, which is impossible by Lemma~\ref{lemma:alpha-quartic},
because $\lambda\mathcal{M}\vert_{E_{11}}\sim_{\mathbb{Q}}-K_{E_{11}}$ by the~adjunction formula.
Thus, we conclude that $Z\not\subseteq\mathcal{E}$.

If $G$ is one of the groups $G_{48,50}$, $G_{96,70}$, $G_{96,227}$, $G_{96,227}^\prime$, $G_{192,955}$,
we can use Lemma~\ref{lemma:alpha-quadric-Heisenberg} to show that $Z$ is not contained in the~locus
\begin{equation}
\label{equation:six-quadrics}
\psi\big(\mathcal{Q}_5\big)\cup\psi\big(\mathcal{Q}_6\big)\cup\psi\big(\mathcal{Q}_7\big)\cup\psi\big(\mathcal{Q}_8\big)\cup\psi\big(\mathcal{Q}_9\big)\cup\psi\big(\mathcal{Q}_{10}\big),
\end{equation}
where $\mathcal{Q}_5$, $\mathcal{Q}_6$, $\mathcal{Q}_7$, $\mathcal{Q}_8$, $\mathcal{Q}_9$, $\mathcal{Q}_{10}$
are quadric surfaces in $\mathbb{P}^3$, which are defined in Section~\ref{section:P3-48}.
Indeed, suppose that $G$ is one of the~subgroups $G_{48,50}$, $G_{96,70}$, $G_{96,227}$, $G_{96,227}^\prime$, $G_{192,955}$,
and there is a surface $S$ among $\psi(\mathcal{Q}_5)$, $\psi(\mathcal{Q}_6)$,
$\psi(\mathcal{Q}_7)$, $\psi(\mathcal{Q}_8)$, $\psi(\mathcal{Q}_9)$, $\psi(\mathcal{Q}_{10})$
that contains~$Z$.
Recall that $G$ contains the subgroup $\mathbb{H}\cong\mumu_2^4$ defined in Section~\ref{section:P3-48},
the quadrics $\mathcal{Q}_5$, $\mathcal{Q}_6$, $\mathcal{Q}_7$, $\mathcal{Q}_8$, $\mathcal{Q}_9$, $\mathcal{Q}_{10}$ are $\mathbb{H}$-invariant,
and the subgroup $\mathbb{H}$ acts faithfully on each of them.
Furthermore, the rational map $\psi\colon\mathbb{P}^3\dasharrow X_{24}$ induces $\mathbb{H}$-equivariant
isomorphisms
$$
\mathcal{Q}_5\cong\psi\big(\mathcal{Q}_5\big), \mathcal{Q}_6\cong\psi\big(\mathcal{Q}_6\big), \mathcal{Q}_7\cong\psi\big(\mathcal{Q}_7\big),
\mathcal{Q}_8\cong\psi\big(\mathcal{Q}_8\big), \mathcal{Q}_9\cong\psi\big(\mathcal{Q}_9\big), \mathcal{Q}_{10}\cong\psi\big(\mathcal{Q}_{10}\big).
$$
Moreover, one can also check that
\begin{itemize}
\item $\psi(\mathcal{Q}_7)$ and $\psi(\mathcal{Q}_{10})$ are the fibers of the~morphism $\eta_1$ over $[1:-1]$ and $[1:1]$,
\item $\psi(\mathcal{Q}_5)$ and $\psi(\mathcal{Q}_{8})$ are the fibers of the~morphism $\eta_2$ over $[1:-1]$ and $[1:1]$,
\item $\psi(\mathcal{Q}_6)$ and $\psi(\mathcal{Q}_{9})$ are the fibers of the~morphism $\eta_3$ over $[1:-1]$ and $[1:1]$.
\end{itemize}
Thus, it follows from the~inversion of adjunction  that
$(S,\lambda\mathcal{M}\vert_{S})$ is not log canonical,
which is impossible by Lemma~\ref{lemma:alpha-quadric-Heisenberg},
because $\lambda\mathcal{M}\vert_{S}\sim_{\mathbb{Q}}-K_{S}$.

Now, we are ready to show that $Z$ is a point. Namely, we suppose that~$Z$ is~a~curve.
Let $H$ be a hyperplane section of the threefold $X_{24}\subset\mathbb{P}^{13}$,
let $M_1$ and $M_2$ be two general surfaces in the~linear system $\mathcal{M}$. Then $\mathrm{deg}(Z)<24$, since
$$
24=\lambda^2 H\cdot M_1\cdot M_2\geqslant \lambda^2\mathrm{deg}(Z)\big(M_1\cdot M_2\big)_Z\geqslant\mathrm{deg}(Z)\mathrm{mult}_Z^2\big(\lambda\mathcal{D}\big)>\mathrm{deg}(Z).
$$
Therefore, it follows from Lemma~\ref{lemma:X24-curves} that
$G$ is one of the groups $G_{48,50}$, $G_{96,70}$, $G_{96,227}$, and~$Z$ is one of the~curves $\psi(\mathcal{L}_6^\prime)$, $\psi(\mathcal{L}_6^{\prime\prime})$, $\psi(\mathcal{L}_6^{\prime\prime\prime})$, $\psi(\mathcal{L}_6^{\prime\prime\prime\prime})$,
where $\mathcal{L}_6^\prime$, $\mathcal{L}_6^{\prime\prime}$, $\mathcal{L}_6^{\prime\prime\prime}$, $\mathcal{L}_6^{\prime\prime\prime\prime}$
are curves in the~projective space $\mathbb{P}^3$ introduced~in~Section~\ref{section:P3-48}.
But~this is impossible, since all these curves are contained in \eqref{equation:six-quadrics}.
This shows that $Z$ is a point.

Observe that $(X_{24},\frac{3}{2}\lambda\mathcal{M})$ is not log canonical at $Z$.
Let $\mu$ be the largest rational number such that  $(X_{24},\mu\mathcal{M})$ is log canonical at $Z$.
Then $\mu<\frac{3}{2}\lambda$ and $\mathrm{Orb}_G(Z)\subseteq\mathrm{Nklt}(X_{24},\mu\mathcal{M})$.
Note that the locus $\mathrm{Nklt}(X_{24},\mu\mathcal{M})$ is at most one-dimensional, because $\mathcal{M}$ does not have fixed components.
Moreover, this locus is $G$-invariant, since $\mathcal{M}$ is $G$-invariant.

Now, we claim that the locus $\mathrm{Nklt}(X_{24},\mu\mathcal{M})$ does not contain curves passing through~$Z$.
Indeed, we suppose that the~locus $\mathrm{Nklt}(X_{24},\mu\mathcal{M})$ contains some $G$-irreducible curve~$C$.
As above, we let $M_{1}$ and $M_{2}$ be two general surfaces in  $\mathcal{M}$. Write
$$
\mu^2 M_1\cdot M_2=\delta C+\Omega,
$$
where $\delta$ is a non-negative rational number, and $\Omega$ is an effective one-cycle whose support does not contain the curve $C$.
Then $\delta\geqslant 4$ by \cite[Theorem~3.1]{Co00}.
Now, taking into account that the degree of the one-cycle $\mu^2 M_1\cdot M_2$ is less that $54$,
we conclude that $\mathrm{deg}(C)\leqslant 13$.
Therefore, it follows from Lemma~\ref{lemma:X24-curves} that
$G$ is one of the groups $G_{48,50}$, $G_{96,70}$, $G_{96,227}$, and~$C$ is one of the~curves
$\psi(\mathcal{L}_6^\prime)$, $\psi(\mathcal{L}_6^{\prime\prime})$, $\psi(\mathcal{L}_6^{\prime\prime\prime})$, $\psi(\mathcal{L}_6^{\prime\prime\prime\prime})$.
But all of these four curves are contained in the subset \eqref{equation:six-quadrics},
so that none of them contains $Z$, since $Z$ is not in \eqref{equation:six-quadrics}.

We conclude that all curves in $\mathrm{Nklt}(\mathbb{P}^3,\mu\mathcal{M})$ are disjoint from $\mathrm{Orb}_G(Z)$.

Let $\mathcal{I}$ be the multiplier ideal sheaf of the pair $(X_{24},\mu\mathcal{M})$,
and let $\mathcal{L}$ be the corresponding subscheme in $X_{24}$.
Applying \cite[Theorem~9.4.8]{Lazarsfeld}, we get $H^1(X_{24},\mathcal{I}\otimes\mathcal{O}_{X_{24}}(H))=0$. Then
$$
14=h^0\big(X_{24},\mathcal{O}_{X_{24}}(H)\big)\geqslant h^0\big(\mathcal{O}_{\mathcal{L}}\otimes\mathcal{O}_{X_{24}}(H)\big)\geqslant |\mathrm{Orb}_G(Z)|,
$$
since the~subscheme $\mathcal{L}$ contains at least $|\mathrm{Orb}_G(Z)|$
disjoint zero-dimensional components.
Therefore, since $Z\not\in\mathcal{E}$, it follows from Corollary~\ref{corollary:X24-orbits}
that either $G=G_{48,50}$ or $G=G_{96,227}$, and $\mathrm{Orb}_G(Z)$ is one of the orbits $\psi(\Sigma_4^\prime)$, $\psi(\Sigma_4^{\prime\prime})$,
$\psi(\Sigma_{12}^{\prime\prime})$, $\psi(\Sigma_{12}^{\prime\prime\prime})$,
which is a contradiction, because these orbits are contained in \eqref{equation:six-quadrics},
while $Z$ is not contained in this locus.
\end{proof}

Now, arguing as in the proof of \cite[Proposition~6.11]{CDK},
we can prove our Main Theorem using~Theorem~\ref{theorem:NFI}, Propositions~\ref{proposition:technical-1} and \ref{proposition:technical-2}.
Similarly, we can prove Theorem~\ref{theorem:324-160} using both Propositions~\ref{proposition:technical-1} and \ref{proposition:technical-2}
together with the following lemma:

\begin{lemma}
\label{lemma:X-10}
Let $\mathfrak{C}$ be one of the two $G^\prime_{324,160}$-invariant irreducible curves \eqref{equation:48-curve-degree-9-1} or \eqref{equation:48-curve-degree-9-2},
and let $\vartheta\colon X\to\mathbb{P}^3$ be the~blow up of the~curve $\mathfrak{C}$.
There is a $G^\prime_{324,160}$-equivariant~diagram
$$
\xymatrix{
&X\ar@{->}[ld]_{\vartheta}\ar@{->}[rd]^{\kappa}&\\%
\mathbb{P}^3&&\mathbb{P}^1}
$$
where $\kappa$ is a fibration into cubic surfaces.
Now, let $\mathcal{M}$ be a non-empty $G$-invariant linear system on $X$ that does not have fixed components such that
$$
K_{X}+\lambda\mathcal{M}\sim_{\mathbb{Q}}\kappa^*(D)
$$
for some $\lambda\in\mathbb{Q}_{>0}$, and some $\mathbb{Q}$-divisor $D$ on $\mathbb{P}^1$.
Then $(X,\lambda\mathcal{M})$ is canonical.
\end{lemma}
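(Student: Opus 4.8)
The plan is to run the Noether--Fano method directly on the smooth threefold $X=\vartheta^{-1}$-blow-up, exploiting that $\kappa$ is the fibration attached to the pencil of cubics through $\mathfrak{C}$ (Example~\ref{example:48-96-324-pencil}). Writing $H=\vartheta^*\mathcal{O}_{\mathbb{P}^3}(1)$ and $E$ for the $\vartheta$-exceptional divisor, one has $-K_X=4H-E$, the fibre class is $f=3H-E$, and every fibre $S$ of $\kappa$ is a cubic surface containing $\mathfrak{C}$. The first step is to record the fibrewise consequence of the hypothesis: since $(\kappa^*D)\vert_S\equiv 0$ and $K_X\vert_S=K_S$ by adjunction, the relation $K_X+\lambda\mathcal{M}\sim_{\mathbb{Q}}\kappa^*D$ restricts to $\lambda\mathcal{M}\vert_S\sim_{\mathbb{Q}}-K_S$ on \emph{every} fibre $S$. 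I would then analyse the fibres explicitly: the members of the pencil are the diagonal cubics $\sum c_ix_i^3=0$, so a general fibre is a smooth cubic surface disjoint from $\Sigma_4$, while the only singular fibres are the four cones with vertices $P_1,\dots,P_4$, each normal with a single (simple-elliptic) singular point. Finally, the kernel $T\cong\mumu_3^3$ fixes the base pointwise, acts on every fibre, and fixes no point of $\mathbb{P}^3$ outside $\Sigma_4$; hence for any $z\notin\Sigma_4$ its $T$-orbit inside the fibre through $z$ has length $\geqslant 3$.

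Suppose $(X,\lambda\mathcal{M})$ is not canonical and let $Z$ be a $G$-irreducible centre of non-canonical singularities of maximal dimension; as $X$ is smooth and $\mathcal{M}$ is movable, $Z$ is a curve or a point. I first exclude curves. If $Z$ is horizontal it meets a general (smooth) fibre $S$ in a $T$-invariant set avoiding $\Sigma_4$, hence of length $\geqslant 3$, with $\mathrm{mult}_{Z\cap S}(\lambda\mathcal{M}\vert_S)\geqslant\mathrm{mult}_Z(\lambda\mathcal{M})>1$; intersecting two general members of $\lambda\mathcal{M}\vert_S\equiv -K_S$ gives $3=(-K_S)^2\geqslant\sum_{\text{orbit}}\mathrm{mult}^2>3$, a contradiction. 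If $Z$ is vertical, it lies in a fibre $S$ with $\mathrm{mult}_{Z\cap S}(\lambda\mathcal{M}\vert_S)>1$, so $(S,\lambda\mathcal{M}\vert_S)$ fails to be log canonical along the curve $Z\cap S$; but $\lambda\mathcal{M}\vert_S$ is $\mathrm{Stab}_G(S)$-invariant and $\equiv -K_S$, so Lemma~\ref{lemma:cubic-surface} forces $Z\cap S\subseteq\mathrm{Sing}(S)$, which is impossible since every fibre has finite singular locus.

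It remains to exclude point centres $Z$. Here I would invoke the Corti--Pukhlikov inequality (\cite[Corollary~3.4]{Co00}, cf.~\cite{Pukhlikov}): for general $M_1,M_2\in\mathcal{M}$ the cycle $W=\lambda^2M_1\cdot M_2\equiv(\lambda\mathcal{M})^2$ satisfies $\mathrm{mult}_z(W)>4$ at each $z\in Z$. For $z\notin\Sigma_4$ (in particular for every point of $E$, as $\mathfrak{C}$ meets $\Sigma_4=\varnothing$ by Lemma~\ref{lemma:P3-curves-edges} and carries only $T$-orbits of length $\geqslant 3$) I split $W=W_{\mathrm{vert}}+W_{\mathrm{hor}}$ into its parts inside and transverse to the fibre $S$ through $z$. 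Since $f^2\equiv 0$ one computes $W\cdot f=(-K_X)^2\cdot f=(-K_S)^2=3$, so $W_{\mathrm{hor}}\cdot S=3$; averaging over the $T$-orbit of $z$ in $S$ (length $\geqslant 3$) yields $\mathrm{mult}_z(W_{\mathrm{hor}})\leqslant 1$, and the same averaging applied to $(\lambda\mathcal{M}\vert_S)^2=3$ yields $\mathrm{mult}_z(W_{\mathrm{vert}})\leqslant 1$, whence $\mathrm{mult}_z(W)\leqslant 2$, contradicting $\mathrm{mult}_z(W)>4$. For the remaining points $z=P_i\in\Sigma_4$ the averaging breaks down because $T$ fixes $P_i$; here I argue on the cone fibre $S_0$ with vertex $P_i$: a general ruling $\ell$ is a line through $P_i$ with $(\lambda\mathcal{M}\vert_{S_0})\cdot\ell=(-K_{S_0})\cdot\ell=1$, so
\[
\mathrm{mult}_{P_i}(\lambda\mathcal{M})\leqslant\mathrm{mult}_{P_i}(\lambda\mathcal{M}\vert_{S_0})\leqslant(\lambda\mathcal{M}\vert_{S_0})\cdot\ell=1,
\]
and a multiplicity $\leqslant 1$ at a smooth point of $X$ propagates (through the tangent cone on the first exceptional $\mathbb{P}^2$) to canonicity over $P_i$, so $P_i$ cannot be a centre either.

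The main obstacle is exactly this cone-vertex case. Lemma~\ref{lemma:cubic-surface} was stated only away from $\mathrm{Sing}(S)$, so the four $G$-fixed points $P_1,\dots,P_4$ — which turn out to be precisely the vertices of the singular fibres — receive no fibrewise input and must be controlled by the global geometry of $\kappa$; this is where the explicit description of every fibre as a cubic surface with the vertex as its unique singular point, and the ruling estimate above, are indispensable. Once all centres are excluded we conclude that $(X,\lambda\mathcal{M})$ is canonical, exactly as in \cite[Proposition~6.11]{CDK}.
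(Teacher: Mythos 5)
Your proposal parallels the paper's own proof in three of its four cases: horizontal curve centres are excluded by intersecting with a general fibre and using that $\mumu_3^3$-orbits on a fibre have length at least $3$; vertical curve centres are excluded by restricting to the fibre and invoking Lemma~\ref{lemma:cubic-surface}; and the cone vertices are excluded by the ruling computation giving $\lambda\mathcal{M}\cdot\ell=1<\mathrm{mult}_{P_i}(\lambda\mathcal{M})$ (the paper performs this intersection on the smooth threefold $X$ rather than on the singular cone, which avoids intersection theory at the vertex, but the idea is identical). The divergence --- and the gap --- is your treatment of point centres $z\notin\Sigma_4$.

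There you invoke the $4n^2$-inequality and split $W=\lambda^2M_1\cdot M_2$ into horizontal and vertical parts. Two problems arise. First, $W$ is not $T$-invariant (the $M_i$ are general members, not invariant ones), so ``averaging over the $T$-orbit'' cannot yield $\mathrm{mult}_z(W_{\mathrm{hor}})\leqslant 1$ at the chosen point $z$; the correct form is to sum Corti's inequality over the orbit (every orbit point is a centre of non-canonical singularities), obtaining $\sum_{z'}\mathrm{mult}_{z'}(W)>12$ against $\sum_{z'}\mathrm{mult}_{z'}(W_{\mathrm{hor}})\leqslant W\cdot f=3$ --- this part is repairable. Second, and fatally, the bound $\mathrm{mult}_z(W_{\mathrm{vert}})\leqslant 1$ has no justification. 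The vertical components of $W$ inside the fibre $S$ through $z$ are the fixed curves of $\mathcal{M}\vert_S$, and the multiplicities with which they occur in the cycle $M_1\cdot M_2$ are local intersection numbers of the threefold divisors $M_1,M_2$ along those curves; these are controlled neither by the quantity ``$(\lambda\mathcal{M}\vert_S)^2=3$'' (which is not even well defined once $\mathcal{M}\vert_S$ has a fixed part) nor by any fibre degree. For instance, a system locally generated by $x$ and $x+t^2$, with $S=\{t=0\}$, gives cycle multiplicity $2$ along $\{x=t=0\}$ while each restriction $M_i\vert_S$ has coefficient $1$ there, and replacing $t^2$ by $t^k$ makes the cycle multiplicity arbitrarily large. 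Globally, the class of $W$ is $(-K_X+\kappa^*D)^2$, which grows with $\deg D$, so no uniform bound on the vertical part exists. The paper closes exactly this case by the same mechanism you already used for vertical curves: inversion of adjunction applied to the fibre $S$ through $z$ shows that $(S,\lambda\mathcal{M}\vert_S)$, made $\mathrm{Stab}_G(S)$-invariant by averaging the boundary on $X$ (where non-canonicity at $z$ is certified by a single valuation and hence survives averaging), is not log canonical at the smooth point $z$ of $S$, contradicting Lemma~\ref{lemma:cubic-surface}. Your proof needs this step, or some genuinely new bound on the vertical part, to be complete.
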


\begin{proof}
Suppose the pair $(X,\lambda\mathcal{M})$ is not canonical.
Let $Z$ be its center of non-canonical singularities. Then
$$
\mathrm{mult}_Z\big(\mathcal{M}\big)>\frac{1}{\lambda}
$$
by \cite[Theorem~4.5]{KoMo98} or \cite[Exercise~6.18]{CoKoSm03}.

First, we suppose that $Z$ is a curve that is not contained in the fibers of the morphism~$\kappa$.
Let $F$ be a general fiber of $\kappa$, let $M_1$ and $M_2$ be general surfaces in $\mathcal{M}$. Then
$$
\frac{3}{\lambda^2}=M_1\cdot M_2\cdot F\geqslant \big(F\cdot Z\big)\big(M_1\cdot M_2\big)_Z\geqslant \big(F\cdot Z\big)\mathrm{mult}^2_Z\big(\mathcal{M}\big)>\frac{F\cdot Z}{\lambda^2}=\frac{|F\cap Z|}{\lambda^2},
$$
so that $|F\cap Z|=1$ or $|F\cap Z|=2$.
One the other hand, we have  $\mathrm{Stab}_{G^\prime_{324,160}}(F)\cong\mumu_3^3$,
and the surface $F$ does not have  $\mathrm{Stab}_{G^\prime_{324,160}}(F)$-orbits of length $1$ and $2$.
Contradiction.	

Thus, we conclude that there exists a fiber $S$ of the morphism $\kappa$ such that $Z\subset S$.

Suppose that the surface $S$ is singular and $Z$ is its singular point.
Then $S$ is a cubic cone in $\mathbb{P}^3$ with vertex at $Z$.
Let $M$ be a general surface in $\mathcal{M}$, and let $\ell$ be a general ruling of the cone $S$. Then $\ell\not\subset M$, hence $$
\frac{1}{\lambda}=\frac{1}{\lambda}\big(-K_{X}\big)\cdot\ell=\frac{1}{\lambda}\big(-K_{X}+\kappa^*(D)\big)\cdot\ell=M\cdot\ell\geqslant \mathrm{mult}_Z\big(\mathcal{M}_{X}\big)>\frac{1}{\lambda},
$$
which is absurd. This shows that $Z$ is not a singular point of the surface $S$.

Using the~inversion of adjunction \cite[Theorem~5.50]{KoMo98},
we conclude that $(S,\lambda\mathcal{M}\vert_{S})$ is not log~canonical at general point of the subvariety $Z$.
But this is impossible by Lemma~\ref{lemma:cubic-surface}, because we have $\lambda\mathcal{M}\vert_{S}\equiv-K_{S}$.
This completes the proof of the lemma.
\end{proof}

In the remaining part of this section, let us present a combined proof of Main Theorem and Theorem~\ref{theorem:324-160}
that does not use Theorem~\ref{theorem:NFI}. We decided to include this proof for convenience of the reader and for one application (see Corollary~\ref{corollary:final} below).

\begin{theorem}
\label{theorem:final}
Let $f\colon \mathbb{P}^3\dasharrow Y$ be a $G$-birational map  such that
$Y$~is~a~threefold with terminal singularities, and there is a $G$-morphism $\varphi\colon Y\to Z$ that is a $G$-Mori fiber space.
If $G$ is not conjugate to $G^\prime_{324,160}$, then $Z$ is a point,
and $Y$ is $G$-isomorphic to $\mathbb{P}^3$ or $X_{24}$.
Similarly, if $G=G^\prime_{324,160}$, then one of the following possibilities holds:
\begin{itemize}
\item $Z$ is a point, and $Y$ is $G$-isomorphic to $\mathbb{P}^3$;
\item $Z$ is a point, and $Y$ is $G$-isomorphic to $X_{24}$ from Example~\ref{example:Fano-Enriques};
\item $Z=\mathbb{P}^1$, and $Y$ is $G$-isomorphic to the threefold $X$ from Lemma~\ref{lemma:X-10}.
\end{itemize}
Moreover, one has $\mathrm{Bir}^G(\mathbb{P}^3)=\Gamma$, where $\Gamma$ is the subgroup in $\mathrm{Bir}(\mathbb{P}^3)$ that is generated by the involution $\iota$ constructed in Section~\ref{section:Fano-Enriques}
and the stabilizer of the subgroup $G$ in $\mathrm{PGL}_4(\mathbb{C})$.
\end{theorem}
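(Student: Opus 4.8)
The plan is to prove Theorem~\ref{theorem:final} by running the $G$-equivariant Sarkisov untwisting argument of \cite[Proposition~6.11]{CDK} \emph{by hand} rather than invoking Theorem~\ref{theorem:NFI}, feeding it Propositions~\ref{proposition:technical-1} and~\ref{proposition:technical-2} (and Lemma~\ref{lemma:X-10} when $G=G_{324,160}^\prime$) as the inputs that locate and bound the maximal singularities. First I would pull back the Mori fibre structure: let $\mathcal{M}_Y$ be a general $G$-invariant mobile linear system on $Y$ that is $\varphi$-very ample, and let $\mathcal{M}=(f^{-1})_*\mathcal{M}_Y$ be its proper transform on $\mathbb{P}^3$, a non-empty $G$-invariant mobile system without fixed components; let $n$ be its degree and set $\lambda=\tfrac{4}{n}$, so that $\lambda\mathcal{M}\sim_{\mathbb{Q}}-K_{\mathbb{P}^3}$. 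By the Noether--Fano--Iskovskikh principle, either the birational map $f$ is an isomorphism of $G$-Mori fibre spaces --- in which case $Z$ is a point and $Y$ is $G$-isomorphic to $\mathbb{P}^3$ and we are done --- or the log pair $(\mathbb{P}^3,\lambda\mathcal{M})$ is not canonical.

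In the non-canonical case I would apply Proposition~\ref{proposition:technical-1}: the maximal centre is forced to be the curve $\mathcal{L}_6$ (with $\mathrm{mult}_{\mathcal{L}_6}(\lambda\mathcal{M})>1$), the orbit $\Sigma_4$ (with $\mathrm{mult}_{\Sigma_4}(\lambda\mathcal{M})>2$), one of the loci appearing in the exceptional maxima \eqref{equation::technical-1-curves}--\eqref{equation::technical-1-points} when $G=G_{48,50}$ or $G=G_{96,227}$, or (only for $G_{324,160}^\prime$) one of the curves $\mathfrak{C}$ of \eqref{equation:48-curve-degree-9-1}--\eqref{equation:48-curve-degree-9-2}. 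To each maximal centre I attach its untwisting link from Section~\ref{section:Fano-Enriques}: a maximal singularity along $\mathcal{L}_6$ is untwisted by the link $\psi\colon\mathbb{P}^3\dasharrow X_{24}$ of diagram~\eqref{equation:Cremona-involution-2}, a maximal singularity at $\Sigma_4$ is untwisted by the Cremona involution $\iota=\psi^{-1}\circ\varsigma\circ\psi$ of diagram~\eqref{equation:Cremona-involution-1}, and a maximal singularity along $\mathfrak{C}$ is untwisted by the blow-up $\vartheta\colon X\to\mathbb{P}^3$ of Lemma~\ref{lemma:X-10}. Replacing $(\mathbb{P}^3,\mathcal{M})$ by its transform, a standard computation on $\widetilde{V}_4$ shows that $\iota$ strictly lowers the degree $n$ precisely when $\mathrm{mult}_{\Sigma_4}(\mathcal{M})>\tfrac{n}{2}$, and a computation on $\widetilde{X}_{24}$ shows that $\psi$ strictly lowers it precisely when $\mathrm{mult}_{\mathcal{L}_6}(\mathcal{M})>\tfrac{n}{4}$ --- exactly the conditions delivered by Proposition~\ref{proposition:technical-1}.

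I would then iterate and prove termination. On the $X_{24}$ side, Proposition~\ref{proposition:technical-2} confines all possible maximal centres to $\mathrm{Sing}(X_{24})$, whose two $G$-orbits are interchanged by the biregular involution $\varsigma$, so composing with $\varsigma$ (and then $\psi^{-1}$) returns us to a strictly smaller datum on $\mathbb{P}^3$; on the $X$ side, Lemma~\ref{lemma:X-10} shows that every mobile $\mathcal{M}$ with $K_X+\lambda\mathcal{M}\sim_{\mathbb{Q}}\kappa^*(D)$ is already canonical, so no further untwisting is possible and $Y$ is $G$-isomorphic to $X$ over $Z=\mathbb{P}^1$. Since the degree is a positive integer that strictly decreases at every step, the process halts at a canonical pair, and the Noether--Fano--Iskovskikh inequality then identifies the endpoint with $\mathbb{P}^3$, with $X_{24}$, or (for $G_{324,160}^\prime$ only) with $X$, which is the classification asserted. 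Running the same analysis on an arbitrary $\rho\in\mathrm{Bir}^G(\mathbb{P}^3)$ decomposes $\rho$ into the links $\iota$, $\psi$, $\varsigma$ and biregular $G$-equivariant automorphisms; as $\iota=\psi^{-1}\circ\varsigma\circ\psi$ and the biregular part lies in the normalizer of $G$ in $\mathrm{PGL}_4(\mathbb{C})$, this yields $\mathrm{Bir}^G(\mathbb{P}^3)=\Gamma$.

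The principal obstacle is the termination bookkeeping: one must verify that the numerical thresholds ``$>1$'' and ``$>2$'' of Proposition~\ref{proposition:technical-1}, together with their analogues on $X_{24}$ furnished by Proposition~\ref{proposition:technical-2}, are exactly the conditions under which the chosen link makes the relevant intersection numbers on $\widetilde{V}_4$, $\widetilde{X}_{24}$ and $X$ negative, so that the Sarkisov degree genuinely drops, and that no maximal centre outside the listed loci can ever occur --- the latter being guaranteed by Corollaries~\ref{corollary:P3-192-curves}, \ref{corollary:P3-tetrahedra-curves}, \ref{corollary:X24-orbits} and Proposition~\ref{proposition:48-curves}. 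The secondary difficulty is keeping the two exceptional groups $G_{48,50}$ and $G_{96,227}$ in step, where the extra length-four orbits $\Sigma_4^\prime$, $\Sigma_4^{\prime\prime}$ and the extra six-line curves really appear and must be untwisted symmetrically via the order-three element $R$; this is precisely why Proposition~\ref{proposition:technical-1} is phrased through the maxima \eqref{equation::technical-1-curves} and~\eqref{equation::technical-1-points}.
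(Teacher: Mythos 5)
Your proposal follows essentially the same route as the paper's own proof: the paper establishes Theorem~\ref{theorem:final} precisely by running the equivariant Sarkisov untwisting by hand, with Propositions~\ref{proposition:technical-1} and~\ref{proposition:technical-2}, Lemma~\ref{lemma:X-10}, the untwisting relations \eqref{equation:final-unwisting-numbers}, and the normalizer elements (in particular $R$) handling the extra orbits and six-line curves for $G_{48,50}$ and $G_{96,227}$; the only organizational difference is that the paper replaces your iterate-until-termination scheme by a minimal-counterexample argument for the invariant $\delta=\min_{\gamma\in\Gamma}\bigl\{\tfrac{n^{\gamma}}{4},\,k^{\gamma}\bigr\}$, which is logically equivalent. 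The one ingredient you should name explicitly is Kawamata's theorem on divisorial contractions to terminal quotient singularities \cite{Kawamata}: it is what converts the output of Proposition~\ref{proposition:technical-2} (non-canonicity of the pair on $X_{24}$ at a point of type $\tfrac{1}{2}(1,1,1)$) into the numerical inequality $m_{E_{\varphi}}>\tfrac{k^{\gamma}}{2}$, without which the return step $X_{24}\dasharrow\mathbb{P}^3$ is not yet known to strictly decrease your degree, so your "standard computation on $\widetilde{X}_{24}$" would be incomplete as stated.
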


\begin{proof}
Recall from Section~\ref{section:Fano-Enriques}, that there exists the following $G$-commutative diagram:
$$
\xymatrix{
\widetilde{X}_{24}\ar@{->}^{\varphi}[rr]\ar@/_1pc/@{->}_{\varpi}[drr]&&X_{24}\ar@{-->}_{\chi}[d]\ar@{->}[rr]^{\varsigma}&&X_{24}\ar@{-->}^{\chi}[d]\\%
&&\mathbb{P}^3\ar@{-->}[rr]^{\iota}&&{\mathbb{P}^3}\\
&&\widetilde{V_4}\ar@{->}^{\pi}[u]\ar@{-->}^{\nu}[rr]&&\widetilde{V_4}\ar@{->}_{\pi}[u]}
$$
For the detailed description of the $G$-birational maps $\pi$, $\varphi$, $\varpi$, $\varsigma$, $\chi$ and $\nu$, see Section~\ref{section:Fano-Enriques}.

Let $\mathfrak{C}$ be one of the~two $G^\prime_{324,160}$-invariant curves \eqref{equation:48-curve-degree-9-1} and \eqref{equation:48-curve-degree-9-2}.
If~$G=G^\prime_{324,160}$, then we also have the following $G$-equivariant~diagram:
$$
\xymatrix{
&X\ar@{->}[ld]_{\vartheta}\ar@{->}[rd]^{\kappa}&\\%
\mathbb{P}^3&&\mathbb{P}^1}
$$
where $\vartheta$ is a blow up of the curve $\mathfrak{C}$, and $\kappa$ is a fibration into cubic surfaces.
Using \cite{Breuer,LMFDB}, one can show that
$$
\mathrm{Aut}\big(\mathfrak{C}\big)=G^\prime_{324,160},
$$
which implies that the normalizer in $\mathrm{PGL}_4(\mathbb{C})$ of the group $G^\prime_{324,160}$ is the group $G_{648,704}^\prime$.
Observe that $G_{648,704}^\prime$ swaps the curves \eqref{equation:48-curve-degree-9-1} and \eqref{equation:48-curve-degree-9-2}.

If $G$ is not conjugate to $G_{324,160}^\prime$,
it is enough to prove that there exists $\gamma\in\Gamma$ such that
one of the maps $f\circ\gamma$, $f\circ\gamma\circ\chi$ or $f\circ\iota$ is an isomorphism.
If $G=G^\prime_{324,160}$, it is enough to prove that there exists $\gamma\in\Gamma$ such that
$f\circ\gamma$, $f\circ\gamma\circ \chi$, $f\circ\gamma\circ\vartheta$ is an isomorphism.
To~complete~the~proof, we suppose that none of these assertions are true.

Let $H_{\mathbb{P}^3}=\mathcal{O}_{\mathbb{P}^3}(1)$, let $H_{X_{24}}$ be the hyperplane section of the Fano threefold $X_{24}\subset\mathbb{P}^{13}$,
let $E_{\pi}$, $E_{\varphi}$, $E_{\varpi}$, $E_{\vartheta}$ be the $G$-irreducible exceptional divisors of  $\pi$, $\varphi$, $\varpi$, $\vartheta$, respectively,
and let $F$ be a fiber of the cubic fibration $\kappa$.
Then
\begin{align}
\label{equation:untwisting}
\begin{split}
2\varphi^*\big(H_{\mathbb{P}^3}\big)&\sim\varpi^*\big(H_{X_{24}}\big)-E_{\varpi},\\
\varpi^*\big(H_{X_{24}}\big)&\sim 6\varphi^*\big(H_{\mathbb{P}^3}\big)-2E_{\varphi},\\
(\pi\circ\nu)^*\big(H_{\mathbb{P}^3}\big)&\sim 3\pi^*\big(H_{\mathbb{P}^3}\big)-2E_{\pi},\\
E_{\varphi}&\sim_{\mathbb{Q}}\varpi^*\big(H_{X_{24}}\big)-\frac{3}{2}E_{\varpi},\\
E_{\varpi}&\sim_{\mathbb{Q}} 4\varphi^*\big(H_{\mathbb{P}^3}\big)-2E_{\varphi},\\
E_{\pi}&\sim_{\mathbb{Q}}4 (\pi\circ\nu)^*\big(H_{\mathbb{P}^3}\big)-3\nu^*\big(E_{\pi}\big),\\
F&\sim 3\vartheta^*\big(H_{\mathbb{P}^3}\big)-E_{\vartheta}.
\end{split}
\end{align}
Note also that $H_{X_{24}}$ generates the group $\mathrm{Cl}^G(X_{24})\otimes\mathbb{Q}$.
In fact, it is not hard to see that every $G$-invariant Weil divisor on $X_{24}$ is $\mathbb{Q}$-rationally equivalent to $kH_{X_{24}}$ for $k\in\frac{1}{2}\mathbb{Z}$.

Fix a~sufficiently large integer $n\gg 0$. Let $D_Z$ be a sufficiently general very ample divisor on $Z$,
and let $\mathcal{M}_{Y}=|-nK_Y+\varphi^*(D_Z)|$. For every $\gamma\in\Gamma$, we let
\begin{align*}
\mathcal{M}^\gamma_{\mathbb{P}^3}&=(f\circ\gamma)_*^{-1}(\mathcal{M}_Y),\\
\mathcal{M}^\gamma_{X_{24}}&=(f\circ\gamma\circ\chi)_*^{-1}(\mathcal{M}_Y).
\end{align*}
Similarly, if $G=G^\prime_{324,160}$, then we let
$$
\mathcal{M}_{X}^\gamma=(f\circ\gamma\circ\vartheta)_*^{-1}(\mathcal{M}_Y)
$$
for every element $\gamma\in\Gamma$.
Now, for every element $\gamma\in\Gamma$, let $n^\gamma$ be the~positive integer such that $\mathcal{M}^\gamma_{\mathbb{P}^3}\sim n^\gamma H_{\mathbb{P}^3}$,
and let $k_\gamma$ be the~positive half-integer  such that $\mathcal{M}^\gamma_{X_{24}}\sim_{\mathbb{Q}}k_\gamma H_{X_{24}}$.
It~follows from \emph{the~Noether--Fano inequality} \cite{Iskovskikh1980,Corti1995,CheltsovShramov} that the singularities of the pair
$$
\Big(\mathbb{P}^3,\frac{4}{n^\gamma}\mathcal{M}^\gamma_{\mathbb{P}^3}\Big)
$$
are not canonical for every $\gamma\in\Gamma$, because $f\circ\gamma$ is not an isomorphism by our assumption.
Similarly, we see that the singularities of the log pair
$$
\Big(X_{24},\frac{1}{k_\gamma}\mathcal{M}^\gamma_{X_{24}}\Big)
$$
are also not canonical for every $\gamma\in\Gamma$, since we assumed that $f\circ\gamma\circ\chi$ is not an isomorphism.

Moreover, if $G=G^\prime_{324,160}$, then
$$
K_{X}+\frac{1}{n^\gamma-3\mathrm{mult}_{\mathfrak{C}}(\mathcal{M}^\gamma_{\mathbb{P}^3})}\mathcal{M}_{X}^\gamma
\sim_{\mathbb{Q}}\frac{n^\gamma-4\mathrm{mult}_{\mathfrak{C}}(\mathcal{M}^\gamma_{\mathbb{P}^3})}{n^\gamma-3\mathrm{mult}_{\mathfrak{C}}(\mathcal{M}^\gamma_{\mathbb{P}^3})}F,
$$
where $\mathrm{mult}_{\mathfrak{C}}(\mathcal{M}^\gamma_{\mathbb{P}^3})<\frac{n^\gamma}{3}$.
Thus, if $G=G^\prime_{324,160}$ and $n^\gamma-4\mathrm{mult}_{\mathfrak{C}}(\mathcal{M}^\gamma_{\mathbb{P}^3})\geqslant 0$,
then it follows from the~Noether--Fano inequality that the singularities of the log pair
$$
\Big(X,\frac{1}{n^\gamma-3\mathrm{mult}_{\mathfrak{C}}(\mathcal{M}^\gamma_{\mathbb{P}^3})}\mathcal{M}_{X}^\gamma\Big)
$$
are not canonical for every $\gamma\in\Gamma$, because we assumed that $f\circ\gamma\circ\vartheta$ is not an isomorphism.
However, we already proved in Lemma~\ref{lemma:X-10}
that this log pair have canonical singularities.
Hence, if $G=G^\prime_{324,160}$, then $\mathrm{mult}_{\mathfrak{C}}(\mathcal{M}^\gamma_{\mathbb{P}^3})<\frac{n^\gamma}{4}$ for every $\gamma\in\Gamma$.

Now, for every $\gamma\in\Gamma$, we let \mbox{$m_{\mathcal{L}_6}^\gamma=\mathrm{mult}_{\mathcal{L}_6}(\mathcal{M}_{\mathbb{P}^3}^\gamma)$}
and \mbox{$m_{\Sigma_4}^\gamma=\mathrm{mult}_{\Sigma_4}(\mathcal{M}_{\mathbb{P}^3}^\gamma)$}.
Similarly, let $m_{E_{\phi}}^\gamma$ be the non-negative rational number such that
$$
\phi_*^{-1}\big(\mathcal{M}_{X_{24}}^\gamma\big)\sim_{\mathbb{Q}}\phi^*\big(\mathcal{M}_{X_{24}}^\gamma\big)-m^\prime_{E_{\phi}}E_{\phi}.
$$
Then, using \eqref{equation:untwisting}, we see that
\begin{align}
\label{equation:final-unwisting-numbers}
\begin{split}
	&n^\gamma=6k^\gamma-4m_{E_{\phi}}^\gamma; \\
	&n^{\gamma\circ\iota}=3n^\gamma-4m^\gamma_{\Sigma_4};\\
	&k^\gamma=\frac{n^\gamma}{2}-m_{\mathcal{L}_6}^\gamma.
\end{split}
\end{align}
Then, we let
$$
\delta=\min_{\gamma\in\Gamma}\Big\{\frac{n^\gamma}{4},k^\gamma\Big\}.
$$
Note that this minimum is attained for some $\gamma\in\Gamma$.

Suppose that $\delta=\frac{n^\gamma}{4}$ for some $\gamma\in\Gamma$.
If $G$ is not conjugate to $G_{48,50}$, $G_{96,227}$, $G_{324,160}^\prime$,
then it follows from Proposition~\ref{proposition:technical-1}
that
$$
m_{\mathcal{L}_6}^\gamma>\frac{n^\gamma}{4}
$$
or
$$
m^\gamma_{\Sigma_4}>\frac{n^\gamma}{4},
$$
therefore \eqref{equation:final-unwisting-numbers} gives
$$
\delta\leqslant k^\gamma=\frac{n^\gamma}{2}-m_{\mathcal{L}_6}^\gamma<\frac{n^\gamma}{4}=\delta
$$
or
$$
\delta\leqslant n^{\gamma\circ\iota}=3n^\gamma-4m^\gamma_{\Sigma_4}<\frac{n^\gamma}{4}=\delta,
$$
which is a contradiction.
Similarly, if $G=G_{324,160}^\prime$, it follows from Proposition~\ref{proposition:technical-1}
that at least one of the following strict inequalities holds:
\begin{align*}
m_{\mathcal{L}_6}^\gamma&>\frac{n^\gamma}{4}, \\
m^\gamma_{\Sigma_4}&>\frac{n^\gamma}{4},\\
\mathrm{mult}_{\mathfrak{C}}\big(\mathcal{M}^\gamma_{\mathbb{P}^3}\big)&>\frac{n^\gamma}{4},\\
\mathrm{mult}_{\mathfrak{C}}\big(\mathcal{M}^{\gamma\circ K}_{\mathbb{P}^3}\big)&>\frac{n^\gamma}{4}
\end{align*}
for
$$
K=\begin{pmatrix}
0 & 1 & 0 & 0\\
1 & 0 & 0 & 0\\
0 & 0 & 1 & 0\\
0 & 0 & 0 & -1
\end{pmatrix}\in G_{648,704}^\prime\subset\Gamma,
$$
since $\mathfrak{C}$ and $K(\mathfrak{C})$ are the $G^\prime_{324,160}$-invariant curves \eqref{equation:48-curve-degree-9-1} and \eqref{equation:48-curve-degree-9-2}.
But we already proved earlier that $\mathrm{mult}_{\mathfrak{C}}(\mathcal{M}^\gamma_{\mathbb{P}^3})<\frac{n^\gamma}{4}$
and $\mathrm{mult}_{\mathfrak{C}}(\mathcal{M}^{\gamma\circ K}_{\mathbb{P}^3})<\frac{n^\gamma}{4}$,
which implies that
$$
m_{\mathcal{L}_6}^\gamma>\frac{n^\gamma}{4}
$$
or
$$
m^\gamma_{\Sigma_4}>\frac{n^\gamma}{4},
$$
and \eqref{equation:final-unwisting-numbers} gives
$$
\delta\leqslant k^\gamma<\frac{n^\gamma}{4}=\delta
$$
or
$$
\delta\leqslant n^{\gamma\circ\iota}<\frac{n^\gamma}{4}=\delta.
$$
Thus, we see that either $G=G_{48,50}$ or $G=G_{96,227}$.
Then it follows from Proposition~\ref{proposition:technical-1} that at least one of the following inequalities holds:
\begin{align*}
m_{\mathcal{L}_6}^\gamma&>\frac{n^\gamma}{4}, \\
m_{\mathcal{L}_6}^{\gamma\circ R}&>\frac{n^\gamma}{4},\\
m_{\mathcal{L}_6}^{\gamma\circ R^2}&>\frac{n^\gamma}{4},\\
m^\gamma_{\Sigma_4}&>\frac{n^\gamma}{4},\\
m^{\gamma\circ R}_{\Sigma_4}&>\frac{n^\gamma}{4},\\
m^{\gamma\circ R}_{\Sigma_4}&>\frac{n^\gamma}{4},
\end{align*}
because $R(\mathcal{L}_6)=\mathcal{L}_6^\prime$, $R^2(\mathcal{L}_6)=\mathcal{L}_6^{\prime\prime}$,
$R(\Sigma_4)=\Sigma_4^\prime$, $R^2(\Sigma_4)=\Sigma_4^{\prime\prime}$, and $R\in G_{576,8654}\subset\Gamma$.
Here, $R$ is one of the~generators of the group $G_{576,8654}$ defined in \eqref{equation:48-R}, see Remark~\ref{remark:three-involutions}.
As above, we obtain a contradiction with~$\delta=\frac{n^\gamma}{4}$, because $n^\gamma=n^{\gamma\circ R}=n^{\gamma\circ R^2}$.

Hence, we conclude $\delta=k^\gamma$ for some $\gamma\in\Gamma$.
Then it follows from Proposition~\ref{proposition:technical-2}
that the log pair $(X_{24},\frac{1}{k^\gamma}\mathcal{M}_{X_{24}^\gamma})$ is not canonical at some singular point of the threefold~$X_{24}$.

Recall from Section~\ref{section:Fano-Enriques} that $\mathrm{Sing}(X_{24})$ is a union of two $G$-orbits: $\varphi(E_{\varphi})$ and $\varsigma\circ\varphi(E_{\varphi})$,
where $\varsigma=\chi^{-1}\circ\iota\circ\chi$.
If $(X_{24},\frac{1}{k^\gamma}\mathcal{M}_{X_{24}})$ is not canonical~at~$\varphi(E_{\varphi})$,
then
$$
m_{E_{\phi}}^\gamma>\frac{k^\gamma}{2}
$$
by Kawamata's theorem \cite{Kawamata}.
Likewise, if  $(X_{24},\frac{1}{k^\gamma}\mathcal{M}_{X_{24}})$ is not canonical at $\varsigma\circ\varphi(E_{\varphi})$,~then
$$
m_{E_{\phi}}^{\gamma\circ\iota}>\frac{k^\gamma}{2}.
$$
Therefore, using \eqref{equation:final-unwisting-numbers}, we see that
$$
\delta\leqslant\frac{n^\gamma}{4}=\frac{6k^\gamma-4m_{E_{\phi}}^\gamma}{4}<k^\gamma=\delta
$$
or
$$
\delta\leqslant\frac{n^{\gamma\circ\iota}}{4}=\frac{6k^{\gamma}-4m_{E_{\phi}}^{\gamma\circ\iota}}{4}<k^{\gamma}=\delta.
$$
The obtained contradiction completes the proof of Theorem~\ref{theorem:final}.
\end{proof}

\begin{corollary}
\label{corollary:final-large-groups}
If $G$ is not conjugate to $G_{48,50}$ or $G_{96,227}$, then $\mathrm{Bir}^{G}(\mathbb{P}^3)$ is finite.
\end{corollary}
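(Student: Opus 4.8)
The plan is to deduce this from Theorem~\ref{theorem:final}, which identifies $\mathrm{Bir}^G(\mathbb{P}^3)$ with the group $\Gamma=\langle\iota,N\rangle$, where $\iota$ is the birational involution constructed in Section~\ref{section:Fano-Enriques} and $N=N_{\mathrm{PGL}_4(\mathbb{C})}(G)$ is the normalizer of $G$. Thus it suffices to prove that $\Gamma$ is finite. First I would record that, since $G$ is conjugate neither to $G_{48,50}$ nor to $G_{96,227}$, the orbit $\Sigma_4$ is the \emph{unique} $G$-orbit of length $4$ in $\mathbb{P}^3$ by Lemmas~\ref{lemma:48-orbits}, \ref{lemma:192-orbits} and \ref{lemma:orbits}. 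Because every element of $N$ normalizes $G$ and hence permutes the $G$-orbits preserving their lengths, this uniqueness forces $N$ to leave $\Sigma_4$ invariant, and therefore to leave invariant the curve $\mathcal{L}_6$ consisting of the six lines through pairs of points of $\Sigma_4$.

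Next I would transport the problem to $X_{24}$. The map $\psi\colon\mathbb{P}^3\dashrightarrow X_{24}$ is given by the linear system of sextics singular along $\mathcal{L}_6$; since this system is $N$-invariant, the map $\psi$ is $N$-equivariant, so $N$ acts biregularly on $X_{24}$. Moreover, by the commutative diagram~\eqref{equation:Cremona-involution-2} the involution $\iota$ satisfies $\psi\circ\iota\circ\psi^{-1}=\varsigma$, where $\varsigma\in\mathrm{Aut}(X_{24})$ is biregular. Consequently $\psi\,\Gamma\,\psi^{-1}\subseteq\mathrm{Aut}(X_{24})$, and since every element of $\Gamma$ normalizes $G$ (see Corollary~\ref{corollary:P3-P3}), this yields an embedding $\Gamma\hookrightarrow N_{\mathrm{Aut}(X_{24})}(G)$. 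It is therefore enough to show that $N_{\mathrm{Aut}(X_{24})}(G)$ is finite; note that this also gives finiteness of $N$ for free, since $N\subseteq\Gamma$.

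Finally I would bound $\mathrm{Aut}(X_{24})$ using the presentation $X_{24}=\mathbb{P}^1\times\mathbb{P}^1\times\mathbb{P}^1/\langle\tau\rangle$ of Example~\ref{example:Fano-Enriques}. Automorphisms lift to the $\tau$-normalizing subgroup of $\mathrm{Aut}(\mathbb{P}^1\times\mathbb{P}^1\times\mathbb{P}^1)=\mathrm{PGL}_2(\mathbb{C})^3\rtimes\mathfrak{S}_3$, and correspondingly the $G$-action lifts to an action of a central extension $\widetilde{G}$ of $G$ by $\langle\tau\rangle$. The identity component $\mathrm{Aut}(X_{24})^\circ$ is covered by the reductive group $\mathrm{PGL}_2(\mathbb{C})^3$, and the identity component of $N_{\mathrm{Aut}(X_{24})}(G)$ centralizes the finite group $G$; hence finiteness of $N_{\mathrm{Aut}(X_{24})}(G)$ reduces to the vanishing of the $\widetilde{G}$-invariants in $\mathfrak{sl}_2(\mathbb{C})^{\oplus 3}=\mathrm{Lie}(\mathrm{PGL}_2(\mathbb{C})^3)$. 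Here is the decisive point: $\widetilde{G}$ permutes the three summands through $G\xrightarrow{\upsilon}\mathrm{im}(\upsilon)\to\mathfrak{S}_3$, and since $\mathrm{im}(\upsilon)\in\{\mathfrak{A}_4,\mathfrak{S}_4\}$ contains the Klein four-group $V_4$, each factor-stabilizer maps onto a subgroup of $\mathrm{PGL}_2(\mathbb{C})$ containing both the image of $\tau$ and a pole-interchanging involution arising from a double transposition in $V_4$; such a subgroup has no nonzero invariant in $\mathfrak{sl}_2(\mathbb{C})$. Combined with the transitivity of the factor permutation, this forces $(\mathfrak{sl}_2(\mathbb{C})^{\oplus 3})^{\widetilde{G}}=0$, whence $N_{\mathrm{Aut}(X_{24})}(G)$ is finite.

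The main obstacle I anticipate is precisely this last invariant-theoretic computation: one must check carefully, in coordinates, that the monomial $G$-action on $\mathbb{P}^3$ induces via $\psi$ a signed-permutation action on the three rulings of $\mathbb{P}^1\times\mathbb{P}^1\times\mathbb{P}^1$ leaving no nonzero direction invariant. It is instructive that this is exactly the step that fails for $G_{48,50}$ and $G_{96,227}$: for those groups $\Sigma_4$ is not the unique orbit of length $4$, the normalizer $N=G_{576,8654}$ of Lemma~\ref{lemma:48-50-normalizer} permutes the three length-$4$ orbits $\Sigma_4,\Sigma_4^\prime,\Sigma_4^{\prime\prime}$ and hence does \emph{not} preserve $\mathcal{L}_6$, so $\psi$ is no longer $N$-equivariant and the embedding into $\mathrm{Aut}(X_{24})$ breaks down --- in agreement with $\mathrm{Bir}^G(\mathbb{P}^3)$ being infinite in those two cases.
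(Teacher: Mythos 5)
Your proposal is correct, but it takes a genuinely different and considerably longer route than the paper. Both arguments start from Theorem~\ref{theorem:final} and both exploit the uniqueness of the length-four orbit $\Sigma_4$ (Lemmas~\ref{lemma:48-orbits}, \ref{lemma:192-orbits}, \ref{lemma:orbits}), but they use it differently. The paper stays entirely inside $\mathrm{PGL}_4(\mathbb{C})$: by Schur's lemma the centralizer of the irreducible group $G$ is trivial, so the normalizer $\mathfrak{N}$ embeds into $\mathrm{Aut}(G)$ and is finite; then, since $\mathfrak{N}$ preserves $\Sigma_4$ and $\iota$ is built from $\Sigma_4$, the involution $\iota$ is $\mathfrak{N}$-equivariant, hence normalizes $\mathfrak{N}$, and because $\iota^2=1$ the group $\langle\iota,\mathfrak{N}\rangle$ has order at most $2|\mathfrak{N}|$ --- no geometry of $X_{24}$ is needed. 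You instead regularize the whole group: uniqueness of $\Sigma_4$ makes $\psi$ equivariant for $\Gamma=\langle\iota,\mathfrak{N}\rangle$, embedding $\Gamma$ into $N_{\mathrm{Aut}(X_{24})}(G)$, whose identity component you kill by showing there are no nonzero $G$-invariant vector fields after lifting to $\mathbb{P}^1\times\mathbb{P}^1\times\mathbb{P}^1$. This works: your key computation is right (for each factor, the image of the factor-stabilizer in $\mathrm{PGL}_2(\mathbb{C})$ contains both the component of $\tau$ and an antidiagonal element coming from a double transposition, and these have no common invariant in $\mathfrak{sl}_2(\mathbb{C})$), and it buys a stronger structural conclusion, namely that $\mathrm{Bir}^G(\mathbb{P}^3)$ is conjugate by $\psi$ to a subgroup of $\mathrm{Aut}(X_{24})$. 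The cost is that you need the canonical-cover lifting and, implicitly, that $\mathrm{Aut}(X_{24})$ is an algebraic group of finite type --- true since $X_{24}$ is Fano, or directly from $\mathrm{Aut}(X_{24})\cong N_{\mathrm{Aut}(\mathbb{P}^1\times\mathbb{P}^1\times\mathbb{P}^1)}(\langle\tau\rangle)/\langle\tau\rangle$ --- so that a trivial identity component really does imply finiteness. Two cosmetic slips: the identity component of $\mathrm{Aut}(X_{24})$ is the three-dimensional torus rather than anything covered by $\mathrm{PGL}_2(\mathbb{C})^3$ (your argument only uses the inclusion of its Lie algebra into $\mathfrak{sl}_2(\mathbb{C})^{\oplus 3}$, which is fine), and the fact $\iota G\iota=G$ is Corollary~\ref{corollary:P3-P3-explicit} rather than Corollary~\ref{corollary:P3-P3}.
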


\begin{proof}
Let $\mathfrak{N}$ be the normalizer of the group $G$ in $\mathrm{PGL}_4(\mathbb{C})$.
Then
$$
\mathrm{Bir}^{G}\big(\mathbb{P}^3\big)=\big\langle \iota,\mathfrak{N}\big\rangle
$$
by Theorem~\ref{theorem:final}.
The centralizer of the group $G$ in $\mathrm{PGL}_4(\mathbb{C})$ is trivial by Schur's lemma,
so we have an embedding $\mathfrak{N}\hookrightarrow\mathrm{Aut}(G)$, which implies that $\mathfrak{N}$ is finite.

Observe that $\Sigma_{4}$ is a $\mathfrak{N}$-orbit, because  $\Sigma_4$ is a unique $G$-orbit of length $4$.

Let $\iota$ be the involution in $\mathrm{Bir}^{G}(\mathbb{P}^3)$ described in Section~\ref{section:Fano-Enriques}.
Note that~$\iota$ is $\mathfrak{N}$-equivariant, since $\Sigma_{4}$ is a $\mathfrak{N}$-orbit.
This implies that $\iota$ also normalizes $\mathfrak{N}$,
so $\langle \iota,\mathfrak{N}\rangle$ is finite.
\end{proof}

If $G=G_{48,50}$ or $G=G_{96,227}$, it follows from Theorem~\ref{theorem:final} that $\mathrm{Bir}^{G}(\mathbb{P}^3)=\langle \iota,G_{576,8654}\rangle$.
In these two cases, the group $\mathrm{Bir}^{G}(\mathbb{P}^3)$ is infinite (but discrete) by the following result.

\begin{corollary}
\label{corollary:final}
Suppose that $G=G_{48,50}$ or $G=G_{96,227}$.
Then $\langle \iota,\iota^\prime,\iota^{\prime\prime}\rangle\cong\mu_2\ast\mu_2\ast\mu_2$,
and there exists the following split exact sequence of groups:
$$
1\longrightarrow\big\langle \iota,\iota^\prime,\iota^{\prime\prime}\big\rangle\longrightarrow\mathrm{Bir}^{G}\big(\mathbb{P}^3\big)\longrightarrow G_{576,8654}\longrightarrow 1,
$$
where $\iota$, $\iota^\prime$, $\iota^{\prime\prime}$ are birational involutions in $\mathrm{Bir}^{G}(\mathbb{P}^3)$  described in Remark~\ref{remark:three-involutions}.
\end{corollary}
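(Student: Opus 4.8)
The plan is to feed the description $\mathrm{Bir}^G(\mathbb{P}^3)=\langle\iota,G_{576,8654}\rangle$ from Theorem~\ref{theorem:final} into an explicit study of how $\iota$, $\iota^\prime$, $\iota^{\prime\prime}$ act on multidegrees of linear systems. First I would invoke Remark~\ref{remark:three-involutions} to record that $N:=\langle\iota,\iota^\prime,\iota^{\prime\prime}\rangle$ is normal in $\mathrm{Bir}^G(\mathbb{P}^3)$ and contains $\iota$, so that $\mathrm{Bir}^G(\mathbb{P}^3)=N\cdot G_{576,8654}$. The whole corollary then reduces to two statements: that $N\cong\mu_2\ast\mu_2\ast\mu_2$, and that $N\cap G_{576,8654}=\{1\}$. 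Granting the second, $G_{576,8654}$ maps isomorphically onto $\mathrm{Bir}^G(\mathbb{P}^3)/N$, which simultaneously identifies the quotient and furnishes the splitting $G_{576,8654}\hookrightarrow\mathrm{Bir}^G(\mathbb{P}^3)$; so the split exact sequence follows formally.

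The heart of the matter is the multidegree action. The three involutions are $G$-Cremona transformations based at the three length-four orbits $\Sigma_4,\Sigma_4^\prime,\Sigma_4^{\prime\prime}$; for $G=G_{48,50}$ or $G=G_{96,227}$ we may take $\iota\colon[x_0:x_1:x_2:x_3]\mapsto[x_1x_2x_3:x_0x_2x_3:x_0x_1x_3:x_0x_1x_2]$ and then $\iota^\prime=R\iota R^2$, $\iota^{\prime\prime}=R^2\iota R$ with $R$ as in \eqref{equation:48-R}. A direct check shows that $\iota$ fixes $[1:1:1:-1]$ and $[1:1:1:1]$ and, being $G$-equivariant, fixes every point of $\Sigma_4^\prime$ and $\Sigma_4^{\prime\prime}$; moreover $\iota$ is a biregular local isomorphism near these orbits, since they are disjoint from its indeterminacy curve $\mathcal{L}_6$. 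By $R$-conjugation the analogous statements hold for $\iota^\prime$ and $\iota^{\prime\prime}$. Hence, assigning to a $G$-invariant mobile system the multidegree $(n;m_1,m_2,m_3)$, where $n$ is its degree and $m_1,m_2,m_3$ its multiplicities along $\Sigma_4,\Sigma_4^\prime,\Sigma_4^{\prime\prime}$, the degree rule of \eqref{equation:final-unwisting-numbers} together with the exact preservation of the two unbased multiplicities gives
$$
\iota\colon(n;m_1,m_2,m_3)\longmapsto(3n-4m_1;\,2n-3m_1,\,m_2,\,m_3),
$$
and cyclically for $\iota^\prime$, $\iota^{\prime\prime}$.

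I would then pass to the excess coordinates $e_i=2m_i-n$, in which each generator acts on $(e_1,e_2,e_3)$ by the reflection $e_i\mapsto-e_i$, $e_k\mapsto e_k+2e_i$ $(k\neq i)$, with Gram matrix $\left(\begin{smallmatrix}1&-1&-1\\-1&1&-1\\-1&-1&1\end{smallmatrix}\right)$. This is exactly the Tits geometric representation of the universal Coxeter group $\langle s_1,s_2,s_3\mid s_i^2=1\rangle\cong\mu_2\ast\mu_2\ast\mu_2$. The vector attached to $|\mathcal{O}_{\mathbb{P}^3}(1)|$ is $(1;0,0,0)$, i.e. $e=(-1,-1,-1)$, which lies in the interior of the fundamental chamber and therefore has trivial stabilizer in the Coxeter group (a standard consequence of Tits' theory of the Tits cone). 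Thus if a reduced word $w$ maps to the identity in $N$ then it fixes this multidegree, forcing $w=1$; this gives injectivity of $\mu_2\ast\mu_2\ast\mu_2\twoheadrightarrow N$ and hence $N\cong\mu_2\ast\mu_2\ast\mu_2$. The same computation shows that a nontrivial $w$ sends $(1;0,0,0)$ to a vector with $n>1$ (since $n=1$ would force all $m_i=0$ and hence fix the vector), so every nontrivial element of $N$ is nonlinear and $N\cap G_{576,8654}=\{1\}$.

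The step I expect to be the main obstacle is the rigorous bookkeeping behind the multidegree homomorphism: one must show that along an arbitrary word no spurious base points appear and that the three multiplicities transport exactly by the matrices above, so that the orbit of $(1;0,0,0)$ is genuinely computed by the Coxeter action. The local-isomorphism property of each involution near the two orbits it does not blow up is the essential geometric input here, and the argument is the analogue of the degree computations in \cite{CDK}; once this is in place, the faithfulness of the reflection representation and the triviality of interior-chamber stabilizers are classical and require no further work.
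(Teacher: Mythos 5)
Your proposal is correct, and it reaches the corollary by a genuinely different route than the paper. Writing $N=\langle\iota,\iota^\prime,\iota^{\prime\prime}\rangle$, the paper's own proof is a Sarkisov-style unique-descent argument: for $g\in N$ of degree $n>1$, the Noether--Fano inequality together with Proposition~\ref{proposition:technical-1} shows that at least one of the untwisting inequalities \eqref{equation:Bir-inequality-1}--\eqref{equation:Bir-inequality-3} holds, and the bounds $\mathrm{mult}_{\mathcal{L}_6^\prime}(\mathcal{M})+\mathrm{mult}_{\mathcal{L}_6^{\prime\prime}}(\mathcal{M})\leqslant n/2$ from Proposition~\ref{proposition:48-curves} and $\mathrm{mult}_{\Sigma_4^\prime}(\mathcal{M})+\mathrm{mult}_{\Sigma_4^{\prime\prime}}(\mathcal{M})\leqslant n$ from the proof of Proposition~\ref{proposition:technical-1} show that \emph{exactly} one holds; so exactly one of $g\circ\iota$, $g\circ\iota^\prime$, $g\circ\iota^{\prime\prime}$ has smaller degree, and unique descent yields a unique normal form, hence the free product and the splitting. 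You replace all of this by an exact linear action on multidegrees $(n;m_1,m_2,m_3)$, identified with the dual Tits representation of the universal Coxeter group, and conclude from triviality of stabilizers of chamber points. This is sound, and the bookkeeping you flag as the main obstacle is in fact harmless: the degree law $n\mapsto 3n-4m_1$ is precisely the relation recorded in \eqref{equation:final-unwisting-numbers}, a class computation on the resolution \eqref{equation:Cremona-involution-1} valid for \emph{every} mobile $G$-invariant system (no genericity needed); the multiplicity law $m_1\mapsto 2n-3m_1$ then follows formally by applying the degree law to $\iota_*\mathcal{M}$ and using $\iota^2=\mathrm{id}$; and preservation of $m_2,m_3$ holds because $\iota$ is biregular on $\mathbb{P}^3\setminus\mathcal{T}$, which contains $\Sigma_4^\prime\cup\Sigma_4^{\prime\prime}$, and maps each of these orbits to itself (for the analogous statements about $\iota^\prime,\iota^{\prime\prime}$, note that the relevant point sets are $G$-orbits and the relevant tetrahedra are $G$-invariant, so non-containment already gives disjointness). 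What your route buys: the isomorphism $N\cong\mu_2\ast\mu_2\ast\mu_2$ and the equality $N\cap G_{576,8654}=\{1\}$ require none of the hard geometry (no Noether--Fano, no Proposition~\ref{proposition:48-curves}, no Proposition~\ref{proposition:technical-1}) once Theorem~\ref{theorem:final} supplies generation and Remark~\ref{remark:three-involutions} supplies normality. What the paper's route buys: an effective untwisting algorithm that computes the normal form of an arbitrary element, which is of independent interest. One small slip to correct: with your convention $e_i=2m_i-n$, the vector attached to $|\mathcal{O}_{\mathbb{P}^3}(1)|$ is $(-1,-1,-1)$, which lies in the \emph{negative} of the fundamental chamber, not in its interior; since the action is linear, stabilizers of $v$ and $-v$ coincide, so Tits' theorem still yields triviality --- or simply set $e_i=n-2m_i$ and the chamber statement becomes literally true.
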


\begin{proof}
It follows from Theorem~\ref{theorem:final} that $\mathrm{Bir}^{G}(\mathbb{P}^3)$
is generated by $\iota$, $\iota^\prime$, $\iota^{\prime\prime}$ and $G_{576,8654}$.
Using this, it is not very difficult to check that $\langle \iota,\iota^\prime,\iota^{\prime\prime}\rangle$ is a normal subgroup in~$\mathrm{Bir}^{G}(\mathbb{P}^3)$.
Recall from Lemma~\ref{lemma:48-50-normalizer} that $G_{576,8654}$ is the normalizer of the subgroup $G$ in $\mathrm{PGL}_4(\mathbb{C})$.

Fix a $G$-birational map $g\in\langle \iota,\iota^\prime,\iota^{\prime\prime}\rangle$.
Let us show that  $g$ can be uniquely written as a~composition of $\iota$, $\iota^\prime$,~$\iota^{\prime\prime}$.
The proof of this fact is similar to the proof of \cite[Theorem~3.10]{Iskovskikh1980}.
More precisely, the proof of Theorem~\ref{theorem:final} provides an algorithm how to decompose $g$ as a composition of $\iota$, $\iota^\prime$,~$\iota^{\prime\prime}$.
Let us remind this algorithm. To start with, we let
$$
\mathcal{M}_{\mathbb{P}^3}=g_*^{-1}\big(|\mathcal{O}_{\mathbb{P}^3}(1)|\big),
$$
and let $n\in\mathbb{Z}_{>0}$ such that $\mathcal{M}_{\mathbb{P}^3}\subset |\mathcal{O}_{\mathbb{P}^3}(n)|$.
The number $n$ is known as the \emph{degree} of~$g$.
Then, arguing as in the proof of Theorem~\ref{theorem:final}, we see that either $n=1$ and $g\in G_{576,8654}$,
or $n>1$ and the singularities of the log pair
$$
\Big(\mathbb{P}^3,\frac{4}{n}\mathcal{M}_{\mathbb{P}^3}\Big),
$$
are not canonical. Now, using Proposition~\ref{proposition:technical-1},
we see that \emph{at least} one inequality~holds among the following three inequalities:
\begin{align}
\label{equation:Bir-inequality-1}\mathrm{max}\Big(4\mathrm{mult}_{\mathcal{L}_6}\big(\mathcal{M}_{\mathbb{P}^3}\big),2\mathrm{mult}_{\Sigma_4}\big(\mathcal{M}_{\mathbb{P}^3}\big)\Big)&>n,\\
\label{equation:Bir-inequality-2}\mathrm{max}\Big(4\mathrm{mult}_{\mathcal{L}_6^{\prime}}\big(\mathcal{M}_{\mathbb{P}^3}\big),2\mathrm{mult}_{\Sigma_4^\prime}\big(\mathcal{M}_{\mathbb{P}^3}\big)\Big)&>n,\\
\label{equation:Bir-inequality-3}\mathrm{max}\Big(4\mathrm{mult}_{\mathcal{L}_6^{\prime\prime}}\big(\mathcal{M}_{\mathbb{P}^3}\big),2\mathrm{mult}_{\Sigma_4^{\prime\prime}}\big(\mathcal{M}_{\mathbb{P}^3}\big)\Big)&>n.
\end{align}
Moreover, if the~inequality \eqref{equation:Bir-inequality-1} holds, then it follows from the proof of Theorem~\ref{theorem:final} that the degree of the composition $g\circ\iota$ is strictly smaller than $n$.
Similarly, if \eqref{equation:Bir-inequality-2} holds, then the degree of the composition $g\circ\iota^\prime$ is strictly smaller than $n$.
Finally, if \eqref{equation:Bir-inequality-3} holds, then the degree of the composition $g\circ\iota^{\prime\prime}$ is smaller than $n$.
Thus, iterating this process, we decompose $g$ into a composition of involutions $\iota$, $\iota^\prime$, $\iota^{\prime\prime}$ and an element in $G_{576,8654}$.

To prove that $\langle \iota,\iota^\prime,\iota^{\prime\prime}\rangle\cong\mu_2\ast\mu_2\ast\mu_2$,
we must prove that \emph{precisely} one birational map among $g\circ\iota$, $g\circ\iota^\prime$, $g\circ\iota^\prime$
has degree (strictly) smaller than the degree of the birational~map~$g$, so the described algorithm decomposes $g$ in a unique way.
To~prove this, it is enough to show that \emph{precisely} one inequality among \eqref{equation:Bir-inequality-1},
\eqref{equation:Bir-inequality-2}, \eqref{equation:Bir-inequality-3} holds.

Without loss of generality, it is enough to show that both inequalities \eqref{equation:Bir-inequality-2} and \eqref{equation:Bir-inequality-3} cannot hold simultaneously.
By Proposition~\ref{proposition:48-curves}, we have
$$
\mathrm{mult}_{\mathcal{L}_6^\prime}\big(\mathcal{M}_{\mathbb{P}^3}\big)+\mathrm{mult}_{\mathcal{L}_6^{\prime\prime}}\big(\mathcal{M}_{\mathbb{P}^3}\big)\leqslant\frac{n}{2}.
$$
Similarly, it follows from the proof of Proposition~\ref{proposition:technical-1} that
$$
\mathrm{mult}_{\Sigma_4^\prime}\big(\mathcal{M}_{\mathbb{P}^3}\big)+\mathrm{mult}_{\Sigma_4^{\prime\prime}}\big(\mathcal{M}_{\mathbb{P}^3}\big)\leqslant n.
$$
Moreover, if $\mathrm{mult}_{\mathcal{L}_6^{\prime}}(\mathcal{M}_{\mathbb{P}^3})>\frac{n}{4}$,
then it follows from the proof of Proposition~\ref{proposition:technical-1} that the~degree of the composition
$g\circ\iota^\prime$ is strictly less than $n$, so \eqref{equation:final-unwisting-numbers} gives
$$
\mathrm{mult}_{\Sigma_4^{\prime}}\big(\mathcal{M}_{\mathbb{P}^3}\big)>\frac{n}{2}.
$$
Likewise, if $\mathrm{mult}_{\mathcal{L}_6^{\prime\prime}}(\mathcal{M}_{\mathbb{P}^3})>\frac{n}{4}$,
then
$$
\mathrm{mult}_{\Sigma_4^{\prime\prime}}\big(\mathcal{M}_{\mathbb{P}^3}\big)>\frac{n}{2}.
$$
Therefore, if \eqref{equation:Bir-inequality-2}~holds, then the~inequality \eqref{equation:Bir-inequality-3} does not hold.
\end{proof}

\end{document}